\setlist[enumerate]{
     itemsep  = 0.2cm,
       label  = {\upshape (\arabic*)},
         ref  = \arabic*,
   leftmargin = *,
  labelindent = 0.2cm,
}
\def\@tocline#1#2#3#4#5#6#7{\relax
  \ifnum #1>\c@tocdepth 
  \else
    \par \addpenalty\@secpenalty\addvspace{#2}%
    \begingroup \hyphenpenalty\@M
    \@ifempty{#4}{%
      \@tempdima\csname r@tocindent\number#1\endcsname\relax
    }{%
      \@tempdima#4\relax
    }%
    \parindent\z@ \leftskip#3\relax \advance\leftskip\@tempdima\relax
    \rightskip\@pnumwidth plus4em \parfillskip-\@pnumwidth
    #5\leavevmode\hskip-\@tempdima
      \ifcase #1
       \or\or \hskip 1em \or \hskip 2em \else \hskip 3em \fi%
      #6\nobreak\relax
    \hfill\hbox to\@pnumwidth{\@tocpagenum{#7}}\par
    \nobreak
    \endgroup
  \fi}
\renewcommand*{\bibnamedash}{\underline{\hspace{3em}}\kern 0.1em} 
\theoremstyle{plain}
\numberwithin{equation}{subsection}
\newtheorem{theorem}[equation]{Theorem}
\newtheorem{lemma}[equation]{Lemma}
\newtheorem{proposition}[equation]{Proposition}
\newtheorem{corollary}[equation]{Corollary}
\newtheorem{conjecture}[equation]{Conjecture}
\theoremstyle{definition}
\newtheorem{construction}[equation]{Construction}
\newtheorem{definition}[equation]{Definition}
\newtheorem{example}[equation]{Example}
\newtheorem{notation}[equation]{Notation}
\newtheorem{nul}[equation]{}
\newtheorem{recollection}[equation]{Recollection}
\newtheorem{remark}[equation]{Remark}
\newtheorem{warning}[equation]{Warning}
\newtheorem*{remark*}{Remark}
\newtheorem*{terminology*}{Terminology}
\newtheorem*{interpretation*}{Interpretation}
\newtheorem*{definition*}{Definition}
\newtheorem*{conjecture*}{Conjecture}
\newtheorem*{notation*}{Notation}
\newtheorem*{convention*}{Convention}
\theoremstyle{remark}
\crefname{section}{\S\!}{\S\S\!} 
\Crefname{section}{Section}{Sections}
\crefname{subsection}{\S\!}{\S\S\!} 
\Crefname{subsection}{Subsection}{Subsections}
\crefname{nul}{}{}
\Crefname{nul}{}{}
\crefname{axiom}{Axiom}{Axioms}
\Crefname{axiom}{Axiom}{Axioms}
\crefname{convention}{Convention}{Conventions}
\Crefname{convention}{Convention}{Conventions}
\crefname{conjecture}{Conjecture}{Conjectures}
\Crefname{conjecture}{Conjecture}{Conjectures}
\crefname{construction}{Construction}{Constructions}
\Crefname{construction}{Construction}{Constructions}
\crefname{corollary}{Corollary}{Corollaries}
\Crefname{corollary}{Corollary}{Corollaries}
\crefname{counterexample}{Counterexample}{Counterexamples}
\Crefname{counterexample}{Counterexample}{Counterexamples}
\crefname{definition}{Definition}{Definitions}
\Crefname{definition}{Definition}{Definitions}
\crefname{example}{Example}{Examples}
\Crefname{example}{Example}{Examples}
\crefname{exercise}{Exercise}{Exercises}
\Crefname{exercise}{Exercise}{Exercises}
\crefname{lemma}{Lemma}{Lemmas} 
\Crefname{lemma}{Lemma}{Lemmas} 
\crefname{notation}{Notation}{Notations}
\Crefname{notation}{Notation}{Notations}
\crefname{observation}{Observation}{Observations}
\Crefname{observation}{Observation}{Observations}
\crefname{proposition}{Proposition}{Propositions}
\Crefname{proposition}{Proposition}{Propositions}
\crefname{question}{Question}{Questions}
\Crefname{question}{Question}{Questions}
\crefname{recollection}{Recollection}{Recollections}
\Crefname{recollection}{Recollection}{Recollections}
\crefname{remark}{Remark}{Remarks}
\Crefname{remark}{Remark}{Remarks}
\crefname{subexample}{Subexample}{Subexamples}
\Crefname{subexample}{Subexample}{Subexamples}
\crefname{theorem}{Theorem}{Theorems}
\Crefname{theorem}{Theorem}{Theorems}
\crefname{variant}{Variant}{Variants}
\Crefname{variant}{Variant}{Variants}
\crefname{warning}{Warning}{Warnings}
\Crefname{warning}{Warning}{Warnings}
\newcommand{\euscr}[1]{\EuScript{#1}} 
\newcommand{\spectra}{\euscr{S}p} 
\newcommand{\Ccat}{\euscr{C}} 
\newcommand{\Dcat}{\euscr{D}} 
\newcommand{\motiviccoh}{\mathrm{M}}
\newcommand{\motive}{\mathrm{M}_{\c}}
\newcommand{\fieldp}{\FF_{p}} 
\DeclareMathOperator{\Spec}{Spec}
\DeclareMathOperator{\Hom}{Hom}
\DeclareMathOperator{\End}{End}
\DeclareMathOperator{\Mod}{Mod}
\DeclareMathOperator{\CAlg}{CAlg}
\DeclareMathOperator{\Alg}{Alg}
\newcommand{\colim}{\operatornamewithlimits{colim}}
\DeclareMathOperator{\Fun}{Fun}
\DeclareMathOperator{\op}{op}
\DeclareMathOperator{\fg}{fg}
\DeclareMathOperator{\Sch}{Sch}
\DeclareMathOperator{\Map}{Map}
\DeclareMathOperator{\map}{map}
\DeclareMathOperator{\id}{id}
\DeclareMathOperator{\Sp}{Sp}
\DeclareMathOperator{\Ab}{Ab}
\DeclareMathOperator{\gr}{gr}
\DeclareMathOperator{\Fil}{Fil}
\DeclareMathOperator{\Ind}{Ind}
\DeclareMathOperator{\et}{\acute{e}t}
\DeclareMathOperator{\Syn}{Syn}
\DeclareMathOperator{\fil}{fil}
\newcommand{\SH}{\mathcal{SH}}
\newcommand{\perfectpures}{\mathrm{Pure}(k)}
\newcommand{\MGL}{\mathrm{MGL}}
\newcommand{\Hrm}{\mathrm{H}}
\newcommand{\Crm}{\mathrm{C}}
\DeclareMathOperator{\Perf}{Perf}
\DeclareMathOperator{\MU}{MU}
\DeclareMathOperator{\KU}{KU}
\DeclareMathOperator{\Th}{Th}
\newcommand{\M}{\mathbb{M}}
\newcommand{\WM}{W\mathbb{M}}
\newcommand{\sphere}{\mathbb{S}}
\theoremstyle{definition}
\newtheorem{observation}[equation]{Observation}
\newtheorem{convention}[equation]{Convention}
\crefname{nul}{}{}
\Crefname{nul}{}{}
\newcommand{\ldh}{$\ell$dh}
\newcommand{\cdh}{cdh}
\newcommand{\cdp}{cdp}
\newcommand{\fpsl}{fps$\ell'$}
\newcommand{\cdpcovering}{\xspace{\cdp-cover\-ing}\xspace}
\newcommand{\cdptopology}{\xspace{\cdp-topol\-o\-gy}\xspace}
\newcommand{\cdhcover}{\xspace{\cdh-cover}\xspace}
\newcommand{\cdhhypercover}{\xspace{\cdh-hy\-per\-cover}\xspace}
\newcommand{\cdhhypersheaf}{\xspace{\cdh-hy\-per\-sheaf}\xspace}
\newcommand{\cdhdescent}{\xspace{\cdh-de\-scent}\xspace}
\newcommand{\cdhsheaf}{\xspace{\cdh-sheaf}\xspace}
\newcommand{\cdhtopology}{\xspace{\cdh-topol\-o\-gy}\xspace}
\newcommand{\cdhtopologies}{\xspace{\cdh-topolo\-gies}\xspace}
\newcommand{\ldhcover}{\xspace{\ldh-cover}\xspace}
\newcommand{\ldhhypercover}{\xspace{\ldh-hy\-per\-cover}\xspace}
\newcommand{\ldhhypercovers}{\xspace{\ldh-hy\-per\-covers}\xspace}
\newcommand{\ldhhyperdescent}{\xspace{\ldh-hy\-per\-de\-scent}\xspace}
\newcommand{\ldhdescent}{\xspace{\ldh-de\-scent}\xspace}
\newcommand{\ldhtopology}{\xspace{\ldh-topol\-o\-gy}\xspace}
\newcommand{\fpslcover}{\xspace{\fpsl-cover}\xspace}
\newcommand{\fpslcovers}{\xspace{\fpsl-covers}\xspace}
\renewcommand{\sphere}{\Sup^{0}}
\renewcommand{\varinjlim}{\colim}
\renewcommand{\varprojlim}{\lim}
\newcommand{\Xbar}{\overline{X}}
\newcommand{\paren}[1]{\left(#1 \right)}
\newcommand{\einv}{\nicefrac{1}{e}}
\newcommand{\Et}{\mathrm{\acute{E}t}}
\newcommand{\Sm}{\mathrm{Sm}}
\newcommand{\Spc}{\mathrm{Spc}}
\renewcommand{\spectra}{\Sp}
\newcommand{\hSp}{\mathrm{hSp}}
\newcommand{\FilSp}{\mathrm{FilSp}}
\newcommand{\Dfil}{\Dcal^{\fil}}
\newcommand{\Pure}{\mathrm{Pure}}
\DeclareMathOperator{\eff}{eff}
\DeclareMathOperator{\AT}{AT}
\newcommand{\post}[1]{\tau_{\geq *}(#1)}
\newcommand{\Modpost}[1]{\Mod_{\post{#1}}(\FilSp)}
\newcommand{\DDelta}{\Delta}
\newcommand{\Deltaop}{\DDelta^{\op}}
\newcommand{\Unit}{\mathbf{1}}
\renewcommand{\Mc}{\motive}
\DeclareMathOperator{\characteristic}{char}
\DeclareMathOperator{\cofib}{cofib}
\DeclareMathOperator{\hyp}{hyp}
\DeclareMathOperator{\syn}{syn}
\newcommand{\disc}{\mathrm{disc}}
\renewcommand{\MU}{\mathrm{MU}}
\newcommand{\Be}{\mathrm{Be}}
\newcommand{\WBe}{\Wup_{\ast}\Be}
\renewcommand{\Re}{\mathrm{Re}}
\newcommand{\WRe}{\Wup_{\ast}\Re}
\newcommand{\Gal}{\mathrm{Gal}}
\DeclareMathOperator{\Sh}{Sh}
\DeclareMathOperator{\PSh}{PSh}
\DeclareMathOperator{\Stable}{Stable}
\newcommand{\SynMU}{\Syn_{\MU}}
\newcommand{\SynMUev}{\SynMU^{\ev}}
\newcommand{\PSigma}{\PSh_{\Sigma}}
\newcommand{\ShSigma}{\Sh_{\Sigma}}
\newcommand{\Shethyp}{\Sh_{\et}^{\hyp}}
\newcommand{\Let}{\mathrm{L}_{\et}}
\newcommand{\ellcomp}{_{\ell}^{\wedge}}
\newcommand{\directsum}{\oplus}
\newcommand{\tensor}{\otimes}
\newcommand{\of}{\circ}
\newcommand{\tensorlimits}{\operatornamewithlimits{\tensor}} 
\newcommand{\cross}{\times}
\newcommand{\fib}{\mathrm{fib}}
\newcommand{\FunL}{\Fun^{\mathrm{L}}}
\newcommand{\Cc}{\Cup_{\mathrm{c}}}
\newcommand{\Hc}{\Hup_{\mathrm{c}}}
\DeclareMathOperator{\fp}{fp}
\DeclareMathOperator{\ev}{ev}
\newcommand{\filev}{\fil_{\ev}}
\newcommand{\Besyn}{\Be_{\syn}}
\newcommand{\equivalent}{\simeq}
\DeclareMathOperator{\Gr}{Gr}
\DeclareMathOperator{\BM}{BM}
\newcommand{\MGLBM}{\MGL^{\BM}}
\newcommand{\HR}{\Hup R}
\newcommand{\MZZ}{\Mup\ZZ}
\renewcommand{\MR}{\Mup R}
\DeclareMathOperator{\Ex}{Ex}
\DeclareMathOperator{\cell}{cell}
\newcommand{\restrict}[2]{{#1}|_{#2}}
\newcommand{\Mfg}{\mathcal{M}_{\fg}}
\newcommand{\IndCoh}{\mathrm{IndCoh}}
\newcommand{\Chow}{\mathrm{Chow}}
\newcommand{\DM}{\mathrm{DM}}
\renewcommand{\SH}{\mathrm{SH}}
\newcommand{\cosk}{\mathrm{cosk}}
\newcommand{\wheart}{\mathrm{w}\heartsuit}
\newcommand{\w}{\mathrm{w}}
\DeclareMathOperator{\GS}{GS}
\newcommand{\WGS}{\Wup_{*}^{\GS}}
\renewcommand{\c}{\mathrm{c}}
\newcommand{\f}{\mathrm{f}}
\newcommand{\s}{\mathrm{s}}
\renewcommand{\Cup}{\mathrm{C}}
\newcommand{\Eup}{\mathrm{E}}
\newcommand{\Hup}{\mathrm{H}}
\newcommand{\Kup}{\mathrm{K}}
\newcommand{\Mup}{\mathrm{M}}
\newcommand{\Sup}{\mathrm{S}}
\newcommand{\Tup}{\mathrm{T}}
\newcommand{\Wup}{\mathrm{W}}
\newcommand{\WCstar}{\Wup_{*}\Cup_{*}}
\newcommand{\E}{\mathbf{E}}
\renewcommand{\AA}{\mathbb{A}}
\newcommand{\CC}{\mathbb{C}}
\newcommand{\FF}{\mathbb{F}}
\newcommand{\PP}{\mathbb{P}}
\newcommand{\QQ}{\mathbb{Q}}
\newcommand{\RR}{\mathbb{R}}
\newcommand{\ZZ}{\mathbb{Z}}
\newcommand{\Ccal}{\mathcal{C}}
\newcommand{\Dcal}{\mathcal{D}}
\newcommand{\Fcal}{\mathcal{F}}
\newcommand{\Ncal}{\mathcal{N}}
\newcommand{\Ocal}{\mathcal{O}}
\newcommand{\reflectedop}[2]{{\mathop{\reflectbox{$#1{#2}$}}}}
\newcommand{\noloc}{{\mathpalette\reflectedop\colon}}
\newcommand{\adjto}[4]{{#1}\colon{#2} \rightleftarrows {#3}\noloc{#4}}
\newcommand{\inclusion}{\hookrightarrow}
\newcommand{\equivalence}{\xrightarrow{\sim}}
\newcommand{\kbar}{\bar{k}}
\newcommand{\pbar}{\bar{p}}
\newcommand{\fbar}{\bar{f}}
\newcommand{\fupperstar}{f^{\ast}}
\newcommand{\flowerstar}{f_{\ast}}
\newcommand{\flowersharp}{f_{\sharp}}
\newcommand{\flowershriek}{f_{!}}
\newcommand{\fuppershriek}{f^{!}}
\newcommand{\iupperstar}{i^{\ast}}
\newcommand{\ilowerstar}{i_{\ast}}
\newcommand{\ilowershriek}{i_{!}}
\newcommand{\iuppershriek}{i^{!}}
\newcommand{\jupperstar}{j^{\ast}}
\newcommand{\jlowerstar}{j_{\ast}}
\newcommand{\jlowersharp}{j_{\sharp}}
\newcommand{\jlowershriek}{j_{!}}
\newcommand{\juppershriek}{j^{!}}
\newcommand{\pupperstar}{p^{\ast}}
\newcommand{\plowerstar}{p_{\ast}}
\newcommand{\plowershriek}{p_{!}}
\newcommand{\puppershriek}{p^{!}}
\newcommand{\andeq}{\text{\qquad and\qquad}}
\newcommand{\comma}{\rlap{\ ,}}
\newcommand{\period}{\rlap{\ .}}
\newcommand{\defn}{\emph}
\newcommand{\category}{\xspace{$\infty$-cat\-e\-go\-ry}\xspace}
\newcommand{\categories}{\xspace{$\infty$-cat\-e\-gories}\xspace}
\newcommand{\categorical}{\xspace{$\infty$-cat\-e\-gor\-i\-cal}\xspace}
\newcommand{\acategory}{{an $\infty$-cat\-e\-go\-ry}\xspace}
\newcommand{\acategorical}{{an $\infty$-cat\-e\-gor\-i\-cal}\xspace}
\DeclareMathOperator{\tup}{t}
\newcommand{\tstructure}{\xspace{$\tup$-struc\-ture}\xspace}
\newcommand{\tstructures}{\xspace{$\tup$-struc\-tures}\xspace}
\newcommand{\texact}{\xspace{$\tup$-exact}\xspace}
\newcommand{\topos}{\xspace{$\infty$-topos}\xspace}
\subjclass[2020]{Primary 14F42; Secondary 55N22}
\begin{document}

\title{Spectral weight filtrations}

\author{Peter J. Haine}

\author{Piotr Pstr\k{a}gowski}

\begin{abstract}
	We provide a description of Voevodsky's \category of motivic spectra in terms of the subcategory of motives of smooth proper varieties. 
	As applications, we construct weight filtrations on the Betti and étale cohomologies of algebraic varieties with coefficients in any complex oriented ring spectrum.
	We show that these filtrations satisfy \ldhdescent, giving an effective way of calculating them in positive characteristic. 
	In the complex motivic case, we further refine the weight filtration to one defined at the level of stable homotopy types. 
\end{abstract}

\thanks{The first-named author gratefully acknowledges support from the NSF Mathematical Sciences Postdoctoral Research Fellowship under Grant \#DMS-2102957 and a grant from the Simons Foundation, 816048 LC. 
The second-named gratefully acknowledges support from NSF Grant \#DMS-1926686 and Deutsche Forschungsgemeinschaft \#EXC-2047/1 – 390685813}

\maketitle
\setcounter{tocdepth}{2}
\tableofcontents


\section{Introduction}\label{sec:introduction}


\subsection{Motivation and overview} 

Let $ X $ be a complex variety.
In his fundamental series of papers \cites{deligne1970theorie}{deligne1971theorie}{deligne1974theorie}, Deligne explains how to use the algebraic structure of $ X $ to endow the rational singular cohomology $\Hrm^{*}(X(\CC); \QQ)$ with a canonical \emph{weight filtration} 
\begin{equation*}
	\Wup_{0}\Hrm^{*}(X(\CC); \QQ) \subseteq \Wup_{1}\Hrm^{*}(X(\CC); \QQ) \subseteq \cdots \period
\end{equation*}
Moreover, the complexification 
\begin{equation*}
	\CC \otimes_{\QQ} \Wup_{\bullet}\Hrm^{*}(X(\CC); \QQ)
\end{equation*}
has a canonical \textit{mixed Hodge module} structure on its associated graded pieces. 
In fact, the filtration exists \textit{before} passing to cohomology: Deligne shows that the singular cochain complex $ \Cup^{*}(X(\CC); \QQ) $ can be canonically refined to an object of the filtered derived \category. 
The weight filtration contains crucial algebraic information: it is not an invariant of the topological space $ X(\CC) $. 
Informally, the weight filtration is obtained by resolving $ X $ by smooth proper varieties. 

The weight filtration on rational cohomology has been extended to a variety of contexts. 
In \cite{gillet1996descent}, Gillet and Soulé show that the weight filtration can be refined to a canonical filtration on the complex of compactly supported \emph{integral} cochains $ \Cc^{*}(X(\CC);\ZZ)$, which was later vastly generalized in the work of Bondarko on weight structures \cites{MR2461902}{MR2746283}{MR2834727}. 
In this paper, one of our main results is that the weight filtration is defined at a \emph{spectral level}, even before passing to algebra. 
That is, we show that the weight filtration can be refined to a canonical filtration on the stable homotopy type of $ X(\CC) $ which equips the latter with a structure of a \textit{synthetic spectrum}.

Our result is based on a new description of Voevodsky's stable \category of motivic spectra $ \SH(\CC) $ in terms of the subcategory generated by motives of smooth proper varieties.
More generally, given any base field $ k $ of exponential characteristic $ e $, we give a new description of the \category $ \SH(k)[\einv] $ obtained from $ \SH(k) $ by inverting the exponential characteristic. This allows for a clean construction of filtered refinements of both Betti and étale realization with coefficients in a complex orientable cohomology theory. 

Applying these filtered realizations to various motivic spectra one can attach to a variety, we obtain weight filtrations on the (co)homology of varieties. 
In particular, we are able to construct weight filtrations on étale cohomology with coefficients in a complex orientable étale sheaf of spectra, extending Deligne's weight filtration on $ \ell $-adic étale cohomology \cite{deligne1980conjecture}. We note that our construction is manifestly functorial, and so the weight filtrations we construct care naturally compatible with the action of cohomology operations. 

We also show that the induced filtration on Borel--Moore homology satisfies hyperdescent with respect to Kelly's \ldhtopology \cite{kelly2017voevodsky}. 
Combined with the theory of alterations \cites[Theorem 4.4]{MR3730515}[Theorem 1.1]{Illusie:On_Gabbers_refined_uniformization}[Exposé IX, Théorème 1.1]{MR3309086}[Theorem 1.2.5]{MR3665001}, this gives an effective way of calculating this filtration in positive characteristic. 
We end the paper with a conjectural picture of the existence of a synthetic realization in the étale context. 

In the rest of this introduction, we explain our results in more detail.


\subsection{The complex orientable case} 

We first describe our result in its most basic case, over the complex numbers and in the case of a \textit{complex orientable} cohomology theory (such as complex bordism, complex $\Kup$-theory, or ordinary cohomology). 
We make use of Voevodsky's \category of motivic spectra $ \SH(\CC) $, and we assume that the reader is familiar with basics of motivic homotopy theory; see \cref{sec:recollections_on_motivic_homotopy_theory} or a brief review.

Let $A \in \CAlg(\spectra)$ be a commutative algebra in spectra.
We have the \textit{$ A $-linear Betti realization} functor  
\begin{equation*}
	\Be(-;A) \colon \SH(\CC) \to \Mod_{A}
\end{equation*}
which is the unique symmetric monoidal left adjoint such that for any smooth $ \CC $-scheme $ X $, we have
\begin{equation*}
	\Be(\Sigma_{+}^{\infty} X;A) \simeq A \otimes \Sigma_{+}^{\infty} X(\CC) \period
\end{equation*}
That is, $ \Sigma_{+}^{\infty} X \in \SH(\CC) $ is sent to the the $ A $-linear stable homotopy type of $ X(\CC) $. 

The functor $\Be(-; A)$ encodes the theory of Betti (co)homology of varieties. 
In more detail, it is a left adjoint, so any $ A $-module $ M $ determines through the right adjoint to $\Be(-; A)$ a motivic spectrum over $\CC$.
Through the formalism of six-functors of the stable motivic category, in turn any motivic spectrum determines (co)homology theories on varieties, in both ordinary and compactly supported variants, which in this case recovers Betti (co)homology with coefficients in $ M $. 

Our first result is that if $ A $ is complex orientable, then the $ A $-linear Betti realization can be equipped with a canonical filtration.
Recall that a \textit{filtered spectrum} is a functor $ X_{*} \colon \ZZ^{\op} \to \spectra $, where we regard $ \ZZ $ as a poset with the usual ordering. 
We write
\begin{equation*}
	\FilSp \colonequals \Fun(\ZZ^{\op}, \spectra)
\end{equation*}
for the \category of filtered spectra.
Every spectrum $ X $ has a canonical \textit{Postnikov filtration}
\begin{equation*}
	\cdots \to \tau_{\geq 1} X \to \tau_{\geq 0}X \to \tau_{\geq -1} X \to \cdots 
\end{equation*}
which can be naturally refined to a lax symmetric monoidal functor $\tau_{\geq *} \colon \spectra \to \FilSp$. 

\begin{theorem}[{\ref{cor:homological_filtered_Betti_realization_exists_for_complex_orientable_rings}}]
	\label{theorem:introduction_complex_orientable_weight_filtration}
	Let $A \in \CAlg(\spectra)$ be complex orientable.
	Then, there exists a unique colimit-preserving lax symmetric monoidal functor 
	\begin{equation*}
		\WBe(-;A) \colon \SH(\CC) \to \Modpost{A}
	\end{equation*}
	such that on the subcategory of motivic spectra of the form $S \simeq (\PP^{1})^{\otimes n} \otimes \Sigma_{+}^{\infty} Y$ with $n \in \ZZ$ and $ Y $ a smooth proper complex variety, we have a natural equivalence
	\begin{equation*}
		\WBe(S;A) \simeq \tau_{\geq *}(\Be(S;A)) \period
	\end{equation*}
	We refer to $ \WBe(-;A) $ as the \emph{filtered $ A $-linear Betti realization} functor.
\end{theorem}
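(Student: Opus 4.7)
The plan is to invoke the new description of $\SH(\CC)$ advertised in the introduction, which exhibits the stable motivic \category as a universal colimit-preserving, symmetric monoidal extension of a construction on the subcategory $\Pure(\CC)$ of motives of smooth proper complex varieties, with $\PP^{1}$ inverted. Given such a universal property, constructing $\WBe(-; A)$ reduces to providing a suitable lax symmetric monoidal functor on this generating subcategory together with a compatibility for $\PP^{1}$-inversion; uniqueness is automatic, since a colimit-preserving functor is determined by its restriction to a set of generators, and the subcategory $\{(\PP^{1})^{\otimes n} \otimes \Sigma_{+}^{\infty} Y\}$ generates $\SH(\CC)$ under colimits.

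On the generating subcategory I would set $\WBe(-; A) \colonequals \tau_{\geq *} \circ \Be(-; A)$. The Postnikov functor $\tau_{\geq *} \colon \spectra \to \FilSp$ is lax symmetric monoidal for the Day convolution on $\FilSp$, and the restriction of $\Be(-; A)$ to $\Pure(\CC)$ is symmetric monoidal, so the composite is lax symmetric monoidal and lands in $\Modpost{A}$. For the $\PP^{1}$-compatibility, one observes that $\Be(\PP^{1}; A) \simeq A \oplus \Sigma^{2} A$, so that $\PP^{1}$-suspension corresponds under $\Be$ to $\Sigma^{2}$, which $\tau_{\geq *}$ converts to an invertible filtration shift in $\FilSp$; thus $\PP^{1}$-inversion on the motivic side is matched by an invertible operation on the filtered target, as the universal property requires.

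The main obstacle is verifying that the lax symmetric monoidal functor on $\Pure(\CC)$ is compatible with all the relations encoded by the universal description of $\SH(\CC)$: the $\PP^{1}$-inversion, the gluing data from blow-up squares of smooth proper varieties, and the multiplicative structure. This is where the hypothesis that $A$ be complex orientable is essential: for such $A$, the Betti realizations of Thom spaces of algebraic vector bundles on smooth proper varieties acquire Postnikov filtrations whose graded pieces sit in the expected weights, so that the Künneth-type comparisons and Thom isomorphisms lift to the filtered setting. Once these compatibilities are discharged on the generating subcategory, the universal property delivers both the existence and the uniqueness of the colimit-preserving lax symmetric monoidal extension $\WBe(-; A)\colon \SH(\CC) \to \Modpost{A}$.
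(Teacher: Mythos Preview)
Your high-level strategy is right: use the universal property of $\SH(\CC)$ in terms of $\Pure(\CC)$ and define $\WBe$ on generators as $\tau_{\geq *}\circ\Be(-;A)$. But the crucial verification step is misidentified. The description of $\SH(\CC)$ as additive sheaves on $\Pure(\CC)$ (\Cref{theorem:introduction_description_of_the_motivic_category_of_sheaves}) says that a colimit-preserving functor out of $\SH(\CC)$ is the same datum as a functor on $\Pure(\CC)$ that \emph{preserves cofiber sequences}. There is no separate $\PP^{1}$-inversion or blow-up compatibility to check; the only relation is exactness on cofiber sequences of perfect pure motives. Your proposed checks (Thom isomorphisms, K\"unneth, blow-up squares) do not address this, and without it the extension does not exist.

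The missing mechanism is the one highlighted in the introduction just after \Cref{theorem:introduction_universal_property_of_sh_in_terms_of_perfect_pures}: any cofiber sequence $A\to B\to C$ in $\Pure(\CC)$ becomes \emph{split} after tensoring with $\MGL$, because the boundary map lives in $\MGL_{-1,0}(A^{\vee}\otimes C)=0$. Since $\Be(\MGL)\simeq\MU$ and Betti realization is symmetric monoidal, the sequence $\Be(A)\to\Be(B)\to\Be(C)$ is $\MU$-split; complex orientability of $A$ then makes it $A$-split in $\Mod_A$. Now $\tau_{\geq *}$ is merely additive, not exact, but additive functors preserve \emph{split} cofiber sequences, so $\tau_{\geq *}\Be(-;A)$ does send cofiber sequences in $\Pure(\CC)$ to cofiber sequences. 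This is exactly where complex orientability enters, and it is the step your proposal is missing. (Incidentally, $\Be(\PP^{1};A)\simeq\Sigma^{2}A$, not $A\oplus\Sigma^{2}A$: the Tate motive is the \emph{reduced} suspension spectrum of $\PP^{1}$.)
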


Note that if $ A $ is an ordinary commutative ring, we have an identification
\begin{equation*}
	\Modpost{A} \simeq \Dcat^{\fil}(A)
\end{equation*}
with the classical filtered derived \category of $ A $, obtained by localizing filtered chain complexes at filtered quasi-isomorphisms.
See \Cref{prop:filtered_modules_ordinary_ring}. 

Informally, \cref{theorem:introduction_complex_orientable_weight_filtration} says that once we decide to equip the $ A $-homology of each smooth proper variety $ X $ with the ``trivial filtration'' given by the Postnikov tower, there is a unique way to extend this to a colimit-preserving functor defined on all of $ \SH(\CC) $. 
By construction, for any motivic spectrum $ S $, the canonical map from the colimit 
\begin{equation*}
	\varinjlim \WBe(S;A) \to \Be(S;A)
\end{equation*}
is an equivalence.
This induces a filtration on homology groups of $\Be(S; A)$; hence for any complex variety $ X $, we obtain a filtration on the complex oriented (co)homology of $ X(\CC) $. 

As a sample application, we explain how to use \Cref{theorem:introduction_complex_orientable_weight_filtration} to define virtual Euler characteristics with coefficients in Morava $ \Kup $-theories.
This description does not rely on Bittner's presentation of the Grothendieck ring of varieties \cite{bittner2004universal}, and is adaptable to more general base fields.
See \cref{subsec:virtual_Euler_characteristics}.


\subsection{A new description of motivic spectra}
\label{subsection:introduction:new_description_of_motivic_spectra}

Our proof of \cref{theorem:introduction_complex_orientable_weight_filtration} is based on the following description of the stable motivic category away from the characteristic.
Our description is inspired by the work of Bachmann--Kong--Wang--Xu on the \textit{Chow--Novikov \tstructure} on motivic spectra \cite{bachmann2022chow}. 

Let $ k $ be a field of exponential characteristic $ e $. 
We say that a motivic spectrum $ S \in \SH(k)[\einv] $ over $ k $ is \emph{perfect pure} if $ S $ belongs to the smallest subcategory 
\begin{equation*}
	\Pure(k) \subseteq \SH(k)[\einv] 
\end{equation*}
generated under extensions and retracts by motivic Thom spectra $ \Th(\eta) $, where $ \eta \in \Kup_{0}(X)$ and $ X $ is a smooth proper $ k $-variety. 
An \emph{additive sheaf} $ \Fcal \colon \Pure(k)^{\op} \to \spectra$ is a functor that sends cofiber sequences of perfect pure motivic spectra to fiber sequences of spectra; we denote the \category of additive sheaves of spectra on $ \Pure(k) $ by $\ShSigma(\Pure(k); \spectra)$%
\footnote{In \cref{section:motivic_spectra_as_sheaves_on_pure_motives}, we show that a spectral presheaf $\Fcal \colon \Pure(k)^{\op} \to \spectra$ sends preserves cofiber sequences if and only if it is additive and a sheaf with respect to a certain natural Grothendieck topology on $\Pure(k)$.
This justifies our terminology.}.

\begin{theorem}[{\ref{theorem:shk_is_additive_sheaves_on_perfect_pure_motives}}]
	\label{theorem:introduction_description_of_the_motivic_category_of_sheaves}
	Let $ k $ be a field of exponential characteristic $ e $.
	The spectral Yoneda embedding $S \mapsto \map_{\SH(k)[\einv]}(-, S)$ defines an equivalence of \categories
	\begin{equation*}
		\SH(k)[\einv] \equivalence \ShSigma(\Pure(k); \spectra) \period
	\end{equation*}
\end{theorem}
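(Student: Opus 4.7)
My plan is to combine a geometric generation statement with formal $\infty$-categorical reconstruction. The spectral Yoneda functor $y \colon S \mapsto \map_{\SH(k)[\einv]}(-, S)|_{\Pure(k)}$ automatically lands in $\ShSigma(\Pure(k); \spectra)$, because representable spectrum-valued functors turn cofiber sequences into fiber sequences. Moreover, every object of $\Pure(k)$ is compact in $\SH(k)[\einv]$: by motivic Atiyah duality, the Thom spectrum $\Th(\eta)$ of a virtual bundle $\eta \in \Kup_{0}(Y)$ on a smooth proper $Y$ is dualizable, hence compact, and compact objects are closed under extensions and retracts; in particular $y$ preserves filtered colimits.

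The geometric crux is to show that $\Pure(k)$ generates $\SH(k)[\einv]$ under colimits---equivalently, that the thick subcategory it generates inside the compact objects $\SH(k)[\einv]^{\omega}$ is all of $\SH(k)[\einv]^{\omega}$. Compact generators of $\SH(k)[\einv]$ are the objects $(\PP^{1})^{\otimes n} \otimes \Sigma_{+}^{\infty} X$ for $X \in \Sm_{k}$ and $n \in \ZZ$; since $\PP^{1}$-twists are themselves Thom spectra of trivial bundles, the task reduces to placing each $\Sigma_{+}^{\infty} X$ in the thick closure of $\Pure(k)$. I would induct on $\dim X$: by resolution of singularities in characteristic zero, or by Gabber--de Jong alterations of degree prime to $e$ in positive characteristic, choose a smooth proper compactification $\Xbar \supseteq X$ with strict normal crossings boundary $D$, and iteratively apply the Morel--Voevodsky purity cofiber sequence $\Sigma_{+}^{\infty}(U \setminus Z) \to \Sigma_{+}^{\infty} U \to \Th(N_{Z/U})$ to smooth closed strata $Z$ of $D$; this expresses $\Sigma_{+}^{\infty} X$ as an iterated extension of Thom spectra on smooth proper varieties, hence as an object of the thick closure of $\Pure(k)$. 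In positive characteristic, descending from the alteration back to $X$ requires a trace/transfer argument, forcing the inversion of $e$. This step is essentially the content of the work of Bondarko and Kelly establishing the Chow weight structure on $\SH(k)[\einv]$.

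The final step is formal reconstruction: $y$ has a colimit-preserving left adjoint $L \colon \ShSigma(\Pure(k); \spectra) \to \SH(k)[\einv]$, obtained from the universal property of $\ShSigma(\Pure(k); \spectra)$ as the free stable presentable $\infty$-category on $\Pure(k)$ subject to the constraint that cofiber sequences of perfect pure motivic spectra go to fiber sequences. Essential surjectivity of $L$ is exactly the generation statement above. Full faithfulness reduces, via compact generation of both sides, to checking $\map_{\ShSigma(\Pure(k); \spectra)}(y(P), y(Q)) \simeq \map_{\SH(k)[\einv]}(P, Q)$ for $P, Q \in \Pure(k)$, which is the spectral Yoneda lemma together with compactness of $P$. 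The main obstacle is the geometric step---extracting a thick-closure statement from resolution or from Gabber's alterations, and handling the trace argument in positive characteristic; the categorical reconstruction is comparatively routine once one isolates the correct universal property of $\ShSigma$, which at bottom amounts to the existence of a bounded Chow-type weight structure on $\SH(k)[\einv]^{\omega}$ with heart $\Pure(k)$.
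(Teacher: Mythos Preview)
Your outline correctly identifies the ingredients, but misplaces the weight of the argument. The generation step---that $\Pure(k)$ generates $\SH(k)[\einv]$ under colimits---is correct and is essentially cited as known in the paper (via \cite[Remark 2.19]{bachmann2022chow} and the Elmanto--Khan/Levine--Yang--Zhao results on alterations). It is not the ``geometric crux''; the paper treats it in one line.

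The genuine gap is your full-faithfulness step. You write that it ``reduces\ldots to checking $\map_{\ShSigma}(y(P), y(Q)) \simeq \map_{\SH}(P, Q)$ for $P, Q \in \Pure(k)$, which is the spectral Yoneda lemma''. But $\Pure(k)$ is only an \emph{additive} $\infty$-category, not a stable one. In $\ShSigma(\Pure(k);\Sp)$ the compact generator attached to $P$ is the sheafification of the \emph{connective cover}
\[
L\bigl(\tau_{\geq 0}\map_{\SH}(-,P)\bigr),
\]
not the full mapping-spectrum presheaf $y(P)=\map_{\SH}(-,P)$. The Yoneda lemma computes maps out of the former, not the latter. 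For your argument to go through you must identify these two objects, i.e.\ show that the presheaf $\map_{\SH}(-,P)$ is already connective \emph{as a sheaf} for the pure-epimorphism topology. This is false on the nose (mapping spectra between perfect pures do have negative homotopy groups in general), but it holds after sheafification: every class in $\pi_{m}\map_{\SH}(B,A)$ with $m<0$ dies on a pure-epimorphism cover $B'\to B$. This is exactly the key lemma of the paper, and its proof uses the vanishing $\MGL_{m,0}(B^{\vee}\otimes A)=0$ for $m<0$ together with the presentation of $\MGL$ as a filtered colimit of duals of perfect pures. Without this step the adjunction argument does not close.

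Your final remark---that the result ``amounts to the existence of a bounded Chow-type weight structure on $\SH(k)[\einv]^{\omega}$ with heart $\Pure(k)$''---is accurate, but this weight structure is not available off the shelf on $\SH$ itself (only on $\MGL$-modules, via Bondarko). Its existence, in particular the weight-orthogonality $\pi_{<0}\map(P,Q)=0$ in the relevant sense, is essentially equivalent to what is being proved, and establishing it again requires the $\MGL$-based local-vanishing argument above.
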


\begin{remark}[inverting $ e $]
	As usual, the reason \Cref{theorem:introduction_description_of_the_motivic_category_of_sheaves} requires inverting the exponential characteristic $ e $ ultimately relies on the fact that strong resolution of singularities is not known over general base fields; instead, we use Gabber's $ \ell' $-alteration theorem.
	Our proofs are written in such a way that if one assumes strong resolution of singularities over $ k $, then the refinement of \Cref{theorem:introduction_description_of_the_motivic_category_of_sheaves} without $ e $ inverted holds.
\end{remark}

By construction, the equivalence of \cref{theorem:introduction_description_of_the_motivic_category_of_sheaves} is compatible with the \tstructure recently introduced by Bachmann--Kong--Wang--Xu \cite{bachmann2022chow}.
More precisely, the Chow--Novikov \tstructure on $ \SH(k)[\einv] $ is identified with the canonical \tstructure on additive sheaves induced by the standard \tstructure on spectra. 

Let $ \MGL \in \SH(k) $ denote the motivic spectrum representing algebraic cobordism. 
If we replace $ \SH(k) $ with the \category of $\MGL[\einv]$-modules, \cref{theorem:introduction_description_of_the_motivic_category_of_sheaves} implies that there is an equivalence of \categories 
\begin{equation}
	\label{equation:mgl_modules}
	\Mod_{\MGL[\einv]}(\SH(k)) \simeq \PSigma(\Pure_\MGL(k); \spectra)[\einv]
\end{equation}
with additive spectral \emph{presheaves}. 
Here, 
\begin{equation*}
	\Pure_{\MGL}(k) \subseteq \Mod_{\MGL}(\SH(k))
\end{equation*}
is the subcategory of modules of the form $\MGL \otimes \Sigma_{+}^{\infty} X$, where $ X $ is smooth and proper. 
As explained in the work of Elmanto--Sosnilo \cite[\S2.2.11]{elmanto2022nilpotent}, this equivalence is also a consequence of the existence of Bondarko's weight structure on $ \MGL $-modules \cite{bondarko2018constructing}. 
Note that if we replace $ \MGL $ with the motivic cohomology spectrum $ \MZZ $, the equivalence \eqref{equation:mgl_modules} can be thought of as a homotopy-coherent refinement of the weight homology construction of Kelly--Saito \cite[Theorem 2.3]{kelly2017weight}. 

As an immediate consequence of \cref{theorem:introduction_description_of_the_motivic_category_of_sheaves}, we deduce the following new universal property of $ \SH(k)[\einv] $.

\begin{corollary}\label{theorem:introduction_universal_property_of_sh_in_terms_of_perfect_pures}
	Let $ k $ be a field of exponential characteristic $ e $ and let $ \Ccat $ be a cocomplete stable \category.
	Then restriction along the inclusion defines an equivalence of \categories
	\begin{equation*}
		\Fun^{\colim}(\SH(k)[\einv],\Ccal) \to \Fun^{\cofib}(\Pure(k),\Ccal)
	\end{equation*}
	between colimit-preserving functors $ \SH(k)[\einv] \to \Ccal $ and functors $ \Pure(k) \to \Ccal $ that preserve cofiber sequences.
\end{corollary}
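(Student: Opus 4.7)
The plan is to deduce this directly from \cref{theorem:introduction_description_of_the_motivic_category_of_sheaves}, which identifies $\SH(k)[\einv]$ with the \category $\ShSigma(\Pure(k); \spectra)$ of additive sheaves of spectra via the spectral Yoneda embedding. Under this identification, the inclusion $\Pure(k) \inclusion \SH(k)[\einv]$ corresponds to the Yoneda functor, so it suffices to establish the universal property
\[
	\Fun^{\colim}(\ShSigma(\Pure(k); \spectra), \Ccal) \equivalence \Fun^{\cofib}(\Pure(k), \Ccal)
\]
for any cocomplete stable \category $\Ccal$.

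This is a formal consequence of two standard facts. First, the spectral presheaf \category $\PSh(\Pure(k); \spectra) \colonequals \Fun(\Pure(k)^{\op}, \spectra)$ is the free cocompletion of $\Pure(k)$ under small colimits in the stable setting, so restriction along the Yoneda embedding $y$ yields an equivalence $\Fun^{\colim}(\PSh(\Pure(k); \spectra), \Ccal) \equivalence \Fun(\Pure(k), \Ccal)$. Second, by definition $\ShSigma(\Pure(k); \spectra)$ is the Bousfield localization of $\PSh(\Pure(k); \spectra)$ at the class of maps $\cofib(y(A) \to y(B)) \to y(C)$, one for each distinguished cofiber sequence $A \to B \to C$ in $\Pure(k)$, since inverting precisely these maps forces a presheaf $\Fcal$ to satisfy $\Fcal(C) \simeq \fib(\Fcal(B) \to \Fcal(A))$. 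A colimit-preserving functor $\Phi \colon \PSh(\Pure(k); \spectra) \to \Ccal$ factors through this localization if and only if it inverts these maps; under the Yoneda equivalence, this translates exactly to the requirement that $F \colonequals \Phi \circ y$ send each distinguished cofiber sequence $A \to B \to C$ in $\Pure(k)$ to a cofiber sequence $F(A) \to F(B) \to F(C)$ in $\Ccal$. Combining the two equivalences and invoking \cref{theorem:introduction_description_of_the_motivic_category_of_sheaves} yields the corollary.

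No serious obstacle stands in the way, as all the substantive content has been absorbed into \cref{theorem:introduction_description_of_the_motivic_category_of_sheaves}. The only minor point to verify is that the free cocompletion statement, which is most often written for presentable targets, applies when $\Ccal$ is merely assumed cocomplete and stable; this follows because $\Pure(k)$ is essentially small, so the left Kan extension of any functor $\Pure(k) \to \Ccal$ along $y$ exists unconditionally in $\Ccal$, and an analogous argument handles the localization step.
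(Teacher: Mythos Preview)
Your proposal is correct and follows the same route as the paper. The paper treats this corollary as an immediate consequence of \cref{theorem:introduction_description_of_the_motivic_category_of_sheaves} (stating it without proof both in the introduction and as \cref{cor:functors_out_of_SH_in_terms_of_Pure}), and your argument is precisely the standard unwinding of that immediacy via the universal property of spectral presheaves and the characterization of the sheaf localization.
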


\noindent The utility of \cref{theorem:introduction_universal_property_of_sh_in_terms_of_perfect_pures} comes down to the fact that cofiber sequences
\begin{equation*}
	\begin{tikzcd}[sep=1.5em]
		A \arrow[r] & B \arrow[r] & C \arrow[r, "\partial"] & \Sigma A
	\end{tikzcd}
\end{equation*}
in $\Pure(k)$ are easier to control than cofiber sequences of arbitrary motivic spectra. 
Indeed, since the $ \MGL[\einv] $-homology of a smooth proper $ k $-scheme vanishes in negative Chow degree \cite[Proposition 3.6(2)]{bachmann2022chow}, the boundary map 
\begin{equation*}
    \partial \colon (\MGL \otimes C)[\einv] \to (\MGL \otimes \Sigma A)[\einv]
\end{equation*}
is necessarily zero; see \Cref{proposition:pure_epimorphisms_detected_by_mgl}.
This fact is essentially equivalent to the existence of Bondarko's weight structure on $ \MGL[\einv] $-modules. 
It follows that any additive functor which preserves $\MGL[\einv]$-split cofiber sequences also preserves cofiber sequences of perfect pure motives. 
This implies \cref{theorem:introduction_complex_orientable_weight_filtration}: since any complex orientable $A \in \CAlg(\spectra)$ is module over $ \Be(\MGL) \simeq \MU$ in the homotopy category of spectra and Betti realization is symmetric monoidal, the functor
\begin{equation*}
    S \mapsto \tau_{\geq *} \Be(S;A)
\end{equation*}
preserves $ \MGL $-split cofiber sequences. 


\subsection{Filtered étale realization}

Since our construction of the filtered Betti realization is based on properties of the \category of motivic spectra itself, rather than the target of a given realization, it also allows us to prove the existence of weight filtrations in other contexts.
For example, let $ k $ be a field and let $ \ell \neq \characteristic(k) $ be a prime.
Write 
\begin{equation*}
	\Re_{\ell} \colon \SH(k) \to \Shethyp(\Et_k;\Sp)\ellcomp \comma
\end{equation*}
for the \textit{$ \ell $-adic étale realization} functor valued in  hypercomplete sheaves of $ \ell $-complete spectra on the small étale site of $ k $; see \cref{subsec:etale_realization}.
The target can be thought of as the \category of $ \ell $-complete spectra equipped with a continuous action of the absolute Galois group $ \Gal(\kbar/k) $. 
We are able to equip the étale realization of any motivic spectrum over $ k $ with a weight filtration:

\begin{theorem}[\ref{proposition:existence_of_the_a_linear_etale_realization}]
	\label{theorem:introduction_cohomological_etale_realization}
	Let $ k $ be a field of exponential characteristic $ e $ and let $ \ell \neq e $ be a prime.
	Let $A \in \CAlg(\Shethyp(\Et_k;\Sp)\ellcomp)$ be complex orientable in the sense that there exists a morphism $ \Re_{\ell}(\MGL) \to A $ of algebras in the homotopy category.
	There exists a unique colimit-preserving lax symmetric monoidal functor
	\begin{equation*}
		\WRe_{\ell}(- ; A) \colon \SH(k)[\einv] \to \Fil(\Shethyp(\Et_k;\Sp)\ellcomp)
	\end{equation*}
	valued in filtered hypersheaves such that for any $ S \in \Pure(k) $, we have
	\begin{equation*}
		\WRe_{\ell}(S; A) \simeq \tau_{\geq *} (\Re_{\ell}(S;A)) \period
	\end{equation*}
\end{theorem}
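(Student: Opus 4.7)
The plan is to mimic the proof of \Cref{theorem:introduction_complex_orientable_weight_filtration}, replacing Betti realization with $\ell$-adic étale realization throughout, and to reduce everything to the universal property \Cref{theorem:introduction_universal_property_of_sh_in_terms_of_perfect_pures} (or rather its symmetric monoidal refinement). First, I would record the following ingredients: the étale realization $\Re_{\ell}$ is constructed as a symmetric monoidal left adjoint, so it descends to a symmetric monoidal functor $\SH(k)[\einv] \to \Shethyp(\Et_k;\Sp)\ellcomp$; the Postnikov functor $\tau_{\geq *}$ on the target, defined with respect to the induced $\tup$-structure on hypercomplete $\ell$-complete sheaves of spectra, is lax symmetric monoidal and its colimit recovers the identity. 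Taking the composite
\[
F \colonequals \tau_{\geq *}\bigl(\Re_{\ell}(-) \otimes A\bigr) \colon \SH(k)[\einv] \longrightarrow \Fil\bigl(\Shethyp(\Et_k;\Sp)\ellcomp\bigr)
\]
then produces a lax symmetric monoidal functor, which on restriction to $\Pure(k)$ is manifestly the formula specified in the statement.

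The key step is to show that the restriction of $F$ to $\Pure(k)$ preserves cofiber sequences; once this is established, the symmetric monoidal refinement of the universal property in \Cref{theorem:introduction_universal_property_of_sh_in_terms_of_perfect_pures} guarantees a unique extension to a colimit-preserving lax symmetric monoidal functor on all of $\SH(k)[\einv]$, and by construction this extension agrees with the composite $F$ (so in particular $F$ is itself colimit-preserving on $\SH(k)[\einv]$). To verify that $F|_{\Pure(k)}$ preserves cofiber sequences, I would argue as in the Betti case: any cofiber sequence of perfect pure motivic spectra is split after smashing with $\MGL[\einv]$, by the vanishing of negative Chow degrees cited just after \Cref{theorem:introduction_universal_property_of_sh_in_terms_of_perfect_pures}. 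Since $\Re_{\ell}$ is symmetric monoidal and $A$ is complex orientable in the sense that it is a module over $\Re_{\ell}(\MGL)$ in the homotopy category, applying $\Re_{\ell}(-) \otimes A$ turns any $\MGL[\einv]$-split cofiber sequence of motivic spectra into a cofiber sequence that splits in the homotopy category of $\Shethyp(\Et_k;\Sp)\ellcomp$. Split cofiber sequences are preserved by any additive functor, and in particular by the Postnikov filtration $\tau_{\geq *}$; this gives the desired preservation.

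The uniqueness part is automatic from the symmetric monoidal version of \Cref{theorem:introduction_universal_property_of_sh_in_terms_of_perfect_pures}, since any colimit-preserving functor on $\SH(k)[\einv]$ is determined by its restriction to $\Pure(k)$. The main obstacle I expect is largely cosmetic rather than substantive: one must be careful that the Postnikov filtration on the target $\Shethyp(\Et_k;\Sp)\ellcomp$ interacts well with the $\ell$-complete and hypercomplete structure, in particular that it upgrades to a lax symmetric monoidal functor to $\Fil\bigl(\Shethyp(\Et_k;\Sp)\ellcomp\bigr)$ with compatible colimit. The only place the exponential characteristic enters is through the equivalence of \Cref{theorem:introduction_description_of_the_motivic_category_of_sheaves}, which is the reason the statement is formulated on $\SH(k)[\einv]$; all the rest of the argument is purely formal manipulation of symmetric monoidal functors and universal properties.
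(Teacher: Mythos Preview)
Your approach is essentially identical to the paper's: both reduce to showing that $\tau_{\geq *}\circ\Re_{\ell}(-;A)$ preserves cofiber sequences of perfect pures, argue that such sequences become split after applying $\Re_{\ell}(-;A)$ because $A$ receives a map from $\Re_{\ell}(\MGL)$ in the homotopy category (the étale analogue of \Cref{lemma:MU_zero_maps_are_zero_on_any_complex_orientable_ring}), and then invoke additivity of $\tau_{\geq *}$ together with the universal property of $\SH(k)[\einv]$.

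One slip worth correcting: you write that the extension ``agrees with the composite $F$ (so in particular $F$ is itself colimit-preserving on $\SH(k)[\einv]$).'' This is false. The functor $F = \tau_{\geq *}(\Re_{\ell}(-)\otimes A)$ is \emph{not} colimit-preserving, since $\tau_{\geq *}$ is not exact; the extension $\WRe_{\ell}(-;A)$ agrees with $F$ only on $\Pure(k)$, and is genuinely different on general motivic spectra (e.g.\ on a suspension of a perfect pure). This does not affect your construction, which is correct, but you should not conflate the two functors.
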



\subsection{Descent and the Gillet--Soulé filtration}

Let $ p \colon X \to \Spec(\CC) $ be a complex variety.
Then $ X $ determines a motivic spectrum
\begin{equation*}
	\motive(X) \colonequals p_{!}(\Unit_{X}) \in \SH(\CC)
\end{equation*}
that encodes the compactly supported cohomology of $ X $; see \cref{subsection:motives_of_varieties}.
In \cref{corollary:motives_of_varieties_are_dualizable_away_from_the_characteristic}, we show that this motivic spectrum is dualizable.
Thus, if $ A $ is complex orientable, then by applying filtered Betti realization to $ \Mc(X) $ and its dual $ \Mc(X)^{\vee} $, we obtain filtered spectra
\begin{equation*}
	\WBe(\motive(X); A) \andeq \WBe(\motive(X)^{\vee}; A) \period
\end{equation*}
These filtered spectra provide filtrations on the compactly supported $ A $-cohomology and Borel--Moore $ A $-homology of $ X $, respectively. 
Analogously, applying the filtered étale realization of \cref{theorem:introduction_cohomological_etale_realization} we obtain filtrations on $ \ell $-adic étale (co)homology over an arbitrary field. 

Since the weight filtrations considered in this paper are defined using a somewhat abstract characterization of the stable motivic category, it is natural to ask for an explicit way to calculate these filtrations only using varieties. 
In both the works of Deligne \cites{deligne1970theorie}{deligne1971theorie}{deligne1974theorie} and Gillet--Soulé \cite{MR1409056}, the weight filtration is obtained by repeatedly invoking resolution of singularities to resolve the starting variety by smooth projective varieties.
We show that the same method can be used in our context. 

Since we are also interested in the case of étale cohomology over fields of positive characteristic (where resolution of singularities is not known) we work with Kelly's \textit{\ldhtopology} \cite{kelly2017voevodsky}. 
Recall that the \ldhtopology is generated by the \cdhtopology and finite flat and surjective maps of degree prime to $ \ell $; see \cref{subsec:background_on_the_cdh-topology_and_ldh_topology} for a brief review.
By Gabber's $ \ell' $-alteration theorem \cite[Exposé IX, Théorème 1.1]{MR3309086}, for any field $ k $ and prime $ \ell \neq \characteristic(k) $, every $ k $-variety admits an \ldhhypercover by regular $ k $-varieties. 
Also note that since any \cdhcover is an \ldhcover, so the latter notion is strictly more general than classical resolution of singularities. 

\begin{theorem}[{\ref{theorem:hypercovers_of_varieties_are_colimit_diagrams_in_shk}}]
	\label{theorem:introduction_ldh_descent_for_dual_motive}
	Let $ k $ be a field and $ \ell \neq \characteristic(k) $ a prime.
	If $X_{\bullet} \to X$ is an \ldhhypercover of $ k $-schemes, then the canonical map 
	\begin{equation*}
		\varinjlim_{\Deltaop} \motive(X_{\bullet} )^{\vee}_{(\ell)} \to \motive(X)^{\vee}_{(\ell)} \period
	\end{equation*}
	is an $ \MGL $-local equivalence; that is, it becomes an equivalence after tensoring with $ \MGL $. 
	In particular, it is $ \infty $-connective with respect to the Chow--Novikov \tstructure. 
\end{theorem}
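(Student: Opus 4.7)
The plan is to reformulate the desired $\MGL$-local equivalence as an $\ell$dh-hyperdescent statement for a suitable $\MGL$-cohomology theory, and then to establish that hyperdescent using Kelly's decomposition of the $\ell$dh-topology.

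First, I would apply $\MGL \tensor (-)$ to the candidate equivalence and use the description of $\MGL[\einv]$-modules as additive spectral presheaves on $\Pure_{\MGL}(k)$ (equation~(1.2.3) of the introduction) to detect equivalences by mapping out of objects of the form $\MGL \tensor \Th(\eta)$ with $Y$ smooth proper and $\eta \in \Kup_{0}(Y)$. Combined with the dualizability of each $\motive(X_{i})_{(\ell)}$ (\cref{corollary:motives_of_varieties_are_dualizable_away_from_the_characteristic}), the Thom isomorphism for $\MGL$, and the Künneth isomorphism against the dualizable object $\Sigma^{\infty}_{+}Y$, this reduces the problem to the following claim: for every smooth proper $Y$, the functor $X \mapsto \map_{\SH(k)}(\motive(X) \tensor \Sigma^{\infty}_{+}Y, \MGL)_{(\ell)}$ satisfies $\ell$dh-hyperdescent along $X_{\bullet} \to X$.

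Next, I would prove this hyperdescent. By Kelly's theorem, the $\ell$dh-topology is generated by the cdh-topology together with finite flat surjective maps of degree prime to $\ell$. Cdh-hyperdescent for any Borel--Moore theory representable in $\SH(k)$ follows from Nisnevich hyperdescent (which holds in $\SH(k)$) combined with the blow-up cofiber sequence for $\motive$. For descent along a finite flat surjective map $f \colon Y' \to Y$ of degree $n$ coprime to $\ell$, one uses the transfer map (arising from orientability of $\MGL$) satisfying $f_{!} \circ f^{*} = n \cdot \id$; since $n$ is invertible in $\MGL_{(\ell)}$, the Čech diagram of $f$ is levelwise split, and hyperdescent follows from a standard convergence argument.

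Finally, the assertion that the map is $\infty$-connective in the Chow--Novikov $t$-structure is immediate from the first part: by the Bachmann--Kong--Wang--Xu characterization of Chow--Novikov connectivity via $\MGL$-homology, any $\MGL$-local equivalence is automatically $\infty$-connective. I expect the main obstacle to be the passage from ordinary (Čech) descent to \emph{hyper}descent, which requires controlling convergence of the associated Chow--Novikov spectral sequence---this can be handled either by appealing to Bondarko's weight structure on $\MGL[\einv]$-modules, or by exploiting that $\MGL_{(\ell)}$-cohomology of smooth proper varieties is bounded in Chow degree so that the spectral sequence associated to the hypercover is strongly convergent.
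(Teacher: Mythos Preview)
Your reduction step essentially matches the paper's Lemma~5.2.3: detecting the $\MGL$-local equivalence via the presheaf description of $\MGL$-modules, using the Thom isomorphism and K\"unneth against $\Sigma^{\infty}_{+}Y$, one reduces to showing that a certain Borel--Moore $\MGL$-homology functor takes \ldhhypercovers to colimit diagrams. So far so good.

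The divergence, and the gap, is in how you propose to prove the hyperdescent. The paper does \emph{not} argue directly via ``cdh + transfers''. Instead it proceeds as follows: working one homotopical degree $m$ at a time, the paper proves a connectivity estimate (Lemma~5.2.5) showing that for any connective $\MGL$-module $E$ and \emph{any} variety $X$ (not just smooth proper), $E^{\BM}_{p,q}(X)=0$ for $p<q$. This allows one to replace $\MGL$ by a finite slice truncation $\c_{r}\MGL$ without changing the map in degrees $\leq m$ (Lemma~5.2.7). Since Spitzweck identifies the slices of $\MGL$ with shifts of $\MZZ$, the question reduces to \ldhhyperdescent for $\ell$-local motivic cohomology, which is exactly Geisser's theorem (generalized by Kelly).

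Your proposed fixes for convergence do not substitute for this. Boundedness of $\MGL$-cohomology of smooth proper varieties in Chow degree is irrelevant: the terms $X_{i}$ of an arbitrary \ldhhypercover are neither smooth nor proper, and their dimensions are unbounded in $i$, so the hypercover spectral sequence has no a~priori vanishing line. Bondarko's weight structure gives each $\MGL^{\BM}_{X_{i}}$ a finite weight filtration, but the lengths are not uniform in $i$, so this does not force convergence either. What is needed is a \emph{vertical} vanishing statement valid for all varieties, which is precisely what Lemma~5.2.5 supplies (via reduction to the smooth projective case through \Cref{lemma:subcategory_of_motives_containing_smooth_projectives_has_all_motives} and the Thom isomorphism).

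There is a second, smaller gap: establishing cdh-hyperdescent and fps$\ell'$-descent separately does not formally yield \ldh\emph{hyper}descent, since the \ldhtopology is generated by both classes and hypercovers can interleave them. Geisser's and Kelly's theorems are stated directly for the \ldhtopology and for presheaves with transfers; the paper simply invokes them after reducing to $\MZZ$.
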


As our filtered realization functors have coefficients in a complex oriented homology theory, they invert $ \MGL $-local maps.
Let us now explain how \cref{theorem:introduction_ldh_descent_for_dual_motive} gives an effective way of calculating the filtration on Borel--Moore homology.
To treat both the Betti and étale cases uniformly, for a variety $ X $ and $ A \in \CAlg(\Sp) $ complex orientable, we write 
\begin{equation*}
		\Crm_{*}^{\BM}(X; A) \colonequals  
		\begin{cases}
			\Be(\motive(X)^{\vee}; A) & \text{(Betti)} \\
		  	\Re_{\ell}(\motive(X)_{(\ell)}^{\vee}; A) & \text{(étale)} \period
		\end{cases}
\end{equation*}
Informally, these are the $ A $-linear Borel--Moore ``cochains'', although note that in the étale case it is a hypersheaf of spectra on the étale site of $ k $ rather than a spectrum itself. 
Using \cref{theorem:introduction_complex_orientable_weight_filtration} and \cref{theorem:introduction_cohomological_etale_realization} these objects inherit canonical filtrations. 

\begin{theorem}[{\ref{theorem:weights_on_bm_homology_of_proper_variety_calculated_through_an_ldh_hypercover}}]
	\label{theorem:introduction_weights_on_bm_homology_of_proper_variety_calculated_through_an_ldh_hypercover}
	Let $ k $ be a field and let $ \ell \neq \characteristic(k) $ be a prime.
	Let $ X $ be a proper $ k $-scheme and let $X_{\bullet} \to X$ be an \ldhhypercover such that for each $i \geq 0$, the scheme $X_{i}$ is smooth and projective. 
	Then for any $ \ell $-local $ A $ we have 
	\begin{equation}
		\label{equation:calculating_weights_of_borel_moore_cochains}
		\Wup_{*} \Crm_{*}^{\BM}(X; A) \simeq \colim_{[i] \in \Deltaop} \tau_{\geq *} \Crm_{*}^{\BM}(X_{i}; A)
	\end{equation}
	where the colimit is calculated in filtered $\tau_{\geq *}A$-modules. 
	If $X_{\bullet} \to X$ is a \cdhcover, then \eqref{equation:calculating_weights_of_borel_moore_cochains} holds for any $ A $ in which the exponential characteristic of $ k $ is invertible. 
\end{theorem}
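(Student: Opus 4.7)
The plan is to apply the filtered realization from \Cref{theorem:introduction_complex_orientable_weight_filtration} (or \Cref{theorem:introduction_cohomological_etale_realization} in the étale case) to the $\MGL$-local equivalence
\begin{equation*}
	\colim_{[i] \in \Deltaop} \motive(X_{\bullet})^{\vee}_{(\ell)} \to \motive(X)^{\vee}_{(\ell)}
\end{equation*}
supplied by \Cref{theorem:introduction_ldh_descent_for_dual_motive}, and then identify the values on each simplicial degree using the characterization of the filtered realization on perfect pure motives.

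For the identification step, since each $X_{i}$ is smooth and projective, $\motive(X_{i}) \simeq \Sigma_{+}^{\infty} X_{i}$ is dualizable with dual a twist of the Thom spectrum of the negative tangent bundle; in particular $\motive(X_{i})^{\vee} \in \Pure(k)$. The defining property of the filtered realization then gives a natural equivalence
\begin{equation*}
	\WBe(\motive(X_{i})^{\vee}; A) \simeq \tau_{\geq *}\Be(\motive(X_{i})^{\vee}; A) = \tau_{\geq *}\Crm_{*}^{\BM}(X_{i}; A) \period
\end{equation*}
For the application step, I need to know that $\WBe(-;A)$ inverts $\MGL$-local equivalences when $A$ is complex orientable. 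The cleanest route uses the presheaf description \eqref{equation:mgl_modules}: because $A$ carries the structure of a module over $\Be(\MGL) \simeq \MU$ (respectively $\Re_{\ell}(\MGL)$) in the homotopy category, the additive functor $\motive(X_{i})^{\vee} \mapsto \tau_{\geq *}\Be(\motive(X_{i})^{\vee}; A)$ factors through $\Pure_{\MGL}(k)$, exhibiting $\WBe(-;A)$ as the colimit-preserving extension of an additive functor on $\Pure_{\MGL}(k)$; any such extension tautologically inverts $\MGL$-local maps.

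With these inputs in hand, applying $\WBe(-;A)$ to the equivalence from \Cref{theorem:introduction_ldh_descent_for_dual_motive} and substituting the pointwise values yields the claimed identification
\begin{equation*}
	\Wup_{*}\Crm_{*}^{\BM}(X; A) \simeq \colim_{[i] \in \Deltaop} \tau_{\geq *}\Crm_{*}^{\BM}(X_{i}; A) \comma
\end{equation*}
computed in $\Modpost{A}$, equivalently in filtered $\tau_{\geq *}A$-modules. The cdh case follows by running the same argument with the cdh-hyperdescent variant of \Cref{theorem:introduction_ldh_descent_for_dual_motive}, which holds without $\ell$-localization and thus covers any $A$ in which the exponential characteristic of $k$ is invertible. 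The main obstacle is the step verifying that $\WBe(-;A)$ factors through $\MGL[\einv]$-modules: while morally immediate from the presheaf description of $\MGL$-modules, the $\Be(\MGL)$-module structure on $A$ exists \emph{a priori} only in the homotopy category, and one must bootstrap it into a coherent factorization of the lax symmetric monoidal lift constructed in \Cref{theorem:introduction_complex_orientable_weight_filtration}.
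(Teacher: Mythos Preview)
Your overall shape is right and matches the paper: apply the filtered realization to the colimit diagram of \Cref{theorem:introduction_ldh_descent_for_dual_motive}, then identify each term using that $\motive(X_i)^{\vee}$ is perfect pure when $X_i$ is smooth projective. The one place where you diverge from the paper is the mechanism for ``filtered realization inverts the comparison map,'' and the gap you yourself flag there is genuine.

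You try to show that $\WBe(-;A)$ factors through $\Mod_{\MGL[\einv]}(\SH(k))$ and hence inverts $\MGL$-local equivalences. As you note, the complex orientation $\MU \to A$ (or $\Re_{\ell}(\MGL) \to A$) lives only in the homotopy category, so promoting this to a coherent factorization of the colimit-preserving functor $\WBe(-;A)$ is not free. The paper avoids this entirely. Rather than arguing via a factorization, it uses the second clause of \Cref{theorem:introduction_ldh_descent_for_dual_motive}: the comparison map is $\infty$-connective in the Chow--Novikov \tstructure. Then one checks directly that $\WBe(-;A)$ is right \texact from the Chow--Novikov \tstructure to the diagonal \tstructure on filtered spectra (this is immediate on perfect pures, since $\tau_{\geq *}(-)$ lands in diagonally connective objects, and extends by colimits). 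Since the diagonal \tstructure is left separated, $\infty$-connective objects are sent to zero, and the comparison map is inverted. This argument needs nothing coherent about the orientation; it only uses that cofiber sequences in $\Pure(k)$ become $A$-split after realization, which is exactly what the homotopy-category orientation buys you.
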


Note that the case of cohomology is more involved: although $ \MGL $-locally the motivic spectrum $\motive(X)$ can be written as a totalization of its hypercover, the filtered realization functors need not preserve infinite limits. 
We analyze this situation in more detail in the case of classical integral cohomology of complex varieties, where we prove that the necessary limit can be replaced by a finite one. 
As a consequence, we deduce the comparison result with the Gillet--Soulé filtration introduced in \cite{gillet1996descent}.
Given a complex variety, we write $ \Wup^{\GS}_{*} \Cc^{*}(X(\CC); \ZZ) $ for the Gillet--Soulé weight filtration on the compactly supported integral cochains on $ X(\CC) $.

\begin{theorem}[{\ref{theorem:gillet_soule_filtration_comparison}}]
	\label{theorem:introduction_gillet_soule_filtration_comparison}
	Let $ X $ be a complex variety. 
	Then there exists a natural equivalence
	\begin{equation}
		\label{equation:equivalence_of_filtrations_in_comparison_with_gs_filtration}
		\Wup_{*} \Cc^{*}(X(\CC); \ZZ) \simeq \Wup^{\GS}_{*} \Cc^{*}(X(\CC); \ZZ)
	\end{equation}
	of objects of the filtered derived \category of $\ZZ$. 
	In other words, the filtration on compactly supported integral cochains inherited from the filtered Betti realization coincides with the Gillet--Soulé filtration. 
\end{theorem}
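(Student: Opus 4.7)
The plan is to identify both filtrations as arising from a single finite smooth proper hypercover, and then to compare them termwise. Since resolution of singularities is available over $\CC$, a classical theorem of Guillén--Navarro (the existence of cubical hyperresolutions) furnishes, for any complex variety $X$, a smooth proper hypercover $X_{\bullet} \to X$ with each $X_{n}$ smooth projective and such that $X_{\bullet}$ is $N$-truncated for some $N$ bounded in terms of $\dim X$. Moreover, this is precisely the class of hyperresolutions that Gillet--Soulé use to define $\Wup_{*}^{\GS}\Cc^{*}(X(\CC);\ZZ)$ in \cite{gillet1996descent}: their filtration is by construction the totalization over such a bounded simplicial diagram of the canonical (good) truncation filtrations $\tau_{\geq *}\Cup^{*}(X_{n}(\CC);\ZZ)$.

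The first main step is to reinterpret our side of \eqref{equation:equivalence_of_filtrations_in_comparison_with_gs_filtration} as exactly the same finite totalization. By \cdh-descent applied to $\motive(-)$ (the compactly supported analogue of \Cref{theorem:introduction_ldh_descent_for_dual_motive}, valid over $\CC$ without inverting any characteristic thanks to resolution of singularities), the natural map
\begin{equation*}
    \motive(X) \to \Tot_{[n]\in \Deltaop_{\leq N}} \motive(X_{n})
\end{equation*}
is an $\MGL$-local equivalence, and in particular is inverted by any complex oriented realization. Because the totalization is over a \emph{finite} diagram, the filtered Betti realization $\WBe(-;\ZZ)$ preserves it, even though $\WBe(-;\ZZ)$ is only colimit-preserving in general. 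This is the quantitative refinement alluded to in the introduction, and it is where boundedness of the hyperresolution is essential. Applying \Cref{theorem:introduction_complex_orientable_weight_filtration} termwise gives
\begin{equation*}
    \WBe(\motive(X);\ZZ) \simeq \Tot_{[n]\in\Deltaop_{\leq N}} \WBe(\motive(X_{n});\ZZ) \simeq \Tot_{[n]\in\Deltaop_{\leq N}} \tau_{\geq *}\Cup^{*}(X_{n}(\CC);\ZZ) \comma
\end{equation*}
where the second equivalence uses that each $X_{n}$ is smooth projective, so $\WBe(\motive(X_{n});\ZZ)$ is the Postnikov filtration on $\Be(\motive(X_{n});\ZZ)\simeq \Cup^{*}(X_{n}(\CC);\ZZ)$.

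The final step is to match this formula with Gillet--Soulé's. Their filtration is defined in exactly this form, and its independence of the chosen cubical hyperresolution is one of the main outputs of \cite{gillet1996descent}. One therefore obtains a canonical equivalence \eqref{equation:equivalence_of_filtrations_in_comparison_with_gs_filtration} by choosing any common hyperresolution and comparing the two totalizations; functoriality in $X$ follows because both sides are functorial and the equivalence is natural in the hyperresolution, and any two smooth proper hyperresolutions are dominated by a third.

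The main obstacle, and the heart of the argument, is the reduction to a finite totalization: a priori $\WBe(-;\ZZ)$ only commutes with colimits, so one must exploit boundedness of cubical hyperresolutions over $\CC$ together with \cdh descent for $\motive(-)$ to express $\motive(X)$ as a \emph{finite} limit of motives of smooth projectives. Once this is in place, the termwise comparison via \Cref{theorem:introduction_complex_orientable_weight_filtration} and the direct comparison with the definition in \cite{gillet1996descent} are essentially formal.
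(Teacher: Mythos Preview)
Your strategy has a genuine gap: you conflate two distinct notions of hyperresolution. The Guillén--Navarro theory produces bounded \emph{cubical} hyperresolutions, which give finite diagrams but are not simplicial \cdhhypercovers in the sense required to invoke the paper's descent result (\Cref{theorem:introduction_ldh_descent_for_dual_motive}, or rather its cohomological dual). Conversely, a simplicial \cdhhypercover $X_\bullet \to X$ by smooth projectives---which is what the paper's $\MGL$-local descent theorem applies to, and what Gillet--Soulé actually use in \cite{gillet1996descent}---is essentially never $N$-truncated or $N$-coskeletal at the level of schemes: the matching maps are \cdhcovers produced by repeated resolution, and there is no mechanism that forces this to stabilize. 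So the object ``a smooth proper \cdhhypercover that is $N$-truncated'' on which your whole argument rests does not exist in the form you need, and your appeal to \cdhdescent for the truncated totalization $\Tot_{[n]\in\Delta_{\leq N}}\motive(X_n)$ is not justified by anything in the paper.

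The paper's proof handles this by locating the boundedness at a different level. One takes an arbitrary (infinite) \cdhhypercover, constructs the comparison map by functoriality, and checks that both filtrations are complete, so it suffices to compare associated gradeds. The key step (\Cref{lemma:associated_graded_of_z_linear_betti_is_functorial_in_hzczero_modules}) is that the associated graded of $\WBe(-;\ZZ)$ factors through $\MZZ_{c=0}$-modules, hence through pure Chow motives. At that point one invokes Gillet--Soulé's theorem \cite[p.~137, Theorem 2]{gillet1996descent} that the \emph{weight complex} of Chow motives associated to $X_\bullet$ is homotopy equivalent to a bounded complex. This replaces the infinite totalization by a finite one \emph{after} passing to Chow motives, not before; the exact functor $\gr_*\Be_{c=0}$ then preserves this finite limit. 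In short, the finiteness you want is available, but only at the level of Chow motives, and extracting it requires the completeness-plus-associated-graded argument and the factorization through $\MZZ_{c=0}$-modules that the paper supplies.
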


For a comparison with more geometric flavor, we note that on the case of smooth open varieties, the weight filtration on compactly supported deRham cohomology introduced in the current work agrees with the décalage of Deligne's pole order filtration on logarithmic deRham cohomology. This is shown in \cite{annala2025note}, where the methods are of the current work are also extended to case of $p$-adic cohomology theories. 

\begin{remark}
	In the case of a field of characteristic zero, an alternative way to construct filtrations on complex oriented, compactly supported cohomology appears in the recent work of Kuijper \cite{kuijper2022general}.
	The filtrations constructed in this way also agree with the ones introduced in this paper, see \cref{remark:work_of_kuijper}.
\end{remark}


\subsection{Synthetic Betti realization}\label{intro_subsec:synthetic_Betti_realization}

In the case of the complex Betti realization we now describe how the weight filtration can be lifted to a filtration on the stable homotopy type itself. 
We believe that an analogous construction should yield a similar filtration in the real Betti and étale cases, and we sketch the conjectural picture in \cref{subsection:ideas_on_synthetic_real_and_etale_realizations}.

The monoidal unit $\sphere \in \spectra$ of spectra is not complex orientable.
However, the unit map $\sphere \to \MU$ is faithfully flat and induces a cosimplicial resolution 
\begin{equation*}
	\begin{tikzcd}
	    \sphere \arrow[r] & \MU \arrow[r, shift left=0.75ex] \arrow[r, shift right=0.75ex]  & \MU \tensor \MU  \arrow[r] \arrow[r, shift left=1.5ex] \arrow[r, shift right=1.5ex] &  \cdots  
	\end{tikzcd}
\end{equation*}
through complex orientable ring spectra.
Moreover, by the work of Hahn--Raksit--Wilson on the \textit{even filtration}%
\footnote{To be more precise, the $ \MU $-resolution of the sphere is universal as a resolution of the sphere through commutative ring spectra with even homotopy groups, i.e., an \textit{even} ring spectrum. 
However, any even ring spectrum is complex orientable, and any complex orientable spectrum can be made in an $ \MU $-algebra in the homotopy category, so we blur the distinction here.} 
\cite{hahn2022motivic}, this resolution is essentially universal with respect to this property. 
The limit of the associated diagram 
\begin{equation*}
	\begin{tikzcd}
	    \Modpost{\MU} \arrow[r, shift left=0.75ex] \arrow[r, shift right=0.75ex]  & \Modpost{\MU \tensor \MU}  \arrow[r] \arrow[r, shift left=1.5ex] \arrow[r, shift right=1.5ex] &  \cdots  
	\end{tikzcd}
\end{equation*}
of \categories of filtered modules can be thought of as a natural target of a weight filtration functor.
Even better, up to completion it can be identified with the \category $ \SynMU $ of \textit{$ \MU $-based synthetic spectra} introduced by the second-named author in \cite{pstrkagowski2022synthetic}. 

The \category $ \SynMU $ is best understood as \acategorical deformation encoding chromatic homotopy theory. 
It is a symmetric monoidal stable \category and monoidal unit has a canonical (degree-shifting) endomorphism $ \tau $.
This endomorphism $ \tau $ should be thought of as a formal parameter, and we have equivalences 
\begin{equation*}
	\SynMU^{\tau = 1} \simeq \spectra
\end{equation*}
between the generic fiber and spectra, and
\begin{equation}
	\label{equation:introduction_special_fiber_of_syn_is_indcoh_mfg}
	\SynMU^{\tau = 0} \simeq \IndCoh(\Mfg)
\end{equation}
between the special fiber and $\Ind$-coherent sheaves on the moduli stack of formal groups%
\footnote{In this paper, we mostly work with all (that is, not necessarily even) synthetic spectra, so that the right-hand side of \eqref{equation:introduction_special_fiber_of_syn_is_indcoh_mfg} is given by sheaves on the moduli of formal groups in Dirac geometry of Lars Hesselholt and the second-named author, see \cite[\S 5.2]{diracgeometry2}. 
It is a natural enlargement of $ \Ind $-coherent sheaves on the classical moduli stack where the Lie algebra line bundle $\omega$ has a canonical square root $\omega^{\otimes \nicefrac{1}{2}}$.}. 
There is a canonical fully faithful embedding $\nu \colon \spectra \hookrightarrow \SynMU$ which reduces to the identity of spectra on the generic fiber and to the association
\begin{equation*}
	X \mapsto \MU_{*}(X) \in \IndCoh(\Mfg)^{\heartsuit}
\end{equation*}
on the special fiber. 
For any spectrum $ X $, the $ \tau $-adic filtration on $\nu(X)$ encodes the Adams--Novikov spectral sequence calculating the stable homotopy groups $\pi_{*}(X)$. 

By the work of Gheorghe--Krause--Isaksen--Ricka \cite{MR4432907}, synthetic spectra are equivalent to filtered modules over the sphere spectrum equipped with the filtration
\begin{equation*}
	\fil^*(\Sup^{0}) \colonequals \lim_{[n] \in \DDelta} \tau_{\geq *}(\MU^{\otimes n+1})
\end{equation*}
given by descent along the faithfully flat map $ \Sup^{0} \to \MU $.
This is essentially the filtration on the sphere spectrum known as the \textit{Adams--Novikov filtration}%
\footnote{To be more precise, \cite{MR4432907} describes the subcategory of \emph{even} synthetic spectra as modules in $ \FilSp $ over the double-speed filtration $ \filev^*(\Sup^{0}) \colonequals \lim_{n} \tau_{\geq 2*}(\MU^{\otimes n+1})$.
The filtration $ \filev^*(\Sup^{0}) $ is what is typically refereed to as the Adams--Novikov filtration.
However, one can also describe the whole \category $ \SynMU $ as modules in $ \FilSp $ over $\fil^*(\Sup^{0}) $.
This is analogous to the difference between the even filtration and its half-integer version, see \cite[Remark 2.26]{pstrkagowski2023perfect}.}. 
Thus, the following realizes the promised weight filtration at the level of stable homotopy types:

\begin{theorem}[\ref{theorem:existence_of_complex_synthetic_betti_homology}]
	\label{intro_theorem:existence_of_complex_synthetic_betti_homology}
	There exists a unique lax symmetric monoidal left adjoint 
	\begin{equation*}
		\Besyn \colon \SH(\CC) \to \SynMU
	\end{equation*}
	such that for each $ S \in \Pure(\CC) $, we have
    \begin{equation*}
    	\Besyn(S) \simeq \nu(\Be(S)) \period
    \end{equation*}
\end{theorem}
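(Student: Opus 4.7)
The plan is to apply the universal property of \cref{theorem:introduction_universal_property_of_sh_in_terms_of_perfect_pures}. Since $\CC$ has characteristic zero, the exponential characteristic is $e = 1$ and $\SH(\CC)[\einv] = \SH(\CC)$; the corollary identifies colimit-preserving functors out of $\SH(\CC)$ with functors on $\Pure(\CC)$ that preserve cofiber sequences. Uniqueness of $\Besyn$ is then automatic, and it suffices to exhibit a lax symmetric monoidal $F \colon \Pure(\CC) \to \SynMU$ that agrees with $\nu \of \Be$ on objects and preserves cofiber sequences.

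Take $F$ to be the composite $\nu \of \Be$; this is clearly well-defined on objects. The central technical point, and the direct analogue of the argument driving \cref{theorem:introduction_complex_orientable_weight_filtration}, is that $F$ preserves cofiber sequences. A cofiber sequence $A \to B \to C$ in $\Pure(\CC)$ is automatically $\MGL$-split: the $\MGL$-homology of a smooth proper variety vanishes in negative Chow degree, so after smashing the boundary $C \to \Sigma A$ with $\MGL$ it becomes null (\Cref{proposition:pure_epimorphisms_detected_by_mgl}). Applying the symmetric monoidal functor $\Be$ and using $\Be(\MGL) \simeq \MU$, one sees that $\Be(A) \to \Be(B) \to \Be(C)$ is an $\MU$-split cofiber sequence of spectra. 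The hardest step is then that $\nu \colon \Sp \to \SynMU$ sends $\MU$-split cofiber sequences to cofiber sequences in $\SynMU$: although $\nu$ is not exact in general, it is exact on precisely those cofiber sequences whose boundary vanishes after smashing with $\MU$. This is the defining exactness property of $\nu$, reflecting the description of $\SynMU$ (up to completion) as additive sheaves of spectra on the subcategory of finite spectra with finitely generated projective $\MU$-homology.

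For the lax symmetric monoidal structure, note that $\Pure(\CC)$ is closed under the tensor product of $\SH(\CC)$, since the smash product of Thom spectra of bundles on smooth proper varieties $X$ and $Y$ is a Thom spectrum on $X \times Y$, and the closure under extensions and retracts interacts well with the tensor product in a stable \category. The composite $F = \nu \of \Be$ is therefore lax symmetric monoidal, since $\Be$ is symmetric monoidal and $\nu$ is lax symmetric monoidal. The construction finishes by invoking the symmetric monoidal enhancement of \cref{theorem:introduction_universal_property_of_sh_in_terms_of_perfect_pures}, which follows from identifying the symmetric monoidal structure on $\SH(\CC) \simeq \ShSigma(\Pure(\CC); \Sp)$ with the Day convolution product on additive sheaves. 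This yields the desired lax symmetric monoidal colimit-preserving $\Besyn$, with $\Besyn(S) \simeq \nu(\Be(S))$ for $S \in \Pure(\CC)$ by construction.
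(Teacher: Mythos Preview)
Your proof is correct and follows essentially the same approach as the paper: reduce via the universal property (\Cref{cor:functors_out_of_SH_in_terms_of_Pure}, packaged in the paper as \Cref{theorem:homological_weight_context_has_unique_solution_if_t_is_mgl_exact}) to checking that $\nu \circ \Be$ preserves cofiber sequences on $\Pure(\CC)$, then observe that such cofiber sequences are $\MGL$-split (\Cref{proposition:pure_epimorphisms_detected_by_mgl}), hence their Betti realizations are $\MU$-split, hence preserved by $\nu$ (\cite[Lemma 4.23]{pstrkagowski2022synthetic}). You are slightly more explicit than the paper about the lax symmetric monoidal structure, which is indeed obtained from the symmetric monoidal equivalence of \Cref{theorem:shk_is_additive_sheaves_on_perfect_pure_motives} together with the lax monoidality of $\nu \circ \Be$.
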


The functor $ \Besyn $ is not strongly symmetic monoidal. 
To see this, note that the reduction to the special fiber $\SynMU \to \SynMU^{\tau = 0}$ is strongly symmetric monoidal. 
By construction, when restricted to synthetic spectra of the form $\Besyn(X)$ for $X \in \Pure(k)$, this reduction takes the form 
\begin{equation*}
    X \mapsto \MU_{*}(X(\CC)) \period
\end{equation*}
This functor is only lax symmetric monoidal: since $\MU_{*}$ is not a field, the Künneth map 
\begin{equation*}
    \MU_{*}(U) \tensorlimits_{\MU_{*}} \MU_{*}(V) \to \MU_{*}(U \otimes V)
\end{equation*}
is not generally an isomorphism. 
For the same reason, unless $A_{*}$ is a field, the $ A $-linear weight filtrations of \cref{theorem:introduction_complex_orientable_weight_filtration} are only lax symmetric monoidal.

The functor \cref{intro_theorem:existence_of_complex_synthetic_betti_homology} is weakly universal in the sense that if $ A $ is a complex orientable ring spectrum, there is a \textit{realization functor}
\begin{equation*}
	\nu(A) \tensor_{\nu(\sphere)} (-) \colon \SynMU \to \Modpost{A} \period
\end{equation*}
and a canonical natural transformation 
\begin{equation}
	\label{equation:introduction_comparison_of_synthetic_and_cpx_oriented_weight_filtrations}
	\nu(A) \tensor_{\nu(\sphere)} \Besyn(-) \to \WBe(-;A)
\end{equation}
of functors 
\begin{equation*}
	\SH(k) \to \Modpost{A} \period
\end{equation*}
We say only ``weakly universal'', because, due to the failure of the Künneth formula, the natural transformation \eqref{equation:introduction_comparison_of_synthetic_and_cpx_oriented_weight_filtrations} is \textit{not} generally an equivalence. 

In \Cref{thm:synthetic_realization_refines_filtered_realization_for_Landwebder_exact_rings}, we show that if the map $\Spec(A_{*}) \to \Mfg$ classifying the Quillen formal group is flat, then \eqref{equation:introduction_comparison_of_synthetic_and_cpx_oriented_weight_filtrations} \emph{is} an equivalence.
In particular, this is the case when $A = \QQ$; hence our synthetic weight filtration refines Deligne's rational weight filtration. 
Similarly, for any ring map $A \to B$ between complex orientable algebras in spectra, there is a comparison natural transformation 
\begin{equation*}
	\tau_{\geq *}(B) \tensorlimits_{\tau_{\geq *}(A)} \WBe(-;A) \to \WBe(-;B) \period
\end{equation*}
If $A_{*} \to B_{*}$ is flat, then this map is an equivalence; see \Cref{cor:Be_fil_of_flat_maps}. 

Since the synthetic refinement of the weight filtration provided by \cref{intro_theorem:existence_of_complex_synthetic_betti_homology} in particular encodes the stable homotopy type of the Betti realization, it is a much stronger invariant than the $\ZZ$-linear weight filtration.
As one piece of evidence towards its strength, observe that since the underlying homotopy type of any complex motivic sphere has only even cells, the synthetic weight filtration restricts to a functor 
\begin{equation*}
	\Besyn \colon \SH(\CC)^{\cell} \to \SynMUev
\end{equation*}
from the full subcategory spanned by \textit{cellular} motivic spectra into the full subcategory spanned by the even synthetic spectra.
This restriction was previously constructed by the second-named author in \cite[\S7.5]{pstrkagowski2022synthetic}.
There, it is shown that for any prime $ p $, this restriction becomes an equivalence
\begin{equation}
\label{equation:equivalence_between_cellular_motives_and_synthetic_spectra}
	(\SH(\CC)^{\cell})^{\wedge}_{p} \equivalence (\SynMUev)^{\wedge}_{p}
\end{equation}
after $ p $-completion \cite[Theorem 7.34]{MR4574661}. 
In other words, in the context of $ p $-complete cellular motivic spectra, the synthetic weight filtration is a complete invariant. 


\subsection{Relation to Bondarko's work on weight structures}
\label{intro_subsec:relation_to_Bondarko}

In this short subsection, we describe the relationship between our work and the theory of weight structures. 

In a celebrated series of papers \cites{MR2461902}{MR2746283}{MR2834727}, Bondarko introduced the notion of a \textit{weight structure} on a triangulated category. In a triangulated category $\mathcal{C}$ equipped with a weight structure, any object $X$ can be completed to a diagram 
\begin{equation*}
    \cdots \rightarrow X_{\w \geq 1}  \rightarrow  X_{\w \geq 0} \rightarrow X_{\w \geq -1} \rightarrow \cdots \rightarrow X 
\end{equation*}
satisfying certain axioms. 
This diagram is not generally unique.
However, Bondarko showed that if $H \colon \mathcal{C} \rightarrow \mathcal{A}$ is a homological functor with values in an abelian category, then the induced filtration 
\begin{equation}\label{equation:formula_for_bondarkos_weight_filtration}
    W_{i}(H)(X) \colonequals \mathrm{im}(H(X_{\w \geq i}) \rightarrow H(X)) 
\end{equation}
on $ H(X) $ is functorial and does not depend on any choices \cite[{Proposition 2.1.2}]{MR2746283}. 
Bondarko constructed a weight structure on the triangulated category of geometric Voevodsky motives away from the characteristic.
In later work with Sosnilo, they refined this to a weight structure on the triangulated category of compact $\MGL$-modules \cite{bondarko2018constructing}. 

It follows from the above body of work that any cohomology theory which can can be encoded by a homological functor from compact $\MGL$-modules has a canonical weight filtration, given by the formula \eqref{equation:formula_for_bondarkos_weight_filtration}. 
In particular, this applies to Betti cohomology with respect to a module over $\MU$ or étale cohomology with respect to a module over $\Re_{\ell}(\MGL)$. 
Since most familiar examples of complex orientable cohomology theories are represented by $\MU$-modules, in these cases this yields a filtration on cohomology groups, analogous to  \cref{theorem:introduction_complex_orientable_weight_filtration}. 

However, since an appropriate weight structure on $\SH(k)[\einv]$ itself does not exist,%
\footnote{More precisely, there does not exist a weight structure on the compact objects of $\SH(k)[\einv]$ whose weight heart is $\Pure(k)$. 
To see this, note that if $ \Ccal $ is a stable \category equipped with a weight structure, then the axioms imply that every fiber sequence $ X \to Y \to Z  $ in $ \Ccal $ with $ X $, $ Y $, and $ Z  $ in the weight heart $ \Ccal^{\wheart} $ is split.
However, not every fiber sequence with terms in $ \Pure(k) $ is split.} 
we cannot use weight structures directly to prove our main results. Instead, \cref{theorem:introduction_description_of_the_motivic_category_of_sheaves} and the spectral weight filtrations we produce can be thought of as refinements of Bondarko's work.%
\footnote{More precisely, as we observed in \cref{subsection:introduction:new_description_of_motivic_spectra}, by the work of Elmanto--Sosnilo to equip a compactly generated $\infty$-category with a bounded weight structure on its subcategory of compact objects is essentially equivalent to describing it as additive spectral presheaves on the heart \cite[\S2.2]{MR4504902}. 
Thus, our description of $\SH(k)[\einv]$ as spectral \emph{sheaves} can be thought of as a proof of existence of a structure slightly weaker than that of a weight structure, but which we show is still enough to obtain weight filtrations.} 
From this perspective, of particular novelty of the current work is that: 
\begin{enumerate}
    \item The weight filtration is defined as a homotopy-coherent functor taking values in an appropriate filtered \category, rather than at the level of homotopy categories.
    
    \item The filtration is functorially defined for any complex orientable cohomology theory, without choosing an $\MU$-module structure, and so has a natural action by cohomology operations.
\end{enumerate}
We note that the second point is very important in practice. For a concrete example, the homology of a spectrum with respect to $p$-complete complex $K$-theory is naturally equipped with the action of $\mathbb{Z}_{p}^{\times}$ through Adams operations, and by functoriality this descends to an action on weight filtered $\KU_{p}$-homology as constructed in the current work. However, since the Adams operations are not $\MU$-linear, these operations cannot be recovered using the weight structure on $\MGL$-modules. 

This additional functoriality is also crucial in our construction of synthetic Betti realization, which, as explained in \S \ref{intro_subsec:synthetic_Betti_realization}, is informally obtained by gluing weight filtrations on cohomology with respect to $\MU^{\otimes \bullet}$. 
Since the maps in the Adams--Novikov resolution are not $\MU$-linear, these gluings cannot be done in the setting of $\MU$-modules. 
The resulting invariant is much stronger: it restricts to a fully faithful functor on the subcategory of cellular objects after completion at the prime, see \eqref{equation:equivalence_between_cellular_motives_and_synthetic_spectra}.  

On a related note, if the representing spectrum is a commutative algebra in spectra, our functors are lax symmetric monoidal. 
To obtain lax symmetric monoidality in the context of modules over algebraic cobordism, one would need a to upgrade a given commutative ring spectrum to a commutative $\MU$-algebra, of which very few examples are known \cite{hopkins2018strictly}. 


\subsection*{Linear overview}

For the convenience of the reader, in \cref{sec:recollections_on_motivic_homotopy_theory}, we recall the basics of motivic homotopy theory, Betti realization, and étale realization.
We also prove a useful result that allows one to reduce statements about motives of arbitrary varieties to statements about motives of smooth proper varieties; see \Cref{lemma:subcategory_of_motives_containing_smooth_projectives_has_all_motives}.
In \cref{section:motivic_spectra_as_sheaves_on_pure_motives}, we prove \Cref{theorem:introduction_description_of_the_motivic_category_of_sheaves}.
In \cref{section:weight_filtration_on_complex_orientable_cohomology}, we apply our new description of $ \SH(k)[\einv] $ to construct filtered refinements of Betti and étale realization; this proves \Cref{theorem:introduction_complex_orientable_weight_filtration,theorem:introduction_cohomological_etale_realization}.
In \cref{sec:descent_and_the_Gillet-Soule_filtration}, given a complex variety $ X $, we show that our filtration on the compactly supported integral cohains $ \Cc^{*}(X(\CC);\ZZ) $ agrees with the filtration defined by Gillet and Soulé.
See \Cref{theorem:gillet_soule_filtration_comparison}.
In \cref{sec:synthetic_realizations}, we construct the synthetic Betti realization functor
\begin{equation*}
	\Besyn \colon \SH(\CC) \to \SynMU 
\end{equation*}
of \Cref{intro_theorem:existence_of_complex_synthetic_betti_homology} and compare synthetic Betti realization to filtered Betti realization.
See \Cref{theorem:existence_of_complex_synthetic_betti_homology,thm:synthetic_realization_refines_filtered_realization_for_Landwebder_exact_rings}.
We conclude the paper by giving a conjectural description of a synthetic lift of a general motivic realization functor; see \cref{subsection:ideas_on_synthetic_real_and_etale_realizations}.


\subsection*{Acknowledgements}

We would like to thank Tom Bachmann, Bhargav Bhatt, Elden Elmanto, Shane Kelly, Adeel A. Khan, Hana Jia Kong, Jacob Lurie, Vova Sosnilo, and Mura Yakerson for insightful conversations related to this work.


\section{Recollections on motivic homotopy theory}\label{sec:recollections_on_motivic_homotopy_theory}

In this section, we review some of the basic tools we need from stable motivic homotopy theory. 
Our account is quite brief; for more details we refer the reader to \cites{MR3971240}{MR3570135}[\S2]{MR4150251}.

In \cref{subsec:motivic_spectra_and_the_six_operations}, we recall the basic setup of stable motivic homotopy theory and the six operations.
In \cref{subsection:motives_of_varieties}, we collect some basic facts about compactly supported motives attached to schemes.
In \cref{subsection:betti_realization,subsec:etale_realization}, we recall the basics of Betti realization and étale realization, respectively.


\subsection{Motivic spectra and the six operations}\label{subsec:motivic_spectra_and_the_six_operations} 

Given a scheme $ S $, we write $ \Sm_S $ for the category of smooth $ S $-schemes. Informally, the \category of motivic spectra over $ S $ has the same relationship to $ \Sm_{S} $ as the topologists' \category of spectra has to the category of finite CW-complexes. 

\begin{recollection}
	To each scheme $ S $ we associate the symmetric monoidal \category $ \SH(S) $ of \emph{motivic spectra over $ S $}.
	This \category comes equipped with a symmetric monoidal functor 
	\begin{equation*}
		\Sigma_{+}^{\infty} \colon \Sm_{S} \to \SH(S) \comma 
	\end{equation*}
	where $ \Sm_{S} $ is has symmetric monoidal structive given by the cartesian product.
	This construction has the following properties: 
	\begin{enumerate}
	    \item The \category $\SH(S)$ is stable, presentable, and its tensor product preserves colimits separately in each variable.
	    
	    \item The functor $ \Sigma_{+}^{\infty} \colon \Sm_{S}^{\op} \to \SH(S)^{\op} $ is a sheaf with respect to the Nisnevich topology.
	    
	    \item For each $ X \in \Sm_{S} $, the projection $X \times \AA^{1} \to X$ induces an equivalence $ \Sigma_{+}^{\infty} (X \times \AA^{1}) \equivalence \Sigma_{+}^{\infty} X $.
	    
	    \item The \defn{Tate motive} given by the cofiber
	    \begin{equation*}
	        \label{equation:defin_of_s21}
	        \Sup^{2, 1} \colonequals \cofib(\infty \colon \Sigma_{+}^{\infty} S \to \Sigma_{+}^{\infty}( \PP_{S}^{1}))
	    \end{equation*}
	    of the point at infinity is $\tensor$-invertible in $ \SH(S) $. 
	\end{enumerate}
	Moreover, $\SH(S)$ is initial with respect to these properties; that is, given any symmetric monoidal functor $F \colon \Sm_{S} \to \Ccat$ satisfying properties (1)--(4), there exists a unique colimit-preserving symmetric monoidal functor $ \widetilde{F} $ fitting into a commutative triangle
	\begin{equation*}
		\begin{tikzcd}
			\Sm_S \arrow[r, "F"] \arrow[d, "\Sigma_{+}^{\infty}"'] & \Ccat \\ 
			\SH(S) \arrow[ur, dotted, "\widetilde{F}"'] & \phantom{\Ccat} \period
		\end{tikzcd}
	\end{equation*}
 	See \cite[Corollary 2.39]{MR3281141}.
\end{recollection} 

\begin{recollection}[bigraded homotopy groups]
	Given integers $ a, b \in \ZZ $, we have \defn{bigraded spheres} 
	\begin{equation*}
		\Sup^{a, b} \colonequals \Sigma^{a-2b} (\Sup^{2, 1})^{\otimes b} \in \SH(S) \comma 
	\end{equation*}
	where $\Sup^{2, 1}$ for the Tate motive. 
	Since the Tate motive is $ \tensor $-invertible, all bigraded spheres $ \Sup^{a,b} $ are also $\tensor$-invertible.
	Moreover, $\Sup^{0, 0}$ is the monoidal unit of $ \SH(S) $. 
	For any motivic spectrum $E \in \SH(S)$, the \emph{bigraded homotopy groups} of $ E $ are defined as 
	\begin{equation*}
		\pi_{p, q} E \colonequals \pi_{0} \Map_{\SH(S)}(\Sup^{p, q}, E)
	\end{equation*}
	the homotopy classes of maps from bigraded spheres. 
\end{recollection}

\begin{notation}[Thom spectra]
	Let $ S $ be a scheme.
	Given a $ \Kup $-theory class $ \eta \in \Kup_0(S) $, we write $ \Th_S(\eta) \in \SH(S) $ for the \defn{motivic Thom spectrum} associated to $ \eta $.
	If the base scheme is clear, we simply write $ \Th(\eta) $ instead of $ \Th_S(\eta) $.

	Importantly, the Thom spectrum $ \Th(\eta) $ is $ \tensor $-invertible in $ \SH(S) $ with inverse $ \Th(-\eta) $.
	We write
	\begin{equation*}
		\Sigma^{\eta} \colon \SH(S) \equivalence \SH(S)
	\end{equation*}
	for the functor $ \Th(\eta) \tensor (-) $.
\end{notation}

\begin{notation}[Eilenberg--MacLane spectra]
	Let $ S $ be a scheme and let $ R $ be an ordinary commutative ring.
	We write $ \MR_S \in \SH(S) $ for \defn{motivic Eilenberg--MacLane spectrum} representing motivic cohomology with coefficients in $ R $.
	Note that $ \MR_S $ is naturally a commutative algebra in $ \SH(S) $
	When it does not lead to confusion, we simply write $ \MR $ instead of $ \MR_S $.
\end{notation}

\begin{recollection}[relation to Voevodsky motives]
	Given a scheme $ S $, write $ \DM(S) $ for Voevodsky's \category of motives over $ S $.
	If $ S $ is regular over a field with resolution of singularities, then there is an equivalence of symmetric monoidal \categories
	\begin{equation*}
		\DM(S) \equivalent \Mod_{\MZZ}(\SH(S))
	\end{equation*}
	between $ \DM(S) $ and modules in $ \SH(S) $ over the motivic Eilenberg--MacLane spectrum $ \MZZ $.
	See \cites{MR4113773}{MR3404379}{MR2435654}.
\end{recollection}

We now review the basics of functoriality of the construction $S \mapsto \SH(S)$. 
Our account is brief, see \cites[\S1]{MR3874952}[\S2.1]{MR4321205} for a more thorough review.

\begin{recollection}
	For every morphism of schemes $ f \colon X \to Y $, we have an adjunction
	\begin{equation*}
		\adjto{\fupperstar}{\SH(Y)}{\SH(X)}{\flowerstar} \period
	\end{equation*}
	The functor $ \fupperstar $ is the unique symmetric monoidal left adjoint that extends the functor $ \Sm_Y \to \SH(X) $ given by
	\begin{equation*}
		S \mapsto \Sigma_{+}^{\infty} (X \cross_Y S) \period
	\end{equation*}
	If $ f \colon X \to Y $ is smooth, then the forgetful functor $ \Sm_X \to \Sm_Y $ induces a functor
	\begin{equation*}
		\flowersharp \colon \SH(X) \to \SH(Y) \period
	\end{equation*}
	that is left adjoint to $ \fupperstar $.
	Importantly, $ \flowersharp(\Unit_X) \equivalent \Sigma_{+}^{\infty} X $.
\end{recollection}

\begin{recollection}[exceptional adjoints]
	If $ f \colon X \to Y $ is a morphism locally of finite type, we have an `exceptional' adjunction
	\begin{equation*}
		\adjto{\flowershriek}{\SH(X)}{\SH(Y)}{\fuppershriek} 
	\end{equation*}
	along with a natural transformation $ \flowershriek \to \flowerstar $.
	These functors are more difficult to construct, but the following are their main features from the perspective of the present work:
\end{recollection}

\begin{recollection}[compatibilities between the six functors]
	Let $ f \colon X \to Y $ is a morphism locally of finite type.
	The following hold:
	\begin{enumerate}
		\item If $ f $ is proper, then $ \flowershriek \equivalent \flowerstar $.
	
		\item If $ f $ is étale, then
		\begin{equation*}
			\flowershriek \equivalent \flowersharp \andeq \fuppershriek \equivalent \fupperstar \period
		\end{equation*}
      	Combined with (1) we see that for any factorization $ f = p \of j $, where $ j $ is an open immersion and $ p $ is proper, we have
  		\begin{equation*}
			\flowershriek \equivalent \plowerstar \of \jlowersharp \period
		\end{equation*}

		\item \textit{Atiyah duality:} If $ f $ is smooth with relative tangent bundle $ \Tup_f $, then there are equivalences
		\begin{equation*}
			\Sigma^{-\Tup_f} \of \fuppershriek \equivalent \fupperstar \andeq \flowershriek \of \Sigma^{\Tup_f} \equivalent \flowersharp \period
		\end{equation*}

		\item \textit{Projection formula:} There is a natural equivalence
		\begin{equation*}
			\flowershriek(- \tensor \fupperstar(-)) \equivalent \flowershriek(-) \tensor (-)
		\end{equation*}
		of functors $ \SH(X) \cross \SH(Y) \to \SH(Y) $.

		\item \textit{Smooth projection formula:} If $ f $ is smooth, there is a natural equivalence
		\begin{equation*}
			\flowersharp(- \tensor \fupperstar(-)) \equivalent \flowersharp(-) \tensor (-)
		\end{equation*}
		of functors $ \SH(X) \cross \SH(Y) \to \SH(Y) $.

  		\item \textit{Basechange:} Given a cartesian square 
        \begin{equation*}
	        \begin{tikzcd}
				X' \arrow[d, "\fbar"'] \arrow[r, "\pbar"] \arrow[dr, phantom, very near start, "\lrcorner"] & X \arrow[d, "f"] \\
				Y' \arrow[r, "p"'] & Y
	        \end{tikzcd}
        \end{equation*}
        where $f $ is locally of finite type, we have natural equivalences 
		\begin{equation*}
			p^{*} f_{!} \equivalent \fbar_{!} \pbar^{*} \andeq \pbar_{*} \fbar^{!} \equivalent f^{!} p_{*} \period
		\end{equation*}

		\item \textit{Gluing:} Given a closed immersion $ i \colon Z \inclusion X $ with open complement $ j \colon U \inclusion X $, there are natural cofiber sequences
		\begin{equation*}
        	\begin{tikzcd}
            	\jlowershriek\juppershriek \arrow[r] & \id_{\SH(X)} \arrow[r] & \ilowerstar\iupperstar
	        \end{tikzcd}
	    \end{equation*}
	    and 
	    \begin{equation*}
        	\begin{tikzcd}
            	\ilowershriek\iuppershriek \arrow[r] & \id_{\SH(X)} \arrow[r] & \jlowerstar\jupperstar
	        \end{tikzcd}
	    \end{equation*}
		of exact functors $ \SH(X) \to \SH(X) $.
	\end{enumerate}
\end{recollection}

\begin{remark}\label{rem:!-pushforward_along_a_smooth_proper_morphism_preserves_compactness} 
	Let $ f \colon X \to Y $ be a smooth morphism of schemes.
	Then $ \flowershriek \colon \SH(X) \to \SH(Y) $ preserves compact objects.
	To see this, observe that by Atiyah duality, the right adjoint to $ \flowershriek $ is given by $ \fuppershriek \equivalent \Sigma^{\Tup_f} \circ f^{*} $, hence preserves all colimits.
\end{remark}

\begin{recollection}[{\cite[Lemma 2.5]{bachmann2022chow}}]\label{rec:dual_of_Thom_spectrum_on_smooth_proper}
	Let $ f \colon X \to S $ be a smooth proper morphism of schemes.
	Given a class $ \eta \in \Kup_0(X) $, we write
	\begin{equation*}
		\Th_S(\eta) \colonequals \flowersharp \Th_X(\eta) \period
	\end{equation*}
	Write $ \Tup_X $ for the tangent bundle of $ X $.
	Then the motivic spectrum $ \Th_S(\eta) $ is dualizable in $ \SH(S) $ with dual $ \Th_S(-\eta - \Tup_{X}) $.
\end{recollection}

Using the six functors, we can define (co)homology theories associated to morphisms of schemes:

\begin{recollection}[{cohomology}]\label{rec:cohomology_theories}
	Fix a base scheme $ S $ and motivic spectrum $ E \in \SH(S) $.
	Let $ p \colon X \to S $ be a morphism of schemes and $ a,b \in \ZZ $.
	Then:
	\begin{enumerate}
		\item We have the motivic spectrum $ \plowerstar \pupperstar(E) \in \SH(S) $ encoding the \defn{$ E $-cohomology} of $ X $.
		We write 
		\begin{equation*}
			E^{a,b}(X/S) \colonequals \pi_{-a,-b}(\plowerstar\pupperstar(E)) \period
		\end{equation*}

		\item If $ p \colon X \to S $ is locally of finite type, we have the motivic spectrum $ \plowerstar \puppershriek(E) \in \SH(S) $ encoding the \defn{Borel--Moore $ E $-homology} or \defn{bivariant $ E $-homology} of $ X $.
		We write
		\begin{equation*}
			E_{a,b}^{\BM}(X/S) \colonequals \pi_{a,b}(\plowerstar\puppershriek(E)) \period
		\end{equation*}

		\item If $ p \colon X \to S $ is locally of finite type, we have the motivic spectrum $ \plowershriek\pupperstar(E) \in \SH(S) $ encoding the \defn{compactly supported $ E $-cohomology} of $ X $.
		We write
		\begin{equation*}
			E_{\c}^{a,b}(X/S) \colonequals \pi_{-a,-b}(\plowershriek\pupperstar(E)) \period
		\end{equation*}

		\item If $ p \colon X \to S $ is locally of finite type, we have the motivic spectrum $ \plowershriek\puppershriek(E) \in \SH(S) $ encoding the \defn{$ E $-homology} of $ X $.
		We write
		\begin{equation*}
			E_{a,b}(X/S) \colonequals \pi_{a,b}(\plowershriek\puppershriek(E))\period
		\end{equation*}
	\end{enumerate}
\end{recollection}

\subsection{Compactly supported motives of schemes} 
\label{subsection:motives_of_varieties}

As surveyed in \cref{rec:cohomology_theories}, the six functor formalism provides a very general form of cohomology theory.
However, it is often convenient to work with an alternative description, obtained by attaching to any $ S $-scheme a suitable motivic spectrum. 
The relationships between schemes (such as an open-closed decomposition) can then be encoded via relationships between these motivic spectra.  

\begin{definition}
	\label{definition:motive_and_motivec_of_a_variety} 
	Let $ p \colon X \to S $ be a locally of finite type morphism of schemes.
	The \emph{(compactly supported) motive associated to $ X $} is the motivic spectrum over $ S $ given by 
	\begin{equation*}
		\motive(X/S) \colonequals p_{!}(\Unit_{X}) \period
	\end{equation*}
	If the base scheme $ S $ is clear from the context, then we simply write $ \Mc(X) $ for $ \Mc(X/S) $.
\end{definition}

\begin{example}
	\label{example:motive_of_smooth_variety}
	Let $ k $ be a field and let $ p \colon X \to \Spec(k) $ be a smooth morphism with relative tangent bundle $ \Tup_{X} $. 
	Then by Atiyah duality we have 
	\begin{equation*}
		p_{!} \simeq p_{\sharp} \circ \Sigma^{-\Tup_{X}} \period
	\end{equation*}
It follows that $\motive(X)$ can be identified with the Thom spectrum $\Th_{X}(-\Tup_{X})$ of the negative tangent bundle. This is, informally, a twisted form of the suspension spectrum of $ X $; in the particular case when $ X $ is a variety of dimension $d$ with trivial tangent bundle, then 
	\begin{equation*}
		\motive(X) \simeq \Sigma^{-2d, d} \Sigma_{+}^{\infty} X \period
	\end{equation*} 
\end{example}

\begin{observation}
\label{observation:motive_of_smooth_projective_variety_is_dual_to_the_suspension_spectrum} 
	Let $ k $ be a field and let $ X $ be a smooth projective $ k $-scheme.
	As a consequence of Atiyah duality, $\motive(X)$ is the monoidal dual of $\Sigma_{+}^{\infty} X$, see \cite[Theorem 2.2]{riou2005dualite}. 
	Moreover, \Cref{rem:!-pushforward_along_a_smooth_proper_morphism_preserves_compactness} shows that $ \Mc(X) $ is also compact.
\end{observation}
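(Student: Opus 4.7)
The plan is to deduce both assertions directly from ingredients already assembled, namely \cref{example:motive_of_smooth_variety}, the dualizability clause in \cref{rec:dual_of_Thom_spectrum_on_smooth_proper}, and the compact-preservation statement \cref{rem:!-pushforward_along_a_smooth_proper_morphism_preserves_compactness}. Let $p \colon X \to \Spec(k)$ be the structure map, which by hypothesis is smooth and proper, and write $\Tup_X \in \Kup_0(X)$ for the tangent bundle of $X$.

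First I would identify $\Mc(X)$ as a Thom spectrum over $k$. Since $p$ is smooth, \cref{example:motive_of_smooth_variety} gives a canonical equivalence
\begin{equation*}
	\Mc(X) = p_{!}(\Unit_X) \simeq p_{\sharp} \Th_X(-\Tup_X) = \Th_S(-\Tup_X) \comma
\end{equation*}
in the notation of \cref{rec:dual_of_Thom_spectrum_on_smooth_proper}. I would then apply that recollection with $\eta = -\Tup_X$, which requires precisely that $p$ be smooth and proper: it asserts that $\Th_S(-\Tup_X)$ is dualizable in $\SH(k)$ with dual
\begin{equation*}
	\Th_S\bigl(-(-\Tup_X) - \Tup_X\bigr) = \Th_S(0) = p_{\sharp}(\Unit_X) \simeq \Sigma_{+}^{\infty} X \period
\end{equation*}
Since dualization is involutive on the full subcategory of dualizable objects, this exhibits $\Mc(X) \simeq (\Sigma_{+}^{\infty} X)^{\vee}$ as the monoidal dual of $\Sigma_{+}^{\infty} X$, as claimed. (One could alternatively cite \cite[Theorem 2.2]{riou2005dualite} directly, but the computation above shows that the statement is already contained in the previously recalled material.)

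For the compactness statement, I would observe that $\Unit_X$ is compact in $\SH(X)$, and since $p$ is smooth, \cref{rem:!-pushforward_along_a_smooth_proper_morphism_preserves_compactness} asserts that $p_{!}$ preserves compact objects. Hence $\Mc(X) = p_{!}(\Unit_X)$ is compact in $\SH(k)$. Both parts are essentially formal consequences of the previously recalled six-functor machinery, so there is no substantive obstacle; the only care required is tracking the sign conventions for Thom spectra of $\Kup$-theory classes so that the dualization formula produces $\Sigma_{+}^{\infty} X$ on the nose rather than some Tate twist thereof.
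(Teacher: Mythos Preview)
Your proposal is correct and follows the same approach the paper indicates: the paper simply cites Atiyah duality (via Riou) and \Cref{rem:!-pushforward_along_a_smooth_proper_morphism_preserves_compactness} without writing out the details, whereas you unpack the duality statement explicitly using \cref{example:motive_of_smooth_variety} and \cref{rec:dual_of_Thom_spectrum_on_smooth_proper}, which amounts to the same argument made self-contained.
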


The compatibilies of the six functors show that the compactly supported motive $ \Mc(X/S) $ encodes both the Borel--Moore homology and compactly supported cohomology of $ X $ with coefficients in an arbitrary motivic spectrum over $ S $:

\begin{observation}\label{obs:compactly_supported_cohomology_as_a_tensor_product}
	Let $ p \colon X \to S $ be a locally of finite type morphism of schemes and $ E \in \SH(S) $.
	Using the projection formula, the motivic spectrum encoding compactly supported $ E $-cohomology can also be described as
	\begin{equation*}
		\plowershriek\pupperstar(E)
		\equivalent \plowershriek(\Unit_X) \tensor E 
		= \Mc(X/S) \tensor E \period
	\end{equation*}
\end{observation}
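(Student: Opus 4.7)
The plan is essentially a one-line application of the projection formula as reviewed in the excerpt. First I would rewrite $p^{\ast}(E)$ using the fact that the monoidal unit $\Unit_{X} \in \SH(X)$ is a unit for the tensor product, so that
\begin{equation*}
    p^{\ast}(E) \equivalent \Unit_{X} \tensor p^{\ast}(E) \period
\end{equation*}
Next, I would apply $\plowershriek$ and invoke the projection formula (item (4) in the list of compatibilities between the six functors) with the first argument $\Unit_{X}$ and the second argument $E$, to obtain
\begin{equation*}
    \plowershriek\pupperstar(E) \equivalent \plowershriek(\Unit_{X} \tensor \pupperstar(E)) \equivalent \plowershriek(\Unit_{X}) \tensor E \period
\end{equation*}
Finally, the definition $\motive(X/S) \colonequals \plowershriek(\Unit_{X})$ from \Cref{definition:motive_and_motivec_of_a_variety} immediately rewrites the right-hand side as $\motive(X/S) \tensor E$, completing the identification. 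There is no real obstacle here; the only thing to check is that the two natural equivalences (unit of the tensor product, and the projection formula) are being applied in the correct variables, which is automatic.
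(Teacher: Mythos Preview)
Your proposal is correct and is exactly the intended argument: the paper presents this as an observation with no separate proof, merely citing the projection formula, and your unpacking (rewrite $p^{\ast}E$ as $\Unit_X \tensor p^{\ast}E$, then apply item (4) of the six-functor compatibilities) is precisely what that citation means.
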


\begin{observation}\label{obs:Borel-Moore_homology_as_a_Hom}
	Let $ p \colon X \to S $ be a locally of finite type morphism of schemes and $ E \in \SH(S) $.
	Using the fact that $ \plowerstar\puppershriek $ is right adjoint to $ \plowershriek\pupperstar $, we have equivalences
	\begin{align*}
		\plowerstar\puppershriek(E) &\equivalent \Hom_{\SH(S)}(\Unit_S, \plowerstar\puppershriek(E)) \\ 
		&\equivalent \Hom_{\SH(S)}(\plowershriek\pupperstar(\Unit_S), E) \\ 
		&\equivalent \Hom_{\SH(S)}(\Mc(X/S), E) \period
	\end{align*}
\end{observation}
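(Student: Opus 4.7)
The plan is to chain three elementary equivalences, each justified by a basic property of the six-functor formalism.

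First, for any presentably symmetric monoidal stable \category $\Ccal$ with unit $\Unit$, the canonical map $F \to \Hom_{\Ccal}(\Unit, F)$ is an equivalence for every $F$. Applying this to $F \colonequals \plowerstar\puppershriek(E) \in \SH(S)$ gives the first equivalence. This step is essentially free.

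Second, I would like to move $\plowerstar\puppershriek$ off the right of the internal hom. Since $(\plowershriek\pupperstar) \dashv (\plowerstar\puppershriek)$ as an adjunction of endofunctors of $\SH(S)$ (both composites are obtained by composing left adjoints with left adjoints, resp.\ right adjoints with right adjoints), and since internal hom is the right adjoint to the tensor product, the usual combination gives an equivalence
\begin{equation*}
	\Hom_{\SH(S)}(A, \plowerstar\puppershriek(E)) \equivalent \Hom_{\SH(S)}(\plowershriek\pupperstar(A), E)
\end{equation*}
natural in $A \in \SH(S)$. Specializing to $A = \Unit_S$ yields the second equivalence.

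Third, since $\pupperstar$ is symmetric monoidal, $\pupperstar(\Unit_S) \equivalent \Unit_X$, so
\begin{equation*}
	\plowershriek\pupperstar(\Unit_S) \equivalent \plowershriek(\Unit_X) = \Mc(X/S)
\end{equation*}
by the definition in \Cref{definition:motive_and_motivec_of_a_variety}. Substituting this into the second equivalence produces the third.

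Since none of these three steps uses anything beyond the tensor-hom adjunction, the $(\plowershriek\pupperstar, \plowerstar\puppershriek)$ adjunction, and symmetric monoidality of $\pupperstar$, there is no serious obstacle; the observation is essentially a bookkeeping exercise. If anything warrants care it is only the second step, where one should be explicit that the adjunction between $\plowershriek\pupperstar$ and $\plowerstar\puppershriek$ interacts correctly with the internal hom in $\SH(S)$, but this is standard.
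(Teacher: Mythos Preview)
Your argument is correct and matches the paper's approach exactly; the paper simply records the chain of equivalences without further elaboration, so your expansion is in the same spirit.

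One small point of care on the second step: the equivalence $\Hom_{\SH(S)}(A, \plowerstar\puppershriek(E)) \equivalent \Hom_{\SH(S)}(\plowershriek\pupperstar(A), E)$ for the \emph{internal} hom does not follow from the bare adjunction $(\plowershriek\pupperstar) \dashv (\plowerstar\puppershriek)$ alone. By Yoneda it is equivalent to $\plowershriek\pupperstar(B \otimes A) \equivalent B \otimes \plowershriek\pupperstar(A)$ for all $B$, which is exactly the projection formula for $\plowershriek$ combined with monoidality of $\pupperstar$. You flagged this step as the one needing care, and indeed the projection formula is the specific ingredient that makes it go through.
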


\begin{observation} 
	\label{observation:bm_homology_and_c_cohomology_using_the_motive}
	As a consequence of \Cref{obs:compactly_supported_cohomology_as_a_tensor_product,obs:Borel-Moore_homology_as_a_Hom}, the compactly supported cohomology and Borel--Moore homology of $ X $ over $ S $ are computed by
	\begin{equation*}
		E_{\c}^{a,b}(X/S) \simeq \pi_{-a,-b}(\motive(X/S) \tensor E)
	\end{equation*}
	and  
	\begin{equation*}
		E^{\BM}_{a,b}(X/S) \simeq \pi_{a,b} \Hom_{\SH(S)}(\motive(X/S), E) \period
	\end{equation*}
\end{observation}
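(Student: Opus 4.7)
The plan is to observe that this statement is essentially a straightforward unpacking of definitions combined with the two preceding observations. I would first recall that by \Cref{rec:cohomology_theories}, the compactly supported $E$-cohomology groups are defined as
\begin{equation*}
    E_{\c}^{a,b}(X/S) = \pi_{-a,-b}(\plowershriek\pupperstar(E))\comma
\end{equation*}
and the Borel--Moore $E$-homology groups as
\begin{equation*}
    E^{\BM}_{a,b}(X/S) = \pi_{a,b}(\plowerstar\puppershriek(E))\period
\end{equation*}
So the task reduces to rewriting the motivic spectra $\plowershriek\pupperstar(E)$ and $\plowerstar\puppershriek(E)$ in terms of the compactly supported motive $\motive(X/S) = \plowershriek(\Unit_X)$.

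For the compactly supported cohomology identity, I would simply invoke \Cref{obs:compactly_supported_cohomology_as_a_tensor_product}, which uses the projection formula to produce the equivalence $\plowershriek\pupperstar(E) \simeq \motive(X/S) \tensor E$. Applying $\pi_{-a,-b}$ to both sides yields the first claimed formula. For the Borel--Moore identity, I would invoke \Cref{obs:Borel-Moore_homology_as_a_Hom}, which uses the $(\plowershriek\pupperstar, \plowerstar\puppershriek)$ adjunction together with the description $\plowershriek\pupperstar(\Unit_S) \simeq \motive(X/S)$ to produce the equivalence $\plowerstar\puppershriek(E) \simeq \Hom_{\SH(S)}(\motive(X/S), E)$. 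Applying $\pi_{a,b}$ to both sides then gives the second formula.

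Since this statement is framed as a consequence of the two preceding observations, there is no genuine obstacle: the entire argument is a matter of assembling three pieces already in place, namely the definitions in \Cref{rec:cohomology_theories} and the identifications in \Cref{obs:compactly_supported_cohomology_as_a_tensor_product,obs:Borel-Moore_homology_as_a_Hom}. The only conceptual content is recognizing that once $\motive(X/S)$ is defined as $\plowershriek(\Unit_X)$, one may convert all four cohomology/homology theories of \Cref{rec:cohomology_theories} into expressions involving tensor products or internal $\Hom$'s against $\motive(X/S)$, which is exactly what makes the motive a convenient bookkeeping device for the six-functor formalism.
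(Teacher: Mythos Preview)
Your proposal is correct and matches the paper's approach exactly: the paper states this observation without proof, simply declaring it a consequence of \Cref{obs:compactly_supported_cohomology_as_a_tensor_product,obs:Borel-Moore_homology_as_a_Hom}, which is precisely what you unpack. There is nothing further to add.
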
 

\begin{warning}
	The isomorphisms of \cref{observation:bm_homology_and_c_cohomology_using_the_motive} are opposite to the ones appearing in topology for usual homology and cohomology.
	That is, it is homology which is defined by mapping into a spectrum and cohomology which is defined using the tensor product. 
	This is because $ \motive(X) $ encodes the \textit{compactly supported} theories: $ \motive(X) $ should be thought of as a ``cohomological'' motive, as witnessed by its contravariant functoriality of \cref{construction:functoriality_of_the_motive_of_variety}. 
\end{warning}

The formation of the compactly supported motive of a scheme commutes with basechange:

\begin{lemma}\label{lemma:formation_of_motive_of_variety_commutes_with_field_extensions}
	Given a cartesian square of schemes
	\begin{equation*}
		\begin{tikzcd}
			X' \arrow[d, "p'"'] \arrow[r, "f'"] \arrow[dr, phantom, very near start, "\lrcorner"] & X \arrow[d, "p"] \\
			S' \arrow[r, "f"'] & S
		\end{tikzcd}
	\end{equation*}
	where $ p $ is locally of finite type, there is an equivalence $ \motive(X'/S') \equivalent \fupperstar \motive(X/S) $.
\end{lemma}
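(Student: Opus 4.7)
The plan is to apply the proper base change equivalence from the six functor formalism directly. By definition,
\begin{equation*}
	\Mc(X/S) = p_{!}(\Unit_{X}) \andeq \Mc(X'/S') = p'_{!}(\Unit_{X'}) \comma
\end{equation*}
so the goal is to exhibit a natural equivalence $f^{*} p_{!}(\Unit_{X}) \equivalent p'_{!}(\Unit_{X'})$.

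First I would apply the basechange equivalence recalled earlier in the excerpt to the given cartesian square. With the dictionary sending the locally of finite type vertical map $p$ to the role of $f$ in the basechange statement and the horizontal map $f$ to the role of $p$ there, basechange yields a natural equivalence
\begin{equation*}
	f^{*} p_{!} \equivalent p'_{!} (f')^{*}
\end{equation*}
of functors $\SH(X) \to \SH(S')$. Evaluating both sides on the monoidal unit $\Unit_{X} \in \SH(X)$ gives
\begin{equation*}
	f^{*} \Mc(X/S) = f^{*} p_{!}(\Unit_{X}) \equivalent p'_{!}\bigl((f')^{*}\Unit_{X}\bigr) \period
\end{equation*}
Finally, since $(f')^{*}$ is symmetric monoidal it preserves the unit, so $(f')^{*}\Unit_{X} \equivalent \Unit_{X'}$, and the right-hand side is $p'_{!}(\Unit_{X'}) = \Mc(X'/S')$. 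Composing these equivalences gives the claim.

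There is no real obstacle here; the statement is essentially a restatement of proper basechange combined with the monoidality of $\upperstar$-pullback. The only subtlety worth noting is that the basechange equivalence quoted in the excerpt requires the locally of finite type hypothesis on the vertical arrow being pushed forward, which is exactly our assumption on $p$; no hypothesis on $f$ is needed.
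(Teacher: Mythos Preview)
Your proof is correct and follows exactly the same approach as the paper: apply the basechange equivalence $f^{*}p_{!} \simeq p'_{!}(f')^{*}$ to the unit object, using that $(f')^{*}$ is symmetric monoidal so it preserves the unit. The paper simply runs the chain of equivalences in the opposite direction (starting from $\motive(X'/S')$ rather than $f^{*}\motive(X/S)$), but the argument is identical.
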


\begin{proof}
	Using the fact that pullback and exceptional pushforward satisfy basechange, we compute
	\begin{align*}
		\motive(X'/S') &= p_{!}' (\Unit_{X'}) \simeq p_{!}' (f')^{*} (\Unit_{X}) \\
		&\simeq f^{*} p_{!} (\Unit_{X}) \\ 
		&= f^{*} \motive(X/S) \period \qedhere
	\end{align*}
\end{proof}

\begin{construction}[{functoriality of $ \Mc $}]
	\label{construction:functoriality_of_the_motive_of_variety} 
	Consider a commutative triangle of locally of finite type morphisms of schemes
	\begin{equation*}
	\begin{tikzcd}
		X \arrow[rr, "f"] \arrow[dr, "p"'] & & Y \arrow[dl, "q"] \\
		& S & \phantom{Y} \period
	\end{tikzcd}
	\end{equation*}
	The construction $ X \mapsto \motive(X/S) $ has the following functorialities:
	\begin{enumerate}
		\item \textit{Contravariant functoriality in proper maps:} Assume that $ f $ is proper.
		Using the equivalence $ f_{!} \simeq f_{*} $, the unit of the adjunction $f^{*} \dashv f_{*}$
		provides a map 
		\begin{equation*}
			\motive(Y/S) = q_{!}(\Unit_{Y}) \longrightarrow q_{!} f_{*} f^{*} (\Unit_{Y}) \simeq q_{!} f_{!} f^{*} (\Unit_{Y}) \simeq q_{!} f_{!} (\Unit_{X}) \simeq p_{!} (\Unit_{X}) = \motive(X/S) \period
		\end{equation*}

		\item \textit{Covariant functoriality in étale maps:} Assume that $ f $ is étale.
		Using the equivalence $ f^{*} \simeq f^{!} $
		the counit map of $f_{!} \dashv f^{!}$ yields a map 
		\begin{equation*}
			\motive(X/S) = q_{!} f_{!} (\Unit_{X}) \simeq q_{!} f_{!} f^{*} (\Unit_{Y}) \simeq q_{!} f_{!} f^{!} (\Unit_{Y}) \longrightarrow q_{!} (\Unit_{Y}) = \motive(Y/S) \period
		\end{equation*}
	\end{enumerate} 
\end{construction}

\begin{lemma}
	\label{lemma:cofiber_sequence_of_motives_associated_to_open_closed_decomposition}
	Let $ p \colon X \to S $ be locally of finite type morphism of schemes.
	Let $ i \colon Z \hookrightarrow X $ be a closed immersion with open complement $ j \colon U \hookrightarrow X $. 
	Then the induced maps $\motive(U/S) \to \motive(X/S)$ and $\motive(X/S) \to \motive(Z/S)$ assemble into a natural cofiber sequence 
    \begin{equation*}
        \begin{tikzcd}
            \motive(U/S) \arrow[r] & \motive(X/S) \arrow[r] & \motive(Z/S)
        \end{tikzcd}
    \end{equation*}
    in $ \SH(S) $.  
\end{lemma}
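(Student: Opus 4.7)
The plan is to deduce the cofiber sequence by applying $p_!$ to the gluing cofiber sequence for the open--closed decomposition $(j,i)$ recalled in the review of the six functors. Specifically, apply the natural cofiber sequence
\begin{equation*}
    \begin{tikzcd}
        \jlowershriek\jupperstar \arrow[r] & \id_{\SH(X)} \arrow[r] & \ilowerstar\iupperstar
    \end{tikzcd}
\end{equation*}
of exact endofunctors of $\SH(X)$ to the monoidal unit $\Unit_X$, obtaining a cofiber sequence
\begin{equation*}
    \begin{tikzcd}
        \jlowershriek\jupperstar \Unit_X \arrow[r] & \Unit_X \arrow[r] & \ilowerstar\iupperstar \Unit_X
    \end{tikzcd}
\end{equation*}
in $\SH(X)$. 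Since $\jupperstar$ and $\iupperstar$ are symmetric monoidal, we have $\jupperstar \Unit_X \simeq \Unit_U$ and $\iupperstar \Unit_X \simeq \Unit_Z$.

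Next, push this sequence forward along $\plowershriek$, which is exact (indeed, a left adjoint between stable \categories). Using the factorizations $p \of j$ and $p \of i$, together with the composition law $(p \of f)_! \simeq p_! \of f_!$ and the identification $\ilowerstar \simeq \ilowershriek$ coming from the properness of the closed immersion $i$, the resulting cofiber sequence identifies with
\begin{equation*}
    \begin{tikzcd}
        \motive(U/S) \arrow[r] & \motive(X/S) \arrow[r] & \motive(Z/S)
    \end{tikzcd}
\end{equation*}
in $\SH(S)$.

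It remains to verify that the two maps produced in this way agree with the maps coming from the functoriality of $\Mc$ described in \cref{construction:functoriality_of_the_motive_of_variety}. This is essentially by construction: the leftmost map arises from the counit of $\jlowershriek \dashv \juppershriek$ together with the identification $\juppershriek \simeq \jupperstar$ for the open immersion $j$, which is precisely the recipe of covariant functoriality in étale maps; the rightmost map arises from the unit of $\iupperstar \dashv \ilowerstar$ together with the identification $\ilowerstar \simeq \ilowershriek$ for the proper (indeed closed) immersion $i$, matching the recipe for contravariant functoriality in proper maps. I do not anticipate a serious obstacle here: the only thing to watch is that the adjunction data used to formulate the gluing sequence matches the adjunction data used in the construction of the functoriality maps, which is a routine bookkeeping check.
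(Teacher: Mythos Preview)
Your proof is correct and follows essentially the same approach as the paper: apply the gluing cofiber sequence $\jlowershriek\jupperstar(\Unit_X) \to \Unit_X \to \ilowershriek\iupperstar(\Unit_X)$ and push forward along $\plowershriek$, using that $\jupperstar$, $\iupperstar$ are symmetric monoidal and $\ilowerstar \simeq \ilowershriek$. You additionally spell out why the resulting maps agree with those of \cref{construction:functoriality_of_the_motive_of_variety}, which the paper leaves implicit.
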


\begin{proof}
    There is a gluing cofiber sequence
    \begin{equation*}
        \begin{tikzcd}
            \jlowershriek\jupperstar(\Unit_X) \arrow[r] & \Unit_X \arrow[r] & \ilowershriek\iupperstar(\Unit_X)
        \end{tikzcd}
    \end{equation*}
    in $ \SH(X) $.
    Applying $ \plowershriek \colon \SH(X) \to \SH(S) $ to this cofiber sequence and using the fact that $ \iupperstar $ and $ \jupperstar $ are symmetric monoidal, we obtain a cofiber sequence  
     \begin{equation*}
        \begin{tikzcd}
            \plowershriek\jlowershriek(\Unit_U) \arrow[r] & \plowershriek(\Unit_X) \arrow[r] & \plowershriek\ilowershriek(\Unit_Z)
        \end{tikzcd}
    \end{equation*}
    in $ \SH(S) $. 
    The claim now follows from the definition of the compactly supported motive of an $ S $-scheme.
\end{proof}

The following is often useful, as it allows one to reduce statements about arbitrary varieties to statements about smooth proper varieties. 

\begin{lemma} 
	\label{lemma:subcategory_of_motives_containing_smooth_projectives_has_all_motives}
	Let $ k $ be a field of exponential characteristic $ e $.
	Let $\Ccat \subseteq \SH(k)[\einv]$ be a full subcategory with the following two properties: 
	\begin{enumerate}
	    \item The subcategory $ \Ccat $ is closed under extensions, fibers, and retracts.

	    \item For each smooth projective $ k $-variety $ X $, we have $ \motive(X)[\einv] \in \Ccat $.
	\end{enumerate}
	Then for any $ k $-variety $ U $, we have $\motive(U)[\einv] \in \Ccat $. 
\end{lemma}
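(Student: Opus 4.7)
The plan is to argue by Noetherian induction on $\dim U$. For the base case $\dim U \le 0$, the variety $U$ is a finite disjoint union of spectra $\Spec L$ of finite field extensions of $k$. When $L/k$ is separable, $\Spec L$ is a smooth projective $0$-dimensional $k$-variety, so $\motive(\Spec L)[\einv] \in \Ccat$ by hypothesis; for purely inseparable extensions (only relevant when $e = \characteristic(k)$) the structure map is a universal homeomorphism and hence becomes an equivalence in $\SH(k)[\einv]$, reducing again to the separable case.

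For the inductive step with $\dim U = n \ge 1$, I would first compactify: by Nagata's theorem choose an open immersion $U \hookrightarrow \overline{X}$ with $\overline{X}$ proper of dimension $n$, and let $Z = \overline{X} \setminus U$ with its reduced subscheme structure, so $\dim Z < n$. By \cref{lemma:cofiber_sequence_of_motives_associated_to_open_closed_decomposition}, inverting $e$ gives a cofiber sequence $\motive(U)[\einv] \to \motive(\overline{X})[\einv] \to \motive(Z)[\einv]$. The inductive hypothesis handles $\motive(Z)[\einv]$, so by stable closure of $\Ccat$ under fibers (equivalently shifted cofibers) and extensions it suffices to treat the proper variety $\overline{X}$.

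Assuming $U$ is proper, I would resolve it by a smooth projective variety: in characteristic zero Hironaka produces a proper birational map $\pi \colon X' \to U$ with $X'$ smooth projective, a proper closed center $Z \subset U$ and exceptional locus $E = \pi^{-1}(Z)$ with $\dim Z,\, \dim E < n$, fitting into an abstract blowup square
\[
\begin{tikzcd}[sep=small]
E \arrow[r, hook] \arrow[d] & X' \arrow[d, "\pi"] \\
Z \arrow[r, hook] & U\period
\end{tikzcd}
\]
In positive characteristic one replaces Hironaka by Gabber's $\ell'$-alteration theorem for a prime $\ell \neq \characteristic(k)$. By cdh descent (respectively $\ell$dh descent in positive characteristic) for $\motive$ in $\SH(k)[\einv]$, this square is cocartesian after inverting $e$, yielding
\[
\motive(U)[\einv] \simeq \motive(X')[\einv] \times_{\motive(E)[\einv]} \motive(Z)[\einv]\period
\]
Induction on dimension gives $\motive(Z)[\einv], \motive(E)[\einv] \in \Ccat$, and $\motive(X')[\einv] \in \Ccat$ by hypothesis; closure under fibers and extensions then forces $\motive(U)[\einv] \in \Ccat$.

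The main obstacle I anticipate is the positive characteristic case of the resolution step: Gabber provides only a generically finite alteration of some degree $d > 1$, not a birational map, so the resulting square is $\ell$dh-distinguished rather than a strict abstract blowup square. Making this step rigorous requires Kelly's $\ell$dh-hyperdescent in $\SH(k)[\einv]$ together with closure of $\Ccat$ under retracts to absorb the multiplicity $d$ coming from the generic degree of $\pi$ (via a transfer along the generically étale locus). All of the other pieces—Nagata, the open/closed cofiber sequence, Hironaka in characteristic zero, and cdh descent for compactly supported motives after inverting $e$—fit together purely formally once this is in place.
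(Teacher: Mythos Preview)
Your overall strategy (Noetherian induction via open-closed cofiber sequences and resolution of singularities) matches the paper's, and in characteristic zero your compactify-then-resolve route is valid: the abstract blowup square gives a cartesian square of compactly supported motives directly from comparing the two open-closed sequences of \cref{lemma:cofiber_sequence_of_motives_associated_to_open_closed_decomposition}, since $\pi$ is an isomorphism over $U \smallsetminus Z$, so no separate cdh descent theorem is needed. One small omission: Nagata only produces a proper compactification, so you should insert Chow's lemma before Hironaka to ensure $X'$ is projective and not merely smooth proper. The paper organizes the induction differently---it shrinks $U$ to a dense open first rather than compactifying---but in characteristic zero both routes work equally well.

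In positive characteristic, however, your proposed fix has a genuine gap. The assertion that $\motive$ satisfies $\ell$dh descent in $\SH(k)[\einv]$ is not known; the paper's own \cref{theorem:hypercovers_of_varieties_are_colimit_diagrams_in_shk} proves $\ell$dh-hyperdescent only up to $\MGL$-local equivalence, which does not suffice here. When Gabber's alteration has generic degree $d > 1$ the square of motives is no longer cartesian, and there is no descent mechanism available in $\SH(k)[\einv]$ to close it. The paper's argument is exactly the transfer-and-retract idea you mention parenthetically, but applied directly on the open locus without any compactification: after shrinking $U$ to be smooth connected with trivial tangent bundle, alterations provide (after further shrinking) a finite \'etale cover $V \to U$ with $V$ open dense in a smooth projective variety, so $\motive(V)[\einv] \in \Ccat$ by induction; then \cite[Lemma B.3]{levine2019algebraic} furnishes the retraction showing $\motive(U)[\einv] \in \Ccat$. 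Your proposal identifies the right ingredients but wraps them in an $\ell$dh-descent statement that is simply unavailable.
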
 

\begin{proof}
	We argue by induction on the dimension of $ U $.

	The base case is when $\dim(U) = 0$, so that $U$ is projective. 
	In this case, if $ k $ is perfect, then $U$ is also smooth, and we are done. 
	If $ k $ is not perfect, we consider the perfection $ r \colon k \hookrightarrow k' $ given by the colimit over the Frobenius morphism. 
	By a result of Elmanto--Khan \cite[Corollary 2.1.7]{MR3999675}, the pullback functor 
	\begin{equation*}
			r^{*} \colon \SH(k)[\einv] \to \SH(k')[\einv]
	\end{equation*}
	is an equivalence. 
	Writing $ U' $ for the basechange of $ U $ to $ k' $, \cref{lemma:formation_of_motive_of_variety_commutes_with_field_extensions} shows that 
	\begin{equation*}
		r^{*} \motive(U/k) \simeq \motive(U'/k') \period
	\end{equation*}
	Write $ \Et_k $ and $ \Et_{k'} $ for the small étale sites of $ k $ and $ k' $, respectively.
	Since $r$ is a universal homeomorphism, the topological invariance of the étale site \cites[Exposé IX, Théorème 4.10]{SGA1}[Exposé VIII, Théorème 1.1]{SGA4} implies that the basechange functor
	\begin{equation*}
		\Et_k \to \Et_{k'}
	\end{equation*}
	is an equivalence of categories.
	It follows that there exists a zero-dimensional étale $ k $-scheme $ V $ such that $ V' \simeq U' $ as $k'$-schemes. 
	Again applying \Cref{lemma:formation_of_motive_of_variety_commutes_with_field_extensions}, we see that
	\begin{equation*}
		r^{*} \motive(V/k)[\einv] \simeq r^{*} \motive(U/k)[\einv] \period
	\end{equation*}
	Since $r^{*}$ is fully faithful, we deduce that $\motive(V)[\einv] \simeq \motive(U)[\einv]$. 
	By assumption, $ \motive(V)[\einv] \in \Ccal $, hence $ \motive(U)[\einv] \in \Ccal $ as well.

	For the induction step, assume that $ \dim(U) > 0 $ and that for each $ k $-variety $ Z $ such that $ \dim(Z) < \dim(U) $, we have $ \motive(Z)[\einv] \in \Ccat $. 
	By \cref{lemma:cofiber_sequence_of_motives_associated_to_open_closed_decomposition}, for any closed $Z \subseteq U$ we have a cofiber sequence 
	\begin{equation*}
		\motive(U \smallsetminus Z)[\einv] \to \motive(U)[\einv]  \to \motive(Z)[\einv] \period
	\end{equation*}
	Hence it is enough to show that, after possibly replacing $U$ by an open dense subset, we have $\motive(U)[\einv] \in \Ccat$. 
	Applying \cref{lemma:cofiber_sequence_of_motives_associated_to_open_closed_decomposition} to a decomposition into connected components, we can assume that $U$ is connected.
	By further shrinking $ U $, we can also assume that $U$ is smooth with trivial tangent bundle. 

	By the theory of alterations we can find a finite étale cover $V \to U$ of degree $d$ coprime to $ e $ such that $ V $ is an open dense subset of a smooth and projective $ k $-variety $ X $. 
	By the inductive hypothesis and an application of \cref{lemma:cofiber_sequence_of_motives_associated_to_open_closed_decomposition}, we deduce that $\motive(V)[\einv] \in \Ccat$.
	We want to deduce the same for $\motive(U)[\einv]$. 
	Since both $U$ and $V$ have trivial tangent bundles, \cref{example:motive_of_smooth_variety} shows that 
	\begin{equation*}
		\motive(U) \simeq \Sigma^{-2d, d} \Sigma_{+}^{\infty} U \andeq \motive(V) \simeq \Sigma^{-2d, d} \Sigma_{+}^{\infty} V \period
	\end{equation*}
	We deduce from 
	\cite[Lemma B.3]{levine2019algebraic} that after possibly shrinking $U$, the motivic spectrum $\motive(U)[\einv]$ is a retract of $\motive(V)[\einv]$, ending the argument. 
\end{proof} 

\begin{corollary}\label{corollary:motives_of_varieties_are_dualizable_away_from_the_characteristic} 
	Let $ k $ be a field of exponential characteristic $ e $.
	Then for any $ k $-variety $ X $, the motivic spectrum $\motive(X)[\einv]$ is a compact and dualizable object of $\SH(k)[\einv]$. 
\end{corollary}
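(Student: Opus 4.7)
The plan is to deduce the corollary as a direct application of \cref{lemma:subcategory_of_motives_containing_smooth_projectives_has_all_motives}. I would take $\Ccat \subseteq \SH(k)[\einv]$ to be the full subcategory spanned by those motivic spectra that are both compact and dualizable, and verify the two hypotheses of that lemma.

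For condition (1), I would argue that compactness and dualizability are each preserved under fibers, extensions, and retracts in any stable presentably symmetric monoidal \category. Compact objects form a stable subcategory closed under retracts by general \categorical nonsense, so in particular they are closed under fibers and extensions (cofibers of maps between compact objects are compact). Dualizability is preserved under retracts, and in a stable symmetric monoidal \category whose tensor product is exact in each variable, the full subcategory of dualizable objects is stable: the dual of a fiber sequence $A \to B \to C$ is the cofiber sequence $C^{\dual} \to B^{\dual} \to A^{\dual}$, which shows dualizability passes through fiber and cofiber sequences. Hence the intersection $\Ccat$ inherits all three closure properties.

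For condition (2), let $X$ be a smooth projective $ k $-variety. By \cref{observation:motive_of_smooth_projective_variety_is_dual_to_the_suspension_spectrum}, the compactly supported motive $\motive(X)$ is dualizable in $\SH(k)$ (it is the monoidal dual of $\Sigma_{+}^{\infty} X$) and is compact. Both properties are preserved under the symmetric monoidal localization functor $\SH(k) \to \SH(k)[\einv]$, so $\motive(X)[\einv] \in \Ccat$.

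With both hypotheses verified, \cref{lemma:subcategory_of_motives_containing_smooth_projectives_has_all_motives} immediately gives $\motive(X)[\einv] \in \Ccat$ for every $ k $-variety $ X $, proving the corollary. The only subtle point is really the verification that dualizability is stable under fibers and extensions, but this is automatic in the stable setting once one recalls that $(-)^{\dual}$ swaps fibers and cofibers; no further input specific to motivic homotopy theory is needed beyond what is already packaged in \cref{lemma:subcategory_of_motives_containing_smooth_projectives_has_all_motives} and \cref{observation:motive_of_smooth_projective_variety_is_dual_to_the_suspension_spectrum}.
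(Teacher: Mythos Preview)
Your proposal is correct and follows essentially the same approach as the paper's proof: the paper simply observes that compact and dualizable objects form a stable subcategory and then invokes \cref{lemma:subcategory_of_motives_containing_smooth_projectives_has_all_motives} together with the smooth projective case from \cref{observation:motive_of_smooth_projective_variety_is_dual_to_the_suspension_spectrum}. You have merely spelled out in more detail why compactness and dualizability are preserved under fibers, extensions, and retracts, which the paper takes for granted.
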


\begin{proof}
	Since compact and dualizable objects form a stable subcategory, this follows from \cref{lemma:subcategory_of_motives_containing_smooth_projectives_has_all_motives} and the smooth projective case of \cref{observation:motive_of_smooth_projective_variety_is_dual_to_the_suspension_spectrum}. 
\end{proof}


\subsection{Betti realization} 
\label{subsection:betti_realization}

We now recall the basics of Betti realizations in characteristic zero.
The first is over the complex numbers.

\begin{construction}[complex Betti realization]\label{construction:complex_betti_realization}
	The functor $ \Sm_{\CC} \to \Spc $ sending a smooth $ \CC $-scheme to the underlying homotopy type of the topological space $ X(\CC) $ with the analytic topology is $ \AA^1 $-invariant, sends elementary Nisnevich squares to pullback squares, and preserves finite products. 

	Moreover, the functor $ \Sm_{\CC} \to \Sp $ given by $ X \mapsto \Sigma_+^{\infty} X(\CC) $ also inverts the Tate motive. 
	As a consequence of the universal property of motivic spectra, this functor uniquely extends to a symmetric monoidal left adjoint
	\begin{equation*}
	    \Be \colon \SH(\CC) \to \Sp
	\end{equation*}
	referred to as \defn{Betti realization}.
\end{construction}

\begin{example}[{\cite[Proposition 5.10]{MR3217623}}]\label{ex:Betti_realization_of_HR}
	There is a natural equivalence 
	\begin{equation*}
		\Be(\MR) \equivalent \HR
	\end{equation*}
	between the Betti realization of the motivic Eilenberg--MacLane spectrum $ \MR $ and the usual Eilenberg--MacLane spectrum of $ R $.
\end{example}

\begin{example}\label{ex:Betti_realization_of_MU}
	There is an equivalence 
	\begin{equation*}
		\Be(\MGL) \equivalent \MU
	\end{equation*}
	of commutative algebras in $ \Sp $.
\end{example}



\begin{construction}[{$ \Cup_2 $-Betti realization}]
    Similarly, if $ X $ is a smooth $ \RR $-scheme, then the complex points $ X(\CC) $ acquire an action of the Galois group $ \Cup_2 \colonequals \Gal(\CC/\RR) $.
    The underlying homotopy type of $ X(\CC) $ refines to a genuine $ \Cup_2 $-space.
    Again by the universal proeprty of motivic spectra, the functor
    \begin{align*}
        \Sm_{\RR} &\to \Sp_{\Cup_2} \\ 
        X &\mapsto \Sigma_{\Cup_2,+}^{\infty} X(\CC)
    \end{align*}
    uniquely extends to a symmetric monoidal left adjoint
    \begin{equation*}
        \Be_{\Cup_2} \colon \SH(\RR) \to \Sp_{\Cup_2}
    \end{equation*}
    valued in genuine $ \Cup_2 $-spectra.
    This functor is referred to as \defn{$ \Cup_2 $-Betti realization}.
\end{construction}



\subsection{Étale realization}\label{subsec:etale_realization}

Let $ k $ be a separably closed field and $ \ell $ a prime different from $ \characteristic(k) $.
We now explain a construction of an étale realization functor from $ \SH(k) $ to $ \ell $-complete spectra.
In fact, we give a more general construction that works over any base scheme.

\begin{notation}
	Let $ S $ be a scheme.
	Write $ \Et_S \subseteq \Sm_S $ for the full subcategory spanned by the étale $ S $-schemes.
	Giving both of these categories the \textit{étale} topology, this inclusion $ \Et_S \subseteq \Sm_S $ is a morphism of sites that satisfies the covering lifting property.
	In particular, this inclusion induces a fully faithful symmetric monoidal pullback functor
	\begin{equation*}
		i^{\ast} \colon \Shethyp(\Et_S;\Sp) \hookrightarrow \Shethyp(\Sm_S;\Sp)
	\end{equation*}
	on étale hypersheaves of spectra.
\end{notation}

\begin{notation}
	Let $ S $ be a scheme.
	Write $ \SH_{\et}(S) $ for the localization of $ \SH(S) $ at the \textit{desuspensions of} étale hypercoverings.
	Write $ \Let \colon \SH(S) \to \SH_{\et}(S) $ for the symmetric monoidal localization functor.
\end{notation}

\begin{nul}
	Equivalently, the \category $ \SH_{\et}(S) $ can be obtained by first taking $ \AA^1 $-local objects in the \topos $ \Shethyp(\Sm_S) $ of étale hypersheaves of spaces on smooth $ S $-schemes, then $ \PP^1 $-stabilizing.
	As a result, there is a natural symmetric monoidal left adjoint 
	\begin{equation*}
		\Shethyp(\Sm_S;\Sp) \to \SH_{\et}(S) \period 
	\end{equation*}
\end{nul}
	
\begin{notation}
	Let $ \Ccal $ be a presentable stable \category and $ \ell $ a prime number.
	A morphism $ f \colon X \to Y $ in $ \Ccal $ is an \defn{$ \ell $-equivalence} if $ \cofib(f)/\ell = 0 $.
	We write $ \Ccal\ellcomp \subseteq \Ccal $ for the localization of $ \Ccal $ at the $ \ell $-equivalences.
	We refer to $ \Ccal\ellcomp $ as the subcategory of \defn{$ \ell $-complete} objects.
	Then inclusuon $ \Ccal\ellcomp \subseteq \Ccal $ admits a left adjoint that we denote by $ (-)\ellcomp \colon \Ccal \to \Ccal\ellcomp $.
\end{notation}

The following rigidity result of Bachmann generalizes work of Ayoub \cite[\S5]{MR3205601} as well as earlier work by Bachmann \cite[Theorem 6.6]{MR4224739}.

\begin{theorem}[{rigidity \cite[Theorem 3.1]{arXiv:2104.06002}}]\label{thm:Bachmann_rigidity_of_etale_SH}
	Let $ S $ be a scheme and $ \ell $ a prime number invertible on $ S $.
	Then the natural symmetric monoidal left adjoint
	\begin{equation*}
		\begin{tikzcd}
			\Shethyp(\Et_S;\Sp)\ellcomp \arrow[r, "i_{\ell}^{\ast,\wedge}"] & \Shethyp(\Sm_S;\Sp)\ellcomp \arrow[r] & \SH_{\et}(S)\ellcomp 
		\end{tikzcd}
	\end{equation*}
	is an equivalence.
\end{theorem}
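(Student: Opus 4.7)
The plan is to treat this as a genuine rigidity statement: since both sides satisfy étale hyperdescent in $S$, one should first reduce to the case where $S$ is the spectrum of a strictly Henselian local ring, and then, by a continuity/left Kan extension argument along the filtered colimit presenting such a ring as a colimit of smooth $\ZZ[\ell^{-1}]$-algebras, further reduce to the case $S = \Spec(k)$ with $k$ separably closed. In that case the small étale site of $ S $ is trivial, so the left-hand side collapses to $\Sp\ellcomp$, and the claim becomes that $\SH_{\et}(k)\ellcomp \simeq \Sp\ellcomp$.

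Over $ k $ separably closed, I would next identify a candidate inverse: the functor $\Sp\ellcomp \to \SH_{\et}(k)\ellcomp$ sending a spectrum $ E $ to its $ \ell $-completed constant étale motivic spectrum, which is a symmetric monoidal left adjoint by universality. Writing $c \colon \Sp\ellcomp \to \SH_{\et}(k)\ellcomp$ for this functor, the goal splits into two parts: (i) $c$ is fully faithful, i.e.\ for $ E $ an $ \ell $-complete spectrum the unit map $E \to \Gamma(k, c(E))$ is an equivalence; and (ii) $c$ generates under colimits, i.e.\ every $ \ell $-complete étale motivic spectrum over $ k $ is in the essential image.

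Fully faithfulness would be proved by reducing to the generators: one needs $\Map_{\SH_{\et}(k)\ellcomp}(\Sigma^{\infty}_{+} X, c(E)) \simeq E^{X(\kbar)^{\et}}\ellcomp$ for $X \in \Sm_k$, which in turn reduces via cofinality and a standard smooth-neighborhood argument to the well-known fact that $\ell$-adic étale cohomology of a smooth $k$-scheme is representable by the étale realization — here the key inputs are (a) Suslin--Voevodsky rigidity, stating that $\ell$-torsion étale sheaves are insensitive to henselization along smooth points, and (b) $\AA^1$-invariance of $\ell$-torsion étale cohomology, which together guarantee the $\AA^1$-localization is automatic after $\ell$-completion. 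For essential surjectivity, one must show the generators $\Sigma^{\infty}_{+} X \otimes \Th(-n)$ of $\SH_{\et}(k)\ellcomp$ all lie in the image; this requires promoting the equivalence from spaces to spectra by verifying that the Tate motive $\Sup^{2,1}$ becomes equivalent to $\Sigma^{2}$ after étale $\ell$-completion, which follows from the computation of $\Hrm^{*}_{\et}(\PP^1; \ZZ/\ell^n)$.

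The main obstacle will be the Tate-invertibility / $\PP^1$-stabilization step, since a priori the étale $ \ell $-completion of $\Shethyp(\Et_k;\Sp)$ knows nothing about $\PP^1$. The cleanest way around this is to show directly that inverting $\Sup^{2,1}$ on the hypercomplete étale $\AA^1$-local $\ell$-complete category is a trivial operation — equivalently, that the endomorphism $\Sigma^{-2}(\Sup^{2,1}) \otimes(-)$ is already invertible — and this is where the rigidity + $\AA^1$-invariance inputs become essential. Once this is in place, the reduction steps in the first paragraph propagate the equivalence from a separably closed point back to arbitrary $ S $ with $ \ell $ invertible.
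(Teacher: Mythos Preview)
The paper does not prove this theorem. It is stated as a citation to Bachmann's work \cite[Theorem 3.1]{arXiv:2104.06002} (generalizing earlier work of Ayoub and of Bachmann himself), and is used as a black box to define the $\ell$-adic étale realization functor in \cref{definition:etale_realization}. There is therefore no proof in the paper to compare your proposal against.

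That said, your sketch follows the broad contours of how such rigidity theorems are actually proved: reduce to local bases by hyperdescent, then to separably closed fields, then identify $\SH_{\et}(k)\ellcomp$ with $\Sp\ellcomp$ via Suslin--Voevodsky rigidity and $\AA^1$-invariance of torsion étale cohomology, with the Tate twist handled by the computation of $\Hrm^{*}_{\et}(\PP^1;\mu_{\ell^n})$. Two places where your outline is thinner than the actual argument: the continuity step (passing from a strictly Henselian local ring to a separably closed field, or to finite-type $\ZZ[\ell^{-1}]$-schemes) is genuinely delicate for $\SH$ and its $\ell$-completion and requires care about which objects are compact; and showing that both sides form étale hypersheaves in $S$, so that the reduction to points is legitimate, is itself a nontrivial input. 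If you want to flesh this out, Bachmann's papers are the right place to look for how these technicalities are handled.
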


\begin{definition}[étale realization]
\label{definition:etale_realization}
	Let $ S $ be a scheme and $ \ell $ a prime number invertible on $ S $.
	The \defn{$ \ell $-adic étale realization functor} is the composite
	\begin{equation*}
		\begin{tikzcd}
			\Re_{\ell} \colon \SH(S) \arrow[r, "\Let"] & \SH_{\et}(S) \arrow[r, "(-)\ellcomp"] & \SH_{\et}(S)\ellcomp \arrow[r, "\sim"{yshift=-0.25ex}] & \Shethyp(\Et_S;\Sp)\ellcomp \period
		\end{tikzcd}
	\end{equation*}
	Here the last equivalence is the inverse of the rigidity equivalence of \Cref{thm:Bachmann_rigidity_of_etale_SH}.
	Note that $ \Re_{\ell} $ is a composite of symmetric monoidal left adjoints, hence is a symmetric monoidal left adjoint.
\end{definition}

\begin{example}
	Let $ k $ be a separably closed field and $ \ell \neq \characteristic(k) $.
	Then $ \ell $-adic étale realization provides a symmetric monoidal left adjoint 
	\begin{equation*}
		\Re_{\ell} \colon \SH(k) \to \Sp\ellcomp 
	\end{equation*}
	to $ \ell $-complete spectra.
\end{example}


\section{Motivic spectra as sheaves on pure motives} 
\label{section:motivic_spectra_as_sheaves_on_pure_motives}

Let $ k $ be a field of exponential characteristic $ e $.
Our goal in this section is to describe the \category $ \SH(k)[\einv] $ of motivic spectra away from the characteristic in terms of motives of smooth proper $ k $-schemes (see \Cref{theorem:shk_is_additive_sheaves_on_perfect_pure_motives}). 

In \cref{subsec:perfect_pure_motivic_spectra} we introduce a subcategory $ \Pure(k) \subseteq \SH(k)[\einv] $ of pure motives and explore its basic properties.
In \cref{subsec:characterization_of_cofiber_sequences_of_perfect_pure_motivic_spectra} characterizes the cofiber sequences in $ \Pure(k) $; see \Cref{proposition:pure_epimorphisms_detected_by_mgl}.
In \cref{subsec:pure_sheaves} we prove our alternative description of $ \SH(k)[\einv] $.

\begin{notation}\label{ntn:SH_means_SH_with_e_inverted}
	Let $ k $ be a field of exponential characteristic $ e $.
	For the remainder of this section, we simply write
	\begin{equation*}
		\SH(k) \colonequals \SH(k)[\einv]
	\end{equation*}
	for the localization of the stable motivic category away from the exponential characteristic. 
	All of the motivic spectra appearing below are implicitly localized as well. 
\end{notation}


\subsection{Perfect pure motivic spectra}\label{subsec:perfect_pure_motivic_spectra}

We start by introducing the subcategory of `pure motives' relevant for our work.
Our definition is inspired by Bachmann, Kong, Wang, and Xu's recent introduction of the \textit{Chow--Novikov \tstructure} on motivic spectra \cite{MR4387236}.

\begin{definition}
	\label{definition:perfect_pure_motivic_spectrum}
	We write 
	\begin{equation*}
		\perfectpures \subseteq \SH(k)
	\end{equation*}
	for the smallest subcategory closed under extensions and retracts which contains the Thom spectrum $ \Th(\eta) $ for any smooth proper $ k $-scheme $ X $ and any class $ \eta \in \Kup_{0}(X) $.
	We say a motivic spectrum $ A $ is \defn{perfect pure} if $ A \in \Pure(k) $.
\end{definition}

\begin{remark}
	The connective part $ \SH(k)_{c\geq 0} $ of the Chow--Novikov \tstructure is the closure of $ \Pure(k) \subseteq \SH(k) $ under colimits and extensions.
\end{remark}

We begin by enumerating the basic features of $ \Pure(k) $.

\begin{lemma}\label{lemma:perfect_pures_closed_under_monoidal_product_and_duals}
	The following statements hold:
	\begin{enumerate}
		\item Every object of $ \Pure(k) $ is dualizable in $ \SH(k) $.

		\item The subcategory $ \Pure(k) \subseteq \SH(k) $ is closed under monoidal duals.

		\item Every object of $ \Pure(k) $ is compact in $ \SH(k) $.

		\item The subcategory $ \Pure(k) \subseteq \SH(k) $ is closed under tensor products.
	\end{enumerate}
\end{lemma}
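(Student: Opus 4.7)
The plan is to verify all four properties on the generators $\Th(\eta) = f_{\sharp}\Th_{X}(\eta)$ (for $f \colon X \to \Spec(k)$ smooth proper and $\eta \in \Kup_{0}(X)$) and then observe that each property is preserved under the closure operations of extensions and retracts used to define $\Pure(k)$.

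First I would handle (1) and (2) together. By \cref{rec:dual_of_Thom_spectrum_on_smooth_proper}, each such Thom spectrum is dualizable with dual $\Th(-\eta - \Tup_{X})$, which is again a generator of $\Pure(k)$. Dualizability is clearly closed under retracts and, since $\otimes$ is exact in each variable, also under cofiber sequences. Moreover, dualization is a contravariant exact functor on the full subcategory of dualizable objects, so it carries cofiber sequences to cofiber sequences and retracts to retracts; hence both (1) and (2) extend from the generators to all of $\Pure(k)$.

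For (3), I would deduce compactness from dualizability together with compactness of the monoidal unit $\Unit_{k} \in \SH(k)$: if $A$ is dualizable, then $\Map_{\SH(k)}(A, -) \simeq \Map_{\SH(k)}(\Unit_{k}, A^{\vee} \otimes -)$ commutes with filtered colimits, since $- \otimes A^{\vee}$ does and $\Unit_{k}$ is compact. As compact objects are closed under extensions and retracts, (3) then follows from (1).

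The main obstacle is (4). The key computation is that for smooth proper $f \colon X \to \Spec(k)$ and $g \colon Y \to \Spec(k)$ with $\eta \in \Kup_{0}(X)$ and $\zeta \in \Kup_{0}(Y)$, repeated applications of smooth basechange and the smooth projection formula yield
\begin{equation*}
	\Th(\eta) \otimes \Th(\zeta) \simeq (f \circ p)_{\sharp} \Th_{X \times_{k} Y}(p^{*}\eta + q^{*}\zeta) \comma
\end{equation*}
where $p \colon X \times_{k} Y \to X$ and $q \colon X \times_{k} Y \to Y$ are the projections. Since $X \times_{k} Y$ is smooth and proper over $k$, the right-hand side is a generator of $\Pure(k)$. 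To extend this from the generators to arbitrary objects of $\Pure(k)$, I would apply the standard bootstrap: for any $A \in \SH(k)$, the full subcategory $\{B \in \SH(k) : A \otimes B \in \Pure(k)\}$ is closed under extensions and retracts because $- \otimes A$ is exact, so if it contains the generators it contains all of $\Pure(k)$; applying this argument twice reduces the general claim to the two-generator case handled above.
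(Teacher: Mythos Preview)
Your proof is correct and follows essentially the same approach as the paper: verify (1)--(4) on the Thom spectrum generators using \cref{rec:dual_of_Thom_spectrum_on_smooth_proper} and the product formula $\Th(\eta) \otimes \Th(\eta') \simeq \Th(\eta \times \eta')$, then extend via closure under extensions and retracts. You have simply spelled out in more detail the bootstrap arguments that the paper leaves implicit.
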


\begin{proof}
	Items (1) and (2) are immediate from the definition of $ \Pure(k) $, \Cref{rec:dual_of_Thom_spectrum_on_smooth_proper}, and the fact that dualizable objects are closed under extensions.
	Item (3) follows from item (1) and the fact that, since the unit of $ \SH(k) $ is compact, every dualizable object of $ \SH(k) $ is compact.

	For item (4), note that if $ X $ and $ X' $ are smooth $ k $-schemes and $ \eta \in \Kup_0(X) $ and $ \eta' \in \Kup_0(X') $, then 
	\begin{equation*}
		\Th(\eta) \tensor \Th(\eta') \equivalent \Th(\eta \cross \eta') \period
	\end{equation*}
	Hence the claim follows from the definition of $ \Pure(k) $ and the fact that smooth proper $ k $-schemes are closed under fiber products in $ \Sm_k $.
\end{proof}

\begin{warning}
	\Cref{definition:perfect_pure_motivic_spectrum} is related to, but distinct from, the notion of a \textit{pure motivic spectrum} introduced in \cite[Definition 2.10]{bachmann2022chow}.
	The subcategory of pure motivic spectra in the sense of Bachmann--Kong--Wang--Xu is the closure of $ \Pure(k) $ under filtered colimits and extensions. 
	Using the fact that perfect pure motivic spectra are compact, it is not difficult to show that a pure motivic spectrum $ A $ is perfect pure if and only $ A $ is compact. 
\end{warning}

\begin{remark}
	Since we work away from the characteristic, \cites[Remark 2.19]{bachmann2022chow}[Theorem 3.2.1]{MR3999675}[Proposition B.1]{MR4023393} show that $ \Pure(k) $ generates $ \SH(k) $ under colimits and desuspensions. 
\end{remark}

An important class of examples of motivic spectra are Thom spectra associated to vector bundles on Grassmannians:

\begin{notation}[Grassmannians]
	Let $ n \geq d \geq 0 $ be integers.
	Write
	\begin{equation*}
		\Gr_{d}(n) \colonequals \Gr_{d}(\AA_{k}^{n})
	\end{equation*}
	for the Grassmanian of $d$-dimensional linear subspaces of $\AA_k^{n}$. 
	Recall that $ \Gr_d(n) $ is a smooth projective variety of dimension $d(n-d)$.
\end{notation}

\begin{example}
    \label{example:thom_spectra_of_grassmanians_are_perfect_pure}
	Write $\gamma_{d, n}$ for the tautological bundle of rank $ d $ over $ \Gr_{d}(n) $  and 
	\begin{equation*}
		\epsilon_{d, n} \colonequals [\gamma_{d, n}] - [\Ocal_{\Gr_{d}(n)}^{\oplus d}] \in \Kup_{0}(\Gr_{d}(n)) \period
	\end{equation*}
    for the associated virtual vector bundle of rank zero.
    Write $ \Th_{d}(n) \colonequals \Th(\epsilon_{d, n}) $ for the associated Thom spectrum.
    Since $ \Gr_{d}(n) $ is smooth and proper, $ \Th_{d}(n) $ is perfect pure. 

    Since 
	\begin{equation*}
		\MGL \equivalent \varinjlim_{d , n \to \infty} \Th_{d}(n+d) \comma
	\end{equation*}
	we deduce that $ \MGL $ is a filtered colimit of perfect pure motivic spectra. 
\end{example}

We are particularly interested in cofiber sequences in $ \Pure(k) $; hence we make the following definitions.

\begin{definition}
	\label{definition:pure_epimorphism}
	\label{definition:pure_monomorphism}
    We say that a morphism $ f \colon B \to A $ in $ \Pure(k) $ is: 
    \begin{enumerate}
         \item A \emph{pure epimorphism} if its fiber $ \fib(f) $ in $ \SH(k) $ is again a perfect pure motivic spectrum. 
         
         \item A \emph{pure monomorphism} if its monoidal dual $ f^{\vee} \colon B^{\vee} \to A^{\vee}$ is a pure epimorphism; equivalently, if the cofiber $ \cofib(f) $ in $ \SH(k) $ is perfect pure. 
    \end{enumerate}
\end{definition}

The transition maps appearing in \Cref{example:thom_spectra_of_grassmanians_are_perfect_pure} are all pure monomorphisms:

\begin{lemma}\label{lem:pure_monomorphisms_between_Thom_spectra_of_vector_bundles_on_Grassmannians}
	Let $ d,m \geq 0 $ be integers.
	Then the following maps are pure monomorphisms:
	\begin{enumerate}
		\item The map $\Th_{d}(m) \to \Th_{d+1}(m+1)$ induced by the morphism $ \Gr_{d}(m) \to \Gr_{d+1}(m+1) $ classifying $ \gamma_{d, m} \oplus \Ocal_{\Gr_{d}(m)} $.

		\item The map $\Th_{d+1}(m) \to \Th_{d}(m+1)$ induced by the map $\Gr_{d+1}(m) \to \Gr_{d}(m+1)$ classifying 
		\begin{equation*}
			\gamma_{d+1, m} \subseteq \Ocal_{\Gr_{d+1}(m)}^{\oplus m} \subseteq \Ocal_{\Gr_{d+1}(m)}^{\oplus m+1} \period 
		\end{equation*}
	\end{enumerate}
\end{lemma}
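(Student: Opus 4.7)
My plan is to compute, for each of the two maps, the cofiber in $\SH(k)$ and exhibit it as an object of $\Pure(k)$. Both maps have the form $\Th_{Z}(\phi^{*}V) \to \Th_{Y}(V)$, where $\phi \colon Z \hookrightarrow Y$ is a closed immersion of smooth projective Grassmannians and $V$ is the relevant tautological bundle on $Y$; the map itself is induced at the level of Thom spaces by the closed immersion of total spaces $\phi^{*}V = V \times_{Y} Z \hookrightarrow V$ (which is compatible with the zero sections). By the closure of $\Pure(k)$ under extensions (\cref{definition:perfect_pure_motivic_spectrum}), it suffices to realise the cofiber as an iterated extension of Thom spectra of $\Kup$-theory classes on smooth projective varieties.

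The main geometric input is the open--closed decomposition $Z \hookrightarrow Y \hookleftarrow U$. In case (1), $U = \Gr_{d+1}(m+1) \smallsetminus \Gr_{d}(m)$ parametrises $(d+1)$-dimensional subspaces of $k^{m+1}$ not containing $\langle e_{m+1}\rangle$; projecting away from $e_{m+1}$ identifies $U$ with the total space of $\gamma_{d+1, m}^{\vee}$ over $\Gr_{d+1}(m)$, under which $V|_{U}$ pulls back from $\gamma_{d+1, m}$. A tangent-bundle calculation gives $N_{\phi} \simeq \gamma_{d, m}^{\perp}$, and the short exact sequence $0 \to \gamma_{d, m} \to \mathcal{O}^{m} \to \gamma_{d, m}^{\perp} \to 0$ makes $\phi^{*}V \oplus N_{\phi} = (\gamma_{d, m} \oplus \mathcal{O}) \oplus \gamma_{d, m}^{\perp}$ trivial in $\Kup_{0}$ of rank $m+1$. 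Combining motivic purity for $\phi$ with $\AA^{1}$-invariance along $U \to \Gr_{d+1}(m)$ yields a cofiber sequence
\begin{equation*}
	\Th_{d+1}(m) \to \Th_{d+1}(m+1) \to \Sigma^{2(m-d), m-d} \Sigma^{\infty}_{+} \Gr_{d}(m)
\end{equation*}
in $\SH(k)$, in which every term is visibly in $\Pure(k)$. Case (2) is handled in parallel, the open complement being identified with an affine bundle over a smaller Grassmannian.

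Since the first map of this sequence is not the lemma's map, I will extract the desired cofiber by combining it with the Gysin triangle $\Th_{Z}(\phi^{*}V) \to \Th_{Y}(V) \to \Th_{Z}(\phi^{*}V \oplus N_{\phi})$ via the octahedral axiom, identifying the outer composite with the Thom-class multiplication map $\Th_{Z}(\phi^{*}V) \to \Th_{Z}(\phi^{*}V \oplus N_{\phi})$. The $\Kup$-theoretic triviality of $\phi^{*}V \oplus N_{\phi}$ then reduces the residual cofibers to maps between Thom spectra of $\Kup$-theory classes on the smooth projective variety $Z$, which a further cellular analysis should exhibit as objects of $\Pure(k)$; closure of $\Pure(k)$ under extensions completes the argument.

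The main obstacle I anticipate is twofold. First, identifying the outer composite of the octahedron with the Thom-class multiplication is a compatibility statement for motivic purity---classical in topology, but requiring some care in the motivic setting (I would appeal either to deformation to the normal cone or to a suitable naturality statement for Voevodsky's purity isomorphism). Second, the cell-level identification of the residual cofibers is delicate because individual Thom spectra such as $\Th_{\mathbb{P}^{1}}(\mathcal{O}(-1)) \simeq S^{0}/\eta$ are not Tate spheres; I expect this bookkeeping to work out precisely thanks to the additive triviality of $\phi^{*}V \oplus N_{\phi}$ in $\Kup_{0}$.
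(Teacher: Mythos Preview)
Your geometric setup is correct, but you have assigned each purity sequence to the wrong half of the lemma, and the octahedral patch you propose to compensate has a real gap.

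For part (1) you take $Z = \Gr_d(m)$ as the closed locus in $\Gr_{d+1}(m+1)$. The purity cofiber sequence you then obtain,
\[
\Th_{d+1}(m) \to \Th_{d+1}(m+1) \to \Sigma^{2(m-d),\,m-d}\,\Sigma_{+}^{\infty}\Gr_d(m),
\]
has as its \emph{first} map precisely the map of part (2): your open complement $U$ retracts onto $\Gr_{d+1}(m)$, and the zero section $\Gr_{d+1}(m)\hookrightarrow U\hookrightarrow\Gr_{d+1}(m+1)$ is the map classifying $\gamma_{d+1,m}\subseteq\Ocal^{m+1}$. Symmetrically, your ``parallel'' treatment of (2) with closed locus $\Gr_{d+1}(m)$ has open complement an affine bundle over $\Gr_d(m)$ whose zero section is the map of part (1). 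Swap the two decompositions and both parts follow directly from purity with no octahedron; this is exactly what the paper does for (1).

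The octahedral extraction you sketch does not close the gap. Granting the identification of the composite $\Th_Z(\phi^*V)\to\Th_Y(V)\to\Th_Z(\phi^*V\oplus N_\phi)$ with the zero-section map, the octahedron produces a cofiber sequence
\[
\Th_{d+1}(m)\;\longrightarrow\;\cofib(\text{map (1)})\;\longrightarrow\;\cofib\!\bigl(\Th_Z(\phi^*V)\to\Th_Z(\phi^*V\oplus N_\phi)\bigr),
\]
so you would need the rightmost term to lie in $\Pure(k)$. But that cofiber is $\Sigma^{\phi^*V}\Sigma\,\Sigma_{+}^{\infty}(N_\phi\smallsetminus 0_Z)$, a twisted suspension of the complement of the zero section in a vector bundle---not smooth projective, and not a Thom spectrum of a $\Kup$-theory class on one. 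The $\Kup$-theoretic triviality of $\phi^*V\oplus N_\phi$ says nothing about this object, and suspensions of perfect pure motivic spectra are generally \emph{not} perfect pure (already $\Sigma\Sup^{0,0}\notin\Pure(k)$). So the ``further cellular analysis'' you anticipate would amount to reproving the lemma by other means.
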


\begin{proof}
	For (1), write $ U $ for the open complement of the closed immersion
	\begin{equation*}
		\Gr_{d+1}(m) \hookrightarrow \Gr_{d+1}(m+1)
	\end{equation*}
	induced by the inclusion $ \AA_{k}^{m} \subseteq \AA_{k}^{m+1} $.
	Note that the map $ \Gr_{d}(m) \to \Gr_{d+1}(m+1) $ factors as 
	\begin{equation}
	    \label{equation:factorization_of_map_of_grassmanians}
	    \Gr_{d}(m) \hookrightarrow U \hookrightarrow \Gr_{d+1}(m+1) \period
	\end{equation}
	Moreover, the left map in \eqref{equation:factorization_of_map_of_grassmanians} is an affine vector bundle and hence a motivic homotopy equivalence.
	Applying purity (see \cite[Lemma A.2]{bachmann2022chow}) to the open-closed decomposition
	\begin{equation}
	\label{equation:open_closed_decomposition_of_grassmanians}
	    \begin{tikzcd}
	        U \arrow[r, hook, "j"] & \Gr_{d+1}(m+1) & \Gr_{d+1}(m) \arrow[l, hook', "i"']
	    \end{tikzcd}
	\end{equation}
	and the virtual vector bundle $\epsilon_{d+1, m+1}$ gives a cofiber sequence in $ \SH(k) $ of the form 
	\begin{equation*}
		\begin{tikzcd}
	         \Th_{d}(m) \arrow[r] & \Th_{d+1}(m+1) \arrow[r] & \Th_{\Gr_{d+1}(m)}(\epsilon_{d+1, m+} \oplus \Ncal) \period
	    \end{tikzcd}
	\end{equation*}
	Here, $ \Ncal $ is the normal bundle of $\Gr_{d+1}(m) \hookrightarrow \Gr_{d+1}(m+1)$. 
	As the cofiber is perfect pure, we deduce that the first map is a pure monomorphism.

	For (2), note that these are the maps corresponding to the closed component in \eqref{equation:open_closed_decomposition_of_grassmanians}. 
	Write $ \Tup_{\Gr_{d+1}(m+1)} $ for the tangent bundle of $ \Gr_{d+1}(m+1) $, and define a virtual vector bundle $ V $ on $ \Gr_{d+1}(m+1) $ by
	\begin{equation*}
	    V \colonequals \Tup_{\Gr_{d+1}(m+1)} \oplus \epsilon_{d+1, m+1} \period
	\end{equation*}
	Applying purity to $ V $, we obtain a cofiber sequence of the form 
	\begin{equation*}
	    \begin{tikzcd}
	         \Th_{\Gr_{d}(m)}(j^{*}V) \arrow[r] & \Th_{\Gr_{d+1}(m+1)}(V) \arrow[r] & \Th_{\Gr_{d+1}(m)}(i^{*}V \oplus \Ncal) \period
	    \end{tikzcd}
	\end{equation*}
	Passing to monoidal duals and applying \Cref{rec:dual_of_Thom_spectrum_on_smooth_proper}, we obtain a cofiber sequence
	\begin{equation*}
	    \begin{tikzcd}
	        \Th_{d+1}(m) \arrow[r] & \Th_{d+1}(m+1) \arrow[r] & \Th_{\Gr_{d}(m)}(i^{*}V - \Tup_{\Gr_{d}(m)}) \period
	    \end{tikzcd}
	\end{equation*} 
	This shows that the right-hand map is a pure monomorphism, as needed. 
\end{proof}

\begin{example}\label{ex:MGL_as_filtered_colimit_along_pure_monomorphisms}
	In light of \Cref{example:thom_spectra_of_grassmanians_are_perfect_pure,lem:pure_monomorphisms_between_Thom_spectra_of_vector_bundles_on_Grassmannians}, we can write 
	\begin{equation*}
		\MGL \equivalent \varinjlim_{d,n \to \infty} \Th_{d}(n+d) 
	\end{equation*}
	as the colimit a filtered diagram of perfect pure motivic spectra where all of the transition maps are pure monomorphisms. 
\end{example}


\subsection{Characterization of cofiber sequences of perfect pure motivic spectra}\label{subsec:characterization_of_cofiber_sequences_of_perfect_pure_motivic_spectra}

We now give a useful characterization of pure epimorphisms.
In \cref{subsec:pure_sheaves}, we use this characterization to give a description of $ \SH(k) $ as \acategory of sheaves of spectra on $ \Pure(k) $.
Before we start, let us recall a number of equivalent characterizations of split cofiber sequences. 

\begin{recollection}[split cofiber sequences]\label{rec:split_cofiber_sequences}
	If $ \Ccal $ is an additive \category, a cofiber sequence
	\begin{equation}\label{eq:split_cofiber_sequence}
		\begin{tikzcd}
			A \arrow[r, "i"] & B \arrow[r, "p"] & C
		\end{tikzcd}
	\end{equation}
	is said to be \emph{split} if there exists a section $ s \colon C \to B $ of $ p $, which implies that $B \simeq A \oplus C$. 
    In this case, we say that $ i \colon A \to B $ is a \defn{split monomorphism}, and $ p \colon B \to C $ is a \defn{split epimorphism}.
\end{recollection}

\begin{recollection}
    Any additive functor $\Ccal \to \Dcal$ of additive \categories preserves split cofiber sequences.
\end{recollection}

\begin{recollection}
	Let $ \Ccal $ be a symmetric monoidal stable \category, and assume that the tensor product is exact separately in each variable.
	Let $ A $ be an $ \E_{1} $-algebra in $ \Ccal $.
	We say that a cofiber sequence $ X \to Y \to Z $ in $ \Ccal $ is \defn{$ A $-split} if the induced cofiber sequence
	\begin{equation*}
		\begin{tikzcd}
			A \tensor X \arrow[r] & A \tensor Y \arrow[r] & A \tensor Z
		\end{tikzcd}
	\end{equation*}
	is a split cofiber sequence in $ \Mod_{A}(\Ccal) $.
\end{recollection}

In order to characterize pure epimorphisms, we make use of the fact that $ \MGL $-homology of perfect pure motivic spectra vanishes in negative \textit{Chow degree}:

\begin{lemma}[{\cite[Proposition 3.6(2)]{bachmann2022chow}}]
	\label{lemma:vanishing_of_mgl_homology_of_perfect_pures}
	Let $ A \in \SH(k)_{c \geq 0} $ be a connective object of the Chow--Novikov \tstructure, and let $ d,w \in \ZZ $.
	If $ d - 2w < 0 $, then $ \MGL_{d, w}(A) = 0 $.
\end{lemma}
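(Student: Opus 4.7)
The plan is to reduce the statement, via closure properties of $\MGL$-homology and the Thom isomorphism, to a vanishing result for motivic cobordism of smooth proper varieties.

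For each fixed pair $(d, w)$, observe first that the class of motivic spectra $A$ with $\MGL_{d, w}(A) = 0$ is closed under colimits, extensions, and retracts, since $\MGL_{d, w}(-) = \pi_{d, w}(\MGL \otimes -)$ is a homological functor commuting with filtered colimits. As $\SH(k)_{c \geq 0}$ is by definition the closure of $\Pure(k)$ under colimits and extensions, and $\Pure(k)$ is generated under extensions and retracts by the Thom spectra $\Th(\eta)$ with $X$ smooth proper and $\eta \in \Kup_{0}(X)$, it suffices to verify $\MGL_{d, w}(\Th(\eta)) = 0$ when $d - 2w < 0$.

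Next, using that $\MGL$ is complex oriented, for $\eta$ of virtual rank $r$ the Thom isomorphism gives
\[
\MGL \otimes \Th(\eta) \equivalent \Sigma^{2r, r}(\MGL \otimes \Sigma_{+}^{\infty} X)
\]
in $\Mod_{\MGL}(\SH(k))$. The induced shift on bidegrees sends $(d, w)$ to $(d - 2r, w - r)$ and hence preserves the Chow degree $d - 2w$, so we are reduced to the case $A = \Sigma_{+}^{\infty} X$ with $X$ smooth proper of some dimension $n$. Atiyah duality (\Cref{rec:dual_of_Thom_spectrum_on_smooth_proper}) then identifies the dual of $\Sigma_{+}^{\infty} X$ with $\Mc(X) = \Th_{k}(-\Tup_{X})$, yielding
\[
\MGL_{d, w}(\Sigma_{+}^{\infty} X) \equivalent \MGL^{2n - d, n - w}(\Sigma_{+}^{\infty} X),
\]
and so the target becomes the vanishing of $\MGL^{a, b}(X)$ for $a - 2b > 0$.

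The main obstacle will be this final vanishing: it rests on the Hopkins--Morel--Hoyois computation of the slices of $\MGL$ (whose associated graded is a polynomial algebra over $\MZZ$ concentrated in nonpositive Chow degree) together with convergence of the slice spectral sequence for smooth proper $X$, which in turn reduces the problem to the analogous vanishing for motivic cohomology $\MZZ^{a, b}(X) = 0$ when $a - 2b > 0$. These substantial inputs are precisely what \cite[Proposition 3.6(2)]{bachmann2022chow} packages into a single reference.
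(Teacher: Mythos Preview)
The paper does not give a proof of this lemma at all: it is stated purely as a citation to \cite[Proposition 3.6(2)]{bachmann2022chow}, with no argument supplied. Your proposal therefore goes well beyond what the paper does, by actually sketching why the result holds.

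Your reduction steps are correct. The closure argument reducing to Thom spectra of smooth proper varieties is valid since $\MGL_{d,w}(-)$ is homological and commutes with filtered colimits. The Thom isomorphism step is fine and, as you note, the shift $(d,w) \mapsto (d-2r,w-r)$ preserves Chow degree $d-2w$. The Atiyah duality computation translating $\MGL_{d,w}(\Sigma_{+}^{\infty}X)$ into $\MGL^{2n-d,n-w}(X)$ is also correct, and the resulting condition $a-2b>0$ matches.

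One small comment on your last paragraph: it reads a bit circularly, since you conclude by invoking the very reference the lemma cites. What you actually need at that point is independent of Bachmann--Kong--Wang--Xu: the vanishing of $\MZZ^{a,b}(X)$ for $a>2b$ is the identification with higher Chow groups $CH^{b}(X,2b-a)$, which are zero for $2b-a<0$; propagating this to $\MGL$ via the slice filtration (using Hopkins--Morel--Hoyois away from the characteristic, as in \cref{recollection:slices_of_mgl}, together with connectivity of the effective covers in the homotopy $\tup$-structure) gives the result. So your outline stands on its own without appealing back to the cited proposition.
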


\begin{proposition}\label{proposition:pure_epimorphisms_detected_by_mgl}
	Let $ f \colon B \to A $ be a morphism in $ \Pure(k) $.
	The following are equivalent:
	\begin{enumerate}
	    \item The morphism $ f \colon B \to A$ is a pure epimorphism.

	    \item The morphism $ \MGL \tensor f \colon \MGL \otimes B \to \MGL \otimes A $ is a split epimorphism of $ \MGL $-modules. 
	\end{enumerate}
\end{proposition}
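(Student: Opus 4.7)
The plan is to prove the two implications separately; both hinge on the vanishing of $\MGL$-homology in negative Chow degree (\Cref{lemma:vanishing_of_mgl_homology_of_perfect_pures}) combined with the closure properties of $\Pure(k)$ from \Cref{lemma:perfect_pures_closed_under_monoidal_product_and_duals}.

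For $(1) \Rightarrow (2)$, I will translate splitness into the nullity of a boundary class. In a stable category, the map $\MGL \tensor f$ admits an $\MGL$-module section if and only if the connecting map $\partial \colon \MGL \tensor A \to \Sigma (\MGL \tensor \fib(f))$ of the cofiber sequence $\MGL \tensor \fib(f) \to \MGL \tensor B \to \MGL \tensor A$ is null. The free--forgetful adjunction identifies $[\MGL \tensor A, \Sigma \MGL \tensor \fib(f)]_{\Mod_{\MGL}} \simeq [A, \Sigma \MGL \tensor \fib(f)]_{\SH(k)}$, and since $A$ is dualizable by \Cref{lemma:perfect_pures_closed_under_monoidal_product_and_duals}, this equals $\MGL_{-1,0}(\fib(f) \tensor A^{\vee})$. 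The assumption that $\fib(f) \in \Pure(k)$ together with $A^{\vee} \in \Pure(k)$ gives $\fib(f) \tensor A^{\vee} \in \Pure(k)$, which is in particular connective in the Chow--Novikov $t$-structure. Applying \Cref{lemma:vanishing_of_mgl_homology_of_perfect_pures} in bidegree $(d, w) = (-1, 0)$---where $d - 2w = -1 < 0$---then kills the obstruction and concludes this direction.

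For $(2) \Rightarrow (1)$, I assume $\MGL \tensor f$ is split epi and argue that $\fib(f)$ lies in $\Pure(k)$. The splitting exhibits $\MGL \tensor \fib(f)$ as a direct summand of $\MGL \tensor B$ in $\Mod_{\MGL}(\SH(k))$. I would then analyze the long exact sequence of Chow--Novikov homotopy groups attached to $\fib(f) \to B \to A$: since $B, A \in \Pure(k)$ lie in the heart of the Chow--Novikov $t$-structure, any non-vanishing of the Chow--Novikov homotopy $\pi^{c}_{*}(\fib(f))$ is confined to degrees $0$ and $-1$, with the potential obstruction $\pi^{c}_{-1}(\fib(f))$ identified with the cokernel of $\pi^{c}_{0}(B) \to \pi^{c}_{0}(A)$. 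The compatibility between the Chow--Novikov $t$-structure on $\SH(k)$ and Bondarko's weight structure on $\Mod_{\MGL}(\SH(k))$, combined with the $\MGL$-split epi hypothesis, forces this cokernel to vanish, so $\fib(f)$ lies in the Chow--Novikov heart. Since $\fib(f)$ is compact as the fiber of a map between compact objects (see \Cref{lemma:perfect_pures_closed_under_monoidal_product_and_duals}), the identification of $\Pure(k)$ with the compact objects of the Chow--Novikov heart then yields $\fib(f) \in \Pure(k)$.

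The main obstacle is the second direction: the argument relies on importing two pieces of Bachmann--Kong--Wang--Xu's setup---the $\MGL$-detection of surjectivity on $\pi^{c}_{0}$ and the identification of $\Pure(k)$ with the compact part of the heart. By contrast, $(1) \Rightarrow (2)$ is a short calculation that, after one dualizability step, reduces to a single application of \Cref{lemma:vanishing_of_mgl_homology_of_perfect_pures}.
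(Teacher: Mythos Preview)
Your argument for $(1)\Rightarrow(2)$ is correct and is exactly the paper's proof.

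Your argument for $(2)\Rightarrow(1)$, however, has a genuine gap. The crucial step~3---``compatibility between the Chow--Novikov $t$-structure and Bondarko's weight structure forces the cokernel to vanish''---is not a proof but a gesture toward unspecified machinery. Concretely, you need to deduce that $f$ is an epimorphism in the Chow--Novikov heart from the hypothesis that $\MGL\otimes f$ is split epi in $\Mod_{\MGL}$; but tensoring with $\MGL$ is an exact functor on the stable category, not a priori an exact functor of abelian categories on the heart, so it is not clear that $\MGL\otimes\coker^{\heartsuit}(f)$ is even computed by the hypothesis. Your step~4 also imports the identification of $\Pure(k)$ with the compact objects of the heart, which is not proved in the paper and is close in strength to the proposition itself; invoking it here risks circularity.

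The paper's proof of $(2)\Rightarrow(1)$ is quite different and entirely elementary. It uses the presentation of $\MGL$ as a filtered colimit of Thom spectra $\Th_d(n+d)$ along pure monomorphisms (\cref{ex:MGL_as_filtered_colimit_along_pure_monomorphisms}). Since the boundary map $A\to\Sigma C$ becomes null after tensoring with $\MGL$ and $A$ is compact, the composite $A\to\Sigma C\to\Th_d(n+d)\otimes\Sigma C$ is already null at some finite stage. Dualizing, the composite $\Th_d(n+d)^{\vee}\otimes A\to A\to\Sigma C$ vanishes, so the pullback $B'\colonequals B\times_A(\Th_d(n+d)^{\vee}\otimes A)$ splits as $C\oplus(\Th_d(n+d)^{\vee}\otimes A)$. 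On the other hand $B'$ sits in an extension by $B$ and $\cofib(\Sup^{0,0}\to\Th_d(n+d))^{\vee}\otimes A$, both perfect pure, so $B'\in\Pure(k)$ by closure under extensions; hence its retract $C$ lies in $\Pure(k)$ as well. This avoids any appeal to the structure of the Chow--Novikov heart.
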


\begin{proof}
	(1)$ \Rightarrow $(2) Write $ C \colonequals \fib(f) $. 
	Since 
	\begin{equation*}
		\MGL \otimes C \to \MGL \otimes B \to \MGL \otimes A
	\end{equation*}
	is a cofiber sequence of $ \MGL $-modules, it is enough to show that the boundary map 
	\begin{equation*}
		\partial \colon \MGL \otimes A \to \Sigma(\MGL \otimes C)
	\end{equation*}
	is zero. 
	Since $ A $ is dualizable, we can identify the homotopy class of $ \partial $ with an element of 
	\begin{equation*}
		\MGL_{-1, 0}(A^{\vee} \otimes C) \period
	\end{equation*}
	By \cref{lemma:perfect_pures_closed_under_monoidal_product_and_duals}, $ A^{\vee} \otimes C $ is again perfect pure.
	Hence \cref{lemma:vanishing_of_mgl_homology_of_perfect_pures} shows that $ \MGL_{-1, 0}(A^{\vee} \otimes C) = 0 $. 

	(2)$ \Rightarrow $(1) By assumption, the boundary map $ A \to \Sigma C $ is zero after tensoring with $ \MGL $. 
	Writing $ \MGL $ as a filtered colimit of Thom spectra of Grassmanians along pure monomorphisms as in \cref{ex:MGL_as_filtered_colimit_along_pure_monomorphisms} and using that $ A $ is compact, we deduce that there exists integers $ d,n \geq 0$ such that the composite 
	\begin{equation*}
		A \to \Sigma C \simeq \Th_{0}(0) \otimes \Sigma C \to \Th_{d}(n+d) \otimes \Sigma C 
	\end{equation*}
	is zero. 
	Passing to the dual of the Thom spectrum, we deduce that the composite 
	\begin{equation*}
		\Th_{d}(n+d)^{\vee} \otimes A \to A \to \Sigma C 
	\end{equation*}
	is zero. 
	Write 
	\begin{equation*}
		B' \colonequals B \times_{A} (\Th_{d}(n+d)^{\vee} \otimes A) \period
	\end{equation*}
	Then we have a commutative diagram 
	\begin{equation*}
		\begin{tikzcd}
			C \arrow[r] \arrow[d, equals] & B' \arrow[r] \arrow[d] \arrow[dr, phantom, very near start, "\lrcorner", xshift=-0.5em, yshift=0.1em] & \Th_{d}(n+d)^{\vee} \otimes A \arrow[d] \\
			C \arrow[r] & B \arrow[r] & A 
		\end{tikzcd}
	\end{equation*}
	where the rows are cofiber sequences. 
	Since the boundary map $\Th_{d}(n+d)^{\vee} \otimes A \to \Sigma C$ is zero, we have 
	\begin{equation*}
		B' \simeq C \oplus (\Th_{d}(n+d)^{\vee} \otimes A) \period
	\end{equation*}
	Since $B'$ is an extension of $\cofib(\Sup^{0, 0} \to \Th_{d}(n+d))^{\vee} \otimes A$ and $B$, we see that $ B' $ is perfect pure. 
	Hence its direct summand $ C $ is also perfect pure, completing the proof. 
\end{proof}


\subsection{Pure sheaves}\label{subsec:pure_sheaves}

We now give a description of $ \SH(k) $ as \acategory of sheaves of spectra on $ \Pure(k) $.
The following is the key definition of this subsection:

\begin{definition}\label{def:pure_sheaf}
	We say a spectral presheaf 
	\begin{equation*}
		X \colon \perfectpures^{\op} \to \spectra
	\end{equation*}
	is a \emph{pure sheaf} if $ X $ sends cofiber sequences of perfect pure motivic spectra to fiber sequences of spectra. 
	We write
	\begin{equation*}
		\ShSigma(\perfectpures; \spectra) \subseteq \PSh(\Pure(k);\Sp)  
	\end{equation*}
	for the full subcategory spanned by the pure sheaves.
\end{definition}

\begin{remark}
	A pure sheaf $X \colon \perfectpures^{\op} \to \spectra$ is in particular additive.
	Our terminology comes from the fact that, as a consequence of \cite[Theorem 2.8]{pstrkagowski2022synthetic}, among all additive functors pure sheaves are characterized by the sheaf property with respect to the Grothendieck pretopology on $\perfectpures$ where covering families consists of a single pure epimorphism. 

	By \cite[Proposition 2.5]{pstrkagowski2022synthetic}, the left adjoint 
	\begin{equation*}
		L \colon \PSigma(\perfectpures; \spectra) \to \ShSigma(\perfectpures; \spectra)
	\end{equation*}
	to the inclusion can be identified with the sheafication functors with respect to this topology. 
	In particular, it is \texact with respect to the \tstructures inherited from that of spectra. 
\end{remark}

\begin{nul}
	The inclusion 
	\begin{equation*}
		\perfectpures \hookrightarrow \SH(k)
	\end{equation*}
	preserves cofiber sequences. 
	Since the target is stable and cocomplete, it follows formally that its left Kan extension defines a symmetric monoidal left adjoint 
	\begin{equation*}
		F \colon \ShSigma(\perfectpures; \spectra) \to \SH(k) \period 
	\end{equation*}
	Its right adjoint 
	\begin{equation*}
		G \colon \SH(k) \to \ShSigma(\perfectpures; \spectra) 
	\end{equation*}
	is given by the spectral Yoneda embedding; i.e., 
	\begin{equation*}
		G(X)(A) \simeq \map_{\SH(k)}(A, X) \period
	\end{equation*}
\end{nul}

\begin{lemma}
	\label{lemma:homotopy_class_of_maps_between_perfect_pures_locally_zero_in_negative_chow_deg}
	Let $ A, B \in \Pure(k) $ be perfect pure and let $ m < 0 $ be an integer.
	Given a map $ \Sigma^{m} B \to A $, there exists a pure epimorphism $ B' \to B$ such that the composite 
	\begin{equation*}
		\Sigma^{m} B' \to \Sigma^{m} B \to A 
	\end{equation*}
	is zero. 
\end{lemma}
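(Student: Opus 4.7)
The plan is to combine the Chow-degree vanishing of $\MGL$-homology on perfect pure motivic spectra (\cref{lemma:vanishing_of_mgl_homology_of_perfect_pures}) with the filtered colimit presentation $\MGL \simeq \varinjlim_{d,n \to \infty} \Th_{d}(n+d)$ along pure monomorphisms of \cref{ex:MGL_as_filtered_colimit_along_pure_monomorphisms}, in order to extract an explicit pure epimorphism $B' \to B$ that kills the given map after desuspending. Morally, this is dual to the argument used in the proof of (2)$\Rightarrow$(1) of \cref{proposition:pure_epimorphisms_detected_by_mgl}.

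To execute this, I would first use the dualizability of $B$ (\cref{lemma:perfect_pures_closed_under_monoidal_product_and_duals}) to pass to the adjoint map $\widetilde{f} \colon \Sup^{m,0} \to B^{\vee} \tensor A$. Since $B^{\vee} \tensor A$ is again perfect pure and $m < 0$, \cref{lemma:vanishing_of_mgl_homology_of_perfect_pures} yields $\MGL_{m,0}(B^{\vee} \tensor A) = 0$, so $\widetilde{f}$ is nullhomotopic after smashing with $\MGL$; equivalently, the composite $\Sigma^{m} B \xrightarrow{f} A \to A \tensor \MGL$ is null. Invoking compactness of $\Sigma^{m} B$ (\cref{lemma:perfect_pures_closed_under_monoidal_product_and_duals}(3)) together with the filteredness of the indexing of $\MGL$, exactly as in the proof of \cref{proposition:pure_epimorphisms_detected_by_mgl}, there exist integers $d, n \geq 0$ for which the composite
\begin{equation*}
	\Sigma^{m} B \xrightarrow{f} A \xrightarrow{\id_{A} \tensor u} A \tensor \Th_{d}(n+d)
\end{equation*}
is already nullhomotopic, where $u \colon \Sup^{0,0} = \Th_{0}(0) \to \Th_{d}(n+d)$ is the structure map of the filtered colimit.

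Finally, \cref{lem:pure_monomorphisms_between_Thom_spectra_of_vector_bundles_on_Grassmannians} implies that $u$ is a composite of pure monomorphisms, so its monoidal dual $u^{\vee} \colon \Th_{d}(n+d)^{\vee} \to \Sup^{0,0}$ is a pure epimorphism. Define $B' \colonequals \Th_{d}(n+d)^{\vee} \tensor B$ and consider the map $u^{\vee} \tensor \id_{B} \colon B' \to B$; since its fiber is $\fib(u^{\vee}) \tensor B$ and $\Pure(k)$ is closed under tensor products (\cref{lemma:perfect_pures_closed_under_monoidal_product_and_duals}(4)), this map is a pure epimorphism. Under the duality adjunction
\begin{equation*}
	\Hom_{\SH(k)}(\Sigma^{m} B, \Th_{d}(n+d) \tensor A) \simeq \Hom_{\SH(k)}(\Sigma^{m} B', A),
\end{equation*}
the composite $\Sigma^{m} B' \to \Sigma^{m} B \xrightarrow{f} A$ corresponds precisely to the nullhomotopic map of the preceding paragraph (as one verifies using the triangle identities for the dualizable object $\Th_{d}(n+d)$), and is therefore itself null. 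The only substantive point in the argument is the translation of the Chow-degree obstruction into a concrete pure epimorphism via the Grassmannian filtration of $\MGL$; the remaining manipulations are routine duality bookkeeping.
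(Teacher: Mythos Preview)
Your proof is correct and follows essentially the same route as the paper's: vanishing of $\MGL_{m,0}(B^{\vee}\otimes A)$, compactness to descend to a finite stage $\Th_{d}(n+d)$ of the Grassmannian filtration, and dualizing to obtain $B' = \Th_{d}(n+d)^{\vee}\otimes B \to B$. You supply more detail than the paper on why $u^{\vee}\otimes\id_B$ is a pure epimorphism (via \cref{lem:pure_monomorphisms_between_Thom_spectra_of_vector_bundles_on_Grassmannians} and closure of $\Pure(k)$ under duals and tensor products), but the argument is otherwise identical.
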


\begin{proof}
	Since $ m < 0 $, by \cref{lemma:vanishing_of_mgl_homology_of_perfect_pures} we have that $\MGL_{m, 0}(B^{\vee} \otimes A) = 0 $.
	Thus the composite map 
	\begin{equation*}
		\Sigma^{m} B \to A \to \MGL \otimes A
	\end{equation*}
	is zero. 
	Since $ B $ is compact, we deduce that there exist integers $ n, d \geq 0 $ such that 
	\begin{equation*}
		\Sigma^{m} B \to A \to \Th_{d}(n) \otimes A 
	\end{equation*}
	is zero. 
	By dualizing, the same follows for the composite 
	\begin{equation*}
		\Sigma^{m} (\Th_{d}(n)^{\vee} \otimes B) \to \Sigma^m B \to A \period
	\end{equation*}
	The map $ \Th_{d}(n)^{\vee} \otimes B \to B $ is the required pure epimorphism. 
\end{proof}

Now for the promised description of $ \SH(k) $:

\begin{theorem}
	\label{theorem:shk_is_additive_sheaves_on_perfect_pure_motives}
	The symmetric monoidal functor
	\begin{equation*}
		F \colon \ShSigma(\perfectpures; \spectra) \to \SH(k)
	\end{equation*}
	is an equivalence. 
\end{theorem}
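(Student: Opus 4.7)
My plan is to show that both the unit and counit of the adjunction $F \dashv G$ are equivalences. Write $y \colon \Pure(k) \hookrightarrow \ShSigma(\Pure(k); \spectra)$ for the spectral Yoneda embedding; each $y(A)$ is automatically a pure sheaf because mapping into $A$ in $\SH(k)$ sends cofiber sequences to fiber sequences. Since $F$ is the left Kan extension of the inclusion $\Pure(k) \hookrightarrow \SH(k)$, we have $F(y(A)) \simeq A$, and directly from the definition of $G$,
\begin{equation*}
	GF(y(A))(B) \simeq \map_{\SH(k)}(B, A) \simeq y(A)(B)
\end{equation*}
for all $A, B \in \Pure(k)$. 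Thus the unit is an equivalence on representables.

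The heart of the argument is showing that $G$ preserves all small colimits. By \Cref{lemma:perfect_pures_closed_under_monoidal_product_and_duals}, each $A \in \Pure(k)$ is compact in $\SH(k)$, so $\map_{\SH(k)}(A,-)$ preserves filtered colimits; as a spectrum-valued mapping functor it is also exact, so it preserves finite colimits; together this gives preservation of all small colimits. This verifies that the composite $\SH(k) \to \ShSigma \hookrightarrow \PSh(\Pure(k); \spectra)$ preserves colimits pointwise. To upgrade this to colimit-preservation in $\ShSigma$ itself, I observe that for any diagram $\{X_i\}$ in $\SH(k)$, the pointwise colimit $A \mapsto \colim_i \map_{\SH(k)}(A, X_i)$ preserves cofiber sequences in $A$—because fiber sequences in $\spectra$ are preserved by arbitrary colimits—so this pointwise colimit already lies in $\ShSigma$ and hence coincides with the colimit computed in $\ShSigma$.

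With $G$ colimit-preserving, the unit $\id \to GF$ is a natural transformation between colimit-preserving endofunctors of $\ShSigma$ that is an equivalence on representables. Because every pure sheaf is a colimit of representables (the representables already lie in $\ShSigma$, and the reflection from $\PSh$ preserves colimit-generation by representables), the unit is a natural equivalence, so $F$ is fully faithful. For essential surjectivity, $F$ preserves colimits and commutes with $\Sigma^{\pm 1}$ (as a left adjoint between stable $\infty$-categories), so its essential image is a subcategory of $\SH(k)$ closed under colimits and desuspensions and containing $\Pure(k)$. By the earlier remark that $\Pure(k)$ generates $\SH(k)$ under colimits and desuspensions—this is where inverting the exponential characteristic $e$ is crucial—$F$ is essentially surjective, and hence an equivalence.

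The main technical obstacle will be the verification that $G$ preserves colimits in $\ShSigma$ rather than just pointwise in $\PSh$; the key is the combination of compactness of perfect pure motivic spectra with the stability of $\spectra$, which lets the argument sidestep any delicate analysis of the sheafification functor. Interestingly, this route does not appear to need the characterization of pure epimorphisms via $\MGL$-splitness from \Cref{proposition:pure_epimorphisms_detected_by_mgl}, though that characterization is visibly what makes the resulting universal property (\Cref{theorem:introduction_universal_property_of_sh_in_terms_of_perfect_pures}) usable in the applications that follow.
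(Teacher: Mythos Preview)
There is a genuine gap in your argument, and it lies precisely in the step you flag as avoiding the $\MGL$-machinery.

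You define $y(A) = \map_{\SH(k)}(-,A)$ and assert that ``since $F$ is the left Kan extension of the inclusion $\Pure(k)\hookrightarrow\SH(k)$, we have $F(y(A))\simeq A$.'' But $F$ is the left Kan extension along the canonical embedding $\iota\colon\Pure(k)\hookrightarrow\ShSigma(\Pure(k);\spectra)$, and that embedding is \emph{not} a priori your $y$. In the additive‐presheaf framework used here (as in \cite{pstrkagowski2022synthetic}), the object $\iota(A)$ that corepresents evaluation at $A$ is the sheafification of the \emph{connective cover} $\tau_{\geq 0}\map_{\SH(k)}(-,A)$, not the full mapping spectrum. Concretely, one computes $\map_{\PSigma}(\iota(A),\iota(B))\simeq \tau_{\geq 0}\map_{\SH(k)}(A,B)$, so $\iota$ is fully faithful only at the level of mapping \emph{spaces}, not mapping spectra. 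Your $y(A)$ is exactly $G(A)$, so the assertion $F(y(A))\simeq A$ is the statement that the counit $FG(A)\to A$ is an equivalence---which is what you are trying to prove.

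The same issue reappears in your generation step: the objects that are known to generate $\ShSigma$ under colimits and desuspensions are the $\iota(A)$, not the $G(A)$. Identifying the two amounts to showing that $\map_{\SH(k)}(-,A)$ is already connective \emph{as a sheaf} for the pure-epimorphism topology; this is exactly what the paper establishes via \Cref{lemma:homotopy_class_of_maps_between_perfect_pures_locally_zero_in_negative_chow_deg}, whose proof rests on the vanishing $\MGL_{m,0}(B^{\vee}\otimes A)=0$ for $m<0$. So your observation that the argument ``does not appear to need the characterization of pure epimorphisms via $\MGL$-splitness'' is mistaken: that vanishing (equivalently, \Cref{proposition:pure_epimorphisms_detected_by_mgl}) is precisely the missing ingredient. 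Your verification that $G$ preserves colimits is correct and is also used in the paper, but it is not by itself enough to close the argument.
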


\begin{proof}
	The \category $\ShSigma(\perfectpures; \spectra)$ is generated under colimits and desuspensions by representable presheaves $y(A)$ for $A \in \perfectpures$. 
	These are defined as a sheafication 
	\begin{equation*}
		y(A)(-) \colonequals L(\tau_{\geq 0}F(-, A))
	\end{equation*}
	of the presheaf given by the connective part of the mapping spectrum. 
	By construction as a left Kan extension, the functor $F$ is uniquely determined by the property of being continuous and the requirement that 
	\begin{equation*}
		F(y(A)) \simeq A \in \SH(k) \period 
	\end{equation*}

	We will analyze the unit map 
	\begin{equation*}
	X \to GF(X) \period 
	\end{equation*}
	for some $X \in \ShSigma(\perfectpures; \spectra)$. If $X \simeq y(A)$ is a representable presheaf, by the above discussion this map takes the form 
	\begin{equation*}
	L(\tau_{\geq 0}F(-, A)) \to G(A)(-) \simeq F(-, A) \period 
	\end{equation*}
	Thus, to verify the result in this case we have to show that the map
	\begin{equation*}
	\tau_{\geq 0}F(-, A) \to F(-, A)
	\end{equation*}
	of presheaves of spectra is a sheafication with respect to the pure epimorphism topology.
	This map is a connective cover before sheafication, and thus will remain so after.
	Thus we only have to check that $G(F(A)) \simeq F(-, A)$ is connective as a sheaf. 

	Suppose that $B$ is perfect pure and we have a class in $g \in \pi_{k} G(F(A)) \simeq F(B, A)$ for $k < 0$, which we can identify with a homotopy class of maps
	\begin{equation*}
	g \colon \Sigma^{k} B \to A \period
	\end{equation*}
	By \cref{lemma:homotopy_class_of_maps_between_perfect_pures_locally_zero_in_negative_chow_deg}, we deduce that there exists a pure epimorphism $B' \to B$ such that $g |_{B'} = 0$. It follows that $F(-, A)$ is connective, as needed. 

	Both functors preserve filtered colimits, $F$ as it is a left adjoint and $ G $ as every perfect pure is compact.
	As both are also exact, we deduce that the subcategory of those $X \in \ShSigma(\perfectpures; \spectra)$ such that the unit map is an equivalence is closed under colimits and desuspensions.
	As $\ShSigma(\perfectpures; \spectra)$ is generated under these by $y(A)$ for $A \in \perfectpures$, we deduce that the unit map is an equivalence for any $ X $, so that $F$ is fully faithful. 

	Since the essential image of $F$ is closed under colimits and desuspensions and contains $A \in \SH(k)$, we deduce that $F$ is an equivalence, as needed. 
\end{proof}

\begin{corollary}\label{cor:functors_out_of_SH_in_terms_of_Pure}
	Let $ \Dcal $ be a stable \category which admits small colimits.
	Restriction along the inclusion $ \Pure(k) \subseteq \SH(k) $ defines an equivalence of \categories
	\begin{equation*}
		\FunL(\SH(k),\Dcal) \to \Fun^{\cofib}(\Pure(k),\Dcal) \period
	\end{equation*}
	Here, the right-hand side is the full subcategory of $ \Fun(\Pure(k),\Dcal) $ spanned by the functors that preserve cofiber sequences.
\end{corollary}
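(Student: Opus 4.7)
My plan is to derive the corollary from \Cref{theorem:shk_is_additive_sheaves_on_perfect_pure_motives}. Under the equivalence $\SH(k) \simeq \ShSigma(\Pure(k);\Sp)$ provided by that theorem, the inclusion $\Pure(k) \hookrightarrow \SH(k)$ corresponds to the spectral Yoneda embedding $y \colon \Pure(k) \to \ShSigma(\Pure(k);\Sp)$, so restriction along either amounts to the same thing. It therefore suffices to check that restriction along $y$ defines an equivalence
\begin{equation*}
\FunL(\ShSigma(\Pure(k);\Sp),\Dcal) \equivalence \Fun^{\cofib}(\Pure(k),\Dcal).
\end{equation*}

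Next, I would invoke the universal property of spectral presheaves: for any stable cocomplete $\Dcal$, left Kan extension along Yoneda gives an equivalence $\FunL(\PSh(\Pure(k);\Sp),\Dcal) \equivalence \Fun(\Pure(k),\Dcal)$. Since $\ShSigma(\Pure(k);\Sp)$ is an accessible reflective localization of $\PSh(\Pure(k);\Sp)$ at some class $W$ of local equivalences, $\FunL(\ShSigma(\Pure(k);\Sp),\Dcal)$ is identified with those colimit-preserving functors on presheaves whose Yoneda extension inverts $W$. The remaining step is to translate this into a condition on $F \colon \Pure(k) \to \Dcal$ itself.

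For this translation, I would argue that $W$ is generated, as a strongly saturated class of maps, by the morphisms $\cofib(y(C) \to y(B)) \to y(A)$ coming from cofiber sequences $C \to B \to A$ in $\Pure(k)$. This is essentially the characterization of pure sheaves used in the proof of \Cref{theorem:shk_is_additive_sheaves_on_perfect_pure_motives} and established in \cite{pstrkagowski2022synthetic}: pure sheaves of spectra are exactly the additive presheaves satisfying the single-map sheaf condition for pure epimorphisms, and a pure epimorphism $B \to A$ with fiber $C$ produces precisely such a cofiber sequence. Since $\Dcal$ is stable, the Yoneda extension of $F$ inverts a generator of this form if and only if $F(C) \to F(B) \to F(A)$ is a cofiber sequence in $\Dcal$, which yields the corollary.

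The main obstacle is the identification of this generating set for $W$, in particular the worry that the sheafification imposes additivity as a separate condition. This turns out to be automatic: because $\Pure(k)$ is closed under finite direct sums and every direct sum $A \oplus B$ fits in a split cofiber sequence $A \to A \oplus B \to B$ in $\Pure(k)$, any $F \in \Fun^{\cofib}(\Pure(k),\Dcal)$ automatically preserves finite sums. Hence additivity comes for free, and the translation between the generating local equivalences of $W$ and cofiber sequences in $\Pure(k)$ is a clean bijection.
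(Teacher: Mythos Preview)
Your proposal is correct and is essentially the argument the paper has in mind: the corollary is stated without proof immediately after \Cref{theorem:shk_is_additive_sheaves_on_perfect_pure_motives}, and your write-up simply unpacks the standard universal-property-of-localization argument that makes it an immediate consequence.
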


\begin{remark}[$ \MGL $-modules]
	As a consequence of \cref{theorem:shk_is_additive_sheaves_on_perfect_pure_motives}, one can deduce a presheaf description of the \category of $ \MGL $-modules.
	This description was already known and is a consequence of the existence of Bondarko's weight structure on $ \MGL $-modules; see the work of Elmanto--Sosnilo \cite[Theorem 2.2.9]{elmanto2022nilpotent}. 
\end{remark}


\section{The weight filtration on complex orientable homology}  
\label{section:weight_filtration_on_complex_orientable_cohomology}

Let $ A $ be an $ \E_1 $-ring spectrum.
In this section, we show that if $ A $ is \textit{complex orientable}, then 
the $ A $-linearized Betti realization functor $ A \tensor \Be(-) \colon \SH(\CC) \to \Mod_A $ refines to a left adjoint
\begin{equation*}
	\WBe(-;A) \colon \SH(\CC) \to \Modpost{A} 
\end{equation*}
valued in modules in filtered spectra over the \textit{Postnikov filtration} on $ A $.
We refer to $ \WBe(-;A) $ as the \textit{filtered Betti realization} functor.
Note that if $ A $ is an ordinary ring, then $ \Modpost{A} $ is coincides with the filtered derived \category of $ A $ (see \Cref{prop:filtered_modules_ordinary_ring});
hence for a complex variety $ X $, the filtered Betti realization $ \WBe(\Sigma_{+}^{\infty} X;A) $ defines a filtration on the complex $ \Cup^{*}(X(\CC);A) $.
In \cref{sec:descent_and_the_Gillet-Soule_filtration}, we explain how to use filtered Betti realization to recover the Deligne--Gillet--Soulé weight filtration on the compactly supported integral Betti cohomology of a complex variety.

In \cref{subsec:background_on_filtered_objects}, we recall some background on filtered objects.
In \cref{subsec:weight_contexts} we set up an abstract framework for using \Cref{cor:functors_out_of_SH_in_terms_of_Pure} to equip the ($ A $-linear) Betti realization of a motivic spectrum with a filtration.
In \cref{subsec:filtered_Betti_homology}, we construct the filtered Betti realization functor $ \WBe(-;A) $; see \Cref{cor:homological_filtered_Betti_realization_exists_for_complex_orientable_rings,cor:filtration_on_Betti_homology_is_exhaustive}.
In \cref{subsec:filtered_Betti realization_with_coefficients_in_an_ordinary_ring}, we unpack our construction in the case of an ordinary ring.
In \cref{subsec:changing_the_coefficients_of_filtered_Betti_realization}, we explain how filtered Betti realization interacts with changing the coefficient ring $ A $.
In \cref{subsec:filtered_etale_realization}, we use the general setup explained in \cref{subsec:weight_contexts} to construct a filtered refinement of the $ \ell $-adic étale realization functor
\begin{equation*}
	\Re_{\ell} \colon \SH(k) \to \Shethyp(\Et_S;\Sp)\ellcomp \period
\end{equation*}
In \cref{subsec:virtual_Euler_characteristics}, we discuss how one can use filtered Betti realization to construct \textit{virtual Euler characteristics} associated to Morava $ \Kup $-theories.

\begin{notation}
	Let $ k $ be a field of exponential characteristic $ e $.
	Throughout this section, we keep the notational convention $ \SH(k) \colonequals \SH(k)[\einv] $
	introduced in \Cref{ntn:SH_means_SH_with_e_inverted}.
\end{notation}


\subsection{Background on filtered objects}\label{subsec:background_on_filtered_objects} 

We begin by reviewing some background on filtered objects in stable \categories.

\begin{notation}
	Let $ \Ccal $ be a stable \category which admits small colimits.
	We write
	\begin{equation*}
		\Fil(\Ccal) \colonequals \Fun(\ZZ^{\op},\Ccal)
	\end{equation*}
	for the \category of \defn{filtered objects} in $ \Ccal $.
	Here we regard $ \ZZ $ as a poset with the usual partial order, so our filtrations are \textit{decreasing}.
	The colimit functor defines a left adjoint $ \colim \colon \Fil(\Ccal) \to \Ccal $.

	If $ \Ccal $ has a \tstructure, then there is a functor $ \tau_{\geq *} \colon \Ccal \to \Fil(\Ccal) $ given by sending an object $ X \in \Ccal $ to its \defn{Postnikov filtration}
	\begin{equation*}
		\begin{tikzcd}[sep=1.5em]
			\cdots \arrow[r] & \tau_{\geq n} X \arrow[r] & \tau_{\geq n + 1} X \arrow[r] & \cdots \comma
		\end{tikzcd}
	\end{equation*}
	see \cite[Construction 3.3.7]{arXiv:2007.02576}. Moroever:   
	\begin{enumerate}
	\item If the \tstructure is right complete, then $ \colim \tau_{\geq *} \equivalent \id_{\Ccal} $, so that the Postnikov filtration is \emph{exhaustive}. 
	\item If the \tstructure is left complete, then $\lim \tau_{\geq *} \equivalent 0$, so that the Postnikov filtration is \emph{complete}. 
	\end{enumerate}
	Note that the functor $ \tau_{\geq *} \colon \Ccal \to \Fil(\Ccal) $ is additive, but generally \textit{not} exact. 
\end{notation}

\begin{notation}
	Via Day convolution, the addition on $ \ZZ^{\op} $ and the tensor product of spectra assemble into a symmetric monoidal structure 
	\begin{equation*} 
		\tensor \colon \FilSp \cross \FilSp \to \FilSp 
	\end{equation*}
	defined by
	\begin{equation*}
		(X_* \tensor Y_*)_n \colonequals \colim_{a + b \geq n} X_a \tensor Y_b \period
	\end{equation*}
\end{notation}

\begin{nul}
	With respect to the Day convolution symmetric monoidal structure, the functor
	\begin{equation*}
		\tau_{\geq *} \colon \Sp \to \FilSp 
	\end{equation*}
	is lax symmetric monoidal.
	In particular, for any $ \E_n $-ring spectrum $ A $, the filtered spectrum $ \tau_{\geq *}(A) $ acquires a natural $ \E_n $-ring structure.
	Moreover, the functor $ \tau_{\geq *} \colon \Sp \to \FilSp $ refines to a functor
	\begin{equation*}
		\Mod_A = \Mod_A(\Sp) \to \Modpost{A} \comma
	\end{equation*} 
	which we also denote by $ \tau_{\geq *} $.
	We also write
	\begin{equation*}
		\colim \colon \Modpost{A} \to \Mod_A
	\end{equation*}
	for the induced functor.
\end{nul}

\begin{definition}
	\label{definition:diagonal_t_structure_on_filtered_spectra} 
	We say a filtered spectrum $F_{*}X$ is \emph{diagonal connective} if for all $n \in \ZZ$ we have $F_{n}X \in \spectra_{\geq n}$. 
	This determines a unique \tstructure on filtered spectra which we call the \emph{diagonal \tstructure}. 
\end{definition} 

\begin{remark}
	The diagonal \tstructure is compatible with the symmetric monoidal structure on filtered spectra. 
	Since any filtered spectrum of the form $ \tau_{\geq *} A $ is diagonal connective, the \category 
	\begin{equation*}
		\Mod_{\tau_{\geq *} A}(\FilSp)
	\end{equation*}
	of modules in filtered spectra inherits a unique \tstructure for which the forgetful functor is \texact.
	We also refer to this \tstructure as the \emph{diagonal \tstructure}. 
\end{remark}

When $ A = \HR $ is the Eilenberg--MacLane spectrum associated to an ordinary commutative ring, $ \Modpost{\HR} $ recovers the filtered derived \category of $ R $: 

\begin{notation}
	Let $ R $ be an ordinary commutative ring.
	We write $ \Dfil(R) $ for the \categorical enhancement of the filtered derived category of $ R $.
\end{notation}

\begin{proposition}\label{prop:filtered_modules_ordinary_ring}
	Let $ R $ be an ordinary commutative ring.
	There are natural symmetric monoidal equivalences
	\begin{equation*}
		\Modpost{\HR} \equivalent \Fil(\Dcal(R)) \equivalent \Dfil(R) \period
	\end{equation*}
\end{proposition}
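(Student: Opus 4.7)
The plan is to handle the two claimed equivalences separately, starting with the one on the right.

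For $\Fil(\Dcal(R)) \equivalent \Dfil(R)$, I would appeal to the standard fact that the $\infty$-categorical enhancement of the filtered derived category of an ordinary ring admits the description $\Fun(\ZZ^{\op}, \Dcal(R))$. Concretely, one presents $\Dfil(R)$ as the Dwyer--Kan localization of the category of filtered chain complexes at filtered quasi-isomorphisms, observes that the functor sending a filtered complex $F^{*}C$ to the $\ZZ^{\op}$-indexed diagram $n \mapsto F^{n}C$ (viewed in $\Dcal(R)$) inverts filtered quasi-isomorphisms, and checks that the induced functor on localizations is an equivalence by comparing mapping complexes level by level. Since the monoidal structure on both sides is induced from the usual tensor product of filtered complexes, the equivalence is symmetric monoidal.

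For $\Modpost{\HR} \equivalent \Fil(\Dcal(R))$, I would first invoke the standard symmetric monoidal equivalence $\Mod_{\HR}(\Sp) \equivalent \Dcal(R)$ to reduce to producing a symmetric monoidal equivalence
\begin{equation*}
    \Modpost{\HR} \equivalent \Fun(\ZZ^{\op}, \Mod_{\HR})\period
\end{equation*}
The key input is that $\HR$ is discrete, so $(\tau_{\geq *}\HR)_{n} = \HR$ with identity transitions for $n \leq 0$, and $(\tau_{\geq *}\HR)_{n} = 0$ for $n > 0$. Consequently, the datum of a $\tau_{\geq *}\HR$-action on a filtered spectrum $M_{*}$ unpacks precisely to a compatible family of $\HR$-actions on each $M_{n}$ together with $\HR$-linear transition maps --- that is, an object of $\Fun(\ZZ^{\op}, \Mod_{\HR})$. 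I would construct mutually inverse functors by sending a $\tau_{\geq *}\HR$-module $M_{*}$ to its underlying filtered $\HR$-module, and conversely promoting a filtered $\HR$-module $N_{*}$ to a $\tau_{\geq *}\HR$-module via the action map $(\tau_{\geq a}\HR) \tensor N_{b} \to N_{a+b}$ defined, for $a \leq 0$, as the composite of the $\HR$-action on $N_{b}$ with the transition $N_{b} \to N_{a+b}$ (and as zero for $a > 0$, where the source vanishes).

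The main obstacle is verifying symmetric monoidality: the tensor product on the left-hand side is the relative Day convolution $\tensor_{\tau_{\geq *}\HR}$ inside $\FilSp$, whereas on the right-hand side it is the ordinary Day convolution for $\tensor_{\HR}$. Matching these requires computing the relevant bar construction, which ultimately comes down to the identity $(\tau_{\geq a}\HR) \tensor_{\HR} (\tau_{\geq b}\HR) \equivalent \tau_{\geq(a+b)}\HR$ --- immediate from the explicit description of $\tau_{\geq *}\HR$ given above. A cleaner alternative would be to endow both sides with a universal property as the initial presentably symmetric monoidal stable \category receiving a colimit-preserving symmetric monoidal map from $\FilSp$ that carries the Day-convolution unit to a specified filtered algebra, and appeal to uniqueness; the trade-off is that setting up the universal property uniformly for both categories itself takes some care.
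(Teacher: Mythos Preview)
Your proposal is correct and follows essentially the same decomposition as the paper: handle the two equivalences separately, identify $\tau_{\geq *}(\HR)$ explicitly using that $\HR$ is discrete, and reduce the left-hand equivalence via $\Mod_{\HR} \equivalent \Dcal(R)$.

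The only difference is in execution. The paper's proof is a brief sketch that simply cites external results: a filtered variant of the Schwede--Shipley theorem for the left-hand equivalence (pointing to a graded analogue in the literature), and a specific reference for $\Fil(\Dcal(R)) \equivalent \Dfil(R)$. You instead attempt to build the equivalence by hand, unpacking what a $\tau_{\geq *}(\HR)$-module structure amounts to and then confronting the symmetric monoidality verification directly. Your approach is more self-contained but, as you correctly flag, the hands-on construction of mutually inverse functors at the $\infty$-categorical level is where the real work hides; this is exactly what the filtered Schwede--Shipley theorem packages. Either route is fine for a result of this nature.
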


\begin{proof}[Proof sketch]
	Note that since $ \HR $ only has a nontrivial homotopy group in degree $ 0 $, the filtered spectrum $ \tau_{\geq *}(\HR) $ is given by
	\begin{equation*}
		\begin{tikzcd}[sep=1.5em]
			\cdots \arrow[r, equals] & \HR \arrow[r, equals] & \HR \arrow[r] & 0 \arrow[r] & 0 \arrow[r] & \cdots \comma
		\end{tikzcd}
	\end{equation*}
	where the nonzero terms are in filtration degrees $ \leq 0 $. 
	With this identification, the left-hand equivalence follows from the natural symmetric monoidal equivalence $ \Mod_{\HR} \equivalent \Dcal(R) $ and a filtered variant of the Schwede--Shipley theorem.
	(See \cite[Proposition A.2.1]{arXiv:2212.09964} for the graded variant of the Schwede--Shipley theorem.)
	The right-hand equivalence is the content of \cite[Theorem 2.6]{MR3806745}.
\end{proof}


\subsection{Weight contexts}\label{subsec:weight_contexts} 

We now describe a general method of equipping a colimit-preserving functor defined on the stable motivic category with additional structure. 

\begin{definition}\label{definition:homological_weight_context}
	Let $ k $ be a field.
	A \emph{weight context} consists of the following data: 
	\begin{enumerate}
	    \item Stable \categories $ \Ccat $ and $ \Dcat $ which admit small colimits. 

	    \item A colimit-preserving functor $ U \colon \Dcat \to \Ccat $. 

	    \item An additive functor $ T \colon \Ccat \to \Dcat$ along with an equivalence $ U \circ T \simeq \id_{\Ccat} $.

	    \item A colimit-preserving functor $ \M \colon \SH(k) \to \Ccat$. 
	\end{enumerate}
	A \emph{solution} to a weight context is a functor $ \WM $ making the following triangle commute
	\begin{equation*}
		\begin{tikzcd}
			& \Dcat \arrow[d, "U"] \\
			\SH(k) \arrow[r, "\M"'] \arrow[ur, dotted, "\WM"] & \Ccat \period
		\end{tikzcd}
	\end{equation*}
\end{definition}

\begin{nul}
	In the setting of \cref{definition:homological_weight_context}, we can think of $\Dcat$ as the \category of objects of $ \Ccat $ equipped with additional structure and of $U$ as the forgetful functor. 
	One should think of the functor $T$, going the other way, as equipping an object $c \in \Ccat$ with a ``trivial structure''. 
	A solution to a weight context should be thought of as a way of functorially equipping objects of the form $\M(X)$ with additional structure. 
	Note that we do not assume that $T$ is exact, and indeed in most examples it is not. 
\end{nul}

The following result is a trivial application of our new description of $ \SH(k) $ explained in \Cref{theorem:shk_is_additive_sheaves_on_perfect_pure_motives}.
However, it turns out that this result has many useful applications.

\begin{theorem}\label{theorem:homological_weight_context_has_unique_solution_if_t_is_mgl_exact}
	Suppose that we are given a weight context as in \cref{definition:homological_weight_context} with the following property:
	\begin{enumerate}[label={$(\ast)$}]
		\item If $ X \to Y \to Z $ is a cofiber sequence in $ \Pure(k) $, then 
		\begin{equation*}
			T(\M(X)) \to T(\M(Y)) \to T(\M(Z))
		\end{equation*}
		is a cofiber sequence in $ \Dcal $. 
	\end{enumerate}
	Then, there exists a unique solution $ \WM \colon \SH(k) \to \Dcat$ satifying the following properties: 
	\begin{enumerate}
	    \item The functor $ \WM $ preserves colimits. 

	    \item The restriction of $ \WM $ to $ \Pure(k) $ is given by $ T \circ \M \colon \Pure(k) \to \Dcal $.
	\end{enumerate}
\end{theorem}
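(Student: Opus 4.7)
The plan is to apply \Cref{cor:functors_out_of_SH_in_terms_of_Pure} twice: once to produce $\WM$ and once to verify that it actually is a solution to the weight context.

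First I would observe that the candidate functor $T \of \M \colon \Pure(k) \to \Dcal$ preserves cofiber sequences by hypothesis~$(\ast)$. Since $\Dcal$ is stable and cocomplete, \Cref{cor:functors_out_of_SH_in_terms_of_Pure} produces a unique colimit-preserving extension
\begin{equation*}
	\WM \colon \SH(k) \to \Dcal
\end{equation*}
whose restriction to $\Pure(k)$ is $T \of \M$. This immediately gives property~(2) and uniqueness among colimit-preserving extensions of $T \of \M|_{\Pure(k)}$.

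Next I would verify that the triangle in \Cref{definition:homological_weight_context} commutes, i.e.\ that $U \of \WM \equivalent \M$ as functors $\SH(k) \to \Ccal$. Both functors preserve small colimits: $\M$ by assumption, and $U \of \WM$ because $U$ preserves colimits by hypothesis and $\WM$ does by construction. Thus, by \Cref{cor:functors_out_of_SH_in_terms_of_Pure} applied with target $\Ccal$, it suffices to exhibit an equivalence between their restrictions to $\Pure(k)$. On $\Pure(k)$ we compute
\begin{equation*}
	U \of \WM|_{\Pure(k)} \equivalent U \of T \of \M|_{\Pure(k)} \equivalent \M|_{\Pure(k)} \comma
\end{equation*}
where the second equivalence uses the given identification $U \of T \equivalent \id_{\Ccal}$. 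This upgrades $\WM$ to a bona fide solution of the weight context.

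Finally, the uniqueness statement follows from the same argument: if $\WM'$ is any colimit-preserving solution whose restriction to $\Pure(k)$ agrees with $T \of \M$, then $\WM'$ and $\WM$ are two colimit-preserving functors $\SH(k) \to \Dcal$ with the same restriction to $\Pure(k)$, hence equivalent by \Cref{cor:functors_out_of_SH_in_terms_of_Pure}. There is essentially no obstacle to this argument: property~$(\ast)$ is precisely what is needed to invoke the universal property of \Cref{theorem:shk_is_additive_sheaves_on_perfect_pure_motives}, and the rest is a formal bookkeeping exercise. The only mildly delicate point is to remember that $T$ itself need not be exact, so $(\ast)$ is a nontrivial compatibility between $T$ and the specific functor $\M$, not an intrinsic property of $T$.
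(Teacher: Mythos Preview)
Your proof is correct and follows the same approach as the paper: apply \Cref{cor:functors_out_of_SH_in_terms_of_Pure} to extend $T\circ\M|_{\Pure(k)}$. In fact you are more thorough than the paper, which simply invokes the corollary for existence and uniqueness without explicitly checking that the resulting $\WM$ makes the triangle $U\circ\WM\simeq\M$ commute; your second paragraph supplies exactly that verification.
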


\begin{proof}
	By \Cref{cor:functors_out_of_SH_in_terms_of_Pure}, the assumptions guarentee that $ T \circ \M \colon \Pure(k) \to \Dcal $ uniquely extends to a colimit-preserving functor $ \WM \colon \SH(k) \to \Dcal $.
\end{proof}


\subsection{Filtered Betti realization}\label{subsec:filtered_Betti_homology}

In this subsection, we study \textit{Betti realization} with coefficients in a ring spectrum $ A $.
The main result of this subsection is that when $ A $ is complex orientable, Betti realization comes equipped with a natural exhaustive filtration (\Cref{cor:homological_filtered_Betti_realization_exists_for_complex_orientable_rings,cor:filtration_on_Betti_homology_is_exhaustive}). 

\begin{definition}
	Let $ A $ be an $ \E_1 $-ring spectrum.
	The \defn{$ A $-linear Betti realization} functor is the composite
	\begin{equation*}
		\begin{tikzcd}[sep=4.5em]
			\Be(-;A) \colon \SH(\CC) \arrow[r, "\Be"] & \Sp \arrow[r, "A \tensor (-)"] & \Mod_A \period
		\end{tikzcd}
	\end{equation*}
\end{definition}

\begin{nul}\label{nul:A-linear_homological_Betti_weight_context}
	Let $ A $ be an $ \E_1 $-ring spectrum.
	Then we have a weight context
	\begin{equation*}
		\begin{tikzcd}[sep=3em]
			& \Modpost{A} \arrow[d, shift left, "\colim"] \\ 
			\SH(\CC) \arrow[r, "\Be(-;A)"'] \arrow[ur, dotted] & \Mod_A  \period \arrow[u, "\tau_{\geq *}"{near start}, shift left]
		\end{tikzcd}
	\end{equation*}
	In the notation of \Cref{definition:homological_weight_context}, $ U = \colim $ and $ T = \tau_{\geq *} $.
\end{nul}

\begin{recollection}[complex orientations]
	\label{rec:complex_orientations}
	Let $ A $ be an $ \E_{1} $-ring spectrum.
	A \defn{complex orientation} of $ A $ is a morphism $ \MU \to A $ of associative algebras in the homotopy category $ \hSp $ of spectra. 
	We say that $ A $ is \defn{complex orientatable} if there exists a complex orientation of $ A $. 
	We refer the reader to \cites{Lurie:Chromatic-4}{Lurie:Chromatic-6}[\S4.1]{MR860042} for more background on complex orientations.
\end{recollection}

\begin{example}
	\begin{enumerate}
		\item If $ R $ is an ordinary ring, then there is a natural map of $ \E_{\infty} $-rings $ \MU \to \HR $.
		In particular, $ \HR $ is complex orientable.

		\item The complex $ \Kup $-theory spectrum $ \KU $ has a canonical complex orientation.	

		\item For each prime $ p $ and integer $ n \geq 0 $, the height $ n $ Morava $ \Kup $-theory $ \Kup(n) $ has a canonical complex orientation.
	\end{enumerate}
\end{example}

In order to check the hypotheses of \Cref{theorem:homological_weight_context_has_unique_solution_if_t_is_mgl_exact} for $ \Be(-;A) $, we need the following lemma.
 
\begin{lemma}
	\label{lemma:MU_zero_maps_are_zero_on_any_complex_orientable_ring} 
	Let $ A $ be a complex orientable $ \E_{1} $-ring and let $ f \colon X \to Y $ be a map of spectra such that $ \MU \otimes f $ is zero. 
	Then $ A \otimes f $ is zero as a map of $ A $-modules. 
\end{lemma}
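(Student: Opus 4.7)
The plan is to exploit the complex orientation $\phi \colon \MU \to A$ in the homotopy category $\hSp$ together with the multiplication $m_A \colon A \tensor A \to A$ to exhibit $A \tensor f$ as a retract of the map $A \tensor \MU \tensor f$, which vanishes by hypothesis. Although $A$ is only an $\MU$-algebra in $\hSp$, the computation we need takes place entirely in $\hSp$, so this is enough to conclude that $A \tensor f = 0$ there, and a fortiori in $\Mod_A$.

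Concretely, I would first observe that the composite $\psi \colonequals m_A \of (A \tensor \phi) \colon A \tensor \MU \to A$ is a unital right action of $\MU$ on $A$ in $\hSp$: the diagram
\begin{equation*}
	\begin{tikzcd}
		A \arrow[r, "A \tensor \eta_{\MU}"] \arrow[dr, equals] & A \tensor \MU \arrow[d, "\psi"] \\
		& A
	\end{tikzcd}
\end{equation*}
commutes in $\hSp$ because $\phi \of \eta_{\MU} = \eta_A$ (the orientation is unital) and $m_A \of (A \tensor \eta_A) = \id_A$ (the unit axiom for $A$). Tensoring this identification on the right with $X$ and $Y$ and using naturality of the unit $\eta_{\MU}$ gives a commutative diagram in $\hSp$
\begin{equation*}
	\begin{tikzcd}[column sep=3em]
		A \tensor X \arrow[r, "A \tensor \eta_{\MU} \tensor X"] \arrow[d, "A \tensor f"'] & A \tensor \MU \tensor X \arrow[r, "\psi \tensor X"] \arrow[d, "A \tensor \MU \tensor f"'] & A \tensor X \arrow[d, "A \tensor f"] \\
		A \tensor Y \arrow[r, "A \tensor \eta_{\MU} \tensor Y"'] & A \tensor \MU \tensor Y \arrow[r, "\psi \tensor Y"'] & A \tensor Y
	\end{tikzcd}
\end{equation*}
whose horizontal composites are the identity.

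Now the middle vertical arrow is $A \tensor (\MU \tensor f)$, which is the zero map in $\hSp$ by our assumption that $\MU \tensor f = 0$. Therefore the outer vertical map $A \tensor f$ is zero in $\hSp$: it factors as $(\psi \tensor Y) \of (A \tensor \MU \tensor f) \of (A \tensor \eta_{\MU} \tensor X)$, whose middle factor is null. Since the forgetful functor $\Mod_A \to \Sp$ is conservative and reflects zero morphisms, the induced map of free $A$-modules $A \tensor f \colon A \tensor X \to A \tensor Y$ is likewise zero in $\Mod_A$, completing the argument.

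The main subtlety is simply to avoid assuming any coherence beyond what the hypothesis supplies: $\phi$ is only a morphism of associative algebras in the homotopy category, so we must arrange that every diagram chase takes place in $\hSp$ rather than in $\Sp$ itself. The argument above is designed so that only the unital axioms in $\hSp$ (and naturality of maps between honest spectra) are invoked, which is exactly what the definition of complex orientability in \cref{rec:complex_orientations} provides.
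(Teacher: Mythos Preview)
Your proof is correct and follows essentially the same idea as the paper: both arguments use the complex orientation $\phi \colon \MU \to A$ in $\hSp$ to factor the relevant map through $\MU \otimes f$, which is zero by hypothesis. The only cosmetic difference is that the paper invokes the extension-of-scalars adjunction to reduce to showing that the composite $X \to A \otimes X \xrightarrow{A \otimes f} A \otimes Y$ is zero (and then factors this single map through $\MU \otimes f$), whereas you tensor the whole retraction $A \to A \otimes \MU \to A$ with $f$ and appeal to conservativity of the forgetful functor at the end; the underlying mechanism is the same.
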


\begin{proof}
	By the extension of scalars adjunction, it is enough to show that the map of spectra 
	\begin{equation}\label{eq:unit_tensor_f}
	  	\begin{tikzcd}[sep=3.5em]
	  		X \equivalent \Sup^0 \tensor X \arrow[r] & A \otimes X \arrow[r, "A \tensor f"] & A \otimes Y
	  	\end{tikzcd} 
	\end{equation}
	induced by the unit of $ A $ is zero. 
	Choose a complex orientation $ \phi \colon \MU \to A $.
	The map \eqref{eq:unit_tensor_f} factors in the homotopy category $ \hSp $ as 
	\begin{equation*}
		\begin{tikzcd}[sep=3.5em]
	  		X \arrow[r] & \MU \otimes X \arrow[r, "\MU \tensor f"] & \MU \otimes Y \arrow[r, "\phi \tensor Y"] & A \otimes Y
	  	\end{tikzcd}
	\end{equation*}
	and the middle is zero by assumption. 
\end{proof}

\begin{corollary}\label{cor:MU-split_cofiber_sequences_are_A-split}
	Let $ A $ be a complex orientable $ \E_{1} $-ring and let
	\begin{equation}\label{eq:generic_cofiber_sequence}
		X \to Y \to Z
	\end{equation}
	be a cofiber sequence of spectra.
	If \eqref{eq:generic_cofiber_sequence} is $ \MU $-split, then \eqref{eq:generic_cofiber_sequence} is $ A $-split.
\end{corollary}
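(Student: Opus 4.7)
The plan is to deduce the statement directly from \cref{lemma:MU_zero_maps_are_zero_on_any_complex_orientable_ring} applied to the boundary map of the cofiber sequence. First, I would let $\partial \colon Z \to \Sigma X$ denote the boundary map of \eqref{eq:generic_cofiber_sequence}. Since the free-module functor $\MU \otimes (-) \colon \Sp \to \Mod_{\MU}$ is exact, the cofiber sequence $\MU \otimes X \to \MU \otimes Y \to \MU \otimes Z$ in $\Mod_{\MU}$ has boundary map $\MU \otimes \partial$, and the hypothesis that \eqref{eq:generic_cofiber_sequence} is $\MU$-split means precisely that this boundary vanishes in $\Mod_{\MU}$. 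Applying the forgetful functor $\Mod_{\MU} \to \Sp$, which is additive and hence preserves null maps, shows that $\MU \otimes \partial = 0$ as a morphism of spectra.

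Next, I would invoke \cref{lemma:MU_zero_maps_are_zero_on_any_complex_orientable_ring} applied to $\partial$: since $\MU \otimes \partial$ is zero as a map of spectra, the lemma yields that $A \otimes \partial \colon A \otimes Z \to A \otimes \Sigma X$ vanishes as a morphism of $A$-modules. Because $A \otimes \partial$ is precisely the boundary map of the $A$-tensored cofiber sequence $A \otimes X \to A \otimes Y \to A \otimes Z$ in $\Mod_A$, and a cofiber sequence in a stable \category is split if and only if its boundary map is null (see \cref{rec:split_cofiber_sequences}), I would conclude that \eqref{eq:generic_cofiber_sequence} is $A$-split.

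There is no substantive obstacle here: once one notes that being split is equivalent to the vanishing of the boundary map, the corollary follows immediately from the preceding lemma applied to $\partial$. The only mild subtlety worth flagging is the direction of the implication about null maps under the forgetful functor $\Mod_{\MU} \to \Sp$; since this functor is additive rather than conservative, one uses only that it \emph{preserves} null maps, which is all that is needed.
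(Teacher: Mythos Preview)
Your proposal is correct and follows essentially the same approach as the paper: both reduce the claim to showing that the boundary map $A \otimes \partial$ vanishes and then invoke \cref{lemma:MU_zero_maps_are_zero_on_any_complex_orientable_ring} applied to $\partial$. Your write-up is simply more explicit about the translation between ``split'' and ``boundary map null,'' and about passing from $\Mod_{\MU}$ to $\Sp$ via the forgetful functor; the paper compresses all of this into a single sentence.
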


\begin{proof}
	We need to show that if the boundary map $ \MU \tensor Z \to \MU \tensor \Sigma X $ is zero, then the boundary map $ A \tensor Z \to A \tensor \Sigma X $ is also zero.
	This is immediate from \Cref{lemma:MU_zero_maps_are_zero_on_any_complex_orientable_ring}. 
\end{proof}

\begin{lemma}\label{lem:MU-linear_homological_Betti_realization_splits_pure_cofiber_sequences}
	Let $ A $ be a complex orientable $ \E_1 $-ring spectrum.
	Let $ X \to Y \to Z $ be an $ \MGL $-split cofiber sequence in $ \SH(\CC) $.
	Then the null sequence
	\begin{equation*}
		\begin{tikzcd}
			\Be(X;A) \arrow[r] & \Be(Y;A) \arrow[r] & \Be(Z;A) 
		\end{tikzcd}
	\end{equation*}
	is a split cofiber sequence in $ \Mod_A $.
\end{lemma}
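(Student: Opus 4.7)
The plan is to reduce the statement to \Cref{cor:MU-split_cofiber_sequences_are_A-split} by transporting the $\MGL$-splitting through Betti realization. First I would apply the symmetric monoidal left adjoint $\Be \colon \SH(\CC) \to \Sp$ to the cofiber sequence $X \to Y \to Z$. Because $\Be$ is exact and symmetric monoidal, and because $\Be(\MGL) \equivalent \MU$ as commutative algebras in $\Sp$ (see \Cref{ex:Betti_realization_of_MU}), the hypothesis that the original sequence is $\MGL$-split implies that
\begin{equation*}
    \Be(X) \to \Be(Y) \to \Be(Z)
\end{equation*}
is an $\MU$-split cofiber sequence of spectra.

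Next I would invoke \Cref{cor:MU-split_cofiber_sequences_are_A-split}: since $A$ is complex orientable, any $\MU$-split cofiber sequence of spectra is $A$-split. Applying this to the sequence above, I conclude that the cofiber sequence
\begin{equation*}
    A \tensor \Be(X) \to A \tensor \Be(Y) \to A \tensor \Be(Z)
\end{equation*}
is split in $\Mod_A$. Unwinding definitions, this is precisely the sequence $\Be(X;A) \to \Be(Y;A) \to \Be(Z;A)$, completing the proof.

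There is no real obstacle here; the lemma is essentially a straightforward chain of the preceding reductions. The only point to be careful about is the identification of the symmetric monoidal structure under Betti realization so that ``$\MGL$-split in $\SH(\CC)$'' is transported to ``$\MU$-split in $\Sp$'', but this is immediate from the fact that $\Be$ is symmetric monoidal and exact, together with $\Be(\MGL) \equivalent \MU$.
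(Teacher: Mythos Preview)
Your proposal is correct and follows essentially the same route as the paper: use that $\Be$ is symmetric monoidal with $\Be(\MGL)\equivalent\MU$ to transport the $\MGL$-splitting to an $\MU$-splitting of $\Be(X)\to\Be(Y)\to\Be(Z)$, then apply \Cref{cor:MU-split_cofiber_sequences_are_A-split} and unwind the definition $\Be(-;A)=A\tensor\Be(-)$.
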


\begin{proof}
	Since $ \Be(-;A) = A \tensor \Be(-) $, by \Cref{cor:MU-split_cofiber_sequences_are_A-split} it suffices to show that the cofiber sequence of $ \MU $-modules
	\begin{equation}\label{eq:MU_tensor_cofiber_sequence}
		\begin{tikzcd}
			\MU \tensor \Be(X) \arrow[r] & \MU \tensor \Be(Y) \arrow[r] & \MU \tensor \Be(Z)
		\end{tikzcd}
	\end{equation}
	is split.
	Since Betti realization is symmetric monoidal and $ \Be(\MGL) \equivalent \MU $, the cofiber sequence of \eqref{eq:MU_tensor_cofiber_sequence} is obtained by applying Betti realization to the cofiber sequence of $ \MGL $-modules 
	\begin{equation*}
		\begin{tikzcd}
			\MGL \tensor X \arrow[r] & \MGL \tensor Y \arrow[r] & \MGL \tensor Z \comma
		\end{tikzcd}
	\end{equation*}
	which is split by assumption.
\end{proof}

\begin{example}\label{ex:A-linear_Betti_preserves_cofiber_sequences_of_pures}
	Let $ A $ be a complex orientable $ \E_1 $-ring spectrum, and let $ X \to Y \to Z $ be a cofiber sequence in $ \Pure(\CC) $.
	Combining \Cref{proposition:pure_epimorphisms_detected_by_mgl,lem:MU-linear_homological_Betti_realization_splits_pure_cofiber_sequences}, we see that
	\begin{equation*}
		\begin{tikzcd}
			\Be(X;A) \arrow[r] & \Be(Y;A) \arrow[r] & \Be(Z;A) 
		\end{tikzcd}
	\end{equation*}
	is a split cofiber sequence in $ \Mod_A $.  
\end{example}

As a consequence, for a complex orientable connective $ \E_1 $-ring $ A $, the weight context of \Cref{nul:A-linear_homological_Betti_weight_context} has a solution.
More generally, any weight context based on $ A $-linear Betti realization has a solution.

\begin{proposition}\label{prop:solutions_to_weight_contexts_based_on_Betti_realization}
	Let $ A $ be a complex orientable $ \E_1 $-ring.
	Then any weight context of the form
	\begin{equation*}
		\begin{tikzcd}[sep=3.5em]
			& \Dcal \arrow[d, shift left, "U"] \\ 
			\SH(\CC) \arrow[r, "\Be(-;A)"'] \arrow[ur, dotted] & \Mod_A \period \arrow[u, "T", shift left]
		\end{tikzcd}
	\end{equation*}
	has a unique solution $ \WM \colon \SH(\CC) \to \Dcal $ satifying the following properties: 
	\begin{enumerate}
	    \item The functor $ \WM $ preserves colimits. 

	    \item If $ X \in \SH(\CC) $ is perfect pure, then $ \WM(X) \simeq T(\Be(X;A)) $. 
	\end{enumerate}
\end{proposition}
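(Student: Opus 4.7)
The plan is to deduce this directly from \Cref{theorem:homological_weight_context_has_unique_solution_if_t_is_mgl_exact} applied with $\M = \Be(-;A)$. The entire content of the proposition reduces to verifying hypothesis $(\ast)$ of that theorem: that the composite $T \circ \Be(-;A)$ carries cofiber sequences in $\Pure(\CC)$ to cofiber sequences in $\Dcal$. Once this is established, the theorem immediately produces a unique colimit-preserving functor $\WM \colon \SH(\CC) \to \Dcal$ whose restriction to $\Pure(\CC)$ is $T \circ \Be(-;A)$, which is exactly conditions (1) and (2).

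To verify $(\ast)$, I would proceed in two steps. First, given a cofiber sequence $X \to Y \to Z$ in $\Pure(\CC)$, apply \Cref{ex:A-linear_Betti_preserves_cofiber_sequences_of_pures}, which tells us that
\begin{equation*}
    \Be(X;A) \to \Be(Y;A) \to \Be(Z;A)
\end{equation*}
is a \emph{split} cofiber sequence in $\Mod_A$. Second, by the definition of a weight context (\Cref{definition:homological_weight_context}(3)), the functor $T \colon \Mod_A \to \Dcal$ is additive, and additive functors between stable (hence additive) \categories automatically preserve split cofiber sequences. Combining these two observations, $T \circ \Be(-;A)$ sends the given cofiber sequence in $\Pure(\CC)$ to a split cofiber sequence in $\Dcal$, which in particular is a cofiber sequence. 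This verifies $(\ast)$.

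There is no serious obstacle here; the work has essentially already been done in \Cref{proposition:pure_epimorphisms_detected_by_mgl}, \Cref{lem:MU-linear_homological_Betti_realization_splits_pure_cofiber_sequences}, and \Cref{ex:A-linear_Betti_preserves_cofiber_sequences_of_pures}. The one conceptual point worth emphasizing is that we do \emph{not} need $T$ to be exact — only additive — precisely because Betti realization of pure cofiber sequences is not merely a cofiber sequence but is actually split. This is crucial since in the motivating example $T = \tau_{\geq *}$, the Postnikov filtration functor is famously not exact.
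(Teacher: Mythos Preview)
Your proposal is correct and follows essentially the same approach as the paper's own proof: reduce to \Cref{theorem:homological_weight_context_has_unique_solution_if_t_is_mgl_exact}, invoke \Cref{ex:A-linear_Betti_preserves_cofiber_sequences_of_pures} to see that the cofiber sequence becomes split after applying $\Be(-;A)$, and then observe that the additive functor $T$ preserves split cofiber sequences. Your closing remark about why mere additivity of $T$ suffices is a nice gloss, though not part of the paper's argument.
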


\begin{proof}
	By \Cref{theorem:homological_weight_context_has_unique_solution_if_t_is_mgl_exact}, it suffices to show that if $ X \to Y \to Z $ is a cofiber sequence in $ \Pure(\CC) $, then 
	\begin{equation*}
		\begin{tikzcd}
			T(\Be(X;A)) \arrow[r] & T(\Be(Y;A)) \arrow[r] & T(\Be(Z;A)) 
		\end{tikzcd}
	\end{equation*}
	is a cofiber sequence in $ \Dcal $.
	By \Cref{ex:A-linear_Betti_preserves_cofiber_sequences_of_pures}, the cofiber sequence 
	\begin{equation*}
		\begin{tikzcd}
			\Be(X;A) \arrow[r] & \Be(Y;A) \arrow[r] & \Be(Z;A) 
		\end{tikzcd}
	\end{equation*}
	in $ \Mod_A $ is split.
	Since $ T \colon \Mod_A \to \Dcal $ is additive, $ T $ preserves this split cofiber sequence.
\end{proof}

\begin{corollary}\label{cor:homological_filtered_Betti_realization_exists_for_complex_orientable_rings}
	Let $ A $ be an $ \E_1 $-ring spectrum.
	If $ A $ is complex orientable, then there exists a unique left adjoint
	\begin{equation*}
		\begin{tikzcd}
			 \WBe(-;A) \colon \SH(\CC) \arrow[r] &  \Modpost{A}  
		\end{tikzcd}
	\end{equation*}
    such that for $ X \in \Pure(\CC) $, we have
    \begin{equation*}
        \WBe(X;A) \simeq \tau_{\geq *} \Be(X;A) \period
    \end{equation*}
\end{corollary}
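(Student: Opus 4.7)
The plan is to deduce this corollary as a direct application of \Cref{prop:solutions_to_weight_contexts_based_on_Betti_realization} to the weight context of \Cref{nul:A-linear_homological_Betti_weight_context}. Recall that this weight context has $\Dcal = \Modpost{A}$, forgetful functor $U = \colim$, structuring functor $T = \tau_{\geq *}$, and $\M = \Be(-;A)$. Almost all the work is already done; my task is just to check that this really forms a weight context in the sense of \Cref{definition:homological_weight_context}, invoke the proposition, and then upgrade the resulting colimit-preserving functor to a left adjoint.

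First I would verify the axioms of \Cref{definition:homological_weight_context}. The \categories $\Modpost{A}$ and $\Mod_A$ are stable and presentable (and so admit small colimits); $ \colim \colon \Modpost{A} \to \Mod_A $ preserves colimits since colimits in filtered module \categories are computed levelwise; and $\tau_{\geq *} \colon \Mod_A \to \Modpost{A}$ is additive because each truncation $\tau_{\geq n}$ is additive. The only nontrivial identification is $\colim \circ \tau_{\geq *} \simeq \id_{\Mod_A}$, which is exactly the statement that the Postnikov filtration on an $A$-module is exhaustive; this holds because the standard \tstructure on $\Sp$ is right complete and this property passes to $\Mod_A$ via the forgetful functor (which is \texact and conservative).

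Having assembled a bona fide weight context, I would apply \Cref{prop:solutions_to_weight_contexts_based_on_Betti_realization} with $\Dcal = \Modpost{A}$, $T = \tau_{\geq *}$, and $U = \colim$. This produces a unique colimit-preserving functor
\begin{equation*}
    \WBe(-;A) \colon \SH(\CC) \to \Modpost{A}
\end{equation*}
whose restriction to $\Pure(\CC)$ agrees with $S \mapsto \tau_{\geq *} \Be(S;A)$, as required. Finally, to upgrade from ``colimit-preserving'' to ``left adjoint'' it suffices to invoke the adjoint functor theorem: both $\SH(\CC)$ and $\Modpost{A}$ are presentable \categories, so any colimit-preserving functor between them admits a right adjoint.

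There is no real obstacle to overcome here, because the hard input — that cofiber sequences in $\Pure(\CC)$ are sent to cofiber sequences in $\Modpost{A}$ by $\tau_{\geq *} \circ \Be(-;A)$ — has already been established inside the proof of \Cref{prop:solutions_to_weight_contexts_based_on_Betti_realization}, where it ultimately reduces via \Cref{proposition:pure_epimorphisms_detected_by_mgl} and \Cref{lem:MU-linear_homological_Betti_realization_splits_pure_cofiber_sequences} to the fact that complex orientability makes $A$-linear Betti realization split $\MGL$-split cofiber sequences, and then to the additivity of $\tau_{\geq *}$. The corollary is therefore essentially a packaging result, specializing the abstract existence statement of \Cref{prop:solutions_to_weight_contexts_based_on_Betti_realization} to the canonical target \category $\Modpost{A}$.
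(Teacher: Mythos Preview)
Your proof is correct and follows exactly the same approach as the paper, which simply says ``Apply \Cref{prop:solutions_to_weight_contexts_based_on_Betti_realization} to the weight context \cref{nul:A-linear_homological_Betti_weight_context}.'' You have helpfully spelled out the verification that \cref{nul:A-linear_homological_Betti_weight_context} really is a weight context and the adjoint functor theorem step, both of which the paper leaves implicit.
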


\begin{proof}
	Apply \Cref{prop:solutions_to_weight_contexts_based_on_Betti_realization} to the weight context \cref{nul:A-linear_homological_Betti_weight_context}.
\end{proof}
	

\begin{definition}[filtered Betti realization]\label{def:filtered_Betti_realization}
	Let $ A $ be a complex orientable $ \E_1 $-ring spectrum.
	We call the functor
	\begin{equation*}
		\WBe(-;A) \colon \SH(\CC) \to \Modpost{A} 
	\end{equation*}
	of \Cref{cor:homological_filtered_Betti_realization_exists_for_complex_orientable_rings} the \defn{$ A $-linear filtered Betti realization} functor. 
\end{definition}

Pleasantly, this filtration is exhaustive:

\begin{corollary}
\label{cor:filtration_on_Betti_homology_is_exhaustive}
	Let $ A $ be a complex orientable $ \E_1 $-ring spectrum.
	Then the triangle of \categories and left adjoints
	\begin{equation*}
		\begin{tikzcd}
			\SH(\CC) \arrow[rr, "\WBe(-;A)"] \arrow[dr, "\Be(-;A)"'] & & \Modpost{A} \arrow[dl, "\colim"] \\ 
			& \Mod_A &
		\end{tikzcd}
	\end{equation*}
	canonically commutes. 
\end{corollary}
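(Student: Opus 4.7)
The plan is to exploit the universal property of $\SH(\CC)$ provided by Corollary \ref{cor:functors_out_of_SH_in_terms_of_Pure}: restriction along $\Pure(\CC) \hookrightarrow \SH(\CC)$ is a fully faithful functor from colimit-preserving functors $\SH(\CC) \to \Mod_A$ to cofiber-sequence-preserving functors $\Pure(\CC) \to \Mod_A$. Since both $\Be(-;A)$ and $\colim \circ \WBe(-;A)$ are colimit-preserving functors $\SH(\CC) \to \Mod_A$, it suffices to produce a canonical natural equivalence between their restrictions to $\Pure(\CC)$.

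First I would record that both candidate functors preserve small colimits. The functor $\Be(-;A) = A \tensor \Be(-)$ is a symmetric monoidal left adjoint, and hence preserves colimits. The filtered Betti realization $\WBe(-;A)$ is a left adjoint by construction (\cref{cor:homological_filtered_Betti_realization_exists_for_complex_orientable_rings}), and $\colim \colon \Modpost{A} \to \Mod_A$ is itself a left adjoint (to the constant-filtration functor), so their composite is a colimit-preserving functor.

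Next I would compare the two functors on $\Pure(\CC)$. By the defining property of $\WBe(-;A)$, for every perfect pure motivic spectrum $X$ there is a canonical equivalence
\begin{equation*}
    \WBe(X;A) \simeq \tau_{\geq *}\Be(X;A)
\end{equation*}
in $\Modpost{A}$, natural in $X$. Applying $\colim$ and using that the standard \tstructure on $\Mod_A$ is right complete (so that the canonical natural transformation $\colim \tau_{\geq *} \Rightarrow \id_{\Mod_A}$ is an equivalence) yields a canonical natural equivalence
\begin{equation*}
    \colim \WBe(X;A) \simeq \colim \tau_{\geq *}\Be(X;A) \simeq \Be(X;A)
\end{equation*}
of functors $\Pure(\CC) \to \Mod_A$.

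Finally, I would invoke Corollary \ref{cor:functors_out_of_SH_in_terms_of_Pure} to upgrade this to a natural equivalence of colimit-preserving functors on all of $\SH(\CC)$. The main (and essentially only) subtlety is the exhaustiveness of the Postnikov filtration on $A$-modules used in the second step; this is a consequence of right completeness of the standard \tstructure on $\Mod_A$ for an $\E_1$-ring $A$, which holds unconditionally. Everything else is formal from the universal property established in \cref{theorem:shk_is_additive_sheaves_on_perfect_pure_motives}.
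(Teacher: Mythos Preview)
Your proof is correct and follows essentially the same approach as the paper: both argue that the two colimit-preserving functors agree on $\Pure(\CC)$ and then invoke \Cref{cor:functors_out_of_SH_in_terms_of_Pure}. You supply a bit more detail than the paper does, namely the explicit use of right completeness of the \tstructure on $\Mod_A$ to justify $\colim \tau_{\geq *} \simeq \id$, which the paper leaves implicit.
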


\begin{proof}
    Both of the functors $ \SH(\CC) \to \Mod_A $ in the diagram preserve colimits.
    Moreover, by \Cref{cor:homological_filtered_Betti_realization_exists_for_complex_orientable_rings} they agree on $ \Pure(\CC) \subseteq \SH(\CC) $. 
    Thus the conclusion follows from \Cref{cor:functors_out_of_SH_in_terms_of_Pure}.
\end{proof}

We conclude by recording that the filtered Betti realization is compatible with \tstructures.
The relevant \tstructure on the motivic side is the Chow--Novikov \tstructure of \cite{bachmann2022chow}, and on the filtered module side is the diagonal \tstructure: 

\begin{lemma}
	\label{lemma:the_filtered_betti_realization_is_t_exact} 
	Let $ A $ be a complex orientable $ \E_1 $-ring spectrum.
	The filtered Betti realization 
    \begin{equation*}
		\WBe(-;A) \colon \SH(\CC) \to \Modpost{A} 
	\end{equation*}
	is right \texact with respect to the Chow--Novikov \tstructure on motivic spectra and the diagonal \tstructure on filtered spectra; that is, filtered Betti realization preserves connectivity. 
\end{lemma}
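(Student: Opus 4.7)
The plan is to reduce the claim to the case of perfect pure motivic spectra, where diagonal connectivity of the Postnikov filtration is immediate from the definition.

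First I would invoke the characterization of the connective part $\SH(\CC)_{c \geq 0}$ of the Chow--Novikov \tstructure as the smallest subcategory of $\SH(\CC)$ that contains $\Pure(\CC)$ and is closed under colimits and extensions, as noted in \cref{subsec:perfect_pure_motivic_spectra}. On the target side, the diagonal connective objects of $\Modpost{A}$ form the connective part of a \tstructure on a presentable stable \category, so in particular they are closed under colimits and extensions.

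Next I would observe that since $\WBe(-;A)$ is a left adjoint between stable \categories (\cref{cor:homological_filtered_Betti_realization_exists_for_complex_orientable_rings}), it is colimit-preserving and exact, hence preserves both colimits and extensions. Consequently, the full subcategory of $X \in \SH(\CC)$ for which $\WBe(X;A)$ is diagonal connective is closed under colimits and extensions. It therefore suffices to prove the claim when $X \in \Pure(\CC)$.

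Finally, for $X \in \Pure(\CC)$ we have by construction
\begin{equation*}
    \WBe(X;A) \simeq \tau_{\geq *}\Be(X;A),
\end{equation*}
whose value in filtration degree $n$ is the Postnikov section $\tau_{\geq n}\Be(X;A) \in \Sp_{\geq n}$, which lies in the connective part of the diagonal \tstructure by the very definition of that \tstructure. Hence $\tau_{\geq *}\Be(X;A)$ is diagonal connective, completing the reduction. There is no significant obstacle here: the content of the result is encapsulated in \cref{theorem:shk_is_additive_sheaves_on_perfect_pure_motives} (which supplies the generation of $\SH(\CC)_{c \geq 0}$ by $\Pure(\CC)$) together with the tautological observation that the Postnikov filtration of any $A$-module is diagonal connective.
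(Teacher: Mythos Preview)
Your proof is correct and follows essentially the same approach as the paper: reduce to perfect pures using that $\SH(\CC)_{c\geq 0}$ is generated by $\Pure(\CC)$ under colimits and extensions, then observe that the Postnikov filtration is tautologically diagonal connective. One small remark: the generation of $\SH(\CC)_{c\geq 0}$ by $\Pure(\CC)$ is really the \emph{definition} of the Chow--Novikov \tstructure (as noted in the remark after \cref{definition:perfect_pure_motivic_spectrum}), not a consequence of \cref{theorem:shk_is_additive_sheaves_on_perfect_pure_motives}.
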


\begin{proof}
	By definition, the connective part of the Chow--Novikov \tstructure is generated under colimits and extensions by perfect pure motivic spectra. 
	Thus, it is enough to show that for $ X $ perfect pure 
	\begin{equation*}
		\WBe(X; A) \simeq \tau_{\geq *}(\Be(X))
	\end{equation*}
	is connective, which is clear. 
\end{proof}

For the next result, recall that an object $ X $ of a stable \category with \tstructure $ \Ccal $ is \emph{$ \infty $-connective} if $ X \in \bigcap_{n \in \ZZ} \Ccal_{\geq n} $.
Also recall that the \tstructure on $ \Ccal $ is \textit{left separated} if $ \bigcap_{n \in \ZZ} \Ccal_{\geq n} = 0 $.

\begin{corollary}
	\label{corollary:filtered_betti_realization_inverts_chow_novikov_infty_connective_maps}
	Let $ A $ be a complex orientable $ \E_1 $-ring spectrum.
	The filtered Betti realization 
    \begin{equation*}
		\WBe(-;A) \colon \SH(\CC) \to \Modpost{A} 
	\end{equation*}
	inverts maps of motivic spectra which are $ \infty $-connective with respect to the Chow--Novikov \tstructure. 
\end{corollary}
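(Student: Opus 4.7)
The plan is to reduce this to a statement about the t-structures already established. Since $\WBe(-;A)$ is a left adjoint between stable \categories, it preserves cofiber sequences, and in particular it is exact. Hence, given a map $f \colon X \to Y$ in $\SH(\CC)$ which is $\infty$-connective with respect to the Chow--Novikov \tstructure, it suffices to show that $\WBe(Z;A) \simeq 0$, where $Z \colonequals \fib(f)$ is again $\infty$-connective.

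Next, I would invoke the right \texactness statement of \cref{lemma:the_filtered_betti_realization_is_t_exact}: since $Z$ lies in $\SH(\CC)_{c \geq n}$ for every integer $n$, its image $\WBe(Z;A)$ lies in $\Modpost{A}_{\geq n}$ with respect to the diagonal \tstructure, for every $n$. In other words, $\WBe(Z;A)$ is $\infty$-connective in the diagonal \tstructure.

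The final step is to observe that the diagonal \tstructure on filtered spectra is left separated: unwinding \cref{definition:diagonal_t_structure_on_filtered_spectra}, a filtered spectrum $F_{*}W$ lies in $\FilSp_{\geq k}$ if and only if $F_{n}W \in \spectra_{\geq n + k}$ for every $n \in \ZZ$, so an $\infty$-connective filtered spectrum has $F_{n}W \in \bigcap_{k} \spectra_{\geq k} = 0$ for each $n$, hence is zero. Since the forgetful functor $\Modpost{A} \to \FilSp$ is conservative and \texact, the same conclusion holds in $\Modpost{A}$, and thus $\WBe(Z;A) \simeq 0$, as desired. The only potential subtlety is confirming the convention for shifts in the diagonal \tstructure (so that left separation is immediate from the pointwise connectivity bound), but this is a routine unwinding of the definition.
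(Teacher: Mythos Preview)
Your proof is correct and follows essentially the same approach as the paper: reduce by exactness to showing that $\infty$-connective objects are sent to zero, apply the right \texactness of \cref{lemma:the_filtered_betti_realization_is_t_exact} to conclude the image is $\infty$-connective in the diagonal \tstructure, and then use that the diagonal \tstructure is left separated (via the levelwise left separation of spectra). The paper's version is slightly terser but the argument is the same.
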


\begin{proof}
	Since $\WBe(-; A)$ is exact, it is enough to show that if $ X $ is $ \infty $-connective with respect to the Chow--Novikov \tstructure, then $\WBe(X; A) = 0$. 
	By \cref{lemma:the_filtered_betti_realization_is_t_exact}, we deduce that $\WBe(X; A)$ is $ \infty $-connective with respect to the diagonal \tstructure, so that $\WBe(X; A)$ is levelwise $ \infty $-connective. 
	Since the standard \tstructure on spectra is left separated, it follows that $ \WBe(X; A) = 0 $. 
\end{proof}


\subsection{The case of an ordinary ring}
\label{subsec:filtered_Betti realization_with_coefficients_in_an_ordinary_ring}

We now unpack the filtered Betti realization in the case of an ordinary ring.

\begin{notation}
	If $ R $ is an ordinary commutative ring, we simply write
	\begin{equation*}
		\Be(-;R) \colon \SH(\CC) \to \Dcal(R) 
	\end{equation*}
	for $ \Be(-;\HR) $.
	Note that the functor $ \Be(-;R) $ is the unique symmetric monoidal left adjoint with the property that for any smooth $ \CC $-scheme $ X $, we have
	\begin{equation*}
		\Be(\Sigma_{+}^{\infty} X;R) \equivalent \Cup_{*}(X(\CC);R) \period
	\end{equation*} 
\end{notation}

An important feature is that Betti realization with coefficients in an ordinary ring factors through modules over motivic cohomology:

\begin{observation}[{$ \Be(-;R) $ factors through $ \MR $-modules}]\label{obs:ZZ-linear_Betti_realization_factors_through_HZZ-modules}
	Let $ R $ be an ordinary commutative ring.
    Since Betti realization $ \Be \colon \SH(\CC) \to \Sp $ is symmetric monoidal and $ \Be(\MR) \equivalent \HR $, the $ R $-linear Betti realization functor factors through $ \MR $-modules in $ \SH(\CC) $.
    That is, $ R $-linear Betti realization refines to a unique symmetric monoidal left adjoint
    \begin{equation*}
        \Mod_{\MR}(\SH(\CC)) \to \Mod_{\HR}(\Sp) \equivalent \Dcal(R)
    \end{equation*}
    fitting into a commutative square
    \begin{equation*}
        \begin{tikzcd}[sep=3em]
            \SH(\CC) \arrow[r, "\Be"] \arrow[d, "\MR \tensor (-)"'] & \Sp \arrow[d, "\HR \tensor (-)"] \\ 
            \Mod_{\MR}(\SH(\CC)) \arrow[r, dotted] & \Mod_{\HR}(\Sp) \period
        \end{tikzcd}
    \end{equation*}
    We also denote this refinement by $ \Be(-;R) \colon \Mod_{\MR}(\SH(\CC)) \to \Dcal(R) $.
\end{observation}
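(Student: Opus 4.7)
This observation is stated as a direct consequence of the symmetric monoidal structure on $\Be$ together with \Cref{ex:Betti_realization_of_HR}, and the plan is to deduce it from the standard functoriality of module \categories with respect to symmetric monoidal colimit-preserving functors. The first step is to promote the equivalence $\Be(\MR) \equivalent \HR$ of \Cref{ex:Betti_realization_of_HR} to an equivalence of $\E_\infty$-rings in $\Sp$. Since $\Be$ is symmetric monoidal, it sends $\E_\infty$-algebras to $\E_\infty$-algebras, so $\Be(\MR)$ inherits a natural $\E_\infty$-ring structure; one then checks that under the underlying equivalence this structure agrees with the standard one on $\HR$, for instance via the universal property of the motivic Eilenberg--MacLane spectrum or by observing that the ring $\pi_0 \HR = R$ carries a unique $\E_\infty$-ring structure lifting its ordinary ring structure.

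Next, I would invoke the standard fact that a symmetric monoidal colimit-preserving functor $F \colon \Ccal \to \Dcal$ between presentable symmetric monoidal stable \categories with bicontinuous tensor products induces, for each $A \in \CAlg(\Ccal)$, a symmetric monoidal colimit-preserving functor $F_A \colon \Mod_A(\Ccal) \to \Mod_{F(A)}(\Dcal)$ fitting into a commutative square with $F$ and the free-module functors $A \tensor (-)$ and $F(A) \tensor (-)$. This can be deduced by regarding $\Mod_A(\Ccal)$ as the pushout $\Mod_A(\Sp) \tensor_{\Sp} \Ccal$ in the \category of presentable symmetric monoidal stable \categories, or directly from the bar-construction presentation of $A$-modules.

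Applying this construction with $F = \Be$ and $A = \MR$, and postcomposing with the symmetric monoidal equivalence $\Mod_{\HR}(\Sp) \equivalent \Dcal(R)$, produces the desired refinement together with the commutativity of the displayed square. Uniqueness is then automatic: the \category $\Mod_{\MR}(\SH(\CC))$ is generated under colimits by free modules $\MR \tensor X$ for $X \in \SH(\CC)$, so any symmetric monoidal left adjoint into $\Dcal(R)$ compatible with the square is determined by its restriction along $\MR \tensor (-)$. The only subtle point in the plan is the $\E_\infty$-coherence of the equivalence $\Be(\MR) \equivalent \HR$, but this is a routine consequence of the symmetric monoidal structure on $\Be$ and causes no essential difficulty.
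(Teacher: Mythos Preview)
Your proposal is correct and is exactly the standard argument the paper is implicitly invoking: the paper states this as an \emph{observation} with no separate proof, the justification being the single sentence ``Since Betti realization $\Be \colon \SH(\CC) \to \Sp$ is symmetric monoidal and $\Be(\MR) \equivalent \HR$\ldots''. You have simply unpacked this into its constituent steps (functoriality of module \categories under symmetric monoidal left adjoints, plus the $\E_\infty$-coherence of $\Be(\MR)\equivalent\HR$), which is precisely what the paper takes for granted.
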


In this case, \Cref{def:filtered_Betti_realization} specializes to the following:

\begin{example}
	Let $ R $ be an ordinary commutative ring.
	Since the Eilenberg--Mac\-Lane spectrum $ \HR $ admits a canonical complex orientation, there is a filtered Betti realization functor
	\begin{equation*}
		\begin{tikzcd}[sep=3em]
			\SH(\CC) \arrow[rr, "\WBe(-;R)"] & & \Modpost{\HR} \arrow[r, "\sim"{yshift=-0.25ex}] & \Fil(\Dcal(R))
		\end{tikzcd}
	\end{equation*}
	Here the right-hand equivalence is provided by \Cref{prop:filtered_modules_ordinary_ring}.
\end{example}


Again, filtered Betti realization with coefficients in an ordinary ring factors through modules over motivic cohomology:

\begin{observation}[{$ \WBe(-;R) $ factors through $ \MR $-modules}]\label{obs:ZZ-linear_filtered_Betti_realization_factors_through_HZZ-modules}
    Let $ R $ be an ordinary commutative ring.
    In light of \Cref{obs:ZZ-linear_Betti_realization_factors_through_HZZ-modules}, the filtered $ R $-linear Betti realization functor $ \WBe(-;R) $ refines to a unique left adjoint
    \begin{equation*}
    	\Mod_{\MR}(\SH(\CC)) \to \Fil(\Dcal(R))
    \end{equation*}
    fitting into a commutative triangle
    \begin{equation*}
        \begin{tikzcd}[sep=3em]
            & \SH(\CC) \arrow[dr, "\WBe(-;R)"] \arrow[dl, "\MR \tensor (-)"'] & & \\ 
            \Mod_{\MR}(\SH(\CC)) \arrow[rr, dotted] & & \Fil(\Dcal(R)) \period
        \end{tikzcd}
    \end{equation*}
    We also denote this refinement by $ \WBe(-;R) \colon \Mod_{\MR}(\SH(\CC)) \to \Fil(\Dcal(R)) $.
\end{observation}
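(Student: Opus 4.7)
The plan is to mimic the construction of \Cref{cor:homological_filtered_Betti_realization_exists_for_complex_orientable_rings} in the setting of $\MR$-modules, using the symmetric monoidal refinement $\Be^{\text{ref}}(-;R) \colon \Mod_{\MR}(\SH(\CC)) \to \Dcal(R)$ provided by \Cref{obs:ZZ-linear_Betti_realization_factors_through_HZZ-modules} in place of $\Be(-;R)$, and the filtered target $\Modpost{\HR} \equivalent \Fil(\Dcal(R))$. The essential input is an analog of \Cref{theorem:shk_is_additive_sheaves_on_perfect_pure_motives} for $\MR$-modules, identifying $\Mod_{\MR}(\SH(\CC))$ with additive sheaves on the subcategory
\begin{equation*}
\Pure_{\MR}(\CC) \subseteq \Mod_{\MR}(\SH(\CC))
\end{equation*}
generated under extensions and retracts by free modules $\MR \tensor X$ for $X \in \Pure(\CC)$. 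This is the $\MR$-analog of the $\MGL$-case recorded in \eqref{equation:mgl_modules} of the introduction; its proof would follow the pattern of \Cref{theorem:shk_is_additive_sheaves_on_perfect_pure_motives}, using that objects of $\Pure_{\MR}(\CC)$ are compact and dualizable in $\Mod_{\MR}(\SH(\CC))$ (inherited from $\Pure(\CC) \subseteq \SH(\CC)$ by base change along $\MR \tensor -$).

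Given this description, I would define the restriction of a candidate refinement $\widetilde{\WBe}(-;R)$ to $\Pure_{\MR}(\CC)$ by sending $\MR \tensor X$ (for $X \in \Pure(\CC)$) to $\tau_{\geq *}\Be^{\text{ref}}(\MR \tensor X;R) \equivalent \tau_{\geq *}\Be(X;R)$. The key verification is that this restriction preserves cofiber sequences: any cofiber sequence in $\Pure_{\MR}(\CC)$ arises (after retracts and extensions) from a cofiber sequence in $\Pure(\CC)$ by applying the exact functor $\MR \tensor -$, and by the same argument as \Cref{ex:A-linear_Betti_preserves_cofiber_sequences_of_pures} it becomes split after $\Be^{\text{ref}}(-;R)$ in $\Dcal(R) = \Mod_{\HR}(\Sp)$; split cofiber sequences are then preserved by the additive functor $\tau_{\geq *}$. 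The analog of \Cref{cor:functors_out_of_SH_in_terms_of_Pure} for $\MR$-modules accordingly produces a unique colimit-preserving left adjoint
\begin{equation*}
\widetilde{\WBe}(-;R) \colon \Mod_{\MR}(\SH(\CC)) \to \Fil(\Dcal(R))
\end{equation*}
extending this restriction.

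Commutativity of the triangle is then automatic: the composite $\widetilde{\WBe}(-;R) \circ (\MR \tensor -) \colon \SH(\CC) \to \Fil(\Dcal(R))$ is colimit-preserving and agrees with $\tau_{\geq *}\Be(-;R)$ on $\Pure(\CC)$, which is exactly the defining property of $\WBe(-;R)$ from \Cref{cor:homological_filtered_Betti_realization_exists_for_complex_orientable_rings}; uniqueness follows by the same universal property applied to any putative refinement. The main obstacle is the preliminary step of establishing the $\MR$-analog of \Cref{theorem:shk_is_additive_sheaves_on_perfect_pure_motives}; once this is in hand, the remainder is a formal unwinding of the universal properties already established in \cref{section:motivic_spectra_as_sheaves_on_pure_motives,section:weight_filtration_on_complex_oriented_cohomology}.
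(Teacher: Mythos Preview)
Your overall strategy is sound and is likely what the paper has in mind (the paper gives no proof, but the remarks around \eqref{equation:mgl_modules} in the introduction point in exactly this direction). However, your justification of the key step contains a genuine gap.

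The claim that ``any cofiber sequence in $\Pure_{\MR}(\CC)$ arises (after retracts and extensions) from a cofiber sequence in $\Pure(\CC)$ by applying $\MR \tensor -$'' is not correct. A map $\MR \tensor X \to \MR \tensor Z$ in $\Mod_{\MR}$ corresponds by adjunction to a map $X \to \MR \tensor Z$ in $\SH(\CC)$, and there is no reason for this to factor through $Z$; so cofiber sequences in $\Pure_{\MR}(\CC)$ do not in general lift to $\Pure(\CC)$, and \Cref{ex:A-linear_Betti_preserves_cofiber_sequences_of_pures} does not apply as written.

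What actually makes the argument work is that every cofiber sequence in $\Pure_{\MR}(\CC)$ is already \emph{split} in $\Mod_{\MR}(\SH(\CC))$. Concretely, for $X, Z \in \Pure(\CC)$ the boundary map $\MR \tensor Z \to \Sigma(\MR \tensor X)$ lies in
\begin{equation*}
[\MR \tensor Z,\, \Sigma(\MR \tensor X)]_{\MR} \;\simeq\; \MR_{-1,0}(X \tensor Z^{\vee}) \comma
\end{equation*}
and this group vanishes: on a Thom spectrum $\Th_V(\eta)$ of rank $r$ the Thom isomorphism identifies it with $\Hrm^{2r+1,r}(V;R)$, which is a higher Chow group in negative simplicial degree and hence zero; the vanishing then propagates to all of $\Pure(\CC)$ by the long exact sequence. (Equivalently, this is the existence of Bondarko's weight structure on $\MR$-modules, parallel to the $\MGL$-case the paper cites via Elmanto--Sosnilo.) This splitting is also why the $\MR$-analog of \Cref{theorem:shk_is_additive_sheaves_on_perfect_pure_motives} lands in additive \emph{presheaves} rather than sheaves, just as the paper notes for $\MGL$ and $\MZZ$. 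Once you replace your incorrect reduction with this splitting observation, the rest of your argument goes through unchanged: the additive functor $\tau_{\geq *} \circ \Be^{\mathrm{ref}}(-;R)$ automatically preserves split cofiber sequences, and the universal property yields the desired factorization.
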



\subsection{Changing the coefficients of filtered Betti realization}\label{subsec:changing_the_coefficients_of_filtered_Betti_realization}

Let $ \phi \colon A \to B $ be a morphism of complex orientable $ \E_1 $-rings.
In this subsection, we produce a comparison natural transformation
\begin{equation*}
	\post{B} \tensorlimits_{\post{A}} \WBe(-;A) \to \WBe(-;B) 
\end{equation*} 
and show that this natural tranformation is an equivalence if $ \phi $ is flat (\Cref{cor:Be_fil_of_flat_maps}).
To start, we need to analyze the interaction between Postnikov filtrations and tensor products.

\begin{observation}
	Let $ \phi \colon A \to B $ be a morphism of $ \E_1 $-rings.
	Then the square
	\begin{equation*}
		\begin{tikzcd}[column sep=4.5em]
			\Mod_B \arrow[r] \arrow[d, "\tau_{\geq *}"'] & \Mod_A  \arrow[d, "\tau_{\geq *}"] \\ 
			\Modpost{B} \arrow[r] & \Modpost{A}
		\end{tikzcd}
	\end{equation*}
	commutes.
	Here the horizontal functors are the forgetful functors.
	Passing to horizontal left adjoints, there is an exchange transformation filling the square
	\begin{equation*}
		\begin{tikzcd}[row sep=4.5em, column sep=8em]
			\Mod_A \arrow[r, "B \tensor_A (-)"] \arrow[d, "\tau_{\geq *}"'] & \Mod_B  \arrow[d, "\tau_{\geq *}"] \\ 
			\Modpost{A} \arrow[r, "\post{B} \tensorlimits\limits_{\post{A}} (-)"'] \arrow[ur, phantom, "\scriptstyle \Ex_{\phi}" above left, "\Longrightarrow" sloped] & \Modpost{B} \period
		\end{tikzcd}
	\end{equation*}
\end{observation}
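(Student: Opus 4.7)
The statement decomposes into two pieces: commutativity of the first square (with forgetful functors as horizontals) and then a formal Beck--Chevalley passage to the second square (with extensions of scalars as horizontals). My plan is to treat these in turn.

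For the first square, the key input is that the Postnikov filtration $\tau_{\geq *} \colon \Sp \to \FilSp$ is lax symmetric monoidal. Given any morphism $\phi \colon A \to B$ of $\E_1$-ring spectra, applying $\tau_{\geq *}$ yields a morphism $\tau_{\geq *}\phi \colon \post{A} \to \post{B}$ of $\E_1$-algebras in $\FilSp$, and the refinement of $\tau_{\geq *}$ from spectra to modules, i.e.\ $\Mod_A \to \Modpost{A}$, comes from this lax monoidal structure: the $\post{A}$-action on $\tau_{\geq *}M$ is the composite $\post{A} \tensor \tau_{\geq *}M \to \tau_{\geq *}(A \tensor M) \to \tau_{\geq *}M$ of the lax structure map with the action. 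Naturality of this construction in $\phi$ is precisely the commutativity asserted in the first square; both composites send a $B$-module $N$ to $\tau_{\geq *}N$, regarded as a $\post{A}$-module via $\tau_{\geq *}\phi$.

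For the second square, both horizontal forgetful functors $\Mod_B \to \Mod_A$ and $\Modpost{B} \to \Modpost{A}$ admit left adjoints, namely the extensions of scalars $B \tensor_A (-)$ and $\post{B} \tensorlimits_{\post{A}} (-)$. For the filtered side, this is the standard existence of relative tensor products of modules over $\E_1$-algebras in the presentable symmetric monoidal \category $\FilSp$. Given any commuting square of right adjoints in which both horizontals admit left adjoints, there is a canonical mate square with an exchange natural transformation: writing $U, \tilde U$ for the forgetful functors and $F, \tilde F$ for their left adjoints, it is the composite
\begin{equation*}
    \tilde F \of \tau_{\geq *} \xrightarrow{\eta} \tilde F \of \tau_{\geq *} \of U \of F \equivalent \tilde F \of \tilde U \of \tau_{\geq *} \of F \xrightarrow{\epsilon} \tau_{\geq *} \of F
\end{equation*}
built from the unit of $F \dashv U$ and the counit of $\tilde F \dashv \tilde U$, with the middle identification provided by the first square. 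This is the desired $\Ex_{\phi}$.

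There is no genuine obstacle: the entire statement is purely formal once the foundational setup (lax monoidality of $\tau_{\geq *}$ and existence of extension of scalars for $\E_1$-algebras in $\FilSp$) is taken for granted. The only point that requires any attention is verifying that the module refinement of $\tau_{\geq *}$ is indeed induced by its lax monoidal structure in the way described, but this is built into the construction of $\tau_{\geq *} \colon \Mod_A \to \Modpost{A}$ used throughout the paper.
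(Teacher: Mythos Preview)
Your proposal is correct. The paper states this as an \emph{observation} without proof; it is treated as formal bookkeeping, and your explanation (lax monoidality of $\tau_{\geq *}$ to get the first square, then the standard mate construction for the exchange transformation) is exactly the expected justification.
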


\begin{construction}[comparison morphism]\label{construction:comparison_morphism_between_filtrations_for_different_cpx_orientable_spectra}
	Let $ \phi \colon A \to B $ be a morphism of complex orientable connective $ \E_1 $-rings.
	Define a natural transformation
	\begin{equation*}
		c_{\phi} \colon \post{B} \tensorlimits_{\post{A}} \WBe(-;A) \longrightarrow \WBe(-;B)
	\end{equation*}
	of functors $ \SH(\CC) \to \Modpost{B} $ as follows.
	Note that since $ \post{B} \tensorlimits_{\post{A}} \WBe(-;A) $ and $ \WBe(-;B) $ are both left adjoints, by the equivalence
	\begin{equation*}
		\begin{tikzcd}
			\FunL(\SH(\CC),\Modpost{B}) \arrow[r, "\sim"{yshift=-0.25ex}] & \Fun^{\cofib}(\Pure(\CC),\Modpost{B})
		\end{tikzcd}
	\end{equation*}
	of \Cref{cor:functors_out_of_SH_in_terms_of_Pure}, it suffices to construct the restriction $ \restrict{c_{\phi}}{\Pure(\CC)} $ to perfect pure motivic spectra.
	For this, we take the natural transformation
	\begin{equation*}
		\begin{tikzcd}[sep=6em]
			\phantom{\restrict{c_{\phi}}{\Pure(\CC)}} \displaystyle \post{B} \tensorlimits_{\post{A}} \tau_{\geq *}(\Be(-;A)) \arrow[r, "\Ex_{\phi}\Be(-;A)"] & \tau_{\geq *}(B \tensor_A \Be(-;A)) 
		\end{tikzcd}
	\end{equation*}
	induced by the exchange transformation.
\end{construction}

For flat ring maps, the exchange transformation is an equivalence:

\begin{lemma}\label{lem:tensoring_with_a_flat_morphism_preserves_Postnikov_filtrations}
	Let $ \phi \colon A \to B $ be a morphism of $ \E_1 $-rings.
	If $ \phi $ is flat, then the exchange transformation
	\begin{equation*}
		\Ex_{\phi} \colon \post{B} \tensorlimits_{\post{A}} \tau_{\geq *}(-) \longrightarrow  \tau_{\geq *}(B \tensor_A (-))
	\end{equation*}
	is an equivalence of functors $ \Mod_A \to \Modpost{B} $.
\end{lemma}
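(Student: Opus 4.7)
The plan is to reduce the claim to a check on a generating set, with the flatness hypothesis entering at two distinct points in the argument. First, I would use flatness to put the right-hand side of the exchange into a more tractable form. Since $\phi$ is flat, the functor $B \tensor_A (-) \colon \Mod_A \to \Mod_B$ is \texact, so for every $n \in \ZZ$ and every $M \in \Mod_A$ the canonical comparison $B \tensor_A \tau_{\geq n}(M) \to \tau_{\geq n}(B \tensor_A M)$ is an equivalence. Consequently, $\tau_{\geq *}(B \tensor_A M)$ is obtained by applying $B \tensor_A (-)$ level-wise to $\tau_{\geq *}(M)$. Writing $\tilde{B} \colon \Modpost{A} \to \Modpost{B}$ for this level-wise base-change functor (well-defined on $\post{A}$-modules by flatness, which is what allows the $\post{A}$-action to descend to a $\post{B}$-action after base change), it suffices to prove that the induced natural transformation
\begin{equation*}
\post{B} \tensorlimits_{\post{A}} N \longrightarrow \tilde{B}(N)
\end{equation*}
of functors $\Modpost{A} \to \Modpost{B}$ is an equivalence for every $N \in \Modpost{A}$.

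Second, both functors in play preserve small colimits: the left one as a left adjoint, the right one because colimits in $\Modpost{A}$ and in $\Modpost{B}$ are computed level-wise in $\FilSp$ and $B \tensor_A (-)$ preserves colimits. Since $\Modpost{A}$ is generated under colimits, via the free-forgetful adjunction, by free modules of the form $\post{A} \tensor X$ with $X$ ranging over a generating set of $\FilSp$, it suffices to verify the equivalence on such free modules.

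Third, on $N = \post{A} \tensor X$ the left-hand side reduces to $\post{B} \tensor X$ by cancellation of $\post{A}$. The right-hand side can be computed level-wise as
\begin{equation*}
B \tensor_A (\post{A} \tensor X)_n \simeq \colim_{a+b \geq n} B \tensor_A \bigl( \tau_{\geq a}(A) \tensor X_b \bigr) \simeq \colim_{a+b\geq n} \tau_{\geq a}(B) \tensor X_b = (\post{B} \tensor X)_n \comma
\end{equation*}
where we use that $B \tensor_A (-)$ preserves colimits, that it commutes with tensoring by the spectrum $X_b$, and that by flatness it sends $\tau_{\geq a}(A)$ to $\tau_{\geq a}(B)$.

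The hardest part of this plan will be making precise the first step, namely verifying that $\tilde{B}$ is a well-defined functor into $\Modpost{B}$ rather than only into filtered $B$-modules: one must check that level-wise base change of a $\post{A}$-action produces a coherent $\post{B}$-action with respect to the Day convolution. Flatness is essential here, as it is what turns the Künneth-type comparison maps appearing when one commutes $B \tensor_A (-)$ past Day convolution with $\post{A}$ into equivalences.
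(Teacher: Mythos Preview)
Your proposal is correct and rests on the same core observation as the paper: flatness makes $B \tensor_A (-)$ \texact, so each map $B \tensor_A \tau_{\geq n}(M) \to \tau_{\geq n}(B \tensor_A M)$ is an equivalence. The paper's proof is exactly that sentence and nothing more; it treats the identification of the $n$-th level of $\post{B} \tensor_{\post{A}} \tau_{\geq *}(M)$ with $B \tensor_A \tau_{\geq n}(M)$ as understood. Your further steps supply that identification via a free-module reduction, which is a reasonable way to fill in what the paper leaves implicit.

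On the difficulty you flag with $\tilde{B}$: it is real but easily handled. For non-connective $A$ the levels $N_n$ of a $\post{A}$-module are only $\tau_{\geq 0}(A)$-modules, so ``level-wise $B \tensor_A (-)$'' should be read as level-wise $\tau_{\geq 0}(B) \tensor_{\tau_{\geq 0}(A)} (-)$; flatness of $\phi$ descends to connective covers, and then your third-step computation goes through verbatim since $\tau_{\geq 0}(B) \tensor_{\tau_{\geq 0}(A)} \tau_{\geq a}(A) \simeq \tau_{\geq a}(B)$. Alternatively, you can bypass constructing $\tilde{B}$ as a functor into $\Modpost{B}$ altogether: fix $n$ and run your colimit argument on the two colimit-preserving functors $\Modpost{A} \to \Sp$ given by $N \mapsto (\post{B} \tensor_{\post{A}} N)_n$ and $N \mapsto \tau_{\geq 0}(B) \tensor_{\tau_{\geq 0}(A)} N_n$, then specialize to $N = \tau_{\geq *}(M)$. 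This delivers the level-wise identification the paper's one-line proof presumes without ever needing the $\post{B}$-module coherence you worry about.
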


\begin{proof}
	Since $ \phi $ is flat, the left adjoint $ B \tensor_A (-) \colon \Mod_A \to \Mod_B $ is \texact \cite[Theorem 7.2.2.15]{lurieha}.
	Hence for each $ M \in \Mod_A $ and $ n \in \ZZ $, the natural map
	\begin{equation*}
		B \tensor_A \tau_{\geq n}(M) \longrightarrow \tau_{\geq n}(B \tensor_A M)
	\end{equation*}
	is an equivalence.
\end{proof}

\begin{corollary}\label{cor:Be_fil_of_flat_maps}
	Let $ \phi \colon A \to B $ be a morphism of complex orientable $ \E_1 $-rings.
	If $ \phi $ is flat, then the comparison natural transformation
	\begin{equation*}
		c_{\phi} \colon \post{B} \tensorlimits_{\post{A}} \WBe(-;A) \longrightarrow \WBe(-;B)
	\end{equation*}
	is an equivalence of functors $ \SH(\CC) \to \Modpost{B} $.
\end{corollary}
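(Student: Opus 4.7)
The plan is to reduce the statement to its restriction on $\Pure(\CC)$ via the universal property of \Cref{cor:functors_out_of_SH_in_terms_of_Pure} and there invoke \Cref{lem:tensoring_with_a_flat_morphism_preserves_Postnikov_filtrations}. I do not expect a serious obstacle here, since nearly all the work has been done in setting up the auxiliary results.

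First, I would observe that both functors $\post{B} \tensorlimits_{\post{A}} \WBe(-;A)$ and $\WBe(-;B)$ are colimit-preserving functors $\SH(\CC) \to \Modpost{B}$. Indeed, $\WBe(-;A)$ and $\WBe(-;B)$ are left adjoints by \Cref{cor:homological_filtered_Betti_realization_exists_for_complex_orientable_rings}, while the extension-of-scalars functor $\post{B} \tensorlimits_{\post{A}} (-) \colon \Modpost{A} \to \Modpost{B}$ is a left adjoint by construction. Applying \Cref{cor:functors_out_of_SH_in_terms_of_Pure}, the natural transformation $c_\phi$ is an equivalence of functors on $\SH(\CC)$ if and only if its restriction to $\Pure(\CC)$ is an equivalence.

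Next, I would unpack the restriction of $c_\phi$ to $\Pure(\CC)$. By \Cref{construction:comparison_morphism_between_filtrations_for_different_cpx_orientable_spectra}, for $X \in \Pure(\CC)$ the map $c_\phi(X)$ is exactly the exchange transformation
\[
\Ex_\phi \Be(X;A) \colon \post{B} \tensorlimits_{\post{A}} \tau_{\geq *}\Be(X;A) \longrightarrow \tau_{\geq *}(B \tensor_A \Be(X;A)),
\]
where we use the identification $\WBe(X;A) \simeq \tau_{\geq *}\Be(X;A)$ from \Cref{cor:homological_filtered_Betti_realization_exists_for_complex_orientable_rings} on the source, and the analogous identification together with the canonical equivalence $B \tensor_A \Be(X;A) \simeq \Be(X;B)$ (coming from $\Be(-;B) \simeq B \tensor (-) \simeq B \tensor_A (A \tensor (-))$) on the target.

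Finally, since $\phi \colon A \to B$ is flat, \Cref{lem:tensoring_with_a_flat_morphism_preserves_Postnikov_filtrations} tells us that $\Ex_\phi \colon \post{B} \tensorlimits_{\post{A}} \tau_{\geq *}(-) \to \tau_{\geq *}(B \tensor_A (-))$ is an equivalence of functors $\Mod_A \to \Modpost{B}$. Evaluating at $\Be(X;A) \in \Mod_A$ for each $X \in \Pure(\CC)$ shows that $c_\phi|_{\Pure(\CC)}$ is an equivalence, which completes the proof by the reduction above.
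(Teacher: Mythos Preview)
Your proof is correct and follows essentially the same approach as the paper: reduce to $\Pure(\CC)$ via \Cref{cor:functors_out_of_SH_in_terms_of_Pure} using that both sides are left adjoints, then invoke \Cref{lem:tensoring_with_a_flat_morphism_preserves_Postnikov_filtrations}. Your write-up is slightly more explicit in unpacking the identification of $c_{\phi}$ on perfect pures with the exchange transformation, but the argument is the same.
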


\begin{proof}
	Since both $ \post{B} \tensorlimits_{\post{A}} \WBe(-;A) $ and $ \WBe(-;B) $ are left adjoints, by \Cref{cor:functors_out_of_SH_in_terms_of_Pure} it suffices to show that $ c_{\phi} $ is an equivalence when restricted to $ \Pure(\CC) $.
	The claim now follows from the definitions of $ \WBe(-;A) $ and $ \WBe(-;B) $ combined with \Cref{lem:tensoring_with_a_flat_morphism_preserves_Postnikov_filtrations}.
\end{proof}

\begin{example}\label{ex:QQ-linearized_BeZZfil_is_BeQQfil}
	The comparison natural transformation
	\begin{equation*}
		\QQ \tensor_\ZZ \WBe(-;\ZZ) \to \WBe(-;\QQ)
	\end{equation*}
	is an equivalence of functors $ \SH(\CC) \to \Fil(\Dcal(\QQ)) $.
\end{example}


\subsection{Filtered étale realization}\label{subsec:filtered_etale_realization}

Let $ k $ be a field and $ \ell \neq \characteristic(k) $ a prime.
In \cref{definition:etale_realization}, we recalled Bachmann's construction an $ \ell $-adic étale realization functor 
\begin{equation*}
	\Re_{\ell} \colon \SH(k) \to \Shethyp(\Et_S;\Sp)\ellcomp \period
\end{equation*}
In this subsection, we show that the complex orientable variants of this functor have a canonical lift to filtered sheaves. 

\begin{definition}
	\label{definition:algebra_in_etale_sheaves_complex_orientable} 
	Let $ k $ be a field and $ \ell \neq \characteristic(k) $ a prime.
	We say that $A \in \Alg(\Shethyp(\Et_S;\Sp)\ellcomp)$ is \defn{complex orientable} if there exists a map of associative algebras
	\begin{equation*}
		\Re_{\ell}(\MGL) \to A
	\end{equation*}
	in the homotopy category of $ \Shethyp(\Et_S;\Sp)\ellcomp $. 
\end{definition}

\begin{remark}
	Write
	\begin{equation*}
		R \colon \Shethyp(\Et_S;\Sp)\ellcomp \to \SH(k)
	\end{equation*}
	for the right adjoint to $ \Re_{\ell} $.
	The condition that $ A $ is complex orientable in the sense of \cref{definition:algebra_in_etale_sheaves_complex_orientable} is equivalent to the condition that the motivic spectrum $ R(A) \in \Alg(\SH(k)) $ representing $ A $-linear étale cohomology is orientable as a motivic spectrum. 
\end{remark}

\begin{nul} 
	Recall that one says that $X \in \Shethyp(\Et_k;\Sp)$ is \emph{coconnective} if for every $E \in \Et_{k}$, the spectrum $ X(E) $ is coconnective.
	This is a coconnective part of a unique \tstructure which we call the \emph{standard \tstructure}; see \cite[\S1.3.2]{SAG}.
	The heart can be described as the category
	\begin{equation*}
		\Shethyp(\Et_k;\Sp)^{\heartsuit} \simeq \Shethyp(\Et_k; \Ab) \simeq \Sh_{\et}(\Et_k; \Ab), 
	\end{equation*}
	of étale sheaves of abelian groups on $ k $.
	A map $X \to Y$ of hypercomplete sheaves is an equivalence if and only if for each $i \in \ZZ$, the induced map $\pi_{i}^{\heartsuit} X \to \pi_{i}^{\heartsuit} Y$ is an isomorphism. 
\end{nul} 

\begin{definition}
	Let $ k $ be a field of exponential characteristic $ e $, let $ \ell \neq e $ be a prime, and let $ A \in \Alg(\Shethyp(\Et_k;\Sp)\ellcomp) $.
	The \defn{$ A $-linear étale realization} functor is the composite
	\begin{equation*}
		\begin{tikzcd}[sep=4.5em]
			\Re_{\ell}(-;A) \colon \SH(k) \arrow[r, "\Re_{\ell}"] & \Shethyp(\Et_k;\Sp)\ellcomp \arrow[r, "A \tensor (-)"] & \Mod_A(\Shethyp(\Et_k;\Sp)\ellcomp) \period
		\end{tikzcd}
	\end{equation*}
\end{definition}

\begin{proposition}
	\label{proposition:existence_of_the_a_linear_etale_realization} 
	Let $ k $ be a field of exponential characteristic $ e $ and $ \ell \neq e $ a prime.
	Let $ A \in \Alg(\Shethyp(\Et_k;\Sp)\ellcomp) $ be complex orientable.
	Then there exists a unique left adjoint
    \begin{equation*}
        \WRe_{\ell}(-;A) \colon \SH(k) \longrightarrow \Fil( \Shethyp(\Et_k; \Sp)\ellcomp)
    \end{equation*}
    such that for $ X \in \Pure(k) $ and any $ n \in \ZZ $ we have
    \begin{equation*}
    	\WRe_{\ell} (X;A) \simeq (\tau_{\geq *} (\Re_{\ell}(X;A))\ellcomp
    \end{equation*}
	the $ \ell $-completion of the Whitehead cover of $\Re_{\ell}(X;A)$ with respect to the standard \tstructure. 
\end{proposition}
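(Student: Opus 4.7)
The plan is to mimic the proof of \Cref{cor:homological_filtered_Betti_realization_exists_for_complex_orientable_rings} verbatim, with Betti realization replaced by $\ell$-adic étale realization and $\Mod_{A}$ replaced by $\Shethyp(\Et_{k};\Sp)\ellcomp$. Concretely, I set up the weight context of \Cref{definition:homological_weight_context} with $\Ccat = \Shethyp(\Et_{k};\Sp)\ellcomp$ and $\Dcat = \Fil(\Shethyp(\Et_{k};\Sp)\ellcomp)$, with $U = \colim$, with $T \colon \Ccat \to \Dcat$ sending $X$ to the levelwise $\ell$-completion $(\tau_{\geq *}X)\ellcomp$ of the Postnikov filtration, and with $\Re_{\ell}(-;A) \colon \SH(k) \to \Ccat$ in place of $\M$. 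Since $\ell$-completion is a left adjoint it preserves colimits, so combined with right completeness of the standard $\tup$-structure on $\Shethyp(\Et_{k};\Sp)$ one has $U \circ T \equivalent \id_{\Ccat}$; both $\tau_{\geq *}$ and levelwise $\ell$-completion are additive, so $T$ is additive.

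By \Cref{theorem:homological_weight_context_has_unique_solution_if_t_is_mgl_exact}, it suffices to verify condition $(\ast)$: for every cofiber sequence $X \to Y \to Z$ in $\Pure(k)$, the sequence $T(\Re_{\ell}(X;A)) \to T(\Re_{\ell}(Y;A)) \to T(\Re_{\ell}(Z;A))$ is a cofiber sequence in $\Dcat$. By \Cref{proposition:pure_epimorphisms_detected_by_mgl} any cofiber sequence in $\Pure(k)$ is $\MGL$-split, and since $\Re_{\ell}$ is a symmetric monoidal left adjoint, its image under $\Re_{\ell}$ is split after tensoring with $\Re_{\ell}(\MGL)$. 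The hypothesis that $A$ is complex orientable in the sense of \Cref{definition:algebra_in_etale_sheaves_complex_orientable} supplies an algebra map $\Re_{\ell}(\MGL) \to A$ in the homotopy category, and the proofs of \Cref{lemma:MU_zero_maps_are_zero_on_any_complex_orientable_ring,cor:MU-split_cofiber_sequences_are_A-split} transfer verbatim to the presentable stable target $\Shethyp(\Et_{k};\Sp)\ellcomp$ to show that the boundary map vanishes after tensoring with $A$. Hence $\Re_{\ell}(X;A) \to \Re_{\ell}(Y;A) \to \Re_{\ell}(Z;A)$ is a split cofiber sequence of $A$-modules in $\Shethyp(\Et_{k};\Sp)\ellcomp$, and in particular of the underlying $\ell$-complete hypersheaves.

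Since $T$ is additive, it preserves this split cofiber sequence, yielding $(\ast)$. An application of \Cref{theorem:homological_weight_context_has_unique_solution_if_t_is_mgl_exact} (equivalently, \Cref{cor:functors_out_of_SH_in_terms_of_Pure} applied to $T \circ \Re_{\ell}(-;A) \colon \Pure(k) \to \Dcat$) then produces the unique colimit-preserving functor $\WRe_{\ell}(-;A) \colon \SH(k) \to \Fil(\Shethyp(\Et_{k};\Sp)\ellcomp)$ whose restriction to $\Pure(k)$ is $(\tau_{\geq *}\Re_{\ell}(-;A))\ellcomp$; since source and target are both presentable, this extension is automatically a left adjoint. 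I expect the only real subtlety to be bookkeeping around $\ell$-completion, namely checking that levelwise $\ell$-completion of the Postnikov filtration assembles into an additive functor $T$ landing in $\Fil(\Shethyp(\Et_{k};\Sp)\ellcomp)$ and compatible with the splitting argument above; but because $\ell$-completion is a left adjoint, both the additivity and the required colimit-compatibility are automatic, so the transfer from the Betti setting is genuinely routine.
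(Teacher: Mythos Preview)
Your proposal is correct and follows essentially the same route as the paper: set up the appropriate weight context, invoke \Cref{theorem:homological_weight_context_has_unique_solution_if_t_is_mgl_exact}, and verify condition~$(\ast)$ by observing that cofiber sequences in $\Pure(k)$ are $\MGL$-split, hence become $A$-split after realization (via the analogue of \Cref{lemma:MU_zero_maps_are_zero_on_any_complex_orientable_ring}), and are therefore preserved by any additive functor such as $T$. Your treatment is slightly more explicit than the paper's in tracking the $\ell$-completion inside $T$, but the argument is the same.
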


\begin{proof}
	By \cref{theorem:homological_weight_context_has_unique_solution_if_t_is_mgl_exact}, it suffices to show that if $ X \to Y \to Z $ is a cofiber sequence in $ \Pure(k) $, then 
	\begin{equation*}
		\begin{tikzcd}
			\tau_{\geq *}(\Re_{\ell}(X;A)) \to \tau_{\geq *}(\Re_{\ell}(Y;A)) \to \tau_{\geq *}(\Re_{\ell}(Z;A))
		\end{tikzcd}
	\end{equation*}
	is a cofiber sequence in $ \Fil( \Shethyp(\Et_k; \Sp)\ellcomp) $.
	Since there exists a map $\Re_{\ell}(\MGL) \to A$ of the algebras in the homotopy category of $ \Shethyp(\Et_k; \Sp)\ellcomp $, the same argument as in \cref{lemma:MU_zero_maps_are_zero_on_any_complex_orientable_ring} shows that 
	\begin{equation*}
		\Re_{\ell}(X;A) \to \Re_{\ell}(Y;A) \to \Re_{\ell}(Z)
	\end{equation*}
	is a split cofiber sequence, hence preserved by all additive functors, such as $\tau_{\geq *}$. 
\end{proof}

\begin{definition}
	\label{definition:filtered_etale_realization}
	We call the left adjoint functor
	\begin{equation*}
	    \WRe_{\ell}(-;A) \colon \SH(k) \to \Fil( \Shethyp(\Et_k; \Sp)\ellcomp)
	\end{equation*}
	of \cref{proposition:existence_of_the_a_linear_etale_realization} the \defn{filtered étale realization}. 
\end{definition}

\begin{remark}
	\label{remark:filtered_etale_realization_also_inverts_chow_infty_connective_maps}
	Since the standard \tstructure on \emph{hypercomplete} sheaves is left separated, same argument as in the Betti case covered in \cref{corollary:filtered_betti_realization_inverts_chow_novikov_infty_connective_maps} shows that filtered étale realization inverts Chow--Novikov $ \infty $-connective maps. 
\end{remark}


\subsection{Virtual Euler characteristics}\label{subsec:virtual_Euler_characteristics}

An old conjecture of Serre, first solved by Deligne using the weight filtration, is the existence of \textit{virtual Euler characteristics}. 
These are invariants 
\begin{equation*}
	a_{i}(X;\QQ) \in \ZZ
\end{equation*}
of a complex variety $ X $ uniquely determined by the following properties:
\begin{enumerate}
    \item If $ X $ is smooth and proper, then
    \begin{equation*}
   		a_{i}(X;\QQ) = \dim_{\QQ} \Hrm^{i}(X(\CC); \QQ) \period
    \end{equation*}

    \item If $ X $ is a variety with an open subvariety $U \subseteq X$ with closed complement $Z \subseteq X$, then 
    \begin{equation*}
    	a_{i}(X;\QQ) = a_{i}(U;\QQ) + a_{i}(Z;\QQ) \period
    \end{equation*}
\end{enumerate}
Over a field of characteristic zero, these virtual Euler characteristics can be defined using Bittner's presentation of the Grothendieck ring of varieties \cite{bittner2004universal}. 

In terms of the weight filtration on compactly supported cochains $\Cc^{*}(X(\CC); \QQ)$, the virtual Euler characteristic is given by the explicit formula
\begin{equation*}
\label{equation:explicit_formula_for_rational_euler_characteristic}
	a_{i}(X;\QQ) = (-1)^{i} \chi_{\QQ}( \gr_{-i} \Cc^{*}(X(\CC); \QQ)) \period
\end{equation*}
Here, 
\begin{equation*}
	\gr_{i} \Cc^{*}(X(\CC); \QQ) \colonequals \cofib\paren{\Wup_{i+1}\Cc^{*}(X(\CC); \QQ) \to \Wup_{i}\Cc^{*}(X(\CC); \QQ)}
\end{equation*}
is the $ i $-th graded piece of the weight filtration, and $\chi_{\QQ}$ denotes the Euler characteristic of a perfect $\QQ$-module in spectra defined by the difference between the dimension in even odd degrees:
\begin{equation*}
	\chi_{\QQ}(P) \colonequals \dim_{\QQ} \pi_{2*}(P) - \dim_{\QQ} \pi_{2*+1}(P) \period
\end{equation*}
Thus, analogous to the way that Khovanov homology categorifies the Jones polynomial \cite{khovanov2000categorification}, the weight filtration can be thought of as the ``geometry'' behind the virtual Euler characteristics. 

Besides ordinary cohomology, there are other complex oriented cohomology theories which behave like fields, known as the \emph{Morava $\Kup$-theories}.
For each prime $ p $ and integer $ n \geq 1 $, we write $ \Kup(n) $ for the \textit{height $ n $ Morava $ \Kup $-theory} at the (implicit) prime $ p $. 
In many ways, despite the fact that their ring of coefficients
\begin{equation*}
	\Kup(n)_{*} \simeq \FF_{p}[v_{n}^{\pm 1}] \quad \text{with} \quad \deg(v_n) = 2p^n - 2 
\end{equation*}
is of positive characteristic, these cohomology theories behave like objects of characteristic zero; see \cites{MR4419631}{HopkinsLurie:ambidexterity}.
This makes Morava $ \Kup $-theories useful, for example, in problems involving orientations of orbifolds, as in Abouzaid and Blumberg's breakthrough work on the Arnold conjecture in symplectic geometry \cites{abouzaid2021arnold}. 

Since the ring of coefficients $\Kup(n)_{*}$ forms a graded field and is concentrated in even degrees, analogously to the case of rational cohomology one can define an Euler characteristic of a perfect $ \Kup(n) $-module $P$ by a formula 
\begin{equation*}
	\chi_{\Kup(n)}(P) \colonequals \dim_{\Kup(n)_{*}}(\pi_{2*}(P)) - \dim_{\Kup(n)_{*}}(\pi_{2*+1}(P)) \period
\end{equation*}
When applied to $\Kup$-cohomology of spaces, these Morava--Euler characteristics satisfy a host of useful properties, and at odd primes can be used to recover an interesting invariant of spaces called \textit{homotopy cardinality}; see the work of Yanovski \cite{yanovski2023homotopy}. 

Since the $ \E_1 $-ring spectra $ \Kup(n) $ are complex orientable, one can show that the Euler characteristics defined by 
\begin{equation*}
	a_{i}(X;\Kup(n)) \colonequals \dim_{\FF_{p}} \Kup(n)^{i}(X(\CC))
\end{equation*}
when $ X $ is smooth and proper satisfy Bittner's relation.
It follows that they extend to a \emph{virtual Morava--Euler characteristic} defined on all complex varieties. 
We now show that the weight filtration on $ \Kup(n) $-cohomology provided by \cref{cor:homological_filtered_Betti_realization_exists_for_complex_orientable_rings} can be thought of as the ``geometry'' behind these virtual Morava--Euler characteristics. This also had the advantage of applying to étale cohomology, including in positive characteristic, where Bittner's theorem is not known to hold, see \cref{remark:etale_morava_euler_characteristics}. 

\begin{notation}
	To keep the notation similar to the rational case, we write 
	\begin{equation*}
		\Wup_{*} \Cc^{*}(X(\CC); \Kup(n)) \colonequals \WBe(\Mc(X); \Kup(n))
	\end{equation*}
	for the weight filtration on compactly supported $ \Kup(n) $-linear cochains, by which we mean the filtered $ \Kup(n) $-linear Betti realization of the compactly supported motive $ \Mc(X) $ introduced in \cref{definition:motive_and_motivec_of_a_variety}. 
	This is a $\tau_{\geq *} \Kup(n)$-module in filtered spectra. 
\end{notation}

\begin{definition}
	\label{definition:virtual_morava_euler_characteristic}
	Let $ X $ be a complex variety.
	The \emph{virtual Morava--Euler characteristic} of $ X $ is defined by 
	\begin{equation*}
		a_{i}(X;\Kup(n)) =  (-1)^{i} (\chi_{\fieldp} \gr_{-i} \Cc^{*}(X(\CC); \Kup(n))) \comma
	\end{equation*}
	the $\fieldp$-Euler characteristic of the $i$-th graded piece of the weight filtration on compactly supported $ \Kup(n) $-linear cochains.
\end{definition}

Note that since the associated graded of $\tau_{\geq *} \Kup(n) $ is given by the homotopy groups $\pi_{*} \Kup(n) \simeq \fieldp[v_{n}^{\pm 1}]$, each graded piece of the weight filtration on $ \Kup(n) $-linear cohomology is in particular a module over $\pi_{0} \Kup(n) \simeq \fieldp$, so that \cref{definition:virtual_morava_euler_characteristic} is well-defined. 

\begin{theorem}
The virtual Morava--Euler characteristic of \cref{definition:virtual_morava_euler_characteristic} has the following properties: 
\begin{enumerate}
    \item If $ X $ is smooth and proper, then
    \begin{equation*}
    	a_{i}(X;\Kup(n)) \colonequals \dim_{\FF_{p}} \Kup(n)^{i}(X(\CC)) \period
    \end{equation*} 

    \item If $ X $ is a variety with an open subvariety $U \subseteq X$ with closed complement $Z \subseteq X$, then 
    \begin{equation*}
    	a_{i}(X;\Kup(n)) = a_{i}(U;\Kup(n)) + a_{i}(Z;\Kup(n)) \period
    \end{equation*}
\end{enumerate}
\end{theorem}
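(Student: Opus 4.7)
The plan is to deduce (1) and (2) as essentially formal consequences of the behaviour of $\WBe(-;\Kup(n))$ on $\Pure(\CC)$, together with dualizability of compactly supported motives; no new motivic input should be needed beyond what has been established in the preceding sections.

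First I would verify (1). For $X$ smooth proper, $\Mc(X)$ is a Thom spectrum of a smooth proper variety by \cref{example:motive_of_smooth_variety}, and hence lies in $\Pure(\CC)$ by \cref{definition:perfect_pure_motivic_spectrum}. The defining property of filtered Betti realization in \cref{cor:homological_filtered_Betti_realization_exists_for_complex_orientable_rings} then identifies $\WBe(\Mc(X);\Kup(n))$ with the Postnikov filtration on $\Be(\Mc(X);\Kup(n))$. Using \cref{observation:motive_of_smooth_projective_variety_is_dual_to_the_suspension_spectrum} and the fact that $\Be$ is symmetric monoidal, this is the $\Kup(n)$-linear cochain spectrum of $X(\CC)$, whose $\pi_{-i}$ is $\Kup(n)^{i}(X(\CC))$. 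The $-i$-th graded piece of the Postnikov filtration is concentrated in homotopical degree $-i$, and a short parity count shows that $\chi_{\fieldp}(\gr_{-i}\Cc^{*}(X(\CC);\Kup(n)))=(-1)^{i}\dim_{\fieldp}\Kup(n)^{i}(X(\CC))$; the overall $(-1)^i$ in the definition of $a_i$ cancels this sign and yields (1).

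For (2), I would use \cref{lemma:cofiber_sequence_of_motives_associated_to_open_closed_decomposition}: the open-closed decomposition produces a cofiber sequence
\begin{equation*}
\Mc(U)\to\Mc(X)\to\Mc(Z)
\end{equation*}
in $\SH(\CC)$. Since $\WBe(-;\Kup(n))$ is a left adjoint between stable \categories it is exact, as is the $-i$-th associated graded functor. This produces a cofiber sequence of perfect $\fieldp$-module spectra (see below), and additivity of $\chi_{\fieldp}$ on such cofiber sequences, together with the overall $(-1)^{i}$, delivers the additivity of $a_{i}(-;\Kup(n))$.

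The one technical point deserving attention is ensuring that each graded piece is a perfect $\fieldp$-module, so that $\chi_{\fieldp}$ is both defined and additive. By \cref{corollary:motives_of_varieties_are_dualizable_away_from_the_characteristic}, $\Mc(W)$ is dualizable in $\SH(\CC)$ for every complex variety $W$, so $\Be(\Mc(W);\Kup(n))$ is a perfect $\Kup(n)$-module; because $\Kup(n)_{*}$ is a graded field, each homotopy group of a perfect $\Kup(n)$-module is a finite-dimensional $\fieldp$-vector space, and so each graded piece of the Postnikov filtration is perfect over $\fieldp$. I do not anticipate any substantive obstacle: the theorem is a direct consequence of the construction of $\WBe(-;\Kup(n))$ on pures combined with the motivic open-closed cofiber sequence.
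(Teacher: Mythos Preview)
Your proposal is correct and follows essentially the same route as the paper: for (1) both arguments recognize that $\Mc(X)\in\Pure(\CC)$ so that the weight filtration is the Postnikov filtration, and then identify $\pi_{-i}\Be(\Mc(X);\Kup(n))$ with $\Kup(n)^{i}(X(\CC))$ (the paper via the Thom-spectrum description and topological Atiyah duality, you via the monoidal-dual description --- these are the same thing); for (2) both apply exactness of $\WBe(-;\Kup(n))$ to the localization cofiber sequence and additivity of the Euler characteristic.

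One small slip worth flagging, since you went further than the paper in justifying perfectness: your last paragraph argues that each graded piece of the \emph{Postnikov} filtration of $\Be(\Mc(W);\Kup(n))$ is perfect over $\FF_p$, but for a general variety $W$ the weight filtration is not the Postnikov filtration, so this does not directly give what you need. The clean fix is to observe that $\gr_{-i}\WBe(-;\Kup(n))$ is exact and, by \cref{lemma:subcategory_of_motives_containing_smooth_projectives_has_all_motives}, $\Mc(W)$ lies in the thick subcategory generated by motives of smooth projective varieties, for which your Postnikov argument does apply; perfectness then propagates. (Also, \cref{observation:motive_of_smooth_projective_variety_is_dual_to_the_suspension_spectrum} is stated for smooth projective $X$; for smooth proper you want \cref{rec:dual_of_Thom_spectrum_on_smooth_proper}.)
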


\begin{proof}
	If $ X $ is smooth and proper, then as observed in \cref{example:motive_of_smooth_variety}, the motive of $ X $ can be identified with the Thom spectrum $\Th_{X}(-\Tup_{X})$ of the negative tangent bundle. 
	It follows from the definition of the filtered Betti realization on pure motives as an associated graded of the Postnikov filtration that 
	\begin{equation*}
		\gr_{-i} \Cc^{*}(X(\CC); \Kup(n)) \simeq \Sigma^{-i} \pi_{-i} (\Be(\Th_{X}(-\Tup_{X})) \otimes K). 
	\end{equation*}
	It follows that if $ X $ is smooth and proper, then 
	\begin{align*}
		a_{i}(X;\Kup(n)) &= \dim_{\fieldp} \Kup(n)_{-i}(\Be(\Th_{X}(-\Tup_{X}))) \\ 
		&= \dim_{\fieldp} \Kup(n)_{-i}(\Th_{X(\CC)}(-\Tup_{X(\CC)})) \\ 
		&= \dim_{\fieldp} \Kup(n)^{i}(X(\CC)) \comma
	\end{align*}
	where the second equality is the fact that the Betti realization takes Thom spectra to Thom spectra, and the last one is Atiyah duality. This gives the first claimed property.

	The second property is an immediate consequence of the localization cofiber sequence 
	\begin{equation*}
		\motive(U) \to \motive(X) \to \motive(Z) 
	\end{equation*}
	of \cref{lemma:cofiber_sequence_of_motives_associated_to_open_closed_decomposition}, exactness of filtered Betti realization, and the fact that the Euler characteristic is additive in cofiber sequences. 
\end{proof}

\begin{remark}[étale Morava--Euler characteristics]
	\label{remark:etale_morava_euler_characteristics}
	One can also define analogues of Morava $ \Kup $-theories in the context of étale realization; for example, as étale realizations of Voevodsky' algebraic Morava $ \Kup $-theories%
	\footnote{Since we only consider $ \ell $-adic cohomology in the étale context, and the Hopkins--Morel theorem holds away from the characteristic, the $ \ell $-local Morava $ \Kup $-theories can be constructed over any field as an appropriate localization of a quotient of $ \MGL $, which is analogous to how they are constructed in topology}. 
	These will also be complex orientable, and a variation on \cref{definition:virtual_morava_euler_characteristic} will also yield a Morava--Euler characteristic in the context of étale cohomology. Since étale Morava $ \Kup $-theories have received comparatively little attention in the literature compared to their topological cousins, we decided against writing this section at this level of generality. 
\end{remark}


\section{Descent and the Gillet--Soulé filtration}
\label{sec:descent_and_the_Gillet-Soule_filtration}

In this section, we show that filtration on compactly supported cohomology given by the filtered Betti realization functor can be calculated through an appropriate hypercover.
As a consequence, we deduce that our filtration on integral cohomology of a complex variety agrees with the one constructed by Gillet--Soulé in \cite{gillet1996descent}. 
The key geometric input needed to establish the hypercover formula are Kelly's \ldhtopology on schemes \cite{kelly2017voevodsky}, and a result of Geisser on \ldhhypercovers \cite[Theorem 1.2]{geisser2014homological}.

In \cref{subsec:background_on_the_cdh-topology_and_ldh_topology}, we review background on the \ldhtopology.
In \cref{subsec:hyperdescent_for_orientable_Borel--Moore_homology}, we prove that Borel--Moore homology with coefficients in an orientable motivic spectrum satisfies \ldhhyperdescent; see \Cref{theorem:hypercovers_of_varieties_are_colimit_diagrams_in_shk}.
In \cref{subsec:the_weight_filtration_on_Borel--Moore_homology_through_hypercovers}, we use \ldhhypercovers to calculate the weight filtration on Borel--Moore homology; see \Cref{theorem:weights_on_bm_homology_of_proper_variety_calculated_through_an_ldh_hypercover}.
In \cref{subsec:filtration_on_cohomology_and_the_comparison_with_the_Gillet--Soule_filtration}, we use our perspective on filtrations to recover the Gillet--Soulé weight filtration on the compactly supported integral cochains on a complex variety; wee \Cref{theorem:gillet_soule_filtration_comparison}.


\subsection{Background on the \cdhtopology and \texorpdfstring{$\ell$dh}{ℓdh}-topology}\label{subsec:background_on_the_cdh-topology_and_ldh_topology}

We briefly review the necessary background on the \cdh- and \ldh-topologies.
For more background, see \cites[\S2]{MR4238259}{kellycdh} and \cite{kelly2017voevodsky}, respectively. 

\begin{recollection}[{\cdp- and \cdhtopologies}]
	\hfill
	\begin{enumerate} 
		\item A family of morphisms of schemes $ \{p_i \colon X'_i \to X\}_{i \in I} $ is \defn{completely decomposed} if for each $ x \in X $ there exists an $ i \in I $ and point $ x' \in p_i^{-1}(x) $ such that the induced map of residue fields $\kappa(x) \to \kappa(x')$ is an isomorphism.

		\item The \defn{\cdptopology} on the category of qcqs schemes is defined as follows: a sieve on a qcqs scheme $ X $ is a \defn{\cdpcovering sieve} if and only if it contains a completely decomposed family $ \{p_i \colon X'_i \to X\}_{i \in I} $ where each $ p_i $ is proper and of finite presentation.

		\item The \defn{\cdhtopology} is the topology generated by the \cdptopology and the Nisnevich topology.
	\end{enumerate}
	Also recall that every motivic spectrum satisfies \cdhdescent \cite[Corollary 6.25]{MR3570135}.
	Moreover, for a field $ k $, every \cdhsheaf over $ k $ is automatically a \cdhhypersheaf \cite[Corollary 2.4.16]{MR4238259}.
\end{recollection}

\begin{recollection}[{\ldhtopology}]
	Let $ \ell $ be a prime number.
	\begin{enumerate} 
		\item A morphism of schemes $ p \colon X' \to X $ is an \defn{\fpslcover} if $ p $ is finite flat and surjective, and $ p_{*} \Ocal_{X'} $ is a free $ \Ocal_X $-module of rank prime to $ \ell $.

		\item The \defn{\ldhtopology} is the topology generated by the \cdhtopology and \fpslcovers.
	\end{enumerate}	
\end{recollection}

\begin{definition}
	Let $ X $ be a scheme and let $p \colon \Deltaop \to \Sch_{/X}$ be a simplicial $ X $-scheme.
	We say that $ p $ is a \emph{\cdhhypercover} (respectively, \emph{\ldhhypercover}) if for each $i \geq 0$, the induced map 
	\begin{equation*}
		X_{i} \to (\cosk^{X}_{i-1}X_{\bullet})
	\end{equation*}
	is a \cdhcover (respectively, \ldhcover).
\end{definition}

\begin{remark}
	Unwrapping the definition of the coskeleton, we see that $ p $ is a hypercover if and only if for each $ i \geq 0 $, the matching maps 
	\begin{equation*}
		X_{0} \to X \comma \qquad
		X_{1} \to X_{0} \times_{X} X_{0} \comma \qquad
		X_{2} \to \cdots
	\end{equation*}
	are coverings.
\end{remark}


\subsection{Hyperdescent for orientable Borel--Moore homology}\label{subsec:hyperdescent_for_orientable_Borel--Moore_homology}

In this subsection, we show that Borel--Moore homology with respect to an orientable motivic spectrum satisfies \ldhhyperdescent. 

\begin{notation}
	Throughout this subsection, we fix a base field $ k $ of exponential characteristic $ e $ and a prime $\ell \neq e $.
\end{notation}

\begin{notation}
	In \cref{definition:motive_and_motivec_of_a_variety} we attached to a variety $p \colon X \to \Spec(k)$ a motivic spectrum
	\begin{equation*}
		\motive(X) \colonequals p_{!}(\Unit_{X}) \period 
	\end{equation*}
	By \cref{corollary:motives_of_varieties_are_dualizable_away_from_the_characteristic}, this motivic spectrum is dualizable away from the characteristic, and we write
	\begin{equation*}
		\motive(X)^{\vee}_{(\ell)} \in \SH(k)_{(\ell)}
	\end{equation*}
	for the $ \ell $-local monoidal dual.
\end{notation}

\noindent The rest of this subsection is be devoted to the proof of the following result. 

\begin{theorem}
	\label{theorem:hypercovers_of_varieties_are_colimit_diagrams_in_shk}
	If $X_{\bullet} \to X$ is an \ldhhypercover of $ k $-schemes, then the natural map 
	\begin{equation}\label{eq:ldh-hyperdescent_map}
		\varinjlim_{\Deltaop} \motive(X_{\bullet} )^{\vee}_{(\ell)} \to \motive(X)^{\vee}_{(\ell)} \period
	\end{equation}
	is an $ \MGL $-local equivalence; that is, the map \eqref{eq:ldh-hyperdescent_map} becomes an equivalence after tensoring with $ \MGL $. 
	In particular, the map \eqref{eq:ldh-hyperdescent_map} is $ \infty $-connective with respect to the Chow--Novikov \tstructure. 
\end{theorem}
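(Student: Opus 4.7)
The plan is to reduce the theorem to a descent statement for $\MGL$-valued Borel--Moore cohomology and then verify it by decomposing the \ldhtopology into its generating pieces. By \cref{corollary:motives_of_varieties_are_dualizable_away_from_the_characteristic}, each $\motive(X_i)$ and $\motive(X)$ is dualizable in $\SH(k)[\einv]$, so tensoring with $\MGL$ identifies $\MGL \otimes \motive(-)^{\vee}_{(\ell)}$ with the presheaf $X \mapsto \Hom_{\SH(k)}(\motive(X), \MGL)_{(\ell)}$ encoding $\ell$-local $\MGL$-Borel--Moore cohomology. Since tensoring with $\MGL$ commutes with colimits, the claim reduces to showing that this presheaf, viewed as a functor from $k$-schemes to $\MGL_{(\ell)}$-modules in $\SH(k)$, sends \ldhhypercovers to colimit diagrams.

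Next, the \ldhtopology is generated by the \cdhtopology together with the \fpslcovers, so standard sheafification arguments reduce the proof to verifying (i) \cdhhyperdescent and (ii) \fpsldescent separately; the passage from \v{C}ech descent to hyperdescent is automatic in the stable setting once connectivity estimates are in hand. For (i), the Nisnevich part is standard for motivic spectra, while the \cdp-part follows from the localization cofiber sequence of \cref{lemma:cofiber_sequence_of_motives_associated_to_open_closed_decomposition}: the cofiber sequence $\motive(E) \to \motive(Z) \oplus \motive(\tilde{X}) \to \motive(X)$ attached to an abstract blowup square yields, after applying $\Hom_{\SH(k)}(-, \MGL)_{(\ell)}$, the Mayer--Vietoris fiber sequence required to handle the \v{C}ech nerve of a \cdpcover.

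The hard part will be (ii), the \fpsldescent. For a finite flat surjective morphism $f \colon Y \to X$ of degree $d$ coprime to $\ell$, the orientability of $\MGL$ supplies a Gysin transfer $f_{*}^{\MGL}$ in $\MGL$-Borel--Moore cohomology whose composite with $f^{*}$ is multiplication by $d$. Since $d$ is a unit in $\ZZ_{(\ell)}$, this splits the augmentation of the \v{C}ech nerve of $f$ in $\MGL_{(\ell)}$-modules, so $\MGL_{(\ell)}$-Borel--Moore cohomology treats $f$ as an effective descent morphism. The remaining obstacles are to arrange this transfer data functorially at the simplicial level and to upgrade \v{C}ech descent to hyperdescent; the connectivity estimates supplied by the Chow--Novikov \tstructure of \cite{bachmann2022chow} should suffice for the latter. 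The final Chow--Novikov $\infty$-connectivity assertion then drops out, since $\MGL$-homology detects connectivity on the generators $\perfectpures$ of the \tstructure.
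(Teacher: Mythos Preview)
Your reduction to $\MGL$-valued Borel--Moore homology is correct and matches the paper's first step. After that, however, your strategy diverges from the paper's and contains a genuine gap.

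The gap is the step you yourself flag: upgrading \v{C}ech descent to hyperdescent for the combined \ldhtopology. The transfer argument for an \fpslcover $f$ shows only that the \v{C}ech nerve of $f$ splits after tensoring with $\MGL_{(\ell)}$; it says nothing about an arbitrary \ldhhypercover, which interleaves \cdh-covers and \fpslcovers at every simplicial level. Descent for two generating pretopologies does not formally imply \emph{hyper}descent for the topology they generate, and your appeal to ``connectivity estimates supplied by the Chow--Novikov \tstructure'' is not a substitute: the relevant convergence argument needs a bound on the \emph{target} presheaf values, not on the motivic source, and the Chow--Novikov \tstructure gives no such bound on $\MGL$-Borel--Moore homology of singular schemes. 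Completing this line of argument would amount to reproving Geisser's and Kelly's \ldhhyperdescent theorems for $\MGL$-coefficients, which is substantially more work than you indicate.

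The paper avoids this entirely. Instead of decomposing the topology, it decomposes the \emph{coefficient spectrum}: using Spitzweck's identification of the slices $s_r\MGL \simeq \Mup(\pi_{2r}\MU)$ and a vanishing estimate for Borel--Moore homology in the \emph{homotopy} \tstructure (not Chow--Novikov), one shows that $\MGL$-Borel--Moore homology in any fixed bidegree agrees with that of a finite Postnikov piece of the slice tower, hence with a finite extension of motivic cohomology spectra. The problem thus reduces to \ldhhyperdescent for $\MZZ_{(\ell)}$-Borel--Moore homology, which is exactly Geisser's theorem (generalized by Kelly) and can be cited as a black box. This is both shorter and avoids the delicate hyperdescent bookkeeping your approach requires.
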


\begin{remark}
	\label{remark:for_cdh_hypercovers_the_descent_holds_after_inverting_the_characteristic}
	Note that a \cdhhypercover is an \ldhhypercover for all $ \ell $.
	Hence, if $X_{\bullet} \to X$ is a \cdhhypercover, then the $ \ell $-localization  in \cref{theorem:hypercovers_of_varieties_are_colimit_diagrams_in_shk} can be replaced by localization away from the exponential characteristic $ e $.
	That is, the map 
	\begin{equation*}
		\varinjlim_{\Deltaop} \motive(X_{\bullet} )[\einv]^{\vee} \to \motive(X)[\einv]^{\vee} 
	\end{equation*}
	is also an $ \MGL $-local equivalence.  
\end{remark}

\noindent The proof of \cref{theorem:hypercovers_of_varieties_are_colimit_diagrams_in_shk} is somewhat involved and occupies the remainder of this subsection.
Our argument can be informally divided into three parts: 
\begin{enumerate}
    \item First, we show that \cref{theorem:hypercovers_of_varieties_are_colimit_diagrams_in_shk} follows from an \ldhhyperdescent statement in Borel--Moore $ \MGL $-homology.
    This is \cref{lemma:reduction_step_in_proof_of_ldh_hyperdescent}. 

    \item We then use the homotopy \tstructure to prove connectivity estimates on Borel--Moore homology of varieties with respect to a connective, orientable homology theory.
    This is \cref{lemma:vanishing_of_borel_moore_homology_of_arbitrary_variety_for_connective_motivic_spectra}. 
    \item Finally, we use Spitzweck's calculation of the slices of $ \MGL $ and our connectivity estimates to show that \ldhhyperdescent for motivic cohomology implies \ldhhyperdescent for $ \MGL $.
    For motivic cohomology the needed hyperdescent statement was proven by Geisser \cite[Theorem 1.2]{geisser2014homological}, and later generalized by Kelly \cite[Theorem 4.0.13]{MR3673293}.
\end{enumerate}

\begin{convention}
	For the rest of this section, we work $ \ell $-locally, and all motivic spectra are implicitly localized at $ \ell $.
	We begin with part (1), where it is convenient to employ the following notation. 
\end{convention}

\begin{notation}
	\label{notation:dual_motive_tensored_with_motivic_spectrum}
	If $E$ is an $ \ell $-local motivic spectrum, we write 
	\begin{equation*}
		E^{\BM}_{X} \colonequals E \otimes \motive(X)^{\vee}_{(\ell)}. 
	\end{equation*}
	This is justified by 
	\cref{observation:bm_homology_and_c_cohomology_using_the_motive}, since we have equivalences
	\begin{align*}
		\pi_{p, q}(E^{\BM}_{X}) &\simeq [\Sup^{p, q}, E \otimes \motive(X)^{\vee}_{(\ell)}] \\
		&\simeq [\Sigma^{p, q} \motive(X), E] \\
		&\simeq E^{\BM}_{p, q}(X) \period
	\end{align*}
	Note that if $ X $ is smooth and projective, then \Cref{observation:motive_of_smooth_projective_variety_is_dual_to_the_suspension_spectrum} shows that
	\begin{equation*}
		E^{\BM}_{X} \simeq E \otimes \Sigma_{+}^{\infty} X \period
	\end{equation*}
\end{notation} 

\begin{lemma}
	\label{lemma:reduction_step_in_proof_of_ldh_hyperdescent} 
	Assume that the following condition is satisfied: 
	\begin{enumerate}[label={$(\ast)$}]
		\item For any $ k $-scheme $ X $, any \ldhhypercover $X_{\bullet} \to X$, and any $s \in \ZZ$, the canonical comparison map of spectra 
	    \begin{equation*}
	   		\varinjlim \map_{\SH(k)}(\Sup^{2s, s}, \MGLBM_{X_{\bullet}}) \to \map_{\SH(k)}(\Sup^{2s, s}, \MGLBM_{X})
	    \end{equation*}
	    is an equivalence.
	\end{enumerate}
	Then \cref{theorem:hypercovers_of_varieties_are_colimit_diagrams_in_shk} holds. 
\end{lemma}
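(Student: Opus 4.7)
The plan is to show that the hypothesis $(\ast)$, once unpacked, gives isomorphisms on all bigraded homotopy groups of the natural map $\colim_{\Deltaop}\MGLBM_{X_{\bullet}}\to\MGLBM_{X}$, which is exactly what is needed to establish the $\MGL$-local equivalence asserted in \cref{theorem:hypercovers_of_varieties_are_colimit_diagrams_in_shk}. First, using that the tensor product on $\SH(k)_{(\ell)}$ preserves colimits separately in each variable, I would identify
\[
    \MGL \otimes \colim_{\Deltaop} \motive(X_{\bullet})^{\vee}_{(\ell)} \simeq \colim_{\Deltaop}\MGLBM_{X_{\bullet}}
\]
in the notation of \cref{notation:dual_motive_tensored_with_motivic_spectrum}, thereby reducing the $\MGL$-local equivalence claim to verifying that the comparison map $\colim_{\Deltaop}\MGLBM_{X_{\bullet}}\to \MGLBM_{X}$ is an equivalence in $\SH(k)_{(\ell)}$. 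Since the bigraded spheres $\Sup^{p,q}$ generate $\SH(k)_{(\ell)}$ under colimits and desuspensions, it suffices to check the claim after applying $\map_{\SH(k)}(\Sup^{p,q}, -)$ for every $(p,q)\in\ZZ^{2}$. Each $\Sup^{p,q}$ is compact in the stable \category $\SH(k)_{(\ell)}$, so its mapping spectrum functor commutes with all small colimits—and in particular with the geometric realization over $\Deltaop$—because every small colimit in a stable \category is a filtered colimit of finite colimits.

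Second, I would exploit the shift identity $\Sup^{p,q}\simeq\Sigma^{p-2q}\Sup^{2q,q}$ to obtain a natural equivalence
\[
    \map_{\SH(k)}(\Sup^{p,q},-) \simeq \Omega^{p-2q}\map_{\SH(k)}(\Sup^{2q,q},-).
\]
Applying $(\ast)$ with $s=q$ and then looping by $p-2q$ yields the required equivalence at every bidegree $(p,q)$, and since the identification of the previous paragraph shows that mapping out of compact objects commutes with the colimit, this finishes the $\MGL$-local equivalence part of \cref{theorem:hypercovers_of_varieties_are_colimit_diagrams_in_shk}.

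Finally, for the second assertion—that the map is $\infty$-connective for the Chow--Novikov \tstructure—I would invoke the characterization of this \tstructure in the work of Bachmann--Kong--Wang--Xu, which says that $\infty$-connectivity is detected by $\MGL$-homology (cf.\ the vanishing statement in \cref{lemma:vanishing_of_mgl_homology_of_perfect_pures}). Thus any morphism whose $\MGL$-tensor is an equivalence is automatically $\infty$-connective for Chow--Novikov. The main technicality, rather than a genuine obstacle, is the commutation of the mapping spectrum with geometric realization appearing in the first paragraph; it follows formally from compactness of bigraded spheres combined with the stability of the ambient \category, so the argument is essentially just an unpacking of $(\ast)$.
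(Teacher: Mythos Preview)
Your argument has a genuine gap at the point where you claim ``the bigraded spheres $\Sup^{p,q}$ generate $\SH(k)_{(\ell)}$ under colimits and desuspensions, so it suffices to check the claim after applying $\map_{\SH(k)}(\Sup^{p,q},-)$.'' This is false in general: the spheres only generate the \emph{cellular} subcategory $\SH(k)^{\cell}\subsetneq\SH(k)$. To detect equivalences in $\SH(k)$ one must map out of (desuspensions of) $\Sigma^{\infty}_{+}Y$ for all smooth $Y$, not merely out of spheres. In particular there is no reason for the $\MGL$-modules $\MGLBM_{X_{\bullet}}$ and $\MGLBM_{X}$ to be cellular, so isomorphisms on $\pi_{p,q}$ alone do not suffice.

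The paper's proof deals with exactly this issue, and this is where the real content of the lemma lies. It works inside the \category of $\MGL$-modules and invokes the presentation from \cite[Theorem 2.2.9]{elmanto2022nilpotent}: equivalences of $\MGL$-modules are detected by mapping out of $\MGL\otimes S$ for $S\in\Pure(k)$. Orientability of $\MGL$ (the Thom isomorphism) then lets one replace these test objects by $\Sigma^{2(d+s),d+s}\MGL_{Y}$ for $Y$ smooth projective, and a duality manipulation identifies
\[
\map_{\MGL}\bigl(\Sigma^{2(d+s),d+s}\MGL_{Y},\,\MGL^{\BM}_{Z}\bigr)\simeq\map_{\SH(k)}\bigl(\Sup^{2s,s},\,\MGL^{\BM}_{Y\times Z}\bigr).
\]
The key observation is that $Y\times X_{\bullet}\to Y\times X$ is again an \ldhhypercover, so one can apply $(\ast)$ to this \emph{new} hypercover. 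In other words, the hypothesis $(\ast)$ is stated for \emph{all} varieties and hypercovers precisely so that this product trick is available; your direct approach never uses this freedom and cannot close the gap without it.
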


\begin{proof}
	In terms of \cref{notation:dual_motive_tensored_with_motivic_spectrum}, \cref{theorem:hypercovers_of_varieties_are_colimit_diagrams_in_shk} is equivalent to showing that the natural map of $ \MGL $-modules
	\begin{equation}
	\label{equation:mgl_bm_comparison_map_in_reduction_lemma}
		\varinjlim_{\Deltaop} \MGL_{X_{\bullet}} \to \MGL_{X}
	\end{equation}
	is an equivalence.
	Since all $ \MGL $-local equivalences are Chow--Novikov $ \infty $-connective \cite[Corollary 3.17]{bachmann2022chow}, the second part of \cref{theorem:hypercovers_of_varieties_are_colimit_diagrams_in_shk} follows from the first.

	By \cite[Theorem 2.2.9]{elmanto2022nilpotent}, the spectral Yoneda embedding induces an equivalence between the \category of $ \MGL $-modules and spectral presheaves on the thick subcategory generated by modules of the form $\MGL \otimes S$, where $S \in \Pure(k)$. 
	Thus, \eqref{equation:mgl_bm_comparison_map_in_reduction_lemma} is an equivalence if and only if for any $ S \in \Pure(k) $, the map
	\begin{equation}
		\label{equation:colimit_equivalence_in_proof_of_mgl_ldh_hypercover_statement}
		\varinjlim \map_{\MGL}(\MGL \otimes S, \MGL_{X_{\bullet}}) \to \map_{\MGL}(\MGL \otimes S, \MGL_{X})
	\end{equation}
	is an equivalence.
	Since $ \MGL $ is orientable, $ \MGL $-linear perfect pure motives are generated as a thick subcategory by modules of the form 
	\begin{equation*}
		\Sigma^{2(d+s), d+s} \MGL_{Y} \comma
	\end{equation*}
	where $ Y $ is a smooth projective variety of dimension $d$ and $s \in \ZZ$. For any variety $Z$, we then have 
	\begin{align*}
		\map_{\MGL}(\Sigma^{2(d+s), d+s} \MGL_{Y} , \MGL_{Z}) &\simeq \map_{\MGL}(\Sigma^{2s, s} \MGL, \MGL_{Y \times Z}) \\ 
		&\simeq \map_{\SH(k)}(\Sup^{2s, s}, \MGL_{Y \times Z})
	\end{align*}
	Thus, to show that 
	\eqref{equation:colimit_equivalence_in_proof_of_mgl_ldh_hypercover_statement} is an equivalence it is enough to show that for each smooth projective variety $ Y $ and integer $ s $, the map
	\begin{equation}
		\label{equation:rewritten_colimit_equivalence_in_proof_of_mgl_ldh_hypercover_statement}
		\varinjlim \map_{\SH(k)}(\Sup^{2s, s}, \MGL_{Y \times X_{\bullet}}) \to \map_{\SH(k)}(\Sup^{2s, s}, \MGL_{Y \times X})
	\end{equation}
	is an equivalence. 
	Since $Y \times X_{\bullet} \to Y \times X$ is again an \ldhhypercover, the conclusion follows from assumption $ (*) $. 
\end{proof}

We now proceed with the second step of the proof, which is a vanishing result for Borel--Moore homology of varieties. 
The vanishing holds for motivic spectra that are connective with respect to the \textit{homotopy \tstructure}, which we now recall.

\begin{recollection}[homotopy \tstructure]
	\label{recollection:homotopy_t_structure_and_vanishing_of_connective_cohomology}
	Write 
	\begin{equation*}
		\SH(k)_{\geq 0} \subseteq \SH(k)
	\end{equation*}	
	for the full subcategory generated under colimits and extensions by $\Sigma^{p, q} \Sigma_{+}^{\infty} X$ for $X \in \Sm_{k}$ and $p > q$.
	The subcategory $ \SH(k)_{\geq 0} $ defines the connective part of a unique \tstructure on $ \SH(k) $ called the \emph{homotopy \tstructure}.
	This \tstructure has the following two properties, both proven in \cite[Corollary 2.4]{hoyois2015algebraic}:
	\begin{enumerate}
	    \item The homotopy \tstructure is left complete.
	    That is, the natural functor
	    \begin{equation*}
	    	\SH(k) \to \lim \left( 
	    	\begin{tikzcd}
	    		\cdots \arrow[r] & \SH(k)_{\leq 2} \arrow[r, "\tau_{\leq 1}"] & \SH(k)_{\leq 2} \arrow[r, "\tau_{\leq 0}"] & \SH(k)_{\leq 0}
	    	\end{tikzcd}
	    	\right)
	    \end{equation*}
	    is an equivalence.
	    Hence the homotopy \tstructure is left separated, i.e., $\bigcap _{d \in \ZZ} \SH(k)_{\geq d} = 0$.
	
	    \item If $E$ is connective, then for any smooth variety $ X $, for $ p > q + \dim(X) $ we have
		\begin{equation}
			\label{equation:vanishing_of_cohomology_of_smooths_for_connective_motivic_spectra}
			E^{p,q}(X) \simeq [\Sigma^{-p, -q} \Sigma_{+}^{\infty} X, E] = 0 \period
		\end{equation}
	\end{enumerate}
\end{recollection} 

\begin{lemma}
	\label{lemma:vanishing_of_borel_moore_homology_of_arbitrary_variety_for_connective_motivic_spectra}
	Let $E \in \SH(k)[\einv]$ be connective motivic spectrum that admits a structure of an $ \MGL $-module. 
	Then for any variety $ X $ and integers $ p < q $, we have
	\begin{equation*}
		E^{\BM}_{p, q}(X) = 0 \period 
	\end{equation*}
\end{lemma}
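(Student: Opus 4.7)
The plan is to apply \cref{lemma:subcategory_of_motives_containing_smooth_projectives_has_all_motives} to reduce the vanishing to the case of a smooth projective variety, and then handle that case by combining Poincaré duality (afforded by the $\MGL$-module structure on $E$) with the cohomological connectivity vanishing in part (2) of \cref{recollection:homotopy_t_structure_and_vanishing_of_connective_cohomology}. Using the identification $E^{\BM}_{p,q}(X) \simeq \pi_{p,q}\Hom_{\SH(k)}(\motive(X), E)$ from \cref{obs:Borel-Moore_homology_as_a_Hom}, it will suffice to show that $\motive(X)[\einv]$ belongs to the full subcategory
\begin{equation*}
\Ccal \colonequals \{\, M \in \SH(k)[\einv] : \pi_{p,q}\Hom_{\SH(k)}(M, E) = 0 \text{ for all } p < q \,\}
\end{equation*}
for every $k$-variety $X$.

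First I would verify the closure hypotheses of \cref{lemma:subcategory_of_motives_containing_smooth_projectives_has_all_motives} for $\Ccal$. Any cofiber sequence $M' \to M \to M''$ in $\SH(k)[\einv]$ induces a fiber sequence $\Hom_{\SH(k)}(M'', E) \to \Hom_{\SH(k)}(M, E) \to \Hom_{\SH(k)}(M', E)$ and hence a long exact sequence of bigraded homotopy groups; in each case (extensions or fibers), for $p < q$ the bigraded homotopy group in question is sandwiched between two vanishing neighbours (for the fibers case one uses that $p < q$ implies $p - 1 < q$). Closure under retracts is immediate.

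Next I would handle the base case: for $X$ smooth projective of dimension $d$, the Atiyah duality description $\motive(X) \simeq \plowersharp\Th_{X}(-\Tup_{X})$ combined with the Thom isomorphism coming from the $\MGL$-orientation of $E$ yields a Poincaré duality equivalence
\begin{equation*}
E^{\BM}_{p,q}(X) \simeq E^{2d - p,\, d - q}(X) \period
\end{equation*}
The cohomological right-hand side vanishes by part (2) of \cref{recollection:homotopy_t_structure_and_vanishing_of_connective_cohomology} exactly when $2d - p > (d - q) + \dim(X) = 2d - q$, i.e., when $p < q$, placing $\motive(X)[\einv] \in \Ccal$.

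The main subtlety I anticipate is setting up the Poincaré duality cleanly for a smooth projective $X$ with nontrivial tangent bundle: it rests on the Thom isomorphism $E \tensor \Th_{X}(V) \simeq E \tensor \Sigma^{2r, r}\Sigma_{+}^{\infty} X$ for every rank-$r$ vector bundle $V$, which is exactly where the $\MGL$-module structure on $E$ is needed. Once this identification is in place, \cref{lemma:subcategory_of_motives_containing_smooth_projectives_has_all_motives} extends the vanishing from smooth projective varieties to all $k$-varieties, completing the proof.
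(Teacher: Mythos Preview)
Your proposal is correct and follows essentially the same route as the paper: reduce via \cref{lemma:subcategory_of_motives_containing_smooth_projectives_has_all_motives} to the smooth projective case, then use the $\MGL$-module structure to invoke the Thom isomorphism and turn Borel--Moore homology into cohomology, where the vanishing follows from \cref{recollection:homotopy_t_structure_and_vanishing_of_connective_cohomology}(2). The only structural difference is that the paper first treats the case where $k$ is perfect and then reduces the non-perfect case separately via the Elmanto--Khan equivalence $\SH(k)[\einv]\simeq\SH(k')[\einv]$; you instead let \cref{lemma:subcategory_of_motives_containing_smooth_projectives_has_all_motives} absorb that reduction, which is fine since that lemma is stated and proven for arbitrary $k$.
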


\begin{proof}
	Let us first assume that $ k $ is perfect. 
	Recall that $ E^{\BM}_{p, q}(X) \simeq [\Sigma^{p, q} \motive(X), E] $ and write $\Ccat \subseteq \SH(k)[\einv]$ for full subcategory of motivic spectra $ A $ such that for all $ p < q $, we have
	\begin{equation*}
		[\Sigma^{p, q} A, E] = 0 \period
	\end{equation*}
	It suffices to show that $ \Ccal $ satisfies the hypotheses of \cref{lemma:subcategory_of_motives_containing_smooth_projectives_has_all_motives}. 
	
	Since $ \Ccal $ is closed under extensions, fibers, and retracts, it is enough to show that if $ X $ is a smooth projective $ k $-scheme, then $ \motive(X)[\einv] \in \Ccal $. 
	In this case, \cref{example:motive_of_smooth_variety} shows that
	\begin{align*}
		\motive(X) &\simeq \Th_{X}(-\Tup_{X}) \period \\ 
	\intertext{Hence we have a string of isomorphisms}
		E^{\BM}_{p, q}(X) &\simeq [\Sigma^{p, q} (\Th_{X}(-\Tup_{X})), E] \\
		&\simeq [\MGL \otimes \Sigma^{p, q} (\Th_{X}(-\Tup_{X})), E]_{\MGL} \comma
	\intertext{where the final term denotes homotopy classes of maps of $ \MGL $-modules.
	Write $ d \colonequals \dim(X) $; using the Thom isomorphism we can further rewrite the right-hand side as}
		[\MGL \otimes \Sigma^{p-2d, q-d} (\Sigma_{+}^{\infty} X), E]_{\MGL} &\simeq [\Sigma^{p-2d, q-d} (\Sigma_{+}^{\infty} X), E] \\
		&\simeq E^{2d-p, d-q}(X) \period
	\end{align*}
	As observed in \cref{recollection:homotopy_t_structure_and_vanishing_of_connective_cohomology}, the right-hand side vanishes when $2d-p > d-q+d$, which translates to $p < q$, as needed. 

	If $ k $ is not perfect, then write $ k \to k' $ for the perfection of $ k $.
	As in the proof of \cref{corollary:motives_of_varieties_are_dualizable_away_from_the_characteristic}, we reduce to the perfect case by using the equivalence $\SH(k)[\einv] \simeq \SH(k')[\einv]$ of \cite[Corollary 2.1.7]{MR3999675}. 
\end{proof}

We now proceed with the third step of the proof, which reduces from $ \MGL $-homology to motivic cohomology.
We need to make use of the slice tower, which we now recall. 

\begin{recollection}[{effective covers \& slice filtration}]
	\label{recollection:slice_filtration}
	Let $ E $ be a motivic spectrum and $ r \in \ZZ $.
	We write $ \f_r E $ for the \defn{$ r $-th effective cover} of $ E $.
	These effective covers give rise to a functorial filtration
	\begin{equation*}
		\cdots \to \f_{1} E \to \f_{0} E \to \f_{-1} E \to \cdots \to  E \period
	\end{equation*}
	We write 
	\begin{equation*}
		\s_{r} E \colonequals \cofib(\f_{r+1} E \to \f_{r} E)
	\end{equation*}
	for the \emph{$r$-th slice}.
	We also write 
	\begin{equation*}
		\c_{r} E \colonequals \cofib(\f_{r+1} E \to E) \period 
	\end{equation*}
\end{recollection} 

\begin{recollection}[{slices of $ \MGL $}]
	\label{recollection:slices_of_mgl}
	The spectrum $ \MGL $ is $ 0 $-effective, i.e., $ \f_0 \MGL \equivalent \MGL $ \cite[Corollary 3.2]{spitzweck2010relations}.
	Assuming the Hopkins--Morel equivalence, Spitzweck calculated the slices of $ \MGL $ as 
	\begin{equation}
	\label{equation:formula_for_slices_of_mgl}
		s_{r} \MGL \simeq \motiviccoh(\pi_{2r} \MU) \comma
	\end{equation}
	where on the right-hand side we have the motivic cohomology spectrum associated to $ \pi_{2r} \MU $, which is a free abelian group of finite rank \cite[Theorem 4.7]{spitzweck2010relations}. 
	The Hopkins--Morel equivalence was subsequently proven by Hoyois away from the characteristic \cite{hoyois2015algebraic}, showing that \eqref{equation:formula_for_slices_of_mgl} holds $ \ell $-locally. 
\end{recollection} 

\begin{lemma}
	\label{lemma:borel_moore_mgl_homology_approximate_by_cofiber_of_slice}
	Let $ X $ be a $ k $-variety. 
	Then for $ p < q+r $, the canonical map 
	\begin{equation*}
		(\MGL_{(\ell)})^{\BM}_{p, q}(X) \to (\c_{r} \MGL_{(\ell)})^{\BM}_{p, q}(X) 
	\end{equation*}
	is an isomorphism. 
\end{lemma}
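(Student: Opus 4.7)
The cofiber sequence $\f_{r+1}\MGL \to \MGL \to \c_r\MGL$ induces a long exact sequence in Borel--Moore homology. The desired map in the lemma is therefore an isomorphism in the range $p < q + r$ provided
\begin{equation*}
(\f_{r+1}\MGL)^{\BM}_{a,q}(X) = 0 \qquad \text{for all integers } a < q + r,
\end{equation*}
since this vanishing kills both the preceding term $(\f_{r+1}\MGL)^{\BM}_{p,q}(X)$ (yielding injectivity) and the succeeding term $(\f_{r+1}\MGL)^{\BM}_{p-1,q}(X)$ (yielding surjectivity).

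To prove this vanishing, I would argue slice-by-slice. By \cref{recollection:slices_of_mgl}, each slice $s_i\MGL$ is an appropriate Tate twist of the motivic cohomology spectrum $\motiviccoh(\pi_{2i}\MU)$, which is connective in the homotopy \tstructure and carries an $\MGL$-module structure via the natural map $\MGL \to \MZZ$. Unwinding the Tate twist and applying \cref{lemma:vanishing_of_borel_moore_homology_of_arbitrary_variety_for_connective_motivic_spectra} to the resulting connective $\MGL$-module yields
\begin{equation*}
(s_i\MGL)^{\BM}_{a,q}(X) = 0 \qquad \text{for } a < q + i.
\end{equation*}
In particular, every slice $s_i\MGL$ with $i \geq r+1$ vanishes in Borel--Moore homology for $a < q + r + 1$.

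To promote slice-by-slice vanishing to vanishing for $\f_{r+1}\MGL$ itself, I would invoke convergence of the slice tower, $\lim_N \f_N \MGL \simeq 0$, to write $\f_{r+1}\MGL \simeq \lim_N (\f_{r+1}\MGL / \f_N\MGL)$. Each finite quotient is an iterated extension of the slices $s_{r+1}\MGL, \ldots, s_{N-1}\MGL$, so iterating the long exact sequence yields $(\f_{r+1}\MGL / \f_N\MGL)^{\BM}_{a,q}(X) = 0$ for $a < q + r + 1$. The Milnor $\lim/\lim^1$ sequence for the tower then produces $(\f_{r+1}\MGL)^{\BM}_{a,q}(X) = 0$ as soon as both $a$ and $a+1$ lie in this range, i.e., for $a < q + r$, which completes the proof.

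The main technical obstacle is justifying the convergence $\lim_N \f_N \MGL \simeq 0$ of the slice tower for $\MGL_{(\ell)}$; this is known over perfect base fields by work of Levine, and extends to arbitrary fields after inverting the exponential characteristic via the equivalence $\SH(k)[\einv] \simeq \SH(k')[\einv]$ for the perfection $k'$ already used elsewhere in the paper. An alternative route avoids this convergence entirely by using the multiplicative structure of the slice filtration to realize $\f_{r+1}\MGL \simeq \Sigma^{2(r+1),r+1}G$ for a connective $\MGL$-module $G$, whereupon \cref{lemma:vanishing_of_borel_moore_homology_of_arbitrary_variety_for_connective_motivic_spectra} applies directly to $G$.
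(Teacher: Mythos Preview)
Your argument is correct, but your alternative route at the very end is essentially what the paper does, and it is considerably simpler than your primary slice-by-slice approach. The paper invokes a result of Spitzweck (from the proof of his Theorem~4.7) that $\f_{r+1}\MGL_{(\ell)}$ is a colimit of motivic spectra of the form $\Sigma^{2(r+1),r+1}\MGL_{(\ell)}$; in particular $\f_{r+1}\MGL_{(\ell)}$ is $(r+1)$-connective in the homotopy \tstructure and is an $\MGL$-module. One then applies \cref{lemma:vanishing_of_borel_moore_homology_of_arbitrary_variety_for_connective_motivic_spectra} directly (after a shift), and the cofiber sequence $\f_{r+1}\MGL \to \MGL \to \c_r\MGL$ finishes the proof in two lines.

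Your primary route works, but note the mild circularity: the slice convergence $\lim_N \f_N\MGL \simeq 0$ you invoke is most easily proved by observing that $\f_N\MGL$ is $N$-connective in the homotopy \tstructure (again by Spitzweck's description) and then using left completeness of that \tstructure, which is exactly the input the paper's direct argument uses. So the slice-by-slice decomposition, the assembly of finite quotients, and the $\lim/\lim^1$ bookkeeping are all extra steps that the connectivity estimate for $\f_{r+1}\MGL$ renders unnecessary. You also lose one degree of sharpness in the vanishing range because of the $\lim^1$ term, though this does not affect the lemma as stated.
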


\begin{proof}
	By a result of Spitzweck \cite[{Proof of Theorem 4.7}]{spitzweck2010relations}, the $ (r+1) $-st effective cover $ \f_{r+1} \MGL_{(\ell)} $ is a colimit of spectra of the form $\Sigma^{2(r+1), r+1} \MGL_{(\ell)}$.
	In particular, $ \f_{r+1} \MGL_{(\ell)} $ is $(r+1)$-connective in the homotopy \tstructure.
	The desired result now follows from the cofiber sequence
	\begin{equation*}
		\f_{r+1} \MGL_{(\ell)} \to \MGL_{(\ell)} \to \c_{r} \MGL_{(\ell)}
	\end{equation*}
	and \cref{lemma:vanishing_of_borel_moore_homology_of_arbitrary_variety_for_connective_motivic_spectra}.
\end{proof}

We now complete the promised argument. 

\begin{proof}[{Proof of \cref{theorem:hypercovers_of_varieties_are_colimit_diagrams_in_shk}}]
	Throuthout the proof, we implicitly work $ \ell $-locally and drop the $ \ell $-localization from notation.
	By \cref{lemma:reduction_step_in_proof_of_ldh_hyperdescent}, it is enough to show that if $X_{\bullet} \to X$ is an \ldhhypercover and $s \in \ZZ$, then the natural map
	\begin{equation*}
		\varinjlim \map_{\SH(k)}(\Sup^{2s, s}, \MGLBM_{X_{\bullet}}) \to \map_{\SH(k)}(\Sup^{2s, s}, \MGLBM_{X})
	\end{equation*}
	is an equivalence.
	As the standard \tstructure on spectra is right complete, a diagram $F \colon \Ccat^{\triangleright} \to \spectra$ is a colimit if and only if for each $ m \in \ZZ $, the diagram 
	\begin{equation*}
		(\tau_{\geq m} \circ F) \colon \Ccat^{\triangleright} \to \spectra_{\geq m}
	\end{equation*}
	of $m$-coconnective spectra is a colimit.
	Thus, the map 
	\begin{equation*}
		\varinjlim \map_{\SH(k)}(\Sup^{2s, s}, \MGLBM_{X_{\bullet}}) \to  \map_{\SH(k)}(\Sup^{2s, s}, \MGLBM_{X}) 
	\end{equation*}
	is an equivalence if and only if for each $m \in \ZZ$, the induced map of spectra
	\begin{equation}
		\label{equation:r_truncated_mgl_hypercover_colimit_comparison_map}
		\varinjlim (\tau_{\geq m} \map_{\SH(k)}(\Sup^{2s, s}, \MGLBM_{X_{\bullet}})) \to \tau_{\geq m} \map_{\SH(k)}(\Sup^{2s, s}, \MGLBM_{X}) 
	\end{equation}
	has an $(m+1)$-connective cofiber.
	By \cref{lemma:borel_moore_mgl_homology_approximate_by_cofiber_of_slice}, for all $ k $-varieties $ Z $ and integers $ k < r - s $, the map 
	\begin{equation*}
		\pi_{k} \map_{\SH(k)}(\Sup^{2s, s}, \MGLBM_{Z}) \to \pi_{k} \map_{\SH(k)}(\Sup^{2s, s}, \c_{r} \MGLBM_Z)
	\end{equation*}
	is an isomorphism.
	Thus, if $r > m+s$, then the map \eqref{equation:r_truncated_mgl_hypercover_colimit_comparison_map} is equivalent to the map
	\begin{equation*}
		\varinjlim (\tau_{\geq m} \map_{\SH(k)}(\Sup^{2s, s}, (\c_{r} \MGL)_{X_{\bullet}}^{\BM} )) \to \tau_{\geq m} \map_{\SH(k)}(\Sup^{2s, s}, (\c_{r} \MGL)_{X}^{\BM} ) \period
	\end{equation*}
	Thus it suffices to show that for each $r \in \ZZ$ the map 
	\begin{equation*}
		\varinjlim \map_{\SH(k)}(\Sup^{2s, s}, (\c_{r} \MGL)_{X_{\bullet}}^{\BM}) \to  \map_{\SH(k)}(\Sup^{2s, s}, (\c_{r} \MGL)_{X}^{\BM}) 
	\end{equation*}
	is an equivalence. 
	In other words, we have to show \ldhhyperdescent for $\c_{r} \MGL$-Borel--Moore homology of varieties. 

	As we observed in \cref{recollection:slices_of_mgl}, by a result of Spitzweck the slices of algebraic cobordism are given by suspensions of motivic cohomology associated to finitely generated abelian groups. It follows that $\c_{r} \MGL $ belongs to the smallest thick subcategory containing the motivic cohomology spectrum $\motiviccoh \ZZ$ and closed under bigraded suspensions.
	Thus suffices to show that 
	\begin{equation*}
		\varinjlim \map_{\SH(k)}(\Sup^{2s, s}, (\motiviccoh \ZZ)_{X_{\bullet}}) \to  \map_{\SH(k)}(\Sup^{2s, s}, (\motiviccoh \ZZ)_{X}) 
	\end{equation*}
	is an equivalence; in other words, that $ \ell $-localized motivic cohomology of varieties satifies \ldhhyperdescent.
	Since $ \ell $-localized motivic cohomology has transfers along finite flat morphisms, this follows from a theorem of Geisser \cite[Theorem 1.2]{MR3226920}; see also a generalization due to Kelly \cite[Theorem 4.0.13]{MR3673293}.
\end{proof}


\subsection{The weight filtration on Borel--Moore homology via \texorpdfstring{\ldh}{ℓdh}-hyperdescent}\label{subsec:the_weight_filtration_on_Borel--Moore_homology_through_hypercovers}

In this subsection, we explain how \cref{theorem:hypercovers_of_varieties_are_colimit_diagrams_in_shk} allows one can calculate the weight filtration on Borel--Moore homology using \ldhhypercovers. 

\begin{recollection}
	\label{recollection:etale_and_betti_realization_in_hypercover_section}
	If $ X $ is a complex variety and $A \in \Alg(\spectra)$ is an algebra in spectra, then the Borel--Moore homology of the topological space $ X(\CC) $ with coefficients in $ A $ can be identified with the homotopy of the Betti realization of the monoidal dual of the compactly supported motive $ \motive(X) $ of \cref{definition:motive_and_motivec_of_a_variety}:
	\begin{equation*}
		\Hrm^{\BM}_{*}(X(\CC); A) \simeq \pi_{*} \Be(\motive(X)^{\vee}; A) \period
	\end{equation*}
	If $ A $ is complex orientable, then \Cref{cor:homological_filtered_Betti_realization_exists_for_complex_orientable_rings} gives a canonical lift of $ \Be(\motive(X)^{\vee}; A) $ to a filtered spectrum 
	\begin{equation*}
		\WBe(\motive(X); A) \in \Mod_{\tau_{\geq *} A}(\Fil \spectra) \period
	\end{equation*}
	Hence this filtration induces a \textit{weight filtration} on the Borel--Moore homology groups $ \Hrm^{\BM}_{*}(X(\CC); A) $. 

	Analogously, if $ k $ is an arbitrary field and if $A \in \Alg(\Shethyp(\Et_S;\Sp)\ellcomp)$ is complex orientable, then to any $ k $-variety $ X $ we can associate a hypercomplete étale sheaf of spectra 
	\begin{equation*}
		\Re_{\ell}(\motive(X)_{(\ell)}^{\vee}; A)) \period
	\end{equation*}
	This hypersheaf inherits a filtration from the filtered étale realization of \cref{definition:filtered_etale_realization}.
\end{recollection}

\begin{notation}
	To treat both the Betti and étale cases uniformly, for a variety $ X $ and $ A $ as in \cref{recollection:etale_and_betti_realization_in_hypercover_section} we write 
	\begin{equation*}
			\Crm_{*}^{\BM}(X; A) \colonequals  
			\begin{cases}
				\Be(\motive(X)^{\vee}; A) & \text{(Betti)} \\
			  	\Re_{\ell}(\motive(X)_{(\ell)}^{\vee}; A)) & \text{(Étale)}
			\end{cases}
	\end{equation*}
	Informally, these are the $ A $-linear Borel--Moore ``chains'', although note that in the étale case it is a hypersheaf of spectra on the étale site of $ k $ rather than a spectrum itself. 
	If $ k $ is separably closed, both types of ``chains'' are given by a spectrum. 
\end{notation}

\begin{recollection}[\ldhhypercovers by smooth schemes]
	\label{recollection:any_variety_admits_a_smooth_ldh_hypercover}
	If $ X $ is proper, then by using a theory of alterations, we can construct an \ldhhypercover $X_{\bullet} \to X$ such that $X_{i}$ is smooth and projective for each $i \geq 0$, see \cite[{Lemma 2 in \S 1.4}]{MR1409056}, where in op. cit. every time one invokes a resolution of singularities, we instead use the theory of alterations to obtain an \ldhcover. 
	If $ k $ is of characteristic zero, then by resolution of singularities any variety admits a \cdhhypercover which is levelwise smooth and projective.
\end{recollection}

\begin{theorem}
	\label{theorem:weights_on_bm_homology_of_proper_variety_calculated_through_an_ldh_hypercover} 
	Let $ X $ be a proper variety.
	Assume one of the following hypotheses:
	\begin{enumerate}
		\item Let $X_{\bullet} \to X$ be an \ldhhypercover such that $X_{i}$ is smooth and projective for each $i \geq 0$ and let $ A $ be $ \ell $-local.

		\item Let $ X_{\bullet} \to X $ be a \cdhhypercover and assume that the exponential characteristic is invertible in $ A $.
	\end{enumerate}
	Then we have 
	\begin{equation}
		\label{equation:calculating_weights_of_borel_moore_cochains}
		\Wup_{*} \Crm_{*}^{\BM}(X; A) \simeq \varinjlim_{[i] \in \Deltaop} \Wup_{*} \Crm_{*}^{\BM}(X_{i}; A) \simeq \varinjlim_{[i] \in \Deltaop} \tau_{\geq *} \Crm_{*}^{\BM}(X_{i}; A)
	\end{equation}
	where the colimit is calculated in $ \Modpost{A} $. 
\end{theorem}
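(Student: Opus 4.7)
The plan is to deduce the theorem from the hyperdescent result established earlier together with two formal properties of the filtered realization functors: they preserve colimits, and they invert Chow--Novikov $\infty$-connective maps.

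First, by \cref{theorem:hypercovers_of_varieties_are_colimit_diagrams_in_shk}, the canonical map
\[
\varinjlim_{\Deltaop} \motive(X_{\bullet})^{\vee} \to \motive(X)^{\vee}
\]
is an $\MGL$-local equivalence, and in particular $\infty$-connective with respect to the Chow--Novikov \tstructure. In case (1) this is the $\ell$-local statement appearing directly in \cref{theorem:hypercovers_of_varieties_are_colimit_diagrams_in_shk}, while in case (2) we invoke \cref{remark:for_cdh_hypercovers_the_descent_holds_after_inverting_the_characteristic}. The hypotheses imposed on $A$ ensure that the relevant localization of the motivic category is inverted in the target of the filtered realization functor under consideration.

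Next, I would apply either filtered Betti realization $\WBe(-; A)$ or filtered étale realization $\WRe_{\ell}(-; A)$ to the above map. Both functors are left adjoints by \cref{cor:homological_filtered_Betti_realization_exists_for_complex_orientable_rings,proposition:existence_of_the_a_linear_etale_realization}, and both invert Chow--Novikov $\infty$-connective maps by \cref{corollary:filtered_betti_realization_inverts_chow_novikov_infty_connective_maps,remark:filtered_etale_realization_also_inverts_chow_infty_connective_maps}. Consequently the filtered realization of the colimit coincides with the colimit of the filtered realizations, which establishes the first equivalence
\[
\Wup_{*} \Crm_{*}^{\BM}(X; A) \simeq \varinjlim_{[i] \in \Deltaop} \Wup_{*} \Crm_{*}^{\BM}(X_{i}; A)
\]
in $\Modpost{A}$. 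For the second equivalence, recall from \cref{observation:motive_of_smooth_projective_variety_is_dual_to_the_suspension_spectrum} that Atiyah duality identifies $\motive(X_{i})^{\vee}$ with $\Sigma_{+}^{\infty} X_{i}$ whenever $X_{i}$ is smooth and projective, and $\Sigma_{+}^{\infty} X_{i}$ is visibly a perfect pure motivic spectrum. By the very defining property of the filtered realization functors — that on $\Pure(k)$ they agree with the Postnikov filtration of the underlying realization — we obtain $\Wup_{*} \Crm_{*}^{\BM}(X_{i}; A) \simeq \tau_{\geq *} \Crm_{*}^{\BM}(X_{i}; A)$, yielding the second equivalence.

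No step of this argument presents a genuine technical difficulty: the substantive geometric content is already packaged inside \cref{theorem:hypercovers_of_varieties_are_colimit_diagrams_in_shk}, and everything else is a formal consequence of the universal properties developed in \cref{section:weight_filtration_on_complex_oriented_cohomology}. The only point to watch is book-keeping of the arithmetic hypotheses on $A$ — $\ell$-local in case (1) versus invertibility of the exponential characteristic in case (2) — so that the relevant form of the hyperdescent input is applied in the right setting.
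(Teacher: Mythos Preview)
Your proposal is correct and follows essentially the same route as the paper's own proof: invoke \cref{theorem:hypercovers_of_varieties_are_colimit_diagrams_in_shk} (together with \cref{remark:for_cdh_hypercovers_the_descent_holds_after_inverting_the_characteristic} in case (2)) to obtain a Chow--Novikov $\infty$-connective map, apply the filtered realization using that it is a left adjoint and inverts such maps (\cref{corollary:filtered_betti_realization_inverts_chow_novikov_infty_connective_maps,remark:filtered_etale_realization_also_inverts_chow_infty_connective_maps}), and then identify the terms of the resulting colimit via the defining property of the filtered realization on perfect pures. Your write-up is in fact slightly more explicit than the paper's in spelling out why $\motive(X_i)^\vee$ is perfect pure.
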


\begin{proof}
	By a combination of \cref{theorem:hypercovers_of_varieties_are_colimit_diagrams_in_shk} of \cref{remark:for_cdh_hypercovers_the_descent_holds_after_inverting_the_characteristic}, we see that the canonical map
	\begin{equation*}
		\varinjlim \motive(X_{\bullet})^{\vee} \to \motive(X)^{\vee}
	\end{equation*}
	is Chow--Novikov $ \infty $-connective.
	By \Cref{corollary:filtered_betti_realization_inverts_chow_novikov_infty_connective_maps}, the filtered Betti realization inverts Chow--Novikov $ \infty $-connective maps; similarly, \cref{remark:filtered_etale_realization_also_inverts_chow_infty_connective_maps} shows that étale realization inverts Chow--Novikov $ \infty $-connective maps.
	Hence we deduce the left-hand equivalence.
	Since each $ X_i $ is smooth and proper, by construction we have 
	\begin{equation*}
		\WBe(\motive(X_{i})^{\vee}; A) \simeq \tau_{\geq *} \Be(\motive(X_{i})^{\vee}; A) \period
	\end{equation*}
	Hence the right-hand equivalence follows.
\end{proof}

\begin{corollary}
	Let $ X $ be a proper complex variety and let $A \in \Alg(\spectra)$ be complex orientable.
	Then the filtration on 
	\begin{equation*}
		\Hrm^{\BM}_{*}(X(\CC); A) \simeq \pi_{*} \Crm_{*}^{\BM}(X; A) 
	\end{equation*}
	induced from the weight filtration on the left-hand side coincides with the filtration induced by the hypercover spectral sequence. 
	\begin{equation*}
		\Eup^{1}_{s, t} \colonequals \Hrm^{\BM}_{t}(X_{s}(\CC); A) \Rightarrow \Hrm^{\BM}_{s+t}(X(\CC); A) \period 
	\end{equation*}
\end{corollary}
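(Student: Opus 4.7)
The starting point is \cref{theorem:weights_on_bm_homology_of_proper_variety_calculated_through_an_ldh_hypercover}: choosing an \ldhhypercover $X_\bullet \to X$ with each $X_i$ smooth projective, which exists by \cref{recollection:any_variety_admits_a_smooth_ldh_hypercover}, one obtains an equivalence of filtered spectra
\[
\Wup_{*} \Crm_{*}^{\BM}(X; A) \simeq \varinjlim_{[i] \in \Deltaop} \tau_{\geq *} \Crm_{*}^{\BM}(X_{i}; A).
\]
By definition, the weight filtration on $\Hup^{\BM}_n(X(\CC); A) \simeq \pi_n \Crm_*^{\BM}(X; A)$ is the filtration induced by applying $\pi_n$ to the left-hand filtered spectrum.

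The plan is to identify the spectral sequence of the right-hand filtered spectrum with a décalage of the hypercover spectral sequence. Using the fact that $\gr_s \tau_{\geq *} Y \simeq \Sigma^s \Hup \pi_s Y$ for any spectrum $Y$ and that $\gr_s$ commutes with colimits in $\Modpost{A}$, one obtains
\[
\gr_s \Wup_* \Crm_*^{\BM}(X; A) \simeq \Sigma^s \Hup \varinjlim_{[i] \in \Deltaop} \Hup^{\BM}_s(X_i(\CC); A),
\]
the shifted Eilenberg--MacLane spectrum of the Moore complex of the simplicial abelian group $\Hup^{\BM}_s(X_\bullet(\CC); A)$. Taking $\pi_{s+t}$ yields the simplicial homotopy group $\pi_t$ of this simplicial abelian group; after transposing bidegrees, this matches the $E_2$-term $\Hup_t(\Hup^{\BM}_s(X_\bullet(\CC); A))$ of the hypercover spectral sequence. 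In other words, the spectral sequence of $\Wup_*$ is, up to reindexing, the décalage of the hypercover spectral sequence in the sense of Deligne.

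Since the décalage construction preserves the induced filtration on the abutment, the two filtrations on $\Hup^{\BM}_n(X(\CC); A)$ coincide. The principal technical obstacle is bookkeeping the bigrading and indexing conventions when comparing the filtered-spectrum and simplicial spectral sequences; however, the essential mathematical content is already contained in the equivalence of filtered spectra provided by \cref{theorem:weights_on_bm_homology_of_proper_variety_calculated_through_an_ldh_hypercover}, so the remaining work is purely conventional.
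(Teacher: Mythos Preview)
Your proposal is correct and takes essentially the same approach as the paper: both start from the equivalence of \cref{theorem:weights_on_bm_homology_of_proper_variety_calculated_through_an_ldh_hypercover}, identify the filtered spectrum $\varinjlim_{[i] \in \Deltaop} \tau_{\geq *} \Crm_{*}^{\BM}(X_{i}; A)$ with Deligne's d\'ecalage of the simplicial spectrum $\Crm_{*}^{\BM}(X_{\bullet}; A)$, and then invoke the fact that d\'ecalage does not change the induced filtration on the abutment. The paper simply cites references for the d\'ecalage identification and Levine's result \cite[Proposition 6.3]{levine2015adams} for the filtration comparison, whereas you sketch the associated-graded computation explicitly; the content is the same.
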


\begin{proof}
	The filtered spectrum $ \varinjlim_{[i] \in \Deltaop} \tau_{\geq *} \Crm_{*}^{\BM}(X_{i}; A)$ appearing in \cref{theorem:weights_on_bm_homology_of_proper_variety_calculated_through_an_ldh_hypercover} can be identified with Deligne's décalage of the simplicial spectrum $\Crm_{*}^{\BM}(X_{\bullet}; A)$.
	See \cites[\S9]{arXiv:2109.01017}[\S1.2.4]{HA}.
	By a result of Levine \cite[Proposition 6.3]{levine2015adams}, the resulting filtration on the homotopy groups of the colimit coincides with the one induced by the spectral sequence of geometric realization. 
\end{proof}

As observed in \cref{recollection:any_variety_admits_a_smooth_ldh_hypercover}, any proper variety admits an \ldhhypercover by smooth varieties.
Hence \cref{theorem:weights_on_bm_homology_of_proper_variety_calculated_through_an_ldh_hypercover} provides a way to explicitly calculate the weight filtration on Borel--Moore homology. 
If $U$ is not necessarily proper, then the weight filtration can be calculated as follows. 

\begin{proposition}
	Let $ X $ be a proper variety and $ Z \subseteq X $ a closed subvariety with open complement $ U $.
	Then the induced maps on Borel--Moore homology form a canonical cofiber sequence
	\begin{equation*}
		\WCstar^{\BM}(Z; A) \to \WCstar(X; A) \to \WCstar^{\BM}(U; A).
	\end{equation*}
	In particular, the weight filtration on $ \Cup_{*}^{\BM}(U;A) $ is canonically determined by the weight filtrations on $ \Cup_{*}(X;A) $ and $ \Cup_{*}^{\BM}(Z;A) $. 
\end{proposition}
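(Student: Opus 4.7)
The plan is to lift the claimed cofiber sequence to a cofiber sequence of (dual) motivic spectra in $\SH(k)$, and then apply the filtered realization, which is a left adjoint between stable \categories and hence preserves cofiber sequences. Let $j \colon U \hookrightarrow X$ denote the open immersion and $i \colon Z \hookrightarrow X$ its closed complement. By \cref{lemma:cofiber_sequence_of_motives_associated_to_open_closed_decomposition}, the induced maps assemble into a cofiber sequence
\begin{equation*}
    \motive(U) \to \motive(X) \to \motive(Z)
\end{equation*}
in $\SH(k)$, where the first map is the covariant functoriality along the étale map $j$ and the second is the contravariant functoriality along the proper map $i$, as in \cref{construction:functoriality_of_the_motive_of_variety}.

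Since $X$, $Z$, and $U$ are $k$-varieties, \cref{corollary:motives_of_varieties_are_dualizable_away_from_the_characteristic} ensures that their compactly supported motives are dualizable in $\SH(k)[\einv]$ (and, after further $\ell$-localization, in $\SH(k)_{(\ell)}$, which is the setting of the étale case). Dualizing the cofiber sequence above therefore yields a cofiber sequence
\begin{equation*}
    \motive(Z)^{\vee} \to \motive(X)^{\vee} \to \motive(U)^{\vee} \period
\end{equation*}
Now apply the filtered Betti realization (in the Betti case, \cref{cor:homological_filtered_Betti_realization_exists_for_complex_orientable_rings}) or the filtered étale realization (in the étale case, \cref{proposition:existence_of_the_a_linear_etale_realization}). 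In each case, the functor is a left adjoint between stable \categories, hence exact; the resulting cofiber sequence is precisely
\begin{equation*}
    \WCstar^{\BM}(Z;A) \to \WCstar^{\BM}(X;A) \to \WCstar^{\BM}(U;A) \comma
\end{equation*}
and we identify $\WCstar^{\BM}(X;A) \simeq \WCstar(X;A)$ using that $X$ is proper, so that $p_! \simeq p_*$ and Borel--Moore homology coincides with ordinary homology on $X$.

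The ``in particular'' clause is then immediate from the existence of this cofiber sequence in the stable \category $\Modpost{A}$ (respectively $\Fil(\Shethyp(\Et_k;\Sp)\ellcomp)$): the filtered object $\WCstar^{\BM}(U;A)$ is canonically equivalent to the cofiber of the map $\WCstar^{\BM}(Z;A) \to \WCstar(X;A)$, and so is determined by the other two together with this comparison map. No genuine obstacle arises here: the only mild bookkeeping is matching up the $\BM$ and non-$\BM$ notation via properness of $X$ and verifying that the motivic functorialities of \cref{construction:functoriality_of_the_motive_of_variety} produce, after dualization, the Borel--Moore pushforward along $i$ and the restriction along $j$ respectively.
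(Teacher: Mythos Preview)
Your proof is correct and follows the same approach as the paper: use the localization cofiber sequence of \cref{lemma:cofiber_sequence_of_motives_associated_to_open_closed_decomposition} and the exactness of the filtered realization. The paper's proof is the one-line version of what you wrote; you have simply made explicit the dualization step (needed because $\Cup_{*}^{\BM}$ is defined via $\motive(-)^{\vee}$) and the identification of Borel--Moore with ordinary homology on the proper variety $X$.
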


\begin{proof}
	Immediate from the localization sequence of \cref{lemma:cofiber_sequence_of_motives_associated_to_open_closed_decomposition} and the fact that the filtered realization is exact. 
\end{proof}


\subsection{Filtration on cohomology and the comparison with the Gillet--Soulé filtration}\label{subsec:filtration_on_cohomology_and_the_comparison_with_the_Gillet--Soule_filtration}

In this subsection, we apply \cref{theorem:hypercovers_of_varieties_are_colimit_diagrams_in_shk} to compare the filtration on compactly supported integral cohomology of a complex variety with the \textit{Gillet--Soulé filtration} introduced in \cite{gillet1996descent}.
Recall that the Gillet--Soulé filtration refines Deligne's weight filtration on rational cohomology \cite{deligne1971theorie}. 

\begin{warning}[there are two different filtrations]
	\label{warning:two_filtrations_on_cochains} 
	There are \emph{two} filtrations one can construct on cohomology using the filtered realization functors introduced in this paper.
	To avoid complicating notation, let us focus on the Betti case; the discussion applies equally well to the filtered étale realization. 

	If $A \in \CAlg(\spectra)$ is complex orientable and $ X $ is a complex variety, then we have an identification 
	\begin{equation*}
		\Hc^{*}(X(\CC); A) \simeq \pi_{-*} \Be(\motive(X); A) \period
	\end{equation*}
	A natural way to lift the right-hand side to a filtered object is to consider 
	\begin{equation}
		\label{equation:natural_filtration_on_compactly_supported_cohomology} 
		\WBe(\motive(X); A) \period 
	\end{equation}
	However, an alternative is to observe that by \cref{corollary:motives_of_varieties_are_dualizable_away_from_the_characteristic}, the motivic spectrum $\motive(X)$ is dualizable; hence we can also consider the dual
	\begin{equation}
		\label{equation:wrong_filtration_on_compactly_supported_cohomology} 
		\map_{\tau_{\geq *} A}(\WBe(\motive(X)^{\vee}; A), \tau_{\geq *} A) \comma
	\end{equation}
	of $\Be(\motive(X)^{\vee};A) $ inside $ \Modpost{A} $.
	Recall that filtered Betti realization is not symmetric monoidal, but only \textit{lax} symmetric monoidal.
	Hence due to the failure of the universal coefficient theorem, \eqref{equation:natural_filtration_on_compactly_supported_cohomology} and \eqref{equation:wrong_filtration_on_compactly_supported_cohomology} need not coincide.
	This failure can already be observed when $ X $ is smooth and proper in which case: 
	\begin{enumerate}
	    \item The filtered object \eqref{equation:natural_filtration_on_compactly_supported_cohomology} can be identified with the Whitehead filtration on cochains.

	    \item The filtered object \eqref{equation:wrong_filtration_on_compactly_supported_cohomology} can be identified with the dual of the Whitehead filtration on chains. 
	\end{enumerate}
	When $ A $ is ordinary cohomology with coefficients in a field, these two coincide.
	However, in general they do not coincide. 
\end{warning}

\begin{nul}
	Note that out of the two ways of filtering cochains described in \cref{warning:two_filtrations_on_cochains}, it is the \emph{first} one which is preferable.  Indeed, if $ X $ is a proper variety, then the diagonal map $X \to X \times X$ equips $\motive(X)$ with a canonical structure of a commutative algebra in $\SH(\CC)$.
	Since $\WBe(-;A) $ is lax symmetric monoidal, it follows that 
	\begin{equation*}
		\WBe(\motive(X); A) 
	\end{equation*}
	canonically inherits the structure of a commutative algebra in filtered $\tau_{\geq *} A$-modules%
	\footnote{Dually, $\motive(X)^{\vee}$ is a cocommutative coalgebra; however, lax monoidal functors need not preserve coalgebras.
	This is why \eqref{equation:natural_filtration_on_compactly_supported_cohomology} is preferable to \eqref{equation:wrong_filtration_on_compactly_supported_cohomology}.
	This is the same reason why cohomology groups of a topological space form a commutative algebra, but homology groups need not form a coalgebra unless we have some further flatness assumption.}.
\end{nul}

\begin{notation}
	Let $ X $ be a complex variety.
	We write 
	\begin{equation*}
		\Cc^{*}(X(\CC); \ZZ) \in \Dcat(\ZZ)
	\end{equation*}
	for the complex of compactly supported integral cochains on $ X(\CC) $, considered as an object of the derived \category. 
\end{notation}

We recall the definition of the Gillet--Soulé filtration. 

\begin{recollection}[the Gillet--Soulé filtration]
	\label{recollection:gillet_soule_filtration}
	If $ X $ is a proper complex variety, then using resolution of singularities we can construct a \cdhhypercover $X_{\bullet} \to X$ by smooth proper varieties.
	The \defn{Gillet--Soulé filtration} on the Betti cohomology of $ X $ is the filtration associated to the spectral sequence
	\begin{equation*}
		\Hrm^{s}(X_{t}(\CC); \ZZ) \Rightarrow \Hrm^{s-t}(X(\CC); \ZZ) \period
	\end{equation*}
	Turning this into a filtered spectrum using décalage yields a definition 
	\begin{equation*}
		\WGS \Cup^{*}(X(\CC); \ZZ) \colonequals \lim_{[n] \in \DDelta} \tau_{\geq *} \Cup^{*}(X_{n}(\CC); \ZZ) \in \Fil(\Dcat(\ZZ))
	\end{equation*}
	If $ X $ is not necessarily proper, we embed $ X $ as an open subvariety $X \subseteq \Xbar $ of a proper variety $ \Xbar $ with closed complement $Z$ and define 
	\begin{equation*}
		\WGS \Cc^{*}(X(\CC); \ZZ) \colonequals \fib\paren{ \WGS \Cc^{*}(\Xbar(\CC); \ZZ) \to \WGS \Cc^{*}(Z(\CC); \ZZ) } \period
	\end{equation*}
	The results of \cite{gillet1996descent} show that, as objects of the filtered derived \category, these filtrations neither depend on the choice of the hypercover  $ X_{\bullet} $ nor on the choice of the compactification $ \Xbar $. 
	We refer to the filtered object $ \WGS \Cc^{*}(X(\CC); \ZZ) $ as the \defn{Gillet--Soulé filtration} on $ \Cc^{*}(X(\CC); \ZZ) $.
\end{recollection}

\begin{notation}[filtered Betti realization \& compactly supported cochains]
	If $ X $ is a complex variety, we have an identification 
	\begin{equation*}
		\Cc^{*}(X(\CC); \ZZ) \simeq \Be(\motive(X); \ZZ)
	\end{equation*}
	of objects in the derived \category $\Dcat(\ZZ)$.
	We write 
	\begin{equation*}
		\Wup_{*} \Cc^{*}(X(\CC); \ZZ) \colonequals \WBe(\motive(X); \ZZ)
	\end{equation*}
	for the filtration induced by the filtered Betti realization of \cref{def:filtered_Betti_realization}. 
\end{notation}

The filtration on compactly supported integral cochains inherited from the filtered Betti realization coincides with the Gillet--Soulé filtration: 

\begin{theorem}
	\label{theorem:gillet_soule_filtration_comparison}
	Let $ X $ be a complex variety.
	Then there exists an equivalence
	\begin{equation}
		\label{equation:equivalence_of_filtrations_in_comparison_with_gs_filtration}
		\Wup_{*} \Cc^{*}(X(\CC); \ZZ) \simeq \Wup^{\GS}_{*} \Cc^{*}(X(\CC); \ZZ)
	\end{equation}
	of objects of the filtered derived \category of $ \ZZ $.
\end{theorem}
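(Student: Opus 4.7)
The plan is to first reduce to the case of proper $X$, and then handle the proper case by combining the hyperdescent of \cref{theorem:hypercovers_of_varieties_are_colimit_diagrams_in_shk} with a finite-truncation of the totalization that defines the Gillet--Soulé filtration.

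For the reduction, I would embed any complex variety $X$ as an open subvariety $X \subseteq \Xbar$ of a proper $\Xbar$ with closed complement $Z \inclusion \Xbar$. The cofiber sequence of motives from \cref{lemma:cofiber_sequence_of_motives_associated_to_open_closed_decomposition}, together with exactness of $\WBe(-;\ZZ)$, presents $\Wup_{*}\Cc^{*}(X(\CC);\ZZ)$ as the fiber of the map $\Wup_{*}\Cc^{*}(\Xbar(\CC);\ZZ) \to \Wup_{*}\Cc^{*}(Z(\CC);\ZZ)$, which by \cref{recollection:gillet_soule_filtration} matches exactly the definition of $\WGS$ on non-proper varieties. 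So it suffices to treat proper $X$.

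Assuming $X$ proper, I would pick a cdh-hypercover $X_{\bullet} \to X$ with each $X_{n}$ smooth and projective (available via resolution of singularities in characteristic zero). Since each $X_{n}$ lies in $\Pure(\CC)$, \cref{cor:homological_filtered_Betti_realization_exists_for_complex_orientable_rings} identifies $\WBe(\Mc(X_{n});\ZZ)$ with $\tau_{\geq *}\Cup^{*}(X_{n}(\CC);\ZZ)$, so that $\WGS\Cc^{*}(X(\CC);\ZZ)$ is precisely $\lim_{\DDelta}\WBe(\Mc(X_{\bullet});\ZZ)$. Using the contravariant functoriality of $\Mc$ in proper maps from \cref{construction:functoriality_of_the_motive_of_variety}, I would construct the comparison map $\alpha \colon \Wup_{*}\Cc^{*}(X(\CC);\ZZ) \to \WGS\Cc^{*}(X(\CC);\ZZ)$ from the augmentation $\Mc(X) \to \lim_{\DDelta}\Mc(X_{\bullet})$. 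What remains is to show $\alpha$ is an equivalence.

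The hard part is that $\WBe(-;\ZZ)$, being a left adjoint, does not commute with the infinite limit $\lim_{\DDelta}$; my plan to circumvent this is a finite-truncation argument. Since $\dim X_{n} \leq \dim X$ and therefore $\Cup^{*}(X_{n}(\CC);\ZZ)$ is concentrated in cohomological degrees $[0, 2\dim X]$, the cosimplicial filtered spectrum $\WBe(\Mc(X_{\bullet});\ZZ)$ is uniformly bounded in the diagonal \tstructure, and a standard Bousfield--Kan argument yields, for each integer $m$, an $N = N(m, \dim X)$ so that the totalization agrees with the finite partial totalization $\Tot_{\DDelta_{\leq N}}$ on the $\tau_{\geq m}$ piece. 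Exactness of $\WBe(-;\ZZ)$ then gives $\Tot_{\DDelta_{\leq N}}\WBe(\Mc(X_{\bullet});\ZZ) \equivalent \WBe(\Tot_{\DDelta_{\leq N}}\Mc(X_{\bullet});\ZZ)$. Dualizing \cref{theorem:hypercovers_of_varieties_are_colimit_diagrams_in_shk} at the level of these finite truncations, where all objects are strongly dualizable by \cref{corollary:motives_of_varieties_are_dualizable_away_from_the_characteristic}, I expect to produce a Chow--Novikov $\infty$-connective map $\Mc(X) \to \Tot_{\DDelta_{\leq N}}\Mc(X_{\bullet})$ in the relevant range; applying \cref{corollary:filtered_betti_realization_inverts_chow_novikov_infty_connective_maps} then shows $\alpha$ is an equivalence on $\tau_{\geq m}$ for every $m$, completing the argument. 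The subtle point will be keeping track of Chow--Novikov ranges compatibly with the finite truncation: the hyperdescent statement of \cref{theorem:hypercovers_of_varieties_are_colimit_diagrams_in_shk} is phrased as an honest colimit, and splitting it into finite truncations while maintaining control over the (co)connectivity losses under duality is where I expect to have to work most carefully.
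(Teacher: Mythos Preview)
Your reduction to the proper case and the construction of the comparison map are correct and match the paper. The finite-truncation strategy, however, has two genuine gaps.

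First, the claim $\dim X_n \leq \dim X$ is false for arbitrary cdh-hypercovers. If $X$ is a $3$-fold with an isolated singularity whose resolution $X_0 \to X$ has exceptional divisor $E$ a surface, then $X_0 \times_X X_0$ contains the component $E \times E$ of dimension $4 > 3$, and any cdh-cover $X_1$ of this fiber product must have dimension $\geq 4$. Bounded-dimension hyperresolutions do exist (via Guill\'en--Navarro cubical hyperresolutions, which is essentially what underlies the Gillet--Soul\'e boundedness result), but this is nontrivial input you have not invoked. Relatedly, the Postnikov filtration $\tau_{\geq *}\Cup^*(X_n(\CC);\ZZ)$ is never bounded above in the diagonal \tstructure: for $k \ll 0$ the $k$-th stage is the whole of $\Cup^*(X_n(\CC);\ZZ) \in \Sp_{\leq 0}$, which does not lie in $\Sp_{\leq k + b}$ for any fixed $b$.

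Second, and more fundamental, the connectivity bookkeeping you flag as ``the subtle point'' does not close. Dualizing the hyperdescent gives that $\cofib\bigl(\colim_{\Deltaop_{\leq N}}\Mc(X_\bullet)^\vee \to \Mc(X)^\vee\bigr)$ lies in $\SH(\CC)_{c \geq N+1}$, since each $\Mc(X_n)^\vee \simeq \Sigma^\infty_+ X_n$ is Chow--Novikov connective. But monoidal duality does not interchange $\SH_{c \geq N}$ with $\SH_{c \leq -N}$, so there is no reason for $\fib\bigl(\Mc(X) \to \Tot_{\DDelta_{\leq N}}\Mc(X_\bullet)\bigr)$ to be highly Chow--Novikov \emph{connective}. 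Since \cref{lemma:the_filtered_betti_realization_is_t_exact} only gives right \texact{}ness of $\WBe(-;\ZZ)$, you have no control on the image of this fiber, and \cref{corollary:filtered_betti_realization_inverts_chow_novikov_infty_connective_maps} does not apply to the truncated map.

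The paper's route is different. It first shows both filtrations are \emph{complete}, reducing to the associated graded. It then proves (\cref{lemma:associated_graded_of_z_linear_betti_is_functorial_in_hzczero_modules}) that $\gr_* \WBe(-;\ZZ)$ factors through $\Mod_{\MZZ_{c=0}}(\SH(\CC))$, where it is the unique colimit-preserving extension of an additive functor on the ordinary category $\Chow(\CC)$. Under Dold--Kan, the cosimplicial Chow motive $\MZZ_{c=0} \otimes \Mc(X_\bullet)$ becomes the Gillet--Soul\'e weight complex, and their boundedness theorem \cite[p.~137, Theorem~2]{gillet1996descent} shows it is chain-homotopy equivalent to a bounded complex, hence cosimplicially homotopy equivalent to an $N$-coskeletal object. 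This is the input that makes the totalization a genuinely finite limit, after which exactness of $\gr_* \Be_{c=0}(-;\ZZ)$ finishes the argument.
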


Before proceeding with the proof, let us remark that the main difficulty lies in the fact that the Gillet--Soulé filtration is defined as a \emph{limit}, whereas filtered Betti realization is a left adjoint, hence preserves \emph{colimits}.
Since we are in the stable context, finite limits and be expressed as finite colimits and vice versa, but the limit defining the Gillet--Soulé filtration is a totalization of a cosimplicial object and hence is not finite.
To prove \cref{theorem:gillet_soule_filtration_comparison}, we will show that after passing to the associated graded, the \cdhhypercover can be replaced by a suitable chain complex in effective Chow motives.
Gillet and Soulé's work \cite{MR1409056} then shows that this complex of effective Chow motives can be chosen to be bounded. 

The key step in the proof of \cref{theorem:gillet_soule_filtration_comparison} is to argue that the associated graded of the filtered Betti realization is defined on $\MZZ_{c=0}$-modules. 
This takes some preparation. 

\begin{recollection}[associated graded]
	We write
	\begin{equation*}
		\Gr(\Sp) \colonequals \Fun(\ZZ^{\disc}, \Sp)
	\end{equation*}
	for the \category of \defn{graded spectra}.
	Given a filtered spectrum $F_{*} S$, the \defn{associated graded} of $F_{*} S$ is the graded spectrum defined by
	\begin{equation*}
		\gr_{k} (F_{*} S) \colonequals \cofib(F_{k+1}S \to F_{k} S) \period
	\end{equation*} 
	A filtered spectrum $F_{*} S$ is \defn{complete} if $\varprojlim_{n \in \ZZ} F_{n} S = 0 $.
	We write
	\begin{equation*}
		\Fil^{\wedge}(\Sp) \subseteq \FilSp
	\end{equation*}
	for the full subcategory spanned by the complete filtered spectra.
	On this subcategory, passing the associated graded functor
	\begin{equation*}
		\gr_{*} \colon \Fil^{\wedge}(\Sp) \to \Gr(\Sp)
	\end{equation*}
	is conservative.
\end{recollection}

\begin{notation}
	Let $\MZZ \in \SH(\CC)$ denote the motivic cohomology spectrum and $\MZZ_{c=0} \simeq \MZZ_{c \leq 0}$ its connective cover in the Chow--Novikov \tstructure. 
\end{notation}

The first observation is that the associated graded of the $\ZZ$-linear filtered Betti realization factors through $\MZZ_{c=0}$-modules:

\begin{lemma}
	\label{lemma:associated_graded_of_z_linear_betti_is_functorial_in_hzczero_modules}
	There exists a left adjoint functor 
	\begin{equation*}
		\gr_{*} \Be_{c=0}(-; \ZZ) \colon \Mod_{\MZZ_{c=0}}(\SH(\CC)) \to \Gr(\Dcat(\ZZ)) 
	\end{equation*}
	such that there is an equivalence
	\begin{equation*}
		\gr_{*} \Be_{c=0}(\MZZ_{c=0} \otimes S; \ZZ) \simeq \gr_{*}(\WBe(S; \ZZ))
	\end{equation*}
	natural in $S \in \SH(\CC)$.  
\end{lemma}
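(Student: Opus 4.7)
My plan is to construct $\gr_{*}\Be_{c=0}(-;\ZZ)$ via the lax symmetric monoidal structure of $F \colonequals \gr_{*}\WBe(-;\ZZ) \colon \SH(\CC) \to \Gr(\Dcat(\ZZ))$; this lax monoidality is inherited as a composite from $\WBe(-;\ZZ)$ being lax symmetric monoidal (\cref{cor:homological_filtered_Betti_realization_exists_for_complex_orientable_rings}) and $\gr_{*}$ being strongly symmetric monoidal. Setting $B \colonequals F(\MZZ_{c=0}) \in \CAlg(\Gr(\Dcat(\ZZ)))$, lax monoidality upgrades $F$ applied to $\MZZ_{c=0}$-modules into a colimit-preserving functor $H \colon \Mod_{\MZZ_{c=0}}(\SH(\CC)) \to \Mod_B(\Gr(\Dcat(\ZZ)))$.

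The critical input is a Künneth-type equivalence: for every $S \in \SH(\CC)$, the canonical lax-monoidality map
$$B \otimes_{F(\Unit)} F(S) \longrightarrow F(\MZZ_{c=0} \otimes S)$$
is an equivalence in $\Mod_B(\Gr(\Dcat(\ZZ)))$. Both sides are colimit-preserving in $S$ and right t-exact for the Chow--Novikov \tstructure (by \cref{lemma:the_filtered_betti_realization_is_t_exact}), so \cref{cor:functors_out_of_SH_in_terms_of_Pure} reduces the verification to the case when $S$ is perfect pure. In that case, $\Be(S)$ is built from even-dimensional cells---a consequence of the even-cell structure of the Grassmannian Thom spectra generating $\Pure(\CC)$---so $\pi_{*}\Be(S;\ZZ)$ is a finitely generated free graded $\ZZ$-module, and the classical Künneth isomorphism for ordinary homology applies.

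Given the Künneth equivalence, I define $\gr_{*}\Be_{c=0}(M;\ZZ) \colonequals H(M) \otimes_B F(\Unit)$. To base-change along $B \to F(\Unit)$ requires an algebra augmentation, which I would obtain by observing that $F(\Unit) \simeq \tau_{\geq *}(\HZZ)$ is concentrated in graded degree zero with value $\ZZ$, while $\pi_{0}\Be(\MZZ_{c=0};\ZZ) \simeq \ZZ$ by \cref{ex:Betti_realization_of_HR} after identifying the graded piece appropriately, so the degree-zero projection yields a canonical algebra map $B \to F(\Unit)$. On free modules $M = \MZZ_{c=0} \otimes S$, the Künneth equivalence then gives
$$\gr_{*}\Be_{c=0}(\MZZ_{c=0} \otimes S;\ZZ) \simeq F(S) \otimes_{F(\Unit)} F(\Unit) \simeq F(S) = \gr_{*}\WBe(S;\ZZ) \comma$$
which is the required natural equivalence.

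The hardest part will be establishing the Künneth equivalence. The reduction to perfect pure $S$ is clean, but at that stage one must carefully track how the base-change functor $B \otimes_{F(\Unit)} (-)$ interacts with the Postnikov-filtered structure on both sides; this is precisely where the evenness of Betti realizations of pure motives is essential, as it forces the relevant graded abelian groups to be flat and hence the derived and underived tensor products to agree.
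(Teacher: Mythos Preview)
Your approach differs from the paper's and contains a real gap.

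The paper does not use lax monoidality or a K\"unneth argument. Instead, it invokes the equivalence
\[
\Mod_{\MZZ_{c=0}}(\SH(\CC)) \simeq \PSigma(\Chow(\CC); \spectra)
\]
from \cite[\S4.2]{bachmann2022chow}, with $\Chow(\CC)$ the additive $1$-category of pure Chow motives. Since any additive functor on $\Chow(\CC)$ extends uniquely to a colimit-preserving functor on $\MZZ_{c=0}$-modules, the paper simply \emph{defines} $\gr_{*}\Be_{c=0}$ on Chow motives by $M \mapsto \bigl(\Sigma^{-n}\Hrm^{n}_{\Be}(M;\ZZ)\bigr)_{n}$ and then checks the required identification for $M = \MZZ_{c=0} \otimes S$ with $S$ perfect pure, where $\MZZ_{c=0}\otimes S$ already lies in $\Chow(\CC)$. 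No tensor-product comparison is ever needed.

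The gap in your argument is the freeness claim. You assert that for $S \in \Pure(\CC)$ the spectrum $\Be(S)$ has an even cell decomposition, so that $\pi_{*}\Be(S;\ZZ)$ is free and the K\"unneth map is an isomorphism. This is false. By \cref{definition:perfect_pure_motivic_spectrum}, $\Pure(\CC)$ is generated by Thom spectra $\Th(\eta)$ over \emph{arbitrary} smooth proper complex varieties $X$, and such $X$ can certainly have torsion in integral cohomology (Enriques surfaces, Godeaux surfaces, many others). The Grassmannian Thom spectra of \cref{example:thom_spectra_of_grassmanians_are_perfect_pure} serve only to present $\MGL$ as a filtered colimit of perfect pures; they do not generate $\Pure(\CC)$. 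Once freeness fails, the lax-monoidal comparison $F(\MZZ_{c=0}) \otimes_{\ZZ} F(S) \to F(\MZZ_{c=0} \otimes S)$ is precisely the K\"unneth map on associated gradeds and has a genuine $\Tor$ obstruction, so your base-changed functor need not compute $F(S)$ on free $\MZZ_{c=0}$-modules. The algebra augmentation $B \to F(\Unit)$ is also underspecified: the natural map runs the other way (induced by $\Unit \to \MZZ_{c=0}$), and a ``degree-zero projection'' is an algebra map only under connectivity hypotheses on $B$ that you have not established.
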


\begin{proof}
	Write $ \Chow(\CC) $ for the additive \emph{$ 1 $-category} of pure Chow motives over $ \CC $.
	By \cite[\S 4.2]{bachmann2022chow}, there is an equivalence of \categories 
	\begin{equation*}
		\Mod_{\MZZ_{c=0}}(\SH(\CC)) \equivalent \PSigma(\Chow(\CC); \spectra)
	\end{equation*}
	between $\MZZ_{c=0}$-modules and spectral presheaves on $ \Chow(\CC) $. 
	It follows that any additive functor on $\Chow(\CC)$ valued in a cocomplete stable \category extends uniquely to a colimit-preserving functor on all $\MZZ_{c=0}$-modules. 
	The needed functor $\gr_{*} \Be_{c=0}(-; \ZZ)$ is defined as the unique colimit-preserving functor such that 
	\begin{equation*}
		\gr_{n} \Be_{c=0}(M; \ZZ) \colonequals \Sigma^{-n} \Hrm_{\Be}^{n}(M; \ZZ)
	\end{equation*}
	for any $M \in \Chow(\CC)$, where the right-hand side is the homological Betti realization of a Chow motive.  
	If $S \in \SH(\CC)$ is perfect pure, then wehave  $\MZZ_{c=0} \otimes S \in \Chow(\CC) $ so that 
	\begin{align*}
		\gr_{*} \Be_{c=0}(\MZZ_{c=0} \otimes S; \ZZ) &\simeq \Sigma^{-n} \Hrm_{\Be}^{n}(\MZZ_{c=0} \otimes S; \ZZ) \\ 
		&\simeq \gr_{*}( \WBe (S; \ZZ)) \period
	\end{align*}
	Since both sides preserve colimits, \cref{cor:functors_out_of_SH_in_terms_of_Pure} implies that this natural equivalence defined on perfect pures extends to an equivalence on all of $\SH(\CC)$. 
\end{proof}

\begin{proof}[{Proof of \cref{theorem:gillet_soule_filtration_comparison}}]
	By \cref{lemma:cofiber_sequence_of_motives_associated_to_open_closed_decomposition}, the left-hand filtration takes open-closed decompositions to fiber sequences, and by definition the Gillet--Soulé filtration takes open-closed decompositions to fiber sequences.
	Hence we can assume that $ X $ is proper. 
	Using resolution of singularities, we can choose a \cdhhypercover $X_{\bullet} \to X$ such that $X_{i}$ is smooth and proper for each $i \geq 0$. 
	Functoriality of the filtered Betti realization applied to $X_{\bullet} \to X$ gives a canonical comparison map 
	\begin{equation}
		\label{equation:comparison_map_between_ours_and_hs_filtrations_in_proof}
		\Wup_{*} \Crm^{*}(X(\CC); \ZZ) \to \varprojlim_{[i] \in \DDelta} \Wup_{*} \Crm^{*}(X_{i}(\CC); \ZZ) \simeq \varprojlim \tau_{\geq *} \Crm^{*}(X_{i}(\CC); \ZZ) \simeq \Wup^{\GS}_{*} \Crm^{*}(X(\CC); \ZZ) \period
	\end{equation}
	We will prove that \eqref{equation:comparison_map_between_ours_and_hs_filtrations_in_proof} is an equivalence.

	First, we claim that both the source and target of \eqref{equation:comparison_map_between_ours_and_hs_filtrations_in_proof} are complete. 
	Indeed, the target is a limit of Whitehead filtrations, which are complete, and hence is complete itself.
	On the other hand, since the subcategory of those motivic spectra $ S $ such that $\WBe(S; \ZZ)$ is complete is thick and contains motives of all smooth and proper varieties, \cref{lemma:subcategory_of_motives_containing_smooth_projectives_has_all_motives} implies that the source is also complete. 

	We deduce that it is enough to show that \eqref{equation:comparison_map_between_ours_and_hs_filtrations_in_proof} is an equivalence after passing to associated graded objects.
	By \cref{lemma:associated_graded_of_z_linear_betti_is_functorial_in_hzczero_modules}, the map between associated graded objects can be identified with the comparison map 
	\begin{equation*}
		\gr_{*} \Be_{c=0}(\MZZ_{c=0} \otimes \motive(X); \ZZ) \to \varprojlim_{[i] \in \DDelta} \gr_{*} \Be_{c=0}(
		(\MZZ_{c=0} \otimes \motive(X_{i}); \ZZ) \colon
	\end{equation*}
	Since $X_{\bullet} \to X$ is a \cdhcover and $\MZZ_{c=0}$ is an $ \MGL $-module, by  \cref{theorem:hypercovers_of_varieties_are_colimit_diagrams_in_shk} and \cref{remark:for_cdh_hypercovers_the_descent_holds_after_inverting_the_characteristic}, we have 
	\begin{equation*}
		\MZZ_{c=0} \otimes \motive(X)^{\vee} \simeq \varinjlim_{[i] \in \Deltaop} \MZZ_{c=0} \otimes \motive(X_{i})^{\vee} \period
	\end{equation*}
	Passing to monoidal duals, this shows that 
	\begin{equation}
		\label{equation:limit_expression_of_hzczero_motive_of_variety_in_terms_of_cdh_hypercover}
		\MZZ_{c=0} \otimes \motive(X) \to \varprojlim_{[i] \in \DDelta} \MZZ_{c=0} \otimes \motive(X_{i}) \period
	\end{equation}
	We have to show that this limit is preserved by the functor $\gr_{*} \Be_{c=0}(-; \ZZ)$ of \cref{lemma:associated_graded_of_z_linear_betti_is_functorial_in_hzczero_modules}. 

	Through the Dold--Kan correspondence, the cosimplicial object $\MZZ_{c=0} \otimes \motive(X_{\bullet}) \colon \Delta \to \Chow(\CC)$ determines a chain complex of pure Chow motives; this complex can be identified with the weight complex of \cite[p. 137-138]{gillet1996descent}.
	By \cite[p.137, Theorem 2]{gillet1996descent}, this chain complex is homotopy equivalent to a bounded one. 
	Using the Dold--Kan correspondence, this homotopy equivalence of chain complexes determines a map $\MZZ_{c=0} \otimes X_{\bullet} \to C_{\bullet}$ of cosimplicial Chow motives which is a cosimplicial homotopy equivalence.
	The assumption that the chain complex associated to $C_{\bullet}$ is bounded implies that $C_{\bullet}$ is $n$-coskeletal for some $n$. 

	We have a commutative diagram of $\MZZ_{c=0}$-modules of the form 
	\begin{equation*}
		\begin{tikzcd}
			& \MZZ_{c=0} \otimes \motive(X) \arrow[dr] \arrow[dl] & \\
			\displaystyle \varprojlim_{[i] \in \DDelta} \MZZ_{c=0} \otimes \motive(X_{i}) \arrow[rr] & & \displaystyle \varprojlim_{[i] \in \DDelta} C_{i} \period
		\end{tikzcd}
	\end{equation*}
	Since the horizontal map is induced by a cosimplicial homotopy equivalence, it is an equivalence, and similarly $\gr_{*} \Be_{c=0}(\MZZ_{c=0} \otimes \motive(X_{\bullet}); \ZZ) \simeq \varprojlim \gr_{*} \Be_{c=0}(C_{\bullet}; \ZZ)$.
	Thus, it is enough to show that 
	\begin{equation*}
		\gr_{*} \Be_{c=0}(\motive(X); \ZZ) \to \varprojlim \gr_{*} \Be_{c=0}(C_{\bullet}; \ZZ)
	\end{equation*}
	is an equivalence. However, as the right-hand side is a totalization of an $n$-coskeletal cosimplicial object, it can be identified with a finite limit.
	As $\gr_{*} \Be_{c=0}(-; \ZZ)$ is exact, it preserves finite limits, ending the argument. 
\end{proof}

\begin{remark}
	A key step in the proof of \cref{theorem:gillet_soule_filtration_comparison} is the boundedness result for the Gillet--Soulé weight complex, which implies that the infinite \cdhhypercover $X_{\bullet} \to X$ can be replaced by an object of finitary nature.
	On the other hand, as a consequence of \cref{lemma:subcategory_of_motives_containing_smooth_projectives_has_all_motives}, the filtered spectrum
	\begin{equation*}
		\WBe(\motive(X); A) 
	\end{equation*}
	can \emph{always} be obtained from the Whitehead filtration on $ A $-linear cochains of smooth, proper varieties using only finite limits and colimits. 
	Unlike in the case of Borel--Moore homology covered by \cref{theorem:weights_on_bm_homology_of_proper_variety_calculated_through_an_ldh_hypercover}, for general $ A $ we do not know if the filtration on cochains satisfies \cdhdescent; although  \cref{theorem:gillet_soule_filtration_comparison} shows that it does when $A = \ZZ$.  
\end{remark}

\begin{remark}[Kuijper's work]
	\label{remark:work_of_kuijper}
	In the case of a field of characteristic zero, a weight filtration on compactly supported cohomology with coefficients in a complex orientable ring spectrum $ A $ can also be constructed using the recent work of Kuijper \cite{kuijper2022general}.
	We claim that this filtration agrees with the filtered realization introduced in this work applied to $\motive(X)$. 

	For simplicity, let us consider the complex Betti case.
	We have the association
	\begin{equation*}
		X \mapsto \tau_{\geq *} \Cc^{*}(X(\CC); A) \in \FilSp \comma
	\end{equation*}
	which we think of as a presheaf defined on smooth and proper varieties.
	As observed in \cite[8.3]{kuijper2022general}, if $ A $-cohomology admits Gysin maps, then this presheaf satisfies descent for blow-ups squares.
	Moreover, if $ A $ is complex orientable then $ A $-cohomology admits Gysin maps. 
	Thus, by \cite[Theorem 1.1]{kuijper2022general}, this presheaf uniquely extends to one defined on all varieties, giving the sought after weight filtration on compactly supported cohomology.
	Note that the two filtrations
	\begin{equation*}
		X \mapsto \tau_{\geq *} \Cc^{*}(X(\CC); A) \andeq X \mapsto \Wup_{*} \Be(\motive(X); A)
	\end{equation*}
	agree on smooth and proper varieties, have the localization property, and satisfy descent for blow-up squares. 
	Hence the uniqueness part of Kujiper's result, shows that these filtrations necessarily agree on all complex varieties.
\end{remark}


\section{Synthetic Betti realization}\label{sec:synthetic_realizations}

Write $ \SynMU $ for the \category of \textit{$ \MU $-based synthetic spectra} introduced by the second-named author in \cite{pstrkagowski2022synthetic}. 
The goal of the section is to show that the Betti realization functor $ \Be \colon \SH(\CC) \to \Sp $ refines to a lax symmetric monoidal left adjoint
\begin{equation*}
	\Besyn \colon \SH(\CC) \to \SynMU 
\end{equation*}
as well as explore its basic properties.
We refer to this refinement as \textit{synthetic Betti realization}.

In \cref{subsec:recollection_on_synthetic_spectra}, we recall the background on synthetic spectra necessary to understand the construction of the synthetic Betti realization functor.
In \cref{subsection:synthetic_spectra_as_filtered_spectra}, we explain give an alternative description of synthetic spectra as modules in filtered spectra over the filtration on the sphere given by descent along the faithfully flat map $ \Sup^0 \to \MU $.
This description is later used to compare synthetic Betti realization with filtered Betti realization.
In \cref{subsection:synthetic_complex_betti_realization}, we construct the functor $ \Besyn $; see \Cref{theorem:existence_of_complex_synthetic_betti_homology}.
In \cref{subsec:comparing_synthetic_and_filtered_Betti_realization}, we explain the relationship between synthetic Betti realization to filtered Betti realization.
In particular, if $ A $ is a Landweber exact complex oriented $ \E_1 $-ring, then the filtered Betti realization $ \WBe(-;A) $ can be recovered from synthetic Betti realization; see \Cref{thm:synthetic_realization_refines_filtered_realization_for_Landwebder_exact_rings}.
In \cref{subsection:ideas_on_synthetic_real_and_etale_realizations}, we give conjectural description of a synthetic lift of a general motivic realization functor, such as étale realization.


\subsection{Recollection on synthetic spectra}\label{subsec:recollection_on_synthetic_spectra}

Initiated by Quillen, \textit{chromatic homotopy theory} studies the relationship between stable homotopy theory and the arithmetic formal groups. 
An important aspect of this relationship is the \textit{Adams--Novikov spectral sequence}
\begin{equation*}
	\Hrm^{s}(\Mfg; \omega^{t/2}) \Rightarrow \pi_{s-t} \Sup^{0}
\end{equation*}
relating cohomology of the moduli stack of formal groups to stable homotopy groups. 
\textit{Synthetic spectra} can be informally thought of as categorification of this spectral sequence.
The purpose of this subsection is to briefly review what we need about synthetic spectra for this paper; we refer the reader to \cite{MR4574661} for more details.

We first recall the construction of $ \MU $-based synthetic spectra from \cite[\S4]{pstrkagowski2022synthetic}. 
We say that a spectrum $ A $ is \emph{finite $ \MU $-projective} if $ A $ is a compact object of $ \Sp $ and $ \MU \otimes A $ is free as an $ \MU $-module; that is, there exists integers $ d_{1}, \ldots, d_n $ and an equivalence of $ \MU $-modules 
\begin{equation*}
	\MU \otimes A \simeq \Sigma^{d_1} \MU \directsum \cdots \directsum \Sigma^{d_n} \MU \period
\end{equation*}
Equivalently, $ A $ is compact and $ \MU_{*}(A) $ is free as an $ \MU_{*} $-module. 
We write
\begin{equation*}
	\spectra_{\MU}^{\fp} \subseteq \Sp
\end{equation*}
for the full subcategory spanned by the finite $ \MU $-projective spectra. 
We say that map $ f \colon A \to B$ of finite $ \MU $-projectives is an \emph{$ \MU $-epimorphism} if $ f $ becomes a split epimorphism after tensoring with $ \MU $; equivalently, if $\MU_{*}A \to \MU_{*}B$ is surjective. 
This notion of a covering equips the site $\spectra_{\MU}^{\fp}$ with a Grothendieck topology. 

\begin{definition}
	The \category of \defn{$ \MU $-based synthetic spectra} is given by 
	\begin{equation*}
		\SynMU \colonequals \ShSigma(\spectra_{\MU}^{\fp}; \spectra)
	\end{equation*}
	the \category of additive sheaves on spectra on the site $\spectra_{\MU}^{\fp}$ of finite $ \MU $-projective spectra. 
\end{definition}

\begin{nul}[$ \SynMU $ as a deformation of $ \Sp $]
	The \category $ \SynMU $ is stable and presentable.
	Moreover, through left Kan extension it inherits a symmetric monoidal tensor product from that of finite spectra. 
	As we briefly explained in the introduction, the \category $ \SynMU $ is best understood as \acategorical deformation of spectra in the following sense. 
	Its monoidal unit has a canonical endomorphism
	\begin{equation*}
		\tau \colon \Unit_{\Syn} \to \Unit_{\Syn}
	\end{equation*}
	which should be thought of as a formal parameter.
	Moreover, there is an equivalence 
	\begin{equation*}
		\SynMU^{\tau = 1} \simeq \spectra
	\end{equation*}
	between the generic fiber and spectra.
	The special fiber is related to arithmetic.
	Write $ \Mfg^{\delta} $ for the \textit{Dirac} moduli stack of formal groups (that is, a sheaf on the category of graded-commutative rings) as defined in \cite[\S 5.2]{diracgeometry2}.  
	Then there is an equivalence
	\begin{equation*}
		\SynMU^{\tau = 0} \simeq \IndCoh(\Mfg^{\delta})
	\end{equation*}
	between the special fiber and $\Ind$-coherent sheaves on $ \Mfg^{\delta} $.
\end{nul}

One can describe this \category of $\Ind$-coherent sheaves on $ \Mfg^{\delta} $ in terms of $ \Ind $-coherent sheaves on the usual moduli stack of formal groups as follows:

\begin{remark}[Dirac moduli of formal groups and its classical counterpart]
	\label{remark:dirac_moduli_of_formal_groups_and_its_classical_counterpart}
	The \category of $\Ind$-coherent sheaves on $\Mfg^{\delta}$ admits a fully faithful embedding 
	\begin{equation*}
		i \colon \IndCoh(\Mfg) \hookrightarrow \IndCoh(\Mfg^{\delta})
	\end{equation*}
	from $\Ind$-coherent sheaves on the moduli stack $ \Mfg $ of formal groups in classical algebraic geometry. 
	The target is obtained from the source by attaching an anti-symmetric square root $\omega^{\nicefrac{1}{2}}$ of the Lie algebra line bundle $\omega \in \IndCoh(\Mfg)$ in the sense that any $\Fcal \in \IndCoh(\Mfg^{\delta})$ can be uniquely written in the form 
	\begin{equation*}
		\Fcal \simeq (i(\Fcal_{0})) \oplus (\omega^{\nicefrac{1}{2}} \otimes i(\Fcal_{1}))
	\end{equation*}
	for $\Fcal_{0}, \Fcal_{1} \in \IndCoh(\Mfg)$. 
	Informally, the additional root arises from the fact that in spectra, the Betti realization $\Be(\PP_{\CC}^{1}) \simeq \Sup^{2}$ of the Tate motive has a tensor square root, given by the $1$-sphere $\Sup^{1}$. 
	This situation is quite special to complex Betti realization. 
\end{remark}

\begin{remark}
	The embedding $i \colon \IndCoh(\Mfg) \hookrightarrow \IndCoh(\Mfg^{\delta})$ mentioned in \cref{remark:dirac_moduli_of_formal_groups_and_its_classical_counterpart} can be identified with the embedding of special fibers
	\begin{equation*}
		(\SynMUev)^{\tau = 0} \hookrightarrow \SynMU^{\tau = 0}
	\end{equation*}
	induced by the inclusion of \textit{even synthetic spectra} of \cite[\S 5.2]{pstrkagowski2022synthetic} into all synthetic spectra. 
\end{remark}

\begin{remark}[Ind-coherent sheaves and Hovey's stable \category]
	In terms of Hopf algebroids, we have a canonical equivalence 
	\begin{equation*}
		\IndCoh(\Mfg^{\delta}) \simeq \Stable_{\MU_{*}\MU}
	\end{equation*}
	between sheaves on the Dirac moduli of formal groups and Hovey's stable \category of $\MU_{*}\MU$-comodules as in \cite{hovey2004homotopy}. Under this equivalence, the subcategory 
	\begin{equation*}
		\IndCoh(\Mfg) \subseteq \IndCoh(\Mfg^{\delta})
	\end{equation*}
	of sheaves on the classical moduli stack corresponds to the stable \category of $\MU_{*}\MU$-comodules concentrated in even degrees.
\end{remark}

\begin{nul}[synthetic analogues]
	The \category of synthetic spectra is equipped with a fully faithful embedding $\nu \colon \spectra \hookrightarrow \SynMU$, called the \emph{synthetic analogue}, which fits into a commutative diagram 
	\begin{equation}
		\label{equation:diagram_explaining_properties_of_nu}
		\begin{tikzcd}[row sep=3em, column sep=5em]
			& \Sp \arrow[dl, equals] \arrow[d, hook, "\nu"'] \arrow[dr, "\MU_{*}(-)"] & \\
			\Sp & \SynMU \arrow[l, "(-)^{\tau = 1}"] \arrow[r, "(-)^{\tau = 0}"'] & \IndCoh(\Mfg)
		\end{tikzcd}
	\end{equation}
	The functor $ \nu $ is additive, but it is \emph{not exact}.
	However, one can show that a cofiber sequence 
	\begin{equation*}
		A \to B \to C
	\end{equation*}
	of spectra is preserved by $ \nu $ if and only if 
	\begin{equation*}
		0 \to \MU_{*}A \to \MU_{*}B \to \MU_{*}C \to 0
	\end{equation*}
	is short exact \cite[Lemma 4.23]{pstrkagowski2022synthetic}. 
	In particular, $ \nu \colon \Sp \hookrightarrow \SynMU $ preserves $ \MU $-split cofiber sequences; this is the crucial property we need to construct the synthetic lift of the Betti realization functor.
\end{nul} 


\subsection{Synthetic spectra as filtered spectra} 
\label{subsection:synthetic_spectra_as_filtered_spectra}

We now explain an alternative presentation of synthetic spectra in terms of filtered spectra.
There are two relevant filtrations on the sphere that come from descent along the faithfully flat map $ \Sup^0 \to \MU $.

\begin{notation}
	\hfill
	\begin{enumerate}
		\item Write $ \filev^{*}(\Sup^0) $ the commutative algebra in filtered spectra defined by the limit
		\begin{equation*}
			\filev^{*}(\Sup^0) \colonequals \lim_{[n] \in \DDelta} \tau_{\geq 2*}(\MU^{\tensor [n]}) \period
		\end{equation*}
		Here, the limit is taken over the diagram given by applying the \textit{double-speed} Postnikov filtration to the cobar construction of the unit $ \Sup^0 \to \MU $.
		This filtration on the sphere is the Adams--Novikov filtration; it can also be identified with the \textit{even filtration} of \cites{hahn2022motivic}{perfect_even_modules}.

		\item Write $ \fil^{*}(\Sup^0) $ for the commutative algebra in filtered spectra defined by the limit
		\begin{equation*}
			\fil^{*}(\Sup^0) \colonequals \lim_{[n] \in \DDelta} \tau_{\geq *}(\MU^{\tensor [n]}) \period
		\end{equation*}
		We refer to $ \fil^{*}(\Sup^0) $ as the \textit{$ \MU $-descent filtration} on $ \Sup^0 $.
		The filtration $ \fil^{*}(\Sup^0) $ agrees with the \textit{half-weight even filtration} of \cite[Remark 2.26]{perfect_even_modules}.
	\end{enumerate}	
\end{notation}

The following description of synthetic spectra in terms of filtered spectra is due to Gheorghe--Krause--Isaksen--Ricka \cite{MR4432907}.
See also \cites[\S1.3]{arXiv:2111.15212}[Corollary 6.1]{arXiv:2402.03257}[\S3.2]{arXiv:2304.04685}.

\begin{proposition}[synthetic spectra as filtered spectra]\label{prop:synthetic_spectra_as_filtered_spectra}
	\hfill
	\begin{enumerate}
		\item There is an equivalence of symmetric monoidal \categories
		\begin{equation*}
			\begin{tikzcd}
				\Gamma^{*} \colon \SynMU \arrow[r, "\sim"{yshift=-0.25ex}] & \Mod_{\fil^{*}(\Sup^{0})}(\FilSp) \period
			\end{tikzcd}
		\end{equation*}

		\item The equivalence $ \Gamma^{*} $ restricts to an equivalence of symmetric monoidal \categories
		\begin{equation*}
			\begin{tikzcd}
				\SynMUev \arrow[r, "\sim"{yshift=-0.25ex}] & \Mod_{\filev^{*}(\Sup^{0})}(\FilSp) \period
			\end{tikzcd}
		\end{equation*}

		\item The triangle 
		\begin{equation*}
			\begin{tikzcd}
				\SynMU \arrow[rr, "\Gamma^{*}", "\sim"'{yshift=0.25ex}] \arrow[dr, "(-)^{\tau = 1}"'] & & \Mod_{\fil^{*}(\Sup^{0})}(\FilSp) \arrow[dl, "\colim"] \\ 
				& \Sp & 
			\end{tikzcd}
		\end{equation*}
		canonically commutes.
	\end{enumerate}
\end{proposition}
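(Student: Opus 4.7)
The plan is to exhibit a symmetric monoidal colimit-preserving functor
\[ \Gamma^* \colon \SynMU \to \Mod_{\fil^*(\Sup^0)}(\FilSp) \]
and show it is an equivalence, following the approach of Gheorghe--Krause--Isaksen--Ricka. The three assertions then follow from the construction together with compatibility checks on the unit and on even objects.

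First I would construct a symmetric monoidal colimit-preserving functor $\widetilde{\Gamma}^* \colon \SynMU \to \FilSp$. Since $\SynMU = \ShSigma(\Sp_{\MU}^{\fp}; \Sp)$, it suffices to define an additive functor on the site $\Sp_{\MU}^{\fp}$ that sends $\MU$-split cofiber sequences to cofiber sequences of filtered spectra; this will extend uniquely by left Kan extension and sheafification. I would take the assignment
\[ A \mapsto \lim_{[n] \in \DDelta} \tau_{\geq *}(A \tensor \MU^{\tensor n+1}). \]
This is lax symmetric monoidal via Day convolution applied to the cobar, it preserves $\MU$-split cofiber sequences because on each term of the cobar the sequence is split and $\tau_{\geq *}$ is additive, and it is strongly symmetric monoidal on finite $\MU$-projectives since for such $A$ the $\MU$-homology is free, so the Postnikov filtration is Künneth-compatible. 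By construction $\widetilde{\Gamma}^*(\nu(\Sup^0)) \equivalent \fil^*(\Sup^0)$, so $\widetilde{\Gamma}^*$ refines to the desired symmetric monoidal left adjoint $\Gamma^*$ landing in $\fil^*(\Sup^0)$-modules.

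To prove $\Gamma^*$ is an equivalence: both source and target are presentable stable \categories generated under colimits and desuspensions by the respective units, so by the Schwede--Shipley recognition principle it suffices to check that $\Gamma^*$ induces an equivalence on mapping spectra out of the unit. The mapping spectrum $\map_{\SynMU}(\nu(\Sup^0),\nu(\Sup^0))$ is, by the sheaf description of $\SynMU$, computed by the $\MU$-Adams--Novikov tower of the sphere, while the target mapping spectrum is $\fil^*(\Sup^0)$ by definition; the comparison is then tautological from the construction.

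Part (ii) follows by restricting $\Gamma^*$ to the subcategory $\SynMUev$, which is generated under colimits by $\nu(A)$ with $\MU_*(A)$ concentrated in even degrees; these images are precisely the filtered spectra landing in even Postnikov degrees, which is exactly the module category over the double-speed subalgebra $\filev^*(\Sup^0) \subseteq \fil^*(\Sup^0)$. Part (iii) holds because $\colim \colon \FilSp \to \Sp$ and $(-)^{\tau=1} \colon \SynMU \to \Sp$ are both symmetric monoidal colimit-preserving functors sending the respective units to $\Sup^0$; by the universal property they must agree via $\Gamma^*$. The main technical obstacle is the well-definedness of $\widetilde{\Gamma}^*$ at the level of sheaves, i.e.\ convergence and commutation of the Postnikov filtration with the cobar totalization; this is handled by the standard connectivity estimates on the $\MU$-Adams tower together with the fact that finite $\MU$-projectives are $\MU_*$-free, so one can control the Postnikov degrees of each $A \tensor \MU^{\tensor n+1}$ uniformly in $n$.
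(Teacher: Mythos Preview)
The paper does not prove this proposition at all: it is stated as a known result due to Gheorghe--Krause--Isaksen--Ricka \cite{MR4432907}, with additional references to \cite{arXiv:2111.15212} and \cite{arXiv:2304.04685}, and no argument is supplied. So there is no ``paper's own proof'' to compare against; your proposal is effectively an attempt to reconstruct the cited literature.

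That said, your sketch contains a genuine gap. You claim that both $\SynMU$ and $\Mod_{\fil^*(\Sup^0)}(\FilSp)$ are ``generated under colimits and desuspensions by the respective units,'' and then invoke a Schwede--Shipley-style argument checking only the endomorphism spectrum of the unit. This is false on both sides. The \category $\SynMU$ carries a \emph{bigraded} family of spheres $\Sup^{a,b}$ (equivalently, $\nu(\Sup^0)$ together with its shifts under both suspension and the independent ``weight'' shift), and it is this bigraded family that generates, not the unit alone. Correspondingly, $\FilSp$ is generated by the unit together with its \emph{filtration} shifts, so $\Mod_{\fil^*(\Sup^0)}(\FilSp)$ is generated by filtration-shifted copies of $\fil^*(\Sup^0)$. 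Checking only $\map(\nu(\Sup^0),\nu(\Sup^0))$ against $\fil^*(\Sup^0)$ is therefore insufficient: you must identify where $\Gamma^*$ sends the bigraded spheres (they should go to filtration-shifts of $\fil^*(\Sup^0)$) and verify full faithfulness on the entire bigraded family of generators. Without this, the argument does not establish that $\Gamma^*$ is an equivalence. The rest of your outline (the construction via the cobar, the lax monoidality, the preservation of $\MU$-split sequences, and the arguments for (ii) and (iii)) is broadly in the right direction, but the core equivalence step needs to be redone with the correct set of generators.
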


Now let us further relate filtered objects in modules over a complex orientable ring to synthetic spectra.

\begin{lemma}\label{lem:equivalent_descriptions_of_filtered_modules_over_a_complex orientable_ring}
	Let $ A $ be a complex orientable $ \E_1 $-ring spectrum.
	Then:
	\begin{enumerate}
		\item There is an equivalence $ \Gamma^{*}(\nu(A)) \equivalent \tau_{\geq *} A $ of $ \E_1 $-algebras in $ \FilSp $.

		\item The equivalence $ \Gamma^{*} \colon \SynMU \equivalence \Mod_{\fil^{*}(\Sup^{0})}(\FilSp) $ induces is an equivalence of \categories
		\begin{equation*}
			\Mod_{\nu(A)}(\SynMU) \equivalence \Modpost{A} \period
		\end{equation*}
	\end{enumerate}
\end{lemma}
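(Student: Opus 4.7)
For part (1), the plan is to use the identification of $\Gamma^{*} \of \nu$ with the $\MU$-Adams--Novikov filtration: for any spectrum $X$, one has
\begin{equation*}
	\Gamma^{*}(\nu(X)) \simeq \lim_{[k] \in \DDelta} \tau_{\geq *}(\MU^{\otimes (k+1)} \tensor X)
\end{equation*}
as a consequence of the $\MU$-descent description of $\fil^{*}(\Sup^{0})$ and the symmetric monoidality of $\Gamma^{*}$. A chosen complex orientation $\phi \colon \MU \to A$ then furnishes a retraction of the unit $A \to \MU \tensor A$, namely the composite $\MU \tensor A \to A \tensor A \to A$ of $\phi \tensor \id_{A}$ and the multiplication $\mu_{A}$, and this extends to an extra degeneracy for the augmented cosimplicial spectrum $A \to \MU^{\otimes (\bullet + 1)} \tensor A$. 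Such a splitting persists after applying the additive functor $\tau_{\geq *}$ levelwise, forcing the totalization to agree with the augmentation, so the natural comparison
\begin{equation*}
	\tau_{\geq *}(A) \equivalence \Gamma^{*}(\nu(A))
\end{equation*}
is an equivalence of filtered spectra. Since both $\tau_{\geq *}$ and $\Gamma^{*} \of \nu$ are lax symmetric monoidal functors $\spectra \to \FilSp$ and the comparison is natural in $A$, it automatically upgrades to an equivalence of $\E_{1}$-algebras in $\FilSp$.

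Part (2) will follow as a formal consequence of (1). The symmetric monoidal equivalence $\Gamma^{*}$ sends the $\E_{1}$-algebra $\nu(A) \in \SynMU$ to $\Gamma^{*}(\nu(A))$ and induces equivalences on module \categories
\begin{equation*}
	\Mod_{\nu(A)}(\SynMU) \equivalent \Mod_{\Gamma^{*}(\nu(A))}\bigl(\Mod_{\fil^{*}(\Sup^{0})}(\FilSp)\bigr) \equivalent \Mod_{\Gamma^{*}(\nu(A))}(\FilSp),
\end{equation*}
where the last step uses the standard fact that an $R$-module structure on an object of a presentable symmetric monoidal \category subsumes the underlying $S$-module structure whenever $R$ is an $S$-algebra. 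Substituting the identification of part (1) then yields the desired equivalence $\Mod_{\nu(A)}(\SynMU) \equivalent \Modpost{A}$.

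The main obstacle lies in making the extra-degeneracy argument of part (1) fully coherent: a complex orientation $\phi \colon \MU \to A$ is given a priori only as a morphism in the homotopy category $\hSp$ of associative ring spectra. The cleanest workaround is to observe that at each fixed filtration level $n$, the limit $\lim_{[k]} \tau_{\geq n}(\MU^{\otimes (k+1)} \tensor A)$ is a totalization of an augmented cosimplicial object in $\spectra$, for which a splitting in the homotopy category already suffices to identify the limit with the augmentation $\tau_{\geq n}(A)$. Alternatively, one may verify the comparison map on associated graded pieces (where both sides become expressible in terms of $\pi_{*}(A)$) and then invoke completeness of both the Postnikov and $\MU$-descent filtrations to promote the equivalence to the filtered level.
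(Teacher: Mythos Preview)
Your argument is correct and follows the same route as the paper: identify $\Gamma^{*}(\nu(A))$ with the $\MU$-Adams--Novikov filtration on $A$, collapse it to the Postnikov filtration using complex orientability, and then deduce (2) formally. The paper handles the coherence worry you raise more cleanly than your extra-degeneracy workaround: it observes that a complex orientation exhibits $A$ as a retract of the genuine $\MU$-module $\MU \tensor A$ in $\hSp$, and since the comparison $\tau_{\geq *}(-) \to \Gamma^{*}(\nu(-))$ is natural in the spectrum and an equivalence on honest $\MU$-modules (where the cobar resolution splits coherently), it is an equivalence on $A$ by closure of equivalences under retracts. This is precisely \cite[Proposition 4.60]{pstrkagowski2022synthetic}, which the paper cites.
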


\begin{proof}
	For (1), note that under the equivalence $ \Gamma^{*} \colon \SynMU \equivalence \Mod_{\fil^{*}(\Sup^{0})}(\FilSp) $, $\nu(A)$ corresponds to $ A $ equipped with the Adams--Novikov filtration.
	Since $ A $ is complex orientable and hence a retract of an $ \MU $-module, the Adams--Novikov filtration on $ A $ is identified with the Postnikov filtration $ \tau_{\geq *}A $.
	See \cite[Proposition 4.60]{pstrkagowski2022synthetic}. 

	For (2), note that by (1) and \Cref{prop:synthetic_spectra_as_filtered_spectra}, we have equivalences
	\begin{align*}
		\Mod_{\nu(A)}(\SynMU) &\equivalent \Mod_{\Gamma^{*}(\nu(A))}(\Mod_{\fil^{*}(\Sup^0)}(\FilSp)) \\ 
		&\equivalent \Modpost{A} \period \qedhere
	\end{align*}
\end{proof}


\subsection{Synthetic complex Betti realization} 
\label{subsection:synthetic_complex_betti_realization}

In this subsection, we refine the Betti realization functor 
\begin{equation*}
	\Be \colon \SH(\CC) \to \spectra
\end{equation*}
of \Cref{construction:complex_betti_realization} to a colimit-preserving functor valued in synthetic spectra.
This refinement is analogous to our construction of filtered Betti realization (\Cref{def:filtered_Betti_realization}).
Specifically, the idea is to send a perfect pure motivic spectrum $ X $ to the `trivial' synthetic spectrum $ \nu(\Be(X)) $ associated to the Betti realization of $ X $. 
To check that this extends to a colimit-preserving functor $ \SH(\CC) \to \SynMU $, we need to check that it the functor $ \nu(\Be(-)) $ preserves cofiber sequences in $ \Pure(\CC) $:

\begin{lemma}\label{lem:MU_tensor_Betti_realization_splits_cofiber_sequence_of_pures}
	Let $ X \to Y \to Z $ be an $ \MGL $-split cofiber sequence in $ \SH(\CC) $.
	Then the null sequence
	\begin{equation*}
		\begin{tikzcd}
			\nu(\Be(X)) \arrow[r] & \nu(\Be(Y)) \arrow[r] & \nu(\Be(Z))
		\end{tikzcd}
	\end{equation*}
	is a cofiber sequence of synthetic spectra.
\end{lemma}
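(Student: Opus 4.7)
The plan is to reduce to the characterizing property of $\nu$ stated in the excerpt: a cofiber sequence of spectra is preserved by $\nu \colon \Sp \hookrightarrow \SynMU$ as soon as it is $\MU$-split. So it suffices to show that
\begin{equation*}
\Be(X) \to \Be(Y) \to \Be(Z)
\end{equation*}
is an $\MU$-split cofiber sequence in $\Sp$.

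First I would use that Betti realization is symmetric monoidal with $\Be(\MGL) \simeq \MU$ (\Cref{ex:Betti_realization_of_MU}). Applying $\Be$ to the $\MGL$-split cofiber sequence $X \to Y \to Z$ gives, after tensoring with $\MU$, precisely the image under $\Be$ of the split cofiber sequence
\begin{equation*}
\MGL \tensor X \to \MGL \tensor Y \to \MGL \tensor Z
\end{equation*}
in $\Mod_\MGL(\SH(\CC))$. Since Betti realization is additive and an additive functor preserves split cofiber sequences, the resulting sequence
\begin{equation*}
\MU \tensor \Be(X) \to \MU \tensor \Be(Y) \to \MU \tensor \Be(Z)
\end{equation*}
is split in $\Mod_\MU(\Sp)$. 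In other words, the cofiber sequence $\Be(X) \to \Be(Y) \to \Be(Z)$ is $\MU$-split.

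Finally, invoking the characterization of cofiber sequences preserved by $\nu$ recalled above (equivalently \cite[Lemma 4.23]{pstrkagowski2022synthetic}), we conclude that
\begin{equation*}
\nu(\Be(X)) \to \nu(\Be(Y)) \to \nu(\Be(Z))
\end{equation*}
is a cofiber sequence in $\SynMU$.

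There is no real obstacle here; the lemma is essentially a packaging of two facts — symmetric monoidality of $\Be$ with $\Be(\MGL) \simeq \MU$, and the fact that $\nu$ preserves $\MU$-split cofiber sequences — so that the argument is parallel to \Cref{lem:MU-linear_homological_Betti_realization_splits_pure_cofiber_sequences}, with $\nu$ playing the role that $\tau_{\geq *}$ (or any additive functor on $\Mod_A$) played there.
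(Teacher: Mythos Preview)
Your proposal is correct and follows essentially the same approach as the paper. The paper's proof simply invokes \Cref{lem:MU-linear_homological_Betti_realization_splits_pure_cofiber_sequences} (with $A=\MU$) to conclude that $\MU \tensor \Be(X) \to \MU \tensor \Be(Y) \to \MU \tensor \Be(Z)$ is split, and then applies \cite[Lemma 4.23]{pstrkagowski2022synthetic}; you unpack the first step inline instead of citing the lemma, but the content is identical.
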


\begin{proof}
	By \Cref{lem:MU-linear_homological_Betti_realization_splits_pure_cofiber_sequences}, the induced cofiber sequence of $ \MU $-modules 
	\begin{equation*}
		\begin{tikzcd}
			\MU \tensor \Be(X) \arrow[r] & \MU \tensor \Be(Y) \arrow[r] & \MU \tensor \Be(Z) 
		\end{tikzcd}
	\end{equation*}
	is split.
	The claim now follows from the fact that the functor $ \nu \colon \Sp \hookrightarrow \SynMU $ preserves $\MU_{*}$-exact cofiber sequences \cite[Lemma 4.23]{pstrkagowski2022synthetic}.
\end{proof}

\begin{example}\label{ex:nu_Betti_preserves_cofiber_sequences_of_pures}
	Let $ X \to Y \to Z $ is a cofiber sequence in $ \Pure(\CC) $.
	Combining \Cref{proposition:pure_epimorphisms_detected_by_mgl,lem:MU_tensor_Betti_realization_splits_cofiber_sequence_of_pures} shows that
	\begin{equation*}
		\begin{tikzcd}
			\nu(\Be(X)) \arrow[r] & \nu(\Be(Y)) \arrow[r] & \nu(\Be(Z))
		\end{tikzcd}
	\end{equation*}
	is a cofiber sequence of synthetic spectra.
\end{example}

\begin{theorem}\label{theorem:existence_of_complex_synthetic_betti_homology}
	There exists a unique lax symmetric monoidal left adjoint 
	\begin{equation*}
		\Besyn \colon \SH(\CC) \to \SynMU 
	\end{equation*}
	such that for $ X \in \Pure(\CC) $, we have
	\begin{equation*}
		\Besyn(X) \simeq \nu(\Be(X)) \period
	\end{equation*}
\end{theorem}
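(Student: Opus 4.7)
The plan is to construct $\Besyn$ by the same strategy used for filtered Betti realization in \Cref{cor:homological_filtered_Betti_realization_exists_for_complex_orientable_rings}, replacing the Postnikov filtration $\tau_{\geq *} \colon \Sp \to \FilSp$ by the synthetic analogue functor $\nu \colon \Sp \to \SynMU$. The underlying functor is obtained from the universal property of $\SH(\CC)$ in terms of perfect pure motives, and the monoidal enhancement will come from a symmetric monoidal refinement of that universal property.

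First I would verify that the composite $\nu \of \Be \colon \Pure(\CC) \to \SynMU$ sends cofiber sequences of perfect pure motives to cofiber sequences. This is precisely \Cref{ex:nu_Betti_preserves_cofiber_sequences_of_pures}: by \Cref{proposition:pure_epimorphisms_detected_by_mgl}, any cofiber sequence $X \to Y \to Z$ in $\Pure(\CC)$ is $\MGL$-split; applying the symmetric monoidal functor $\Be$ and using $\Be(\MGL) \simeq \MU$, the resulting cofiber sequence $\Be(X) \to \Be(Y) \to \Be(Z)$ is $\MU$-split, and $\nu$ preserves $\MU$-split cofiber sequences (this is the defining exactness property of $\nu$, e.g.\ \cite[Lemma 4.23]{pstrkagowski2022synthetic}). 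Having checked this, I invoke \Cref{cor:functors_out_of_SH_in_terms_of_Pure} to uniquely extend $\nu \of \Be|_{\Pure(\CC)}$ to a colimit-preserving functor $\Besyn \colon \SH(\CC) \to \SynMU$, giving existence and uniqueness of the underlying functor.

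The remaining and more delicate step is promoting this to a lax symmetric monoidal structure. The approach is to use the symmetric monoidal enhancement of \Cref{theorem:shk_is_additive_sheaves_on_perfect_pure_motives}, which identifies $\SH(\CC)$ with $\ShSigma(\Pure(\CC);\Sp)$ equipped with Day convolution. Since $\Pure(\CC)$ is closed under tensor products in $\SH(\CC)$ by \Cref{lemma:perfect_pures_closed_under_monoidal_product_and_duals}, it inherits a symmetric monoidal structure for which the inclusion into $\SH(\CC)$ is strong monoidal. The functor $\nu \of \Be \colon \Pure(\CC) \to \SynMU$ is lax symmetric monoidal, being the composite of the strong monoidal $\Be$ with the lax monoidal $\nu$. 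Its colimit-preserving left Kan extension along the strong monoidal, cofiber-preserving inclusion $\Pure(\CC) \hookrightarrow \SH(\CC)$ then inherits a canonical lax symmetric monoidal structure by the universal property of Day convolution; this extension is $\Besyn$, and its lax monoidal refinement is unique.

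The main obstacle will be making precise the symmetric monoidal refinement of \Cref{cor:functors_out_of_SH_in_terms_of_Pure}: namely, that restriction induces an equivalence between lax symmetric monoidal colimit-preserving functors $\SH(\CC) \to \Dcal$ and lax symmetric monoidal cofiber-preserving functors $\Pure(\CC) \to \Dcal$, for any presentably symmetric monoidal stable $\Dcal$. Granted the symmetric monoidal equivalence of \Cref{theorem:shk_is_additive_sheaves_on_perfect_pure_motives}, this is a formal consequence of Lurie's universal property of Day convolution \cite[\S 4.8.1]{lurieha} together with the identification of $\ShSigma(\Pure(\CC);\Sp)$ as a symmetric monoidal localization of the presheaf category; once this statement is in place, applying it to $\nu \of \Be$ concludes the proof.
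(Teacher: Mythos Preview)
Your proof is correct and follows essentially the same approach as the paper: verify that $\nu \circ \Be$ preserves cofiber sequences of perfect pures via \Cref{proposition:pure_epimorphisms_detected_by_mgl} and the fact that $\nu$ preserves $\MU$-split sequences, then invoke the universal property of $\SH(\CC)$ in terms of $\Pure(\CC)$. You are in fact more explicit than the paper's own proof about the lax symmetric monoidal enhancement, which the paper leaves implicit in its appeal to \Cref{theorem:homological_weight_context_has_unique_solution_if_t_is_mgl_exact}.
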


\begin{proof}
	By \cref{theorem:homological_weight_context_has_unique_solution_if_t_is_mgl_exact}, it suffices to show that if $X \to Y \to Z$ is a cofiber sequence in $ \Pure(\CC) $, then the sequence 
	\begin{equation*}
		\begin{tikzcd}
			\nu(\Be(X)) \arrow[r] & \nu(\Be(Y)) \arrow[r] & \nu(\Be(Z))
		\end{tikzcd}
	\end{equation*}
	is a cofiber sequence in $ \SynMU $; this is the content of \Cref{ex:nu_Betti_preserves_cofiber_sequences_of_pures}.
\end{proof}

\begin{definition}[synthetic Betti realization]\label{def:synthetic_Betti_homology}
	We refer to the functor $\Besyn \colon \SH(\CC) \to \SynMU$ of \cref{theorem:existence_of_complex_synthetic_betti_homology} as complex \emph{synthetic Betti realization}. 
\end{definition}

Using the fact that synthetic Betti realization is a left adjoint, it is not hard to see that synthetic Betti realization refines the usual Betti realization:  

\begin{lemma}
	The triangle of \categories and left adjoints 
	\begin{equation*}
		\begin{tikzcd}
			\SH(\CC) \arrow[rr, "\Besyn"] \arrow[dr, "\Be"'] & & \SynMU \arrow[dl, "(-)^{\tau=1}"] \\
			& \Sp &
		\end{tikzcd}
	\end{equation*}
	canonically commutes. 
\end{lemma}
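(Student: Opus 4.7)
The plan is a direct application of the universal property of $\SH(\CC)$ in terms of perfect pure motivic spectra established in \cref{cor:functors_out_of_SH_in_terms_of_Pure}. Both composites $\Be$ and $(-)^{\tau=1} \circ \Besyn$ are colimit-preserving functors $\SH(\CC) \to \Sp$: the former by construction (\cref{construction:complex_betti_realization}) and the latter as a composite of the two left adjoints $\Besyn$ (\cref{theorem:existence_of_complex_synthetic_betti_homology}) and $(-)^{\tau = 1}$. By \cref{cor:functors_out_of_SH_in_terms_of_Pure}, restriction along the inclusion $\Pure(\CC) \subseteq \SH(\CC)$ defines an equivalence
\begin{equation*}
    \FunL(\SH(\CC), \Sp) \equivalence \Fun^{\cofib}(\Pure(\CC), \Sp),
\end{equation*}
so it suffices to construct a natural equivalence $\Be(X) \simeq (\Besyn(X))^{\tau = 1}$ for $X \in \Pure(\CC)$ and then uniquely extend.

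For $X \in \Pure(\CC)$, the defining property of synthetic Betti realization gives $\Besyn(X) \simeq \nu(\Be(X))$, and the commutative diagram \eqref{equation:diagram_explaining_properties_of_nu} recalled in \cref{subsec:recollection_on_synthetic_spectra} identifies $(-)^{\tau = 1} \circ \nu$ with the identity of $\Sp$. Hence there is a natural equivalence
\begin{equation*}
    (\Besyn(X))^{\tau = 1} \simeq \nu(\Be(X))^{\tau = 1} \simeq \Be(X)
\end{equation*}
of functors $\Pure(\CC) \to \Sp$. Applying the equivalence of \cref{cor:functors_out_of_SH_in_terms_of_Pure} to this natural equivalence produces the desired natural equivalence of colimit-preserving functors on all of $\SH(\CC)$.

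There is essentially no obstacle here once one observes that $(-)^{\tau = 1}$ preserves colimits and that the needed equivalence on pure motivic spectra is built into the definition of $\Besyn$; the work has already been done in \cref{theorem:shk_is_additive_sheaves_on_perfect_pure_motives,theorem:existence_of_complex_synthetic_betti_homology}.
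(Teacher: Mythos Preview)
Your proof is correct and follows the same approach as the paper: reduce to perfect pures via \cref{cor:functors_out_of_SH_in_terms_of_Pure}, then use $\Besyn(X) \simeq \nu(\Be(X))$ together with the identification $(-)^{\tau=1} \circ \nu \simeq \id_{\Sp}$ from \eqref{equation:diagram_explaining_properties_of_nu}. The paper's version is just terser.
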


\begin{proof}
	By \cref{cor:functors_out_of_SH_in_terms_of_Pure}, it suffices to show that this diagram commutes when restricted to perfect pures. 
	If $X \in \Pure(\CC)$, this follows from the equivalences
	\begin{equation*}
		\Besyn(X)^{\tau = 1} \simeq (\nu(\Be(X))^{\tau=1} \simeq \Be(X) \period
	\end{equation*}
	Here, the second equivalence uses that \eqref{equation:diagram_explaining_properties_of_nu} commutes. 
\end{proof}


\subsection{Comparing synthetic Betti realization and filtered Betti realization}\label{subsec:comparing_synthetic_and_filtered_Betti_realization}

As a consequence of \Cref{prop:synthetic_spectra_as_filtered_spectra}, we see that synthetic Betti realization can be thought of as equipping $\Be(X)$ with an additional filtration compatible with the $ \MU $-descent filtration of the sphere. 
In this subsection, we use the filtered perspective on synethetic spectra explained in \cref{subsection:synthetic_spectra_as_filtered_spectra} to compare synthetic Betti realization with filtered Betti realization (recall \Cref{def:filtered_Betti_realization}).

\begin{construction}[$ A $-linear realization of a synthetic spectrum]
	Let $ A $ be a complex orientable $ \E_{1} $-ring.
	Write $ \Re_{A} $ for the composite
	\begin{equation*}
		\begin{tikzcd}[sep=4.5em]
			\SynMU \arrow[r, "\nu(A) \otimes (-)"] & \Mod_{\nu(A)}(\SynMU) \arrow[r, "\sim"{yshift=-0.25ex}] & \Modpost{A} \period
		\end{tikzcd}
	\end{equation*}
	Here, the second functor is the equivalence of \Cref{lem:equivalent_descriptions_of_filtered_modules_over_a_complex orientable_ring}.
	We call $ \Re_A $ the \defn{$ A $-linear realization} functor. 
\end{construction}

\begin{observation}
	Since $ \nu $ is lax symmetric monoidal, for any spectrum $ X $ we have a canonical comparison map 
	\begin{equation*}
		\nu(A) \otimes \nu(X) \to \nu(A \otimes X) 
	\end{equation*}
	which we can identify with a map 
	\begin{equation}
		\label{equation:canonical_map_from_a_linear_realization_of_nu}
		\Re_{A}(\nu(X)) \to \tau_{\geq *}(A \otimes X) \comma
	\end{equation}
	where we use that $ A \otimes X $ is a retract of an $ \MU $-module to identify $ \Gamma^{*} \nu(A \otimes X) $ with $ \tau_{\geq *}(A \otimes X) $.
\end{observation}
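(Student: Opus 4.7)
The plan is to unpack both sides of the claimed identification under the symmetric monoidal equivalence $\Gamma^{*} \colon \Mod_{\nu(A)}(\SynMU) \equivalence \Modpost{A}$ provided by \cref{lem:equivalent_descriptions_of_filtered_modules_over_a_complex orientable_ring}(2). My strategy is to promote the lax monoidality map $\nu(A) \tensor \nu(X) \to \nu(A \tensor X)$ in $\SynMU$ to a map of $\nu(A)$-modules and transport it through $\Gamma^{*}$ to obtain a map of $\tau_{\geq *} A$-modules in filtered spectra. The source identification is tautological: by construction $\Re_{A}$ factors as $\nu(A) \tensor (-) \colon \SynMU \to \Mod_{\nu(A)}(\SynMU)$ followed by $\Gamma^{*}$, so $\Re_{A}(\nu(X))$ is literally the image of $\nu(A) \tensor \nu(X)$ under $\Gamma^{*}$.

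The substantive content is verifying that $\Gamma^{*}\nu(A \tensor X) \simeq \tau_{\geq *}(A \tensor X)$ as a module over $\Gamma^{*}\nu(A) \simeq \tau_{\geq *} A$. For this, I would first observe that a choice of complex orientation $\MU \to A$ exhibits $A$ as a retract of $\MU \tensor A$ in the homotopy category of $A$-modules; tensoring this splitting with $X$ shows that $A \tensor X$ is a retract of the $\MU$-module $\MU \tensor A \tensor X$. The same principle used in the proof of \cref{lem:equivalent_descriptions_of_filtered_modules_over_a_complex orientable_ring}(1), namely that on retracts of $\MU$-modules the Adams--Novikov filtration agrees with the Postnikov filtration (see \cite[Proposition 4.60]{pstrkagowski2022synthetic}), then gives the required identification when applied to the spectrum $A \tensor X$.

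The only real bookkeeping is checking that the identifications respect the $\tau_{\geq *} A$-module structures on both sides, and that the whole construction is natural in $X$. Both are formal: the $\nu(A)$-module structure on $\nu(A \tensor X)$ is induced by lax monoidality of $\nu$ together with the canonical $A$-module structure on $A \tensor X$, so it is matched by $\Gamma^{*}$ with the evident $\tau_{\geq *} A$-module structure on $\tau_{\geq *}(A \tensor X)$; naturality in $X$ follows from naturality of $\nu$ and of $\Gamma^{*}$. I do not expect any substantive obstacle here, since the entire observation is essentially a definitional unwinding once the retract-of-$\MU$-module argument is in hand.
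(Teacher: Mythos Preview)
Your proposal is correct and matches the paper's approach exactly: the paper treats this as an observation whose justification is entirely contained in the statement itself, and your unpacking (identifying the source via the definition of $\Re_A$, and identifying the target via the retract-of-$\MU$-module argument from \cite[Proposition 4.60]{pstrkagowski2022synthetic} as in \cref{lem:equivalent_descriptions_of_filtered_modules_over_a_complex orientable_ring}(1)) is precisely the reasoning the paper has in mind.
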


\begin{remark}    
	If $f \colon A \to B$ is a map of complex orientable $ \E_{1} $-ring spectra, then the induced map $\nu(A) \to \nu(B)$ of $ \E_{1} $-algebras in synthetic spectra gives rise to a natural transformation 
	\begin{equation*}
		\Re_{A}(-) \to \Re_{B}(-) \period
	\end{equation*}
	This natural transformation is adjoint to a comparison morphism, which we denote by 
	\begin{equation*}
		\Re(f) \colon \tau_{\geq *} B \tensorlimits_{\tau_{\geq *} A} \Re_{A}(-) \longrightarrow \Re_{B}(-) \period
	\end{equation*}
	In fact, $\Re(f)$ is an equivalence, as for $X \in \SynMU$ it can be identified with the canonical map 
	\begin{equation*}
		\nu(B) \tensorlimits_{\nu(A)} \nu(A) \otimes \nu(X) \to \nu(B) \otimes \nu(X) \period
	\end{equation*}
\end{remark}

As a consequence of \cref{cor:functors_out_of_SH_in_terms_of_Pure}, we can make the following definition. 

\begin{definition}
	\label{definition:comparison_map_between_synthetic_and_a_linear_realizations}
	We write 
	\begin{equation*}
		\phi_{A} \colon \Re_{A}(\Besyn(-)) \to \WBe(-;A)
	\end{equation*}
	for the unique natural transformation of colimit-preserving functors 
	\begin{equation*}
		\SH(\CC) \to \Modpost{A}
	\end{equation*}
	such that for every perfect pure $S \in \Pure(\CC)$ it can be identified with the map 
	\begin{equation*}
		\Re_{A}(\Besyn(S)) \simeq \Re_{A}(\nu(\Be(S)) \to \tau_{\geq *}(A \otimes \Be(S)) \simeq \WBe(S;A)
	\end{equation*}
	of \eqref{equation:canonical_map_from_a_linear_realization_of_nu}.
\end{definition}

For the following result, recall that a complex oriented ring spectrum $ A $ is said to be \textit{Landweber exact} if the map $\Spec(A_{*}) \to \Mfg$ classiying the Quillen formal group is flat. 
For example, this is true if $\pi_{*}(A)$ is a rational vector space. 

\begin{theorem}\label{thm:synthetic_realization_refines_filtered_realization_for_Landwebder_exact_rings}
	Let $ X \in \SH(\CC) $ and let $ A $ be a complex oriented $ \E_1 $-ring.
	Assume that one of the following conditions holds:
	\begin{enumerate}
	    \item The motivic spectrum $ X $ is cellular.

	    \item The complex oriented $ \E_1 $-ring $ A $ is Landweber exact.
	\end{enumerate}
	Then the map
	\begin{equation*}
		\phi_{A} \colon \Re_{A}(\Besyn(X)) \to \WBe(X;A)
	\end{equation*}
	is an equivalence.
\end{theorem}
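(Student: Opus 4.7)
The approach is to apply \Cref{cor:functors_out_of_SH_in_terms_of_Pure}. Both $\Re_{A} \of \Besyn$ and $\WBe(-;A)$ are colimit-preserving functors $\SH(\CC) \to \Modpost{A}$, so the full subcategory of $\SH(\CC)$ on which $\phi_{A}$ is an equivalence is localizing; it therefore suffices to verify the equivalence on a suitable set of generators in each case. Under the equivalences of \Cref{prop:synthetic_spectra_as_filtered_spectra} and \Cref{lem:equivalent_descriptions_of_filtered_modules_over_a_complex orientable_ring}, for $S \in \Pure(\CC)$ the map $\phi_{A}(S)$ is identified with the image under $\Gamma^{*}$ of the lax-monoidal structure map $\nu(A) \tensor \nu(\Be(S)) \to \nu(A \tensor \Be(S))$ of the synthetic analogue functor. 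Equivalently, in filtered modules it is the canonical comparison
\begin{equation*}
    \tau_{\geq *}A \tensorlimits_{\fil^{*}(\Sup^{0})} \fil^{*}(\Be(S)) \to \tau_{\geq *}(A \tensor \Be(S)) \period
\end{equation*}

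\emph{Case (2): $A$ Landweber exact.} The essential input is that for Landweber exact $A$ the K\"unneth comparison $\nu(A) \tensor \nu(Y) \to \nu(A \tensor Y)$ is an equivalence for every spectrum $Y$. This is a property of the synthetic analogue functor established in \cite{pstrkagowski2022synthetic}: the K\"unneth spectral sequence $\Tor^{\MU_{*}\MU}_{s,t}(A_{*}, \MU_{*}Y) \Rightarrow A_{s+t}Y$ collapses onto its zero line precisely when $\Spec(A_{*}) \to \Mfg$ is flat. Applied with $Y = \Be(S)$, this shows $\phi_{A}(S)$ is an equivalence for every $S \in \Pure(\CC)$, and hence $\phi_{A}$ is an equivalence on all of $\SH(\CC)$.

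\emph{Case (1): $X$ cellular.} Cellular motivic spectra form the smallest localizing subcategory containing the bigraded spheres $\Sup^{a,b}$, so it suffices to check $\phi_{A}$ on each such sphere. Writing $\Sup^{a,b} \equivalent \Sigma^{a-2b} \Sup^{2b,b}$ and using that $\Besyn$ and $\WBe(-;A)$ are exact, $\phi_{A}(\Sup^{a,b})$ is the $(a-2b)$-fold suspension of $\phi_{A}(\Sup^{2b,b})$. Now $\Sup^{2b,b} \in \Pure(\CC)$, so $\phi_{A}(\Sup^{2b,b})$ is the lax-monoidal map $\nu(A) \tensor \nu(\Sup^{2b}) \to \nu(A \tensor \Sup^{2b})$. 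Since $\Sup^{2b}$ is $\tensor$-invertible, both sides are canonically identified with the twist $\Sigma^{2b,0}\nu(A)$ and the map is an equivalence. Hence $\phi_{A}$ is an equivalence on every bigraded sphere, and on all cellular $X$ by colimit-preservation. The principal technical obstacle is the K\"unneth equivalence invoked in case (2), which amounts to the fact that Landweber exactness makes $\nu(A)$ behave like a flat algebra in $\SynMU$; case (1) by contrast only requires the tautological behavior of $\nu$ on invertible spectra.
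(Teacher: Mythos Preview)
Your argument for case~(1) is essentially the paper's: reduce to the spheres $\Sup^{2b,b}\in\Pure(\CC)$ and then identify $\phi_A(\Sup^{2b,b})$ with the lax monoidal map $\nu(A)\otimes\nu(\Sup^{2b})\to\nu(A\otimes\Sup^{2b})$. The paper justifies this last equivalence by noting that $\Sup^{2b}$ is $\MU$-finite projective and invoking \cite[Lemma~4.24]{pstrkagowski2022synthetic}; your appeal to invertibility works too, provided you are implicitly using that $\nu$ intertwines suspension with the bigraded shift (so that both sides are the \emph{same} twist of $\nu(A)$, not merely abstractly invertible). Invertibility of $\Sup^{2b}$ alone is not sufficient for a general lax monoidal functor, so you should cite one of these two facts explicitly. (Your bigrading $\Sigma^{2b,0}$ looks off; in the standard synthetic conventions the shift is $\Sigma^{2b,2b}$.)

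Case~(2) has a genuine gap in the justification. Your strategy---check $\phi_A(S)$ on $S\in\Pure(\CC)$ by showing $\nu(A)\otimes\nu(Y)\to\nu(A\otimes Y)$ is an equivalence for all $Y$---is exactly the paper's, and the claim is true. But the spectral sequence you write down, $\Tor^{\MU_*\MU}_{s,t}(A_*,\MU_*Y)\Rightarrow A_{s+t}Y$, is not a standard object: $A_*$ is not a module over the ring $\MU_*\MU$, and there is no such K\"unneth spectral sequence converging to $A_*Y$. Landweber exactness is flatness over $\Mfg$, which is \emph{weaker} than flatness of $A_*$ over $\MU_*$, so even a corrected K\"unneth-over-$\MU_*$ argument would not go through directly. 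The paper instead invokes a structural result of Hovey \cite[Propositions~2.12 and~2.13]{hovey1999morava}: a Landweber exact $A$ is a filtered colimit of finite $\MU$-projective spectra $P_i$. Since $\nu$ preserves filtered colimits and $\nu(P_i)\otimes\nu(Y)\to\nu(P_i\otimes Y)$ is an equivalence by \cite[Lemma~4.24]{pstrkagowski2022synthetic}, the claim follows by passing to the colimit. You should replace the spectral sequence paragraph with this argument.
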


\begin{proof}
	Suppose first that $ X $ is cellular.
	Since both functors preserve colimits, it suffices to show that $ \phi_A $ is an equivalence for motivic spectra of the form 
	\begin{equation*}
		X \simeq \Sup^{2n, n} \simeq (\PP^{1})^{\otimes n} \period
	\end{equation*}
	Since $ \Sup^{2n, n} $ is perfect pure, we see that $\phi_{A}$ can be identified with the canonical map 
	\begin{equation*}
		\nu(A) \otimes \nu(\Be(\Sup^{2n, n})) \simeq \nu(A) \otimes \nu(\Sup^{n}) \to \nu(A \otimes \Sup^{n}) \simeq \nu(A \otimes \Be(\Sup^{2n, n})) \period
	\end{equation*}
	Since $\Sup^{n}$ is $ \MU $-finite projective, \cite[Lemma 4.24]{pstrkagowski2022synthetic} implies that this map is an equivalence. 

	In the Landweber exact case, \cite[Propositions 2.12 \& 2.13]{hovey1999morava} shows that $ A $ is a filtered colimit of finite $ \MU $-projectives.
	The proof is now the same as the proof in the cellular case.
\end{proof}

\begin{remark}
	If $f \colon A \to B$ is a morphism of complex oriented $ \E_{1} $-rings, then the comparison map 
	\begin{equation*}
		c_{f} \colon \tau_{\geq *} B \tensorlimits_{\tau_{\geq *} A} \WBe(-;A) \to \WBe(-;B) 
	\end{equation*}
	of \cref{construction:comparison_morphism_between_filtrations_for_different_cpx_orientable_spectra} is compatible with the those of \cref{definition:comparison_map_between_synthetic_and_a_linear_realizations} in the sense that we have a commutative diagram 
	\begin{equation*}
		\begin{tikzcd}[row sep=3em, column sep=10em]
			\displaystyle \tau_{\geq *}B \tensorlimits_{\tau_{\geq *}A}\Re_{A}(\Besyn(-)) \arrow[r, "\tau_{\geq *}B \tensorlimits\limits_{\tau_{\geq *}A} \phi_{A}"] \arrow[d, "\Re(f)"'] %
			& \displaystyle \tau_{\geq *} B \tensorlimits_{\tau_{\geq *} A} \WBe(-;A) \arrow[d, "c_f"] \\
			\Re_{B}(\Besyn(-)) \arrow[r, "\phi_B"'] & \WBe(-;B)
		\end{tikzcd}
	\end{equation*}
	of functors $\SynMU \to \Mod_{\tau_{\geq *} B}(\FilSp)$ and natural transformations.
	To see this, note that all these functors preserve colimits, and so to give such a square it is enough to define it on perfect pures. If $S \in \Pure(k)$, then the above square reduces to 
	\begin{equation*}
		\begin{tikzcd}[column sep=4em, row sep=3em]
			\displaystyle \nu(B) \tensorlimits_{\nu(A)} \nu(A) \otimes \nu(\Be(S)) \arrow[r] \arrow[d] & \displaystyle \nu(B) \tensorlimits_{\nu(A)} \nu(B \otimes \Be(S)) \arrow[d] \\
			\displaystyle \nu(B) \tensorlimits_{A} \nu(\Be(S)) \arrow[r] & \nu(B \otimes \Be(S)) \period
		\end{tikzcd} 
	\end{equation*}
\end{remark}


\subsection{Synthetic real Betti realization and synthetic étale realization} 
\label{subsection:ideas_on_synthetic_real_and_etale_realizations}

In this section, highly inspired by the work of Burklund--Hahn--Senger on the \category of Artin--Tate real motivic spectra \cite{burklund2020galois}, we give conjectural description of a synthetic lift of a general motivic realization functor, such as étale realization. 

We first describe the main difference which makes the general case more interesting than the complex one. Notice that the complex Betti realization is valued in the \category of spectra, and the synthetic lift of \cref{theorem:existence_of_complex_synthetic_betti_homology} shows that it can be naturally lifted to the \category of synthetic spectra, which was constructed previously in \cite{pstrkagowski2022synthetic}. However, in both the case of the real Betti realization 
\begin{equation*}
	\Be_{\Cup_{2}} \colon \SH(\RR) \to \spectra^{\Cup_{2}}
\end{equation*}
and the étale realization 
\begin{equation*}
	\Re_{\ell} \colon \SH(k) \to \Shethyp(\Et_k;\Sp)\ellcomp \comma
\end{equation*}
the target is spectra equipped with additional structure (either that of a genuine $\Cup_{2}$-spectrum or, informally, a continuous action of the absolute Galois group $\Gal(\overline{k}/k)$).
Hence it is natural to expect that the synthetic lift of these realizations would not be valued in ordinary synthetic spectra, but rather in a synthetic deformation which takes this additional structure into account.
Thus, before one can discuss the existence of a lift, one first has to construct an appropriately structured deformation.
We propose a candidate for such a deformation in \cref{definition:even_synthetic_deformation_in_general_case}.
As an invitation towards further research in this direction, we also make conjectures on its structure.  

We can treat both Betti and étale realizations uniformly by introducing the following notion: 

\begin{definition}
	\label{definition:abstract_realization_functor}
	An \emph{abstract realization functor} over a field $ k $ is a symmetric monoidal, colimit-preserving functor 
	\begin{equation*}
		\Re \colon \SH(k) \to \Ccat
	\end{equation*}
	valued in presentably symmetric monoidal, stable \category. 
\end{definition}

\begin{warning}
	In the generality of \cref{definition:abstract_realization_functor}, one should probably only expect a functor valued in an \emph{even} synthetic category, so that the formalism of this section applied to $\Be \colon \SH(\CC) \to \spectra$ does not recover exactly the construction of \cref{subsection:synthetic_complex_betti_realization}, but rather only its even variant. 
	We believe that the existence of a non-even extension of the synthetic deformation is special to the case of the complex Betti realization.
	The reason is that it relies on the existence of a tensor square root of the Tate motive $\Be(\PP^{1}) \simeq \Sup^{2} \in \spectra$; this is not true in either the real or étale contexts. 
\end{warning} 

We now fix an abstract realization functor $\Re \colon \SH(k) \to \Ccat$.
To motivate the following definition, we recall \cite[Proposition 3.6]{pstrkagowski2023perfect}.
Write $\Perf(\Sp)_{\ev} \subseteq \Sp $ for the \category finite spectra with an even cell decomposition.
An ($ \MU $-based) \emph{even synthetic spectrum} can be identified with an additive sheaf
\begin{equation*}
	X \colon (\Perf(\Sp)_{\ev})^{\op} \to \spectra
\end{equation*}
on $\Perf(\Sp)_{\ev}$ with respect to the topology of $\MU_{*}$-epimorphisms (equivalently, with respect to the topology where coverings are maps whose fiber is again even). 
Since the even cells can be identified as $\Sup^{2k} \simeq \Be((\PP^{1})^{\otimes k})$, this suggests the following notions.

\begin{definition}
	Let $ \Re \colon \SH(k) \to \Ccat $ be an abstract realization functor.
	The \defn{$ \Ccat $-Tate motive} is
	\begin{equation*}
		L_{\Ccat} \colonequals \Re(\PP^{1}) \period
	\end{equation*} 
\end{definition}

\begin{definition}
	\label{definition:perfect_even_object_of_deformation}
	Let $ \Re \colon \SH(k) \to \Ccat $ be an abstract realization functor.
	We say that an object of $ \Ccat $ is \emph{perfect even} if it belongs to the smallest subcategory 
	\begin{equation*}
	\Perf(\Ccat)_{\ev} \subseteq \Perf(\Ccat)
	\end{equation*}
	containing $L_{\Ccat} ^{\otimes n}$ for all $n \in \ZZ$ and closed under retracts and extensions. 
\end{definition}

\begin{nul}
	Since $\Re(\MGL)$ is a filtered colimit of perfect evens, arguing as in \cref{proposition:pure_epimorphisms_detected_by_mgl}, one shows that the following two conditions are equivalent for a map $f \colon c \to d$ between perfect evens of \cref{definition:perfect_even_object_of_deformation}:
	\begin{enumerate}
	    \item $\fib(f) \in \Ccat$ is perfect even.

	    \item $\Re(\MGL) \otimes c \to \Re(\MGL) \otimes d$ admits a section. 
	\end{enumerate}
	We say that a map $ f $ of perfect evens is an \emph{even epimorphism} if $ f $ satisfies these two equivalent conditions.
\end{nul}

\begin{definition}
	\label{definition:even_synthetic_deformation_in_general_case}
	Let $ \Re \colon \SH(k) \to \Ccat $ be an abstract realization functor.
	The \emph{even synthetic deformation} of $ \Ccat $ is the \category
	\begin{equation*}
		\Syn^{\ev}(\Ccat) \colonequals \ShSigma(\Perf(\Ccat)_{\ev}; \Ccat)
	\end{equation*}
	of $ \Ccat $-valued additive sheaves with respect to the even epimorphism topology. 
\end{definition}

\begin{remark}
	\label{remark:generic_fiber_of_the_general_synthetic_category}
	Since the inclusion $\Perf(\Ccat)_{\ev} \hookrightarrow \Ccat$ preserves cofiber sequences, its left Kan extension gives a localization functor
	\begin{equation*}
		\Syn^{\ev}(\Ccat) \to \Ccat \period
	\end{equation*}
	This localization should be informally thought of as expressing the target as the generic fiber of the source. 
\end{remark}

\begin{remark}
	As \acategory, the even synthetic deformation depends only on $ \Ccat $ and on the invertible object $L_{\Ccat}$. 
	However, to define the synthetic analogue functor $\nu \colon \Ccat \to \Syn^{\ev}(\Ccat)$ we use more information about the functor $ \Re $. 
\end{remark}

Recall that in the classical case, the synthetic analogue $\nu \colon \spectra \hookrightarrow \SynMUev$ is given by the spectral Yoneda embedding followed by taking connective covers. 
The work of Burklund--Hahn--Senger suggests that in the general case, the right replacement for connectivity of spectra is that of effectivity. 

\begin{definition}
	Let $ \Re \colon \SH(k) \to \Ccat $ be an abstract realization functor.
	We say that an object $ c \in \Ccat $ is \emph{effective} if $ c $ belongs to the smallest subcategory 
	\begin{equation*}
		\Ccat^{\eff} \subseteq \Ccat
	\end{equation*}
	which contains $ \Re(\Sigma^{-n} \Sigma_{+}^{\infty} X)$ for $X \in \Sm_{k}$ and $ n \geq 0 $ and is closed under colimits.
	For an integer $ q \in \ZZ $, we say that an object $ c \in \Ccat $ is \emph{$q$-effective} if $ c $ belongs to the smallest subcategory 
	\begin{equation*}
		\Ccat^{\eff}(q) \subseteq \Ccat
	\end{equation*}
	which contains $L_{\Ccat}^{\otimes q} \otimes E $ for $ E \in \Ccal^{\eff} $ effective.
\end{definition}

\begin{nul}
	By construction $\Ccat^{\eff}(q)$ is presentable and the inclusion $\Ccat^{\eff}(q) \subseteq \Ccal $ admits a right adjoint 
	\begin{equation*}
		\f_{q} \colon \Ccat \to \Ccat^{\eff}(q) 
	\end{equation*}
	which we call the $q$-th \emph{effective cover}. 
	As
	\begin{equation*}
		\Ccat^{\eff}(q+1) \subseteq \Ccat^{\eff}(q) \comma
	\end{equation*}
	we have canonical natural transformations $\f_{q+1}(-) \to \f_{q}(-)$ which assemble into the \emph{slice tower} 
	\begin{equation}
		\label{equation:slice_tower}
		\cdots \to \f_{q+1}(-) \to \f_{q}(-) \to \f_{q-1}(-) \to \cdots \period
	\end{equation}
\end{nul}

\begin{remark}
	\label{remark:commuting_slices_past_tensoring_with_tate}
	Since $L_{\Ccat} \otimes \Ccat^{\eff}(q) = \Ccat^{\eff}(q+1)$, we have that for any $c \in \Ccat$, we have 
	\begin{equation*}
		L_{\Ccat} \otimes \f_{q}(c) \simeq \f_{q+1}(L_{\Ccat} \otimes c) \period
	\end{equation*}
	Informally, if we think of the slice tower as the variant of the Postnikov tower, tensoring with the Tate motive plays the role of the suspension. 
\end{remark}

\begin{definition}
	\label{definition:general_synthetic_analogue}
	If $c \in \Ccat$, the \emph{synthetic analogue} $\nu(c) \in \Syn^{\ev}(\Ccat)$ is given by the sheafication of the presheaf 
	\begin{equation*}
		\f_{0} \Map_{\Ccat}(-, c) \colon \Perf(\Ccat)_{\ev}^{\op} \to \Ccat \comma
	\end{equation*}
	where $\Map_{\Ccat}$ is the internal mapping object of $ \Ccat $. 
\end{definition}

\noindent The existence of the synthetic lift of $ \Re $ relies on the following conjecture. 

\begin{conjecture}
	\label{conjecture:general_synthetic_analogue_preserves_mgl_split_cofiber_sequences}
	The functor $\nu \colon \Ccat \to \Syn^{\ev}(\Ccat)$ preserves $\Re(\MGL)$-split cofiber sequences.
\end{conjecture}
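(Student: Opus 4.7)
The approach generalizes the proof of the analogous statement for the classical $\MU$-synthetic analogue $\nu \colon \Sp \to \SynMU$ from \cite[Lemma 4.23]{pstrkagowski2022synthetic}. The strategy is to equip $\Syn^{\ev}(\Ccat)$ with a natural $t$-structure for which $\nu$ lands in connective objects, and then to exploit the fact that a null sequence of connective objects is a cofiber sequence precisely when its sequence of homotopy sheaves is short exact in the heart.

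First I would equip $\Syn^{\ev}(\Ccat)$ with the $t$-structure whose connective part consists of those additive sheaves $X \colon \Perf(\Ccat)_{\ev}^{\op} \to \Ccat$ such that $X(A)$ is effective for every perfect even $A$. Provided one knows that sheafification for the even epimorphism topology is $t$-exact—a statement analogous to \cite[Proposition 2.5]{pstrkagowski2022synthetic}, which is routine but must be checked—one concludes that $\nu(c)$ is connective for every $c \in \Ccat$, since the presheaf $A \mapsto \f_{0}\Map_{\Ccat}(A, c)$ is pointwise effective by construction. Moreover, a null sequence of connective synthetic objects is a cofiber sequence if and only if the induced sequence of homotopy sheaves is short exact in the heart.

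The problem therefore reduces to showing that, for every $\Re(\MGL)$-split cofiber sequence $c \to d \to e$ in $\Ccat$ and every perfect even $A$, the null sequence
\begin{equation*}
	\f_{0} \Map_{\Ccat}(A, c) \to \f_{0} \Map_{\Ccat}(A, d) \to \f_{0} \Map_{\Ccat}(A, e)
\end{equation*}
becomes a cofiber sequence after sheafification. Since perfect evens are dualizable, $\Map_{\Ccat}(A, -) \simeq A^{\vee} \otimes (-)$, and tensoring with $A^{\vee}$ preserves $\Re(\MGL)$-split sequences; hence it suffices to treat the case $A = \Unit_{\Ccat}$. The splitting assumption says that the boundary $e \to \Sigma c$ vanishes after tensoring with $\Re(\MGL)$. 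Writing $\Re(\MGL)$ as a filtered colimit of the perfect evens $\Re(\Th(\epsilon_{d, n+d}))$ built from the Grassmannian Thom spectra of \cref{example:thom_spectra_of_grassmanians_are_perfect_pure}, and using compactness of $e$, the boundary is already killed after tensoring with a single perfect even $P$—exactly as in the proof of \cref{proposition:pure_epimorphisms_detected_by_mgl}. Chasing this splitting through the long exact sequence of $\f_{0}$ then gives short exactness on homotopy sheaves.

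The principal obstacle will be verifying the structural prerequisites: existence and $t$-exactness of sheafification for the even epimorphism topology, and an analog of \cref{lemma:vanishing_of_mgl_homology_of_perfect_pures} giving vanishing of $\Re(\MGL)$-homology in negative effective degree. In the classical Betti case these come from left-completeness of the Chow--Novikov $t$-structure together with Landweber-type flatness inputs; in an abstract realization context they must either be imposed as hypotheses on $\Re$ or verified in cases of interest such as $\Re_{\ell}$ and $\Be_{\Cup_{2}}$. Isolating the right general hypothesis and checking it in these cases appears to be the most substantial outstanding issue.
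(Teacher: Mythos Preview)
The paper does not prove this statement: it is stated as a \emph{conjecture} and left open. \Cref{subsection:ideas_on_synthetic_real_and_etale_realizations} is explicitly framed as a ``conjectural picture'' and an ``invitation towards further research'', so there is no proof in the paper against which to compare your proposal.

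Your sketch has a structural gap. The effective cover $\f_{0}$ appearing in \cref{definition:general_synthetic_analogue} is \emph{not} a connective cover for a $t$-structure: the subcategory $\Ccat^{\eff} \subseteq \Ccat$ contains $\Re(\Sigma^{-n}\Sigma_+^{\infty}X)$ for all $n \geq 0$ and is closed under colimits, hence is closed under desuspension and is therefore a stable subcategory. Thus $\f_{0}$ is a colocalization onto a stable subcategory (a slice-type filtration), not a truncation. Your argument hinges on a $t$-structure on $\Syn^{\ev}(\Ccat)$ whose connective part consists of pointwise-effective sheaves, together with the criterion that a null sequence of connectives is a cofiber sequence if and only if it is short exact on homotopy objects in the heart; but without a genuine $t$-structure there is no heart and no such criterion. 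This is exactly where the analogy with the classical case---where $\tau_{\geq 0}$ on $\Sp$ \emph{is} a $t$-structure truncation---breaks down, and it is presumably one reason the authors record the statement as a conjecture rather than a theorem. A secondary issue: your appeal to ``compactness of $e$'' is unjustified, since the cofiber sequence $c \to d \to e$ lives in $\Ccat$ with no finiteness hypothesis on its terms, so the compactness trick from the proof of \cref{proposition:pure_epimorphisms_detected_by_mgl} does not apply directly.
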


\begin{remark}
	Notice that if \cref{conjecture:general_synthetic_analogue_preserves_mgl_split_cofiber_sequences} holds, then using \cref{cor:functors_out_of_SH_in_terms_of_Pure} we can define the synthetic lift 
	\begin{equation*}
		\Re^{\syn} \colon \SH(k) \to \Syn^{\ev}(\Ccat)
	\end{equation*}
	as the unique colimit-preserving functor such that 
	\begin{equation*}
		\Re^{\syn}(S) \simeq \nu(\Re(S))
	\end{equation*}
	for any perfect pure $ S $. 
\end{remark}

As we mentioned at the beginning of this section, our approach is inspired by the work of Burklund--Hanh--Senger, who instead of additive sheaves work with filtered objects. 
We now explain how the synthetic deformation presented here should conjecturally be related to the filtered object perspective of \cite{burklund2020galois}. 

Using \cref{remark:commuting_slices_past_tensoring_with_tate}, the slice tower of $\Map(-, L^{\otimes 0})$ induces a filtered object 
\begin{equation*}
	\nu(L^{\otimes *}) \in \Fil(\Syn^{\ev}(\Ccat))
\end{equation*}
of the form 
\begin{equation*}
	\cdots \to \nu(L^{\otimes 1}) \to \nu(L^{\otimes 0}) \to \nu(L^{\otimes -1}) \to \cdots \period
\end{equation*}
This object the slice analogue of the Postnikov tower of \cite[\S 5.2]{patchkoria2021adams}, which is shown in op. cit. to encode the Adams filtration. 
We conjecture that $\nu(L^{\otimes *})$ has an analogous property in the context of motivic realizations, at least for Artin--Tate objects. 

\begin{conjecture}
	We have that: 
	\begin{enumerate}
	    \item The internal mapping object functor 
	    \begin{equation*}
			\Map_{\Ccat}(\nu(L^{\otimes *}), -) \colon \Syn^{\ev}(\Ccat) \to \Fil(\Ccat)
	    \end{equation*}
	    can be promoted to an equivalence 
	    \begin{equation*}
	     	\Syn^{\ev}(\Ccat) \simeq \Mod_{\fil^{*}(L^{\otimes 0})}(\Fil(\Ccat))
	    \end{equation*}
	    between the synthetic deformation and modules over
	    \begin{equation*}
	    	\fil^{*}(L^{\otimes 0}) \simeq \End_{\Ccat}(\nu(L^{\otimes *}), \nu(L^{\otimes *})) \period
	    \end{equation*}

	    \item Through the equivalence of (1), the synthetic realization functor
	    \begin{equation*}
	    	\Re^{\syn} \colon \SH(k) \to \Fil(\Ccat)
	    \end{equation*}
	    can be identified on Artin--Tate objects with the functor sending $X \in \SH^{\AT}(k)$ to 
	    \begin{equation*}
	   		\cdots \to \Re(\f_{1} X) \to \Re(\f_{0}X) \to \Re(\f_{-1} X) \to \cdots \comma
	    \end{equation*}
	    the realization of its tower of effective covers (see \Cref{recollection:slice_filtration}). 
	\end{enumerate}
\end{conjecture}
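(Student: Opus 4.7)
The plan is to approach part (1) through a Schwede--Shipley-type argument, realizing $\Syn^{\ev}(\Ccat)$ as filtered modules over the endomorphism $\E_{1}$-algebra of the bigraded tower $\nu(L^{\otimes *})$, and then to approach part (2) by restricting both sides to perfect pure motives and invoking the universal property of \Cref{cor:functors_out_of_SH_in_terms_of_Pure}.

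For part (1), first I would argue that $\{\nu(L^{\otimes n})\}_{n \in \ZZ}$ forms a family of compact generators of $\Syn^{\ev}(\Ccat)$ up to $L$-shift. Compactness should follow from the compactness of each $L^{\otimes n}$ in $\Ccat$ together with the fact that covers in the even epimorphism topology are split after tensoring with $\Re(\MGL)$, analogous to the identification of compact objects of $\SynMU$ in \cite{pstrkagowski2022synthetic}. Generation follows from the definition of $\Syn^{\ev}(\Ccat)$ as additive sheaves on $\Perf(\Ccat)_{\ev}$: by \Cref{definition:perfect_even_object_of_deformation}, every perfect even object is built from powers of $L$ under extensions and retracts, so every representable synthetic sheaf is, after these operations, a bigraded sum of $\nu(L^{\otimes n})$'s. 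One then promotes the internal mapping functor $\Map_{\Ccat}(\nu(L^{\otimes *}), -)$ into an equivalence via a filtered Schwede--Shipley theorem: conservativity follows from the generation statement, colimit-preservation from compactness, and the filtered structure on the endomorphism algebra arises canonically from the tower $\nu(L^{\otimes *})$ itself. The identification of $\fil^{*}(L^{\otimes 0})$ with $\End_{\Ccat}(\nu(L^{\otimes *}), \nu(L^{\otimes *}))$ is then a tautology given the definition of $\nu$ via slice truncation.

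For part (2), both $\Re^{\syn}$ (transported under the equivalence of part (1)) and the slice realization $X \mapsto \Re(\f_{*} X)$ should restrict to cofiber-preserving functors on $\Pure^{\AT}(k)$, so by \Cref{cor:functors_out_of_SH_in_terms_of_Pure} it suffices to check agreement on perfect pure Artin--Tate motivic spectra, i.e., objects of the form $(\PP^{1})^{\otimes n}$ and their twists by smooth projective varieties. On such $S$, the effective cover $\f_{q}(\Re(S))$ is controlled by the motivic slice filtration on $S$ (which is finitely supported for Artin--Tate pures), and $\nu(\Re(S))$ is by \Cref{definition:general_synthetic_analogue} the sheafification of the slice-truncated mapping presheaf $\f_{0}\Map_{\Ccat}(-,\Re(S))$. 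Unwinding the equivalence of part (1), the associated filtered object is exactly the slice tower $\Re(\f_{*} S)$, giving the comparison on generators.

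The main obstacles I anticipate are twofold. First, both parts presuppose \Cref{conjecture:general_synthetic_analogue_preserves_mgl_split_cofiber_sequences}: even to define $\Re^{\syn}$ in part (2) one must know that $\nu$ preserves $\Re(\MGL)$-split cofiber sequences, and this will require understanding the interaction between the slice filtration on $\Ccat$ and $\Re(\MGL)$-homology, likely through a $t$-exactness property of slice truncation against $\Re(\MGL)$-split sequences. Second, convergence of the slice tower of the unit $\Unit_{\Ccat}$ is subtle: it governs whether $\fil^{*}(L^{\otimes 0})$ is complete and whether the equivalence in part (1) needs to be taken up to a suitable completion. For the étale realization this should be tractable using hypercompleteness and Levine's convergence results, but for the real Betti realization the conjecture may only hold after $p$-completion or under an Artin--Tate restriction, and I would expect to first prove a completed version before attempting the statement as written.
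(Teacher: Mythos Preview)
The statement you are attempting to prove is stated in the paper as a \emph{Conjecture}, not as a theorem or proposition. The paper provides no proof whatsoever: the conjecture appears at the very end of \cref{subsection:ideas_on_synthetic_real_and_etale_realizations}, which the authors explicitly frame as ``an invitation towards further research'' and a ``conjectural picture.'' There is therefore no proof in the paper to compare your proposal against.

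Your sketch is a reasonable outline of how one might attack the conjecture, and you have correctly identified the two main structural obstacles: the dependence on \Cref{conjecture:general_synthetic_analogue_preserves_mgl_split_cofiber_sequences} (which is itself open in the paper) and convergence of the slice tower in $\Ccat$. But you should be aware that you are not filling in a gap the authors left implicit---you are proposing an approach to an open problem. In particular, the compactness claim for $\nu(L^{\otimes n})$ in $\Syn^{\ev}(\Ccat)$ is not obviously reducible to arguments in \cite{pstrkagowski2022synthetic}, because the target category $\Ccat$ is arbitrary and the even epimorphism topology is defined relative to $\Re(\MGL)$ rather than $\MU$; whether sheafification behaves well enough for compactness to go through depends on properties of $\Ccat$ that the paper does not assume. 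Similarly, your claim in part (2) that ``the slice realization $X \mapsto \Re(\f_{*} X)$ should restrict to a cofiber-preserving functor on $\Pure^{\AT}(k)$'' is exactly the kind of statement the authors are unable to verify in general, which is why they restrict to Artin--Tate objects and still only conjecture the identification.
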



\printbibliography

\end{document}